\definecolor{skin}{HTML}{FFECC9}
\definecolor{pumpkin}{HTML}{FEDFA9}
\definecolor{piggy}{HTML}{FFB99D}
\definecolor{fiolet}{HTML}{CD8F9C}
\definecolor{granat}{HTML}{677081}
\definecolor{ciemnyblekit}{HTML}{91A1B8}
\definecolor{oliwkowy}{HTML}{627037}
\definecolor{ciemnazielen}{HTML}{394D2E}
\definecolor{ciemnyfiolet}{HTML}{424444}
\definecolor{mocnyfiolet}{HTML}{717299}
\definecolor{jasnyfiolet}{HTML}{B0ABCC}
\definecolor{bladyfiolet}{HTML}{C9C7DB}
\theoremstyle{plain}
\newtheorem{thm}[equation]{Theorem}
\newtheorem{cor}[equation]{Corollary}
\newtheorem{lemma}[equation]{Lemma}
\newtheorem{prop}[equation]{Proposition}
\theoremstyle{remark}
\newtheorem{rem}[equation]{Remark}
\newtheorem{ex}[equation]{Example}
\theoremstyle{definition}
\newtheorem{assumpt}[equation]{Assumptions}
\newenvironment{pf}{\begin{proof}}{\end{proof}}
\numberwithin{equation}{section}
\DeclareMathOperator{\Pic}{Pic}
\DeclareMathOperator{\Aut}{Aut}
\DeclareMathOperator{\Spec}{Spec}
\DeclareMathOperator{\Hom}{Hom}
\DeclareMathOperator{\HH}{H}
\DeclareMathOperator{\conv}{conv}
\newcommand{\set}[1]{\left\{#1\right\}}
\newcommand{\fromto}[2]{#1, \dotsc, #2}
\newcommand{\setfromto}[2]{\set{\fromto{#1}{#2}}}
\DeclareMathOperator{\id}{Id}
\newcommand\PP{{\mathbb{P}}}
\newcommand\RR{{\mathbb{R}}}
\newcommand\QQ{{\mathbb{Q}}}
\newcommand\ZZ{{\mathbb{Z}}}
\newcommand\CC{{\mathbb{C}}}
\newcommand\AAA{{\mathbb{A}}}
\newcommand\HHH{{\mathbb{H}}}
\newcommand\FF{{\mathbb{F}}}
\newcommand\cO{{\mathcal O}}
\newcommand\cM{{\mathcal M}}
\newcommand\cC{{\mathcal C}}
\newcommand\cJ{{\mathcal J}}
\newcommand\cR{{\mathcal R}}
\newcommand\cQ{{\mathcal Q}}
\newcommand\m{{\mathfrak m}}
\newcommand\g{{\mathfrak g}}
\newcommand\h{{\mathfrak h}}
\newcommand\ra{{\ \rightarrow\ }}
\newcommand\lra{\longrightarrow}
\newcommand\iso{{\ \cong\ }}
\newcommand{\Wedge}[1]{{\textstyle{\bigwedge\nolimits}^{\! #1}}}
\begin{document}

\title{Algebraic torus actions on contact manifolds}

\author[J.~Buczy{\'n}ski]{Jaros{\l}aw Buczy{\'n}ski}
\address{J.~Buczy\'nski,
Institute of Mathematics of the Polish Academy of Sciences,
  ul.~\'Sniadeckich 8,
  00-656 Warszawa, Poland,
and
Institute of Mathematics,
University of Warsaw,
ul.~Banacha 2,
02-097 Warszawa,
Poland
 }
 \email{jabu@mimuw.edu.pl}

\author[J.~A.~Wi\'sniewski]{Jaros\l{}aw A. Wi\'sniewski}
\address{J.~A.~Wi\'sniewski,
Institute of Mathematics,
University of Warsaw,
ul.~Banacha 2,
02-097 Warszawa,
Poland
}
 \email{jarekw@mimuw.edu.pl}

\author[A.~Weber]{with Appendix by Andrzej Weber}
\address{A.~Weber,
Institute of Mathematics,
University of Warsaw,
ul.~Banacha 2,
02-097 Warszawa,
Poland
}
 \email{aweber@mimuw.edu.pl}

\thanks{The authors are supported by the Polish National Science
  Center (NCN) project Algebraic Geometry: Varieties and Structures,
  2013/08/A/ST1/00804. Buczy{\'n}ski is also supported 
  by a scholarship of Polish Ministry of Science and by the NCN project Complex contact manifolds and geometry of secants, 2017/26/E/ST1/00231.
  Wi\'sniewski and Weber are supported by NCN project Algebraic Torus Action: Geometry and Combinatorics 2016/23/G/ST1/04282.}
\keywords{Fano manifolds, quaternion-Kahler manifolds, complex contact manifolds, algebraic torus action, 
     homogeneous spaces, adjoint action, localization in K-theory}
     
\subjclass[2010]{Primary: 14L30; Secondary: 53C26, 53D10, 14J45, 14M17, 22E46}

\begin{abstract}
  We prove the LeBrun-Salamon Conjecture in low dimensions.  More
  precisely, we show that a contact Fano manifold $X$ of dimension
  $2n+1$ that has reductive automorphism group of rank at least $n-2$
  is necessarily homogeneous.  This implies that any positive
  quaternion-Kahler manifold of real dimension at most $16$ is
  necessarily a symmetric space, one of the Wolf spaces.  A similar
  result about contact Fano manifolds of dimension at most $9$ 
  with reductive automorphism group also holds. 
  The main difficulty in approaching the conjecture is how to
  recognize a homogeneous space in an abstract variety.  We contribute
  to such problem in general, by studying the action of algebraic
  torus on varieties and exploiting Bia{\l}ynicki-Birula decomposition
  and equivariant Riemann-Roch theorems.  From the point of view of
  $T$-varieties (that is, varieties with a torus action), our result
  is about high complexity $T$-manifolds.  The complexity here is at
  most $\frac{1}{2} (\dim X+5)$ with $\dim X$ arbitrarily high, but we
  require this special (contact) structure of $X$.  Previous methods
  for studying $T$-varieties in general usually only apply for complexity at
  most~$2$ or~$3$.
\end{abstract}


\maketitle

\begin{flushright}
   \emph{Dedicated to Andrzej Szczepan Bia{\l}ynicki-Birula.}
\end{flushright}

\setcounter{tocdepth}{2}

\newpage

\tableofcontents

\section{Introduction}

A complex manifold $X$ of dimension $2n+1$ (with $n\ge 1$) is called a \emph{contact manifold} if there exists a rank $2n$ vector subbundle
   $F \subset T X$ of the tangent bundle with a short exact sequence:
\[
  0 \to F \to T X \stackrel{\theta}{\to} L \to 0, 
\]
  such that the derivative $d \theta|_{F} \colon \Wedge{2} F \to L$ 
  of the twisted form $\theta \in H^0(\Omega^1 X \otimes L)$ is nowhere degenerate.
Moreover, we say $X$ is a \emph{contact Fano manifold}, 
  if in addition it is projective and $\Wedge{2n+1} TX \simeq L^{\otimes (n+1)}$ is an ample line bundle.

The geometry of complex contact manifolds attracts a lot of attention for a few notable reasons. 
It naturally generalizes the real case that appears in classical mechanics.
It is motivated by a problem from Riemannian geometry, 
  namely the Berger classification of all manifolds by their holonomy group \cite{berger}.
Classification of nonsymmetric positive quaternion-K\"ahler manifolds 
(so one of the building blocks in the Berger list; 
  in fact, this is the only building block, for which there is no compact example known) 
  is equivalent to the classification of contact Fano manifolds admitting a K\"ahler-Einstein metric \cite{Salamon-Inventiones}.
Further, contact manifolds connect geometry and representation theory via a version of a moment map \cite{Beauville}, \cite{landsbergmanivel02}.
Finally, such varieties produce a fertile test ground for tools of higher dimensional algebraic geometry, such as Minimal Model Program, minimal rational curves, vector bundles.
It is also strictly related to problems in Riemannian geometry, K\"ahler-Einstein metrics, non-compact hyperk\"ahler manifolds, algebraic group actions and homogeneous spaces, 
  dual varieties, and Legendrian varieties. 

\subsection{The LeBrun-Salamon conjecture 
in dimensions \texorpdfstring{$12$}{12} and \texorpdfstring{$16$}{16}}
\label{sec_LB_S_conjecture}
  
A major open question in this area is the classification of projective contact manifolds.
It is known that they all fit in one of three cases \cite{KPSW}, \cite{Demailly}, \cite{Fano-largeindex}, \cite{kobayashi_ochiai}: 
   if $X$ is a projective contact manifolds of dimension $2n+1$ with the line bundle $L$ as above, then either
\renewcommand{\theenumi}{(\roman{enumi})}
\renewcommand{\labelenumi}{\theenumi}
\begin{enumerate}
 \item $X = \PP(T^*Y)$ for a projective manifold $Y$ of dimension $n+1$ with $L \simeq \cO_{\PP(T^*Y)}(1)$, or
 \item $X = \PP^{2n+1}$ with $L=\cO_{\PP^{2n+1}}(2)$, or
 \item $X$ is a contact Fano variety with $\Pic X = \ZZ L$.
\end{enumerate}
\renewcommand{\theenumi}{(\arabic{enumi})}
\renewcommand{\labelenumi}{\theenumi}
Therefore it remains to classify the last case, and the LeBrun-Salamon conjecture \cite{LeBrunSalamon} claims that they are necessarily rational homogeneous spaces,
   more specifically, the adjoint varieties (see for instance \cite[Table~1 on p.~9]{jabu_dr} for more details).
This conjecture has a reinterpretation in terms of twistor spaces of quaternion-K\"ahler manifolds: 
   if $\cM$ is a compact simply connected quaternion-K\"ahler manifold of real dimension $4n$ with positive scalar curvature, 
   then $\cM$ is (conjecturally) a symmetric space, and more specifically one of the Wolf spaces.
Both versions of the conjecture are proven for $n=1$ \cite{hitchin, ye}, and $n=2$ \cite{poon_salamon, druel}.
It was also claimed for $n=3$ by \cite{2herreras}, however, see \cite{2herreras_erratum}.

One consequence of the results of this article is the proof of the LeBrun-Salamon conjecture for $n=3$ and $n=4$.
\begin{thm}\label{thm_qK_dim_12_16}
   Suppose $\cM$ is a compact simply connected qua\-ter\-nion-K\"ahler manifold of real dimension $4n$ with positive scalar curvature.
   If $n =3$ or $n=4$, then $\cM$ is isometric (up to rescaling) to a symmetric space, one of the Wolf spaces:
    either the quaternion projective space $\HHH\PP^{3}$ or $\HHH\PP^4$, or the complex Grassmannian $Gr(\CC^2, \CC^{5})$ or $Gr(\CC^2, \CC^{6})$,
      or the real Grassmannian of oriented subspaces $\widetilde{Gr}(\RR^4, \RR^{7})$ or $\widetilde{Gr}(\RR^4, \RR^{8})$.    
\end{thm}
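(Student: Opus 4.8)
The plan is to pass from Riemannian to complex-algebraic geometry through the twistor construction and then invoke the homogeneity theorem of the abstract. Write $Z$ for the twistor space of $\cM$. By Salamon's theorem $Z$ is a contact Fano manifold of complex dimension $2n+1$, with contact line bundle $L$ satisfying $K_Z = -(n+1)L$, and the positivity of the scalar curvature of $\cM$ equips $Z$ with a K\"ahler--Einstein metric. First I would record two consequences: isometries of $\cM$ lift to contact automorphisms of $Z$, so the rank of $\mathrm{Isom}(\cM)$ equals $\rank \Aut(Z)$; and, by Matsushima's theorem, the K\"ahler--Einstein condition forces $\Aut(Z)$ to be reductive, with $\g := \mathrm{Lie}\,\Aut(Z) \iso H^0(Z,L)$.

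The heart of the matter is to verify the hypothesis of the main theorem, namely $\rank \Aut(Z) \ge n-2$. Since $L - K_Z = (n+2)L$ is ample, Kodaira vanishing gives $H^i(Z,L)=0$ for $i>0$, whence $\dim \g = h^0(Z,L) = \chi(Z,L)$. I would then compute, or at least lower-bound, this Euler characteristic by Hirzebruch--Riemann--Roch, using the universal relations of Salamon among the characteristic numbers of a positive quaternion-K\"ahler manifold to express $\chi(Z,L)$ through $n$ and a bounded, constrained set of topological parameters. For $n=3$ one needs only $\chi(Z,L)\ge 1$, i.e.\ a single nonzero Killing field, while for $n=4$ one needs $\g$ to have rank at least $2$, so it suffices to rule out $\g=0$ and $\g=\mathfrak{sl}_2$; both follow from index-theoretic lower bounds on $\dim \mathrm{Isom}(\cM)=h^0(Z,L)$. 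This is the main obstacle, and it is genuinely dimension-sensitive: extracting $\rank \ge n-2$ from a bound on $\dim\g$ via the reductive dimension-to-rank inequality requires $\dim\g$ to grow quadratically in $n$, and the available estimate clears this threshold only for $n\le 4$, which is exactly why the conclusion is confined to real dimensions $12$ and $16$.

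With the rank bound established, the main theorem shows that $Z$ is homogeneous, hence the adjoint variety of a simple complex Lie algebra by the classification of homogeneous contact manifolds; correspondingly $\cM$ is its associated Wolf space. It then remains to enumerate the simple Lie algebras whose adjoint variety has dimension $2n+1$. For $2n+1=7$ and $2n+1=9$ a direct inspection of the list leaves only the series $A_{n+1}$, $\mathfrak{so}_{n+4}$, and $C_{n+1}$ --- the exceptional algebras yield adjoint varieties of dimensions $5,15,\dots$, none equal to $7$ or $9$ --- and these produce respectively $Gr(\CC^2,\CC^{n+2})$, $\widetilde{Gr}(\RR^4,\RR^{n+4})$, and $\HHH\PP^{n}$, matching the list in the statement.
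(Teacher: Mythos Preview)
Your outline is essentially the paper's own proof: pass to the twistor space, use Matsushima to get reductivity of $\Aut(Z)$, bound $h^0(Z,L)=\dim\g$ from below by Riemann--Roch type arguments to force $\rank\Aut(Z)\ge n-2$, invoke the main theorem, and read off the Wolf spaces. The only substantive difference is in the dimension bound: you appeal to Salamon's index-theoretic estimates on the quaternion-K\"ahler side, whereas the paper works on the contact side and combines Hirzebruch--Riemann--Roch with the Bogomolov--Gieseker inequality (using Kebekus' stability of $TZ$) to get the explicit bounds $h^0(L)\ge 5$ for $n=3$ and $h^0(L)\ge 8$ for $n=4$; the latter route has the advantage of also yielding Theorem~\ref{thm_contact_dim_7_9} without the K\"ahler--Einstein assumption, but for the quaternion-K\"ahler statement either works.

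One small correction: to rule out $\rank\g\le 1$ for $n=4$ you must exclude not only $\g=0$ and $\g=\mathfrak{sl}_2$ but also the one-dimensional abelian case $\g\simeq\CC$; this is automatic once you have $h^0(L)\ge 4$, which the cited bounds comfortably provide.
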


Also a slightly weaker version of the conjecture for complex contact Fano manifolds holds in (complex) dimensions $7$ and $9$.
\begin{thm}\label{thm_contact_dim_7_9}
   Suppose $X$ is a contact Fano manifold of dimension $d=2n+1$ with $3\le d\le 9$ (equivalently, $1\le n\le 4$), whose automorphism group is reductive.
   Then $X$ is a homogeneous space, explicitly, one of the following manifolds:
   \begin{itemize}
     \item  a (complex) projective space $\PP^{d}$ for $d=3,5,7,9$, or
     \item  a projectivization of a cotangent bundle $\PP(T^*\PP^{n+1})$ for $n=2,3,4$, or
     \item  the $5$-dimensional adjoint variety of $G_2$, or
     \item  the Grassmannian of projective lines on a smooth quadric hypersurface $Gr(\PP^1, \cQ^{n+2})$ for $n=3,4$.
   \end{itemize}
\end{thm}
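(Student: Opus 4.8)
The plan is to derive the statement from the general homogeneity criterion proved earlier---\emph{a contact Fano manifold of dimension $2n+1$ whose automorphism group is reductive of rank at least $n-2$ is homogeneous}---by verifying that the rank hypothesis is automatic as soon as $n\le 4$, and then reading off the explicit list from the classification of homogeneous contact Fano manifolds in small dimension. I would organize the argument along the trichotomy \cite{KPSW,Demailly,Fano-largeindex,kobayashi_ochiai}. In case (ii), $X=\PP^{2n+1}$ is homogeneous and on the list. In case (i) the Picard rank of $X$ is at least two, and here the Fano condition is so restrictive that by \cite{KPSW} it forces $Y\cong\PP^{n+1}$; thus $X\cong\PP(T^*\PP^{n+1})$ is the flag manifold $Fl(1,n+1;n+2)$, again homogeneous and on the list. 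Both of these conclusions are unconditional and use neither reductivity nor the criterion.

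The substance of the reduction is therefore case (iii), where $\Pic X=\ZZ L$ and $X\neq\PP^{2n+1}$. Here the contact form equips $H^0(X,L)$ with a Lie bracket and furnishes a canonical isomorphism $\mathfrak{aut}(X)\cong H^0(X,L)$ together with a moment map $X\to\PP(H^0(X,L)^*)$ whose image is non-degenerate of dimension $2n+1$ \cite{Beauville,LeBrunSalamon,KPSW}. Non-degeneracy of this image forces $\dim\Aut(X)=h^0(X,L)\ge 2n+2\ge 4$. I would then bring in reductivity: a reductive group of rank at most $1$ has dimension at most $3$ (either a one-dimensional torus, or an $\mathrm{SL}_2$- or $\mathrm{PGL}_2$-factor), so $\dim\Aut(X)\ge 4$ already forces $\rank\Aut(X)\ge 2$. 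Since $n\le 4$ gives $n-2\le 2\le\rank\Aut(X)$, the hypothesis of the homogeneity criterion holds and $X$ is homogeneous.

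It then remains to identify the homogeneous $X$. A rational homogeneous contact Fano manifold is the adjoint variety of a complex simple Lie algebra, so I would simply enumerate adjoint varieties of dimension $2n+1\le 9$. Besides $\PP^{2n+1}$ (type $C$) and $\PP(T^*\PP^{n+1})$ (type $A$) already encountered, one finds the variety of projective lines $Gr(\PP^1,\cQ^{n+2})$ on a smooth quadric (type $B$/$D$, the algebra $\mathfrak{so}_{n+4}$) for $n=3,4$, and the exceptional $5$-dimensional adjoint variety of $G_2$ for $n=2$; all remaining exceptional adjoint varieties have dimension $\ge 15>9$. Sorting by $d=2n+1\in\{3,5,7,9\}$---and using the coincidences $\mathfrak{so}_5\cong\mathfrak{sp}_4$ and $\mathfrak{so}_6\cong\mathfrak{sl}_4$ to remove duplicates in dimensions $3$ and $5$---matches the list in the theorem.

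The genuinely hard content is packaged inside the homogeneity criterion itself, which rests on the Bia{\l}ynicki-Birula decomposition and equivariant Riemann-Roch; granting it, the only delicate point in the present deduction is the rank estimate in case (iii). The key is that \emph{reductivity}, rather than mere connectedness, converts the dimension bound $\dim\Aut(X)\ge 2n+2$ into $\rank\Aut(X)\ge 2$, and it is precisely the inequality $2\ge n-2$, i.e.\ $n\le 4$, that lets this crude rank bound meet the criterion. For $n\ge 5$ the reductive automorphism group could a priori still have rank as small as $2$ (realized by type $G_2$), which no longer reaches $n-2$, and this final step fails---exactly why the theorem stops at dimension $9$.
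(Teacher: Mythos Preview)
Your overall strategy---reduce to the main homogeneity criterion by showing $\rank\Aut(X)\ge 2$---matches the paper's. But the step where you obtain the rank bound has a genuine gap.

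You claim that the moment map $X\to\PP(H^0(X,L)^*)$ has image of dimension $2n+1$, hence $h^0(X,L)\ge 2n+2$. This is \emph{not} established in the references you cite. In fact, generic finiteness of the contact moment map is precisely the missing ingredient in Beauville's approach to the LeBrun--Salamon conjecture: Beauville proves that \emph{if} $G=\Aut^0(X)$ is reductive and the moment map is generically finite, then $X$ is the adjoint variety. So if your claim held, you would already have $X$ homogeneous directly from \cite{Beauville}, for all $n$, without ever invoking the paper's main theorem---you would have proved the full conjecture (under the reductivity hypothesis), not just the low-dimensional cases. That the argument proves too much is a sign something is wrong; the image of the moment map is non-degenerate, but its dimension is not known a priori.

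The paper obtains the needed dimension bound by an entirely different route: Theorem~\ref{thm_dim_of_Aut_X} computes $\chi(X,L)=h^0(X,L)$ via Hirzebruch--Riemann--Roch, uses the duality $F\simeq F^*\otimes L$ to eliminate odd Chern classes, imposes the Kodaira vanishing constraints $p(-1)=p(-2)=0$, $p(0)=1$, and then applies the Bogomolov--Gieseker inequality to the stable bundle $TX$ (stability from \cite{kebekus_lines2}). This yields $h^0(L)\ge 5$ for $\dim X=7$ and $h^0(L)\ge 8$ for $\dim X=9$---weaker than your claimed $2n+2$, but rigorously justified and still enough to force rank $\ge 2$. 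For $\dim X\le 5$ the paper simply quotes the existing classifications \cite{ye,druel}. Your enumeration of the resulting adjoint varieties is fine.
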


\subsection{Contact Fano manifolds with reductive group of automorphisms of high rank} 
Let $X$ be a complex projective manifold of
dimension $d$ with an ample line bundle $L$. We assume that $X$ admits
an action of an algebraic torus $H$ of rank $r$ and the map
$H\rightarrow \Aut(X)$ has at most finite kernel.
Our main interest is when $d=2n+1$ and $X$ is a contact Fano manifold as briefly defined above.
 
\begin{table}[hbt]
   \begin{center}
   \scriptsize
   \begin{tabular}{||c|c|c|p{0.159\textwidth}|p{0.085\textwidth}|c|c||}
   \hline
   \hline
      type     & $X$               & $\dim X$ & $\Aut^{\circ}(X)$, $\Aut^{\circ}(X,F)$& rank of $\Aut(X)$ & $\Pic X$& $\cM$\\
   \hline
   \hline
      $A_{n+1}$& $\PP(T^*\PP^{n+1})$ & $2n+1$   & $PGL_{n+2}$ & $n+1$ & $\ZZ^2$& $Gr(\CC^2, \CC^{n+2})$\\
   \hline
      $C_{n+1}$& $\PP^{2n+1}$ & $2n+1$   & $PGL_{2n+2}$, $Sp_{2n+2}/(\pm \id)$&$2n+1$ & $\ZZ\cdot(\tfrac{1}{2} L)$& $\HHH\PP^{n}$\\
   \hline
      $G_2$ & $G_2$-variety& $5$&$G_2$ & $2$   & $\ZZ \cdot L$& $G_2$-Wolf space\\
   \hline
      $B_3$ & $Gr(\PP^1, \cQ^{5})$& $7$&$SO_7$ & $3$   & $\ZZ \cdot L$& $\widetilde{Gr}(\RR^4, \RR^{7})$\\
   \hline
      $D_4$ & $Gr(\PP^1, \cQ^{6})$& $9$ & $SO_8$ & $4$ & $\ZZ \cdot L$& $\widetilde{Gr}(\RR^4, \RR^{8})$\\
   \hline
      $B_4$ & $Gr(\PP^1, \cQ^{7})$& $11$&$SO_9$ & $4$   & $\ZZ \cdot L$& $\widetilde{Gr}(\RR^4, \RR^{9})$\\
   \hline
      $D_5$ & $Gr(\PP^1, \cQ^{8})$& $13$ & $SO_{10}$ & $5$  & $\ZZ \cdot L$& $\widetilde{Gr}(\RR^4, \RR^{10})$\\
   \hline
      $B_5$ & $Gr(\PP^1, \cQ^{9})$& $15$& $SO_{11}$ & $5$   & $\ZZ \cdot L$& $\widetilde{Gr}(\RR^4, \RR^{11})$\\
   \hline
      $D_6$ & $Gr(\PP^1, \cQ^{10})$& $17$ & $SO_{12}$ & $6$ & $\ZZ \cdot L$& $\widetilde{Gr}(\RR^4, \RR^{12})$\\
      \hline
   \hline
   \end{tabular}
   \end{center}
   \caption{The list of contact Fano manifolds $X$ satisfying $\dim X= 2n+1$ and $\Aut(X)$ reductive and of rank at least $n-2$.
            The \emph{type} refers to the Lie algebra type corresponding to the respective adjoint variety $X$.
            $\cQ^k$ denotes the smooth projective $k$-dimensional quadric in $\PP^{k+1}$, 
               and $Gr(\PP^1, \cQ^k)$ is the orthogonal Grassmannian of projective lines contained in the quadric.  
            $\Aut^{\circ}(X)$ is the identity component of the automorphism group of $X$, 
            while $\Aut^{\circ}(X,F)$ is the component of the group of automorphisms preserving the contact distribution $F$.
            The \emph{$G_2$-variety} is the $5$-dimensional homogeneous space admiting a transitive action of a simple group of type $G_2$ 
               that is not isomorphic to $\cQ^5$.
            $\cM$ is the corresponding quaternion-K\"ahler manifold whose twistor space is $X$.}\label{tab_list_of_contact_with_high_rank_of_torus}
   
\end{table}

The main result of the present paper is the following:
\begin{thm}\label{main_thm}
  Let $X$ be a contact Fano manifold of dimension $2n+1$,
     whose group of automorphisms $G$ is reductive and
  contains an algebraic torus $H$ of rank $n-2$. 
  Then $X$ is a homogeneous space.
  The complete list of all such manifolds is given in Table~\ref{tab_list_of_contact_with_high_rank_of_torus}.
\end{thm}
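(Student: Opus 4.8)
The plan is to convert the geometry of $X$ into Lie theory, reconstruct a semisimple Lie algebra directly from the torus-equivariant invariants of $X$, and then recognize $X$ as the corresponding adjoint variety. By the trichotomy (i)--(iii) recalled above, the cases $X=\PP^{2n+1}$ (immediate) and $X=\PP(T^*Y)$ are disposed of first: in case (i) the $H$-action descends to $Y$, and the rank hypothesis together with the Fano condition on $X$ reduces to a lower-complexity problem on $Y$ that forces $Y\cong\PP^{n+1}$, putting these in the Table. The real content is case (iii), where $\Pic X=\ZZ L$, so the contact structure is canonical and $G=\Aut(X)$ preserves it; then $\g:=\mathrm{Lie}(G)=H^0(X,L)$ carries the contact Poisson bracket, and since $-K_X=(n+1)L$, Kodaira vanishing gives $H^i(X,L)=0$ for $i>0$, so $\dim G=\chi(X,L)$ and the full $H$-character of $\g$ are computable. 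The target is to prove that the $G$-equivariant contact moment map $X\to\PP(\g^*)$ realizes $X$ as the closed (minimal) orbit of a simple $\g$.

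Next I would fix a generic one-parameter subgroup $\lambda\subset H$ and study the Bia\l{}ynicki--Birula decomposition. The contact condition enters decisively through the isomorphism $F\cong F^*\otimes L$: at any $H$-fixed point $p$ the tangent weights split symmetrically about the $L$-weight $\mu_p$ as $\{\alpha_1,\mu_p-\alpha_1,\dots,\alpha_n,\mu_p-\alpha_n,\mu_p\}$, with total $(n+1)\mu_p$ matching $-K_X=(n+1)L$. This pairing rigidly constrains the normal bundles of the fixed components and ties the source and sink of the decomposition to a lower-dimensional contact reduction. Because $H$ has only rank $n-2$, the complexity is high and the fixed components are in general positive-dimensional, so bounding and describing them via the contact pairing is the central technical labor.

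With the fixed-point data in hand I would apply equivariant Riemann--Roch and K-theoretic localization to compute $\dim G=h^0(X,L)$ and the $H$-weights of $\g$. Imposing the contact weight-pairing and the Fano normalization forces the resulting collection of weights to be (a projection of) a root system, and comparing dimensions shows $G$ is semisimple. A finite root-theoretic check --- which simple types admit an adjoint variety of dimension $2n+1$ whose automorphism rank is at least $n-2$ --- then leaves precisely the entries $A_{n+1}$, $C_{n+1}$, $G_2$, $B_3,\dots,D_6$ of the Table; for the orthogonal series this is exactly where the list terminates, since the rank of $\mathrm{SO}_m$ eventually drops below $n-2$.

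The main obstacle is the recognition step: upgrading this numerical and representation-theoretic coincidence to an actual isomorphism of $X$ with the adjoint variety of the reconstructed $\g$. The difficulty is twofold. First, since the guaranteed torus is far from maximal (complexity $\approx n+3$), localization only sees projected weights and the fixed loci are not isolated, so one must genuinely control their geometry rather than read roots off points. Second, even after pinning down $\dim G$ and the weight system, one must exclude non-homogeneous $T$-varieties sharing the same equivariant invariants; I would do this by showing that the equivariant map $X\to\PP(\g^*)$ has image a single closed $G$-orbit, using that $X$ is smooth, irreducible, $G$-stable, of the correct dimension, and satisfies $\Pic X=\ZZ L$, which forces transitivity. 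Proving that the reconstructed bracket is genuinely semisimple, and that no exotic configuration survives the contact-localization constraints, is where the bulk of the work lies.
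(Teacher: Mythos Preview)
Your broad outline---reduce to $\Pic X=\ZZ\cdot L$, exploit the contact pairing on tangent weights, identify $\g$, recognize $X$ as an adjoint variety---matches the paper's, but you misidentify where the work and the simplifications lie, and several steps as written would not go through.

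First, you take $H$ to be the given rank-$(n-2)$ torus and worry that fixed loci are positive-dimensional. The paper instead takes $H$ to be the \emph{maximal} torus of $G=\Aut(X)$ (rank $r\ge n-2$), and the pivotal lemma (Lemma~\ref{lem_contact-Gamma=Delta}) is that the \emph{extremal} fixed components of $X^H$ are isolated points. This uses that such components are isotropic (Corollary~\ref{cor_contact-fixedpt-isotropic}), hence of dimension $\le n$, plus a flag-of-faces argument to get dimension $\le 3$, and then $h^0(L|_{Y^0})=1$ (the root multiplicity) forces $Y^0$ to be a point via Lemma~\ref{lem_Fano_3_folds_H0L}. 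You miss this, and it is what makes the whole analysis tractable. Second, there is nothing to ``compute'' via localization to discover the weights: since $\HH^0(X,L)\cong\g$ with the adjoint action (Lemma~\ref{lem_contact-canonical-linearization}), the $H$-weights \emph{are} the roots of $G$. The content is the equality $\Gamma=\Delta$, which follows from extremal components being points. From this $G$ is semisimple; but passing to $G$ \emph{simple} (Proposition~\ref{prop_contact->Gsimple}) is a separate argument you omit, using compasses at vertices and Proposition~\ref{prop_smallDelta2} on edges. Incidentally, case~(i) needs no torus argument at all: $X$ Fano makes $TY$ ample, so $Y\cong\PP^{n+1}$ by Mori.

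Third, your ``finite root-theoretic check'' vastly understates Section~\ref{sec_contact_simple}, which is the bulk of the paper: for each simple type one analyzes compasses at extremal fixed points via Lemma~\ref{lem_t-action-contact} and Corollaries~\ref{cor_compass_and_fixpt},~\ref{cor_weights-jets}, excludes $A_r$ ($r\ge3$) and $C_r$ by contradiction, and for $B_r,D_r$ pins down \emph{all} fixed points and compasses exactly. Localization (Corollary~\ref{cor_RRekw}) is used, but only for $A_2$ and $G_2$ to count inner fixed points, not to discover roots; and the $A_1$ case needs an entirely different input, a lower bound on $\dim\Aut(X)$ via Riemann--Roch and Bogomolov--Gieseker for the stable bundle $TX$ (Theorem~\ref{thm_dim_of_Aut_X}). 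Finally, your recognition step via the moment map is too vague: you do not explain why the map is an embedding or why its image is a single orbit. The paper's criterion (Proposition~\ref{prop_fixpt+compass-determine-homogeneous}) is concrete: once isolated fixed points and compasses match those of the homogeneous model $X'$, localization gives $\HH^0(X,L)\cong\HH^0(X',L')$ as $H$-modules, hence as $G$-modules; then $X'$, being the unique closed $G$-orbit in $\PP(\HH^0(X,L)^*)$, sits inside $X$, and equal dimensions finish.
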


Because of the twistor construction the above theorem is related to
results for quaternion-K\"ahler manifolds, see the surveys
\cite{Salamon-survey}, \cite{Amann_Phd}, or \cite{jabu_moreno_contact_survey},
as well as \cite{Beauville}, \cite{Bielawski},
\cite{Fang1}, \cite{Fang2}, \cite{Kim} and references therein.
A consequence of Theorem~\ref{main_thm} is an analogous statement on the isometries of a quaternion-K\"ahler manifold.

\begin{thm}\label{thm_qK_rank_of_isometries_group}
  Let $\cM$ be a positive quaternion-K\"ahler manifold of dimension
  $4n$. 
  If the isometry group ${\rm Isom}(\cM)$ has rank at least $n-2$,
    then $\cM$ is isometric to one of the Wolf spaces.
\end{thm}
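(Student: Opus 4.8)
The plan is to deduce this statement directly from Theorem~\ref{main_thm} by passing to the twistor space. Since $\cM$ is a \emph{positive} quaternion-K\"ahler manifold its scalar curvature is positive, so by Myers' theorem $\cM$ is compact and ${\rm Isom}(\cM)$ is a compact Lie group. Let $X$ be the twistor space of $\cM$. By the classical twistor construction (Salamon, LeBrun--Salamon), the fibration $X\to\cM$ is a $\PP^1$-bundle and $X$ is a complex contact Fano manifold of complex dimension $2n+1$, the contact line bundle being induced fibrewise. Thus $X$ is exactly the kind of object to which Theorem~\ref{main_thm} applies, and the whole argument reduces to checking its two hypotheses: that $\Aut(X)$ is reductive, and that it contains an algebraic torus of rank $n-2$.

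First I would verify reductivity. The twistor space of a positive quaternion-K\"ahler manifold carries a K\"ahler--Einstein metric with positive Einstein constant; by Matsushima's theorem the automorphism group of a compact K\"ahler--Einstein manifold is reductive, so $\Aut(X)$ is reductive. Next I would produce the torus. Every isometry of $\cM$ preserves the Levi--Civita connection, hence the holonomy reduction to $Sp(n)Sp(1)$ and the quaternionic structure, and therefore lifts to a holomorphic contact transformation of $X$; this yields a homomorphism ${\rm Isom}^{\circ}(\cM)\to\Aut^{\circ}(X)$ with at most finite kernel. A maximal torus $T\subset{\rm Isom}^{\circ}(\cM)$ then maps with finite kernel to a compact torus in $\Aut^{\circ}(X)$, whose Zariski closure (equivalently, complexification) is an algebraic torus $H\subset\Aut(X)$ with $\rank H=\rank T=\rank{\rm Isom}(\cM)\ge n-2$. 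As any torus of rank at least $n-2$ contains a subtorus of rank exactly $n-2$, both hypotheses of Theorem~\ref{main_thm} are satisfied.

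It then follows from Theorem~\ref{main_thm} that $X$ is one of the homogeneous spaces of Table~\ref{tab_list_of_contact_with_high_rank_of_torus}, and reading off the last column of that table identifies $\cM$, up to rescaling of the metric, with the corresponding Wolf space. Here one should note that $\cM$ is recovered from $X$ canonically, since for a positive quaternion-K\"ahler twistor space the twistor fibration is determined by the contact and real structures; hence each homogeneous $X$ in the table corresponds to precisely the Wolf space listed, and no ambiguity arises.

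The genuinely hard part of the argument lies entirely in Theorem~\ref{main_thm}; here the only real subtlety is the transfer of hypotheses across the twistor correspondence, namely matching $\rank{\rm Isom}(\cM)$ with the rank of an algebraic torus acting on $X$ and confirming that $\Aut(X)$ is reductive. Both are classical facts of twistor theory, so once the dictionary between $\cM$ and $X$ is set up correctly the conclusion is immediate.
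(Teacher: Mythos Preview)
Your proposal is correct and follows essentially the same route as the paper: pass to the twistor space $X$, use the K\"ahler--Einstein metric on $X$ together with Matsushima's theorem to get reductivity of $\Aut(X)$, transfer the rank of the isometry group to an algebraic torus in $\Aut(X)$, apply Theorem~\ref{main_thm}, and then use the bijection between adjoint varieties and Wolf spaces to recover $\cM$. The only cosmetic difference is that the paper packages the rank transfer via the statement that $\Aut^{\circ}(X)$ is the complexification of ${\rm Isom}^{\circ}(\cM)$ (Theorem~\ref{thm_dim_Aut_and_Isom}), whereas you lift a maximal torus directly; both arguments are equivalent.
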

See Table~\ref{tab_results_qK_isometries} for a comparison of Theorem~\ref{thm_qK_rank_of_isometries_group} with earlier results in this direction.
In particular, our result is the strongest known for $\dim \cM \le 36$, but weaker than \cite{Fang2} for dimension at least $48$.

\begin{table}
  \begin{center}
   \scriptsize
   \begin{tabular}{||c|c|c|c|c|c|c|c|c|c|c|c|c||}
   \hline
   \hline
      reference   &  year & \multicolumn{11}{c||}{lower bound on rank for $\dim \cM=$}\\
                  &       & $4n$ & $12$ & $16$ &$20$ &$24$ &$28$ &$32$ &$36$ & $40$ & $44$ & $48$\\ 
   \hline
   \hline
      \cite[Cor.~7]{Bielawski} & 1999 & $n+1$&  $4$ & $5$ & $6$ & $7$ & $8$& $9$&$10$ & $11$ & $12$ & $13$\\
   \hline   
      \cite[Thm.~B]{Fang1} & 2004 &\multicolumn{2}{l|}{$n-2$, if $n\ge 10$}  &  &  &  & & & & $8$ & $9$ & $10$\\
   \hline   
      \cite[Thm.~1.1]{Fang2} & 2008 & $\left\lceil\frac{n}{2}\right\rceil + 3$ & $5$ & $5$ & $6$ & $6$ &$7$&$7$& $8$& $8$ & $9$& $9$\\
   \hline   
      Theorem~\ref{thm_qK_rank_of_isometries_group} & 2017 & $n-2$ &$1$  & $2$ & $3$ & $4$ & $5$& $6$&$7$ & $8$ & $9$ & $10$\\
   \hline
   \hline
   \end{tabular}
   \end{center}
   \caption{Comparing the known results about the rank of the group of isometries of a quaternion-K\"aler manifold.
            The referenced theorems state that if the rank is at least the number or formula in the table, 
               then the quaternion-K\"aler manifold is isometric to a Wolf space.}\label{tab_results_qK_isometries}
\end{table}

\subsection{Projective manifolds with torus action}\label{subsec_intro_proj_mflds_with_torus_action}

In order to prove Theorem~\ref{main_thm} we propose a new approach to algebraic manifolds with an action of a complex torus $(\CC^*)^r$.
This treatment might be of independent interest and it is described in details in Sections~\ref{sec_torus_action} and~\ref{sec_ABB_decomp}, 
  which never mention ``contact manifolds'' explicitly.
Our focus is on the components of fixed-point sets, some related polytopes, and a notion of compass, 
  which locally describes the action near a fixed point.
The narration is built in a way that immitates the classical correspondence between 
  geometry of toric varieties and combinatorics of convex bodies, see for instance \cite{cox_book}.
We briefly review this correspondence in the following paragraph.
  
Let $X$ be a smooth toric projective variety and $L$ an ample line bundle on $X$.
Denote by $H=(\CC^*)^{\dim X}$ the algebraic torus whose action on $X$ has an open orbit.
The space sections of $L$ decomposes into eigenspaces of the action of $H$:
$$
  \HH^0(X,L)=\bigoplus_{u\in M\cap\Delta}\CC_u,
$$
where $M$ is the lattice of characters of $H$ and $\Delta=\Delta(X,L)$
is a lattice polytope in $M_\RR$. Vertices of $\Delta$ can be
identified with fixed points in $X^H$ and for any vertex $v\in \Delta$
the variety $\Spec(\CC[\RR_{\geq 0}(\Delta-v)\cap M])$ is an affine
neighborhood of the respective fixed point. 
The polytope $\Delta$ can
be also obtained via the moment map.
  
In a more general situation, suppose $X$ is a projective manifold $X$ and again $L$ is an ample line bundle on $X$. 
Also assume a torus $H\simeq (\CC^*)^r$ acts on $X$, and $\mu$ is a linearization of the action on $L$.
From this data one can construct two polytopes: $\Gamma$, which comes from weights of global sections of $L$
   (unlike in the toric case, the multiplicities do not need to be equal to $1$), 
   and $\Delta$, which comes from the linearization of the action on $L$ at fixed points.
In nice situations the two polytopes coincide, 
   see for instance Lemmas~\ref{lem_compare_polytopes} and~\ref{lem_contact-Gamma=Delta},
   and Proposition~\ref{prop_Delta=Gamma-small_corank}. 
Another important ingredient is the compass which is analogous to the affine open neighborhood
   interpretation in the case of a toric variety.
It encodes the characters of the action of $H$ 
   on the (co)tangent space at a fixed point.
There is a strict relation between the compass at a fixed point corresponding to $v\in \Delta$, 
   and the polytopes $\Delta$ and $\Gamma$. 
Namely, the elements of the compass must be contained in the cone generated by $\Delta-v$, 
   and (again, in a sufficiently nice situation) the cone and a related semigroup of lattice points associated to sections of multiples of $L$ must be generated by the elements of the compass.
   
The tool box we use includes Bia{\l}ynicki-Birula decomposition, Theorem~\ref{thm_ABB-decomposition},
   and the localization for torus action, Theorem~\ref{thm_RRekwgeneral}.
As a result we develop new criteria to recognize a homogeneous space in some relatively abstract variety,
   by analyzing the fixed locus $X^H$ and the combinatorics of $\Delta$, $\Gamma$ and the compass 
   ---
   see for instance Propositions~\ref{prop_fixpt+compass-determine-homogeneous} and~\ref{prop_smallDelta2}, Lemmas~\ref{lem_E-and-F} and~\ref{lem_2-fixpts-comp}, and Corollary~\ref{cor_torus-on-quadrics}.
The statements in this article are taylormade to the applications for contact Fano manifolds, 
   but the potential of the methods is more general and yet to be discovered.

\subsection{Content of the paper and an outline of the proof} 

In Section~\ref{sec_torus_action} of the article we consider a general
situation when a torus $H$ acts on a projective manifold $X$, as described in Subsection~\ref{subsec_intro_proj_mflds_with_torus_action}.
We introduce combinatorial objects that encode a lot of information about the action, 
  and explain how these objects change, when we restrict the action to a smaller subtorus, 
  or to an invariant submanifold, or if we replace the polarization by its tensor power, etc.
  
In Section~\ref{sec_ABB_decomp} we describe the BB-decomposition (Theorem~\ref{thm_ABB-decomposition}) 
   and derive its applications relevant to the content of this article.  
The case of primary interest is when $\Pic X=\ZZ$
and rank of $H$ is not too small so that we can use action of its
subtori (downgrading) and the action of quotient tori on the fixed-point sets of these smaller tori (restriction). 
We relate the properties of fixed-point components to those of $X$, 
see Lemmas~\ref{lem_ABBdecomposition-extremal_cells} and~\ref{lem_ABB-surjectivity}.
The most accessible components of $X^H$ are the \emph{extremal components},
which are associated to vertices of a polytope $\Delta$.

In Section~\ref{sec_contact_general} we use these tools to deal with contact
manifolds. 
In the case considered in Theorem~\ref{main_thm}, when the rank of the torus is large with respect to the dimension of the contact manifold,
two convex polytopes $\Gamma$ and $\Delta$ associated to the torus action coincide by Lemma~\ref{lem_contact-Gamma=Delta}.
Here $\Gamma$ is a convex hull of weights of sections of the line bundle $L$.
Results of \cite{Beauville} imply that $\Gamma$ is the convex hull of the roots of the group $G$ of contactomorphism of $X$.
By contrast, the other polytope $\Delta$ is full dimensional in the character lattice of $H$.
Therefore, $G$ must be semi-simple, as we show in Lemma~\ref{lem_contact->Gsemisimple},
  and eventually simple by Proposition~\ref{prop_contact->Gsimple}. 
Thus $\Delta(X,L,H)$ is the convex hull of
roots in the space of weights of a simple group $G$,
known as the root polytope of
the group $G$. 

In Section~\ref{sec_contact_simple},
  we list a bunch of assumptions that are implied by the hypotheses of Theorem~\ref{main_thm} 
  and the results of earlier sections.
Then we show Theorem~\ref{thm_contact_simple} that under the above assumptions, there is only a short list of possible groups of automorphisms, 
  or the dimension is too large to remain consistent with Theorem~\ref{main_thm}.
In the proof we run a case-by-case analysis of root polytopes of simple groups, exploiting heavily the theory introduced in 
  Sections~\ref{sec_torus_action} and \ref{sec_ABB_decomp} in very explicit situations.
An important criterion is Proposition~\ref{prop_fixpt+compass-determine-homogeneous}, 
  which allows to recognize a homogeneous manifold from the properties of an action of the maximal torus.

In Section~\ref{sec_proofs} we conclude the proofs of all theorems mentioned in the introduction and we review the relevant literature.
In particular, we slightly strengthen a theorem of Salamon to show that the dimension of the automorphisms group of a contact Fano manifold
  in dimension $7$ or $9$ is bounded from below by $5$ or $8$, respectively (Theorem~\ref{thm_dim_of_Aut_X}).
We use this to eliminate the last remaining case left in the proof of Theorem~\ref{main_thm}.
We also explain how the classification of contact Fano manifolds in dimension $7$ and $9$ follows (Theorem~\ref{thm_contact_dim_7_9}),
  and how to derive from the literature the appropriate results about quaternion-K\"ahler manifolds 
  (Theorems~\ref{thm_qK_dim_12_16} and~\ref{thm_qK_rank_of_isometries_group}).

In Appendix~\ref{sect_appendix} we give an exposition of the Localization Theorem for torus action (Theorem~\ref{thm_RRekwgeneral}).
It is derived from classical works by  Atiyah--Bott, Grothendieck, Atiyah--Singer, and Berline--Vergne on localization in cohomology.
In addition, we apply it to prove Corollary~\ref{cor_RRekw}, 
  a special case, when the fixed-point locus consist of isolated points and curves only.
This set-up provides a clear and manageable tool for calculating characters of torus representation on the space of sections of equivariant ample line bundles.
Then the combinatorial data discussed in Section~\ref{sec_torus_action} are enough to determine the space of sections of a line bundle as an $H$-representation.
The corollary is applied in the proofs in Subsection~\ref{subsec_comparing_G_varieties}.

\subsection{Notation}

The following notation is used throughout the article.
\begin{itemize}[leftmargin=*]
\item $X$ is a connected projective manifold over complex numbers of dimension
  $d$. 
  In Sections~\ref{sec_contact_general} and~\ref{sec_contact_simple} we will additionally assume that $X$ admits
  contact structure and $d=2n+1$.
\item $L$ is an ample line bundle over $X$. 
  (We frequently assume that $\Pic X=\ZZ\cdot L$.)
\item If $X$ admits a contact structure (in Sections~\ref{sec_contact_general} and~\ref{sec_contact_simple}),
  then $\theta\in\HH^0(X,\Omega_X\otimes L)$ is a contact form on $X$ (see Section~\ref{sec_contact_general} for more details).
\item $H$ denotes an algebraic torus acting on $X$, and 
  $M$ is the lattice of characters (or weights) of $H$.
  We assume both $H$ and $M$ are of rank $r$.
  (We frequently suppose the action of $H$ is \emph{almost faithful}, that is $H \rightarrow \Aut(X)$ has finite kernel,
     or equivalently, there are only finitely many group elements that act trivially on $X$.)
  By a minor abuse we will shamelessly identify weights and characters throughout Sections~\ref{sec_torus_action}--\ref{sec_proofs}.
    In Appendix~\ref{sect_appendix} we will also use characters of representations of $H$ and we are careful to distinguish between weights and characters.
\item $G$ is a connected reductive group with a maximal torus $H$, 
     and $G$ acts almost faithfully on $X$.
\item By $\mu$ or $\mu_L$ we denote a linearization of the action of
  $G$ or $H$ on the line bundle $L$.
  $\Gamma(L)=\Gamma(X,L,H,\mu)$
   is the \emph{polytope of sections} in $M_\RR$, that is, the convex hull of the set of
   weights (eigenvalues) of the action of $H$ on $\HH^0(X,L)$, 
   see Subsection~\ref{subsec_polytopes_of_sections_and_fixed_points} for details.
\item ${\mathcal R}(X,L)$ is the ring of sections of $L^{\otimes
    m}$, $m\geq 0$, graded by $M\times \ZZ_{\geq 0}$.
\item By $X^H$ we denote the set of fixed points of the
  action of $H$.
\item $\Delta(L)=\Delta(X,L,H,\mu)$ is the {\em polytope of
    fixed points} in $M_\RR$,
    which is the convex hull of the characters $\mu(Y)$
    with which $H$ acts on $L$ over a fixed-point component $Y\subset X^H$,
     see Subsection~\ref{subsec_polytopes_of_sections_and_fixed_points} for details.
    A component of $X^H$ associated to a vertex of $\Delta(L)$
     is called {\em extremal}. 
\item The {\em compass} $\cC(Y,X,H)$ of the action of $H$ on a
  component $Y\subset X^H$ is the set of (nontrivial) characters
  $\nu_1,\dots,\nu_{\dim X-\dim Y}$ (possibly, with repetitions)
  associated to the linearization of the action of $H$ on the conormal
  bundle of $Y$ in $X$, see Subsection~\ref{subsec_compass} for details.
\item For a semi-simple group $G$ by $\Delta(G)$ we denote the
  \emph{root polytope} of $G$.
  That is, $\Delta(G)$ is the polytope in the lattice of characters of the maximal torus $H$,
  which is the convex hull of roots of $G$.
  If $G$ is simple and simply connected of type $A_r, B_r,\dots, G_2$ then we also write $\Delta(A_r),$ etc.
\end{itemize}

\subsection*{Acknowlegdements}

We would like to thank Klaus Altmann, Roger Bielawski, and Aleksandra Bor{\'o}wka for many interesting conversations.
We are also grateful to the participants of workshops in Oberwolfach (Novemeber 3--9, 2013)
  and in Levico Terme (November 2--6, 2015) for inspiring talks and discussions.
We thank Michel Brion, Joseph Landsberg, Eleonora Romano, 
  Uwe Semmelmann, and anonymous referees for their comments on the initial versions of the article.

\section{Torus action and combinatorics}\label{sec_torus_action}

\subsection{Polytopes of sections and of fixed points}\label{subsec_polytopes_of_sections_and_fixed_points}
We consider an ample line bundle $L$ over $X$ with an action of an
algebraic torus $H$. We will assume that $H\rightarrow \Aut(X)$ has at
most finite kernel.

There exists a
linearization $\mu=\mu_L$ of the action of $H$ on $L$, \cite[Prop.~2.4, and the following Remark]{Knop-etal}. Any two
linearizations differ by a character in $M$. 
For more details see \cite{Brion}. 
To each $p\in X^H$ we associate $\mu(p)\in M$, the weight of the action of $H$ on the fiber $L_p$.

Let $X^H=Y_1\sqcup\cdots\sqcup Y_s$ be a decomposition of the fixed-point locus into connected components. 
Then each $Y_i$ is smooth \cite{Luna} and
if $y_1$ and $y_2$ belong to the same connected component $Y$ then
$\mu(y_1)=\mu(y_2)$ and we will denote this by $\mu(Y)$.  This
determines lattice points associated to characters $\mu(Y_1),\dots
\mu(Y_s)\in M$. By $\widetilde{\Delta}(X,L,H,\mu)$ we denote the set
of these characters and we define a {\em polytope of fixed points}
$\Delta(X,L,H,\mu)$ in $M_\RR=M\otimes\RR$ as the convex hull of these
points. We will frequently simplify the notation and when referring to
$\widetilde{\Delta}$ or $\Delta$ we will skip the elements of the
quadruple $(X,L,H,\mu)$ which are obvious from the context.

In the literature (for instance \cite{brion_faces_of_moment_polytope}, \cite{cannas_da_silva_intro_sympl_geom}) 
   polytope $\Delta$ is also called the \emph{moment polytope} and arises via the \emph{moment map}.
However, in the setting of contact manifolds, the meaning of moment map is ambiguous, as a different map is also called the \emph{moment map},
   see \cite{Beauville}.
Thus to avoid confusion we refrain from using the word \emph{moment} in either context.
We also want to stress the difference between the two polytopes $\Delta$ and $\Gamma$ which are built on the fixed points and 
  on the sections of possibly not spanned line bundle.

A component $Y\subset X^H$ is called {\em extremal} if $\mu(Y)$
is a vertex of $\Delta(L)$.

A linearization $\mu$ determines a decomposition into
eigenspaces of the $H$ action:
$$
\HH^0(X,L)=\bigoplus_{u\in\widetilde{\Gamma}(X,L,H,\mu)}\HH^0(X,L)_u,
$$ 
where $\HH^0(X,L)_u$ denotes the eigenspace on which $H$ acts with the
weight $u\in M$ and $\widetilde{\Gamma}(X,L,H,\mu_L)$ is the set of
the weights (eigenvalues) of the action of $H$ on $\HH^0(X,L)$. We
define a lattice {\em polytope of sections} $\Gamma(X,L,H,\mu)$ in
$M_\RR$ as the convex hull of $\widetilde{\Gamma}(X,L,H,\mu)$.  Again,
we will frequently simplify the notation and skip the elements of the
quadruple $(X,L,H,\mu)$ which are obvious from the context.

As a digression, we remark that if $L$ is very ample 
  then $\Gamma(L)$ is the image of the moment map, see \cite[Ch.~2]{Kirwan}.

Observe that a linearization of $L$ determines
a linearization of $L^{\otimes m}$ for $m \in \ZZ$.
Therefore the action of $H$
determines the grading of the ring of sections
\begin{equation}
{\mathcal R}(X,L)=\bigoplus_{m\geq 0}\HH^0(X,L^{\otimes m})=
\bigoplus_{m\geq 0}\bigoplus_{u\in M}\HH^0(X,L^{\otimes m})_u.
\end{equation}

We introduce the \emph{weight cone} of ${\mathcal R}(X,L)$, denoted by $\widehat{\Gamma}(L)\subset M_\RR\times\RR$ and defined as
\begin{equation}\label{equ_cone_of_grading}
  \widehat{\Gamma}(L)=\RR_{\geq 0}\cdot
  \left(\frac{\Gamma(L^{\otimes
        m})}{m}\times \{1\}\right)=
  \RR_{\geq 0}\cdot
  \left(\Gamma(L^{\otimes
        m})\times \{m\}\right)
\end{equation}
where $m$ is sufficiently large. 
This is a polyhedral cone, since ${\mathcal R}(X,L)$ is finitely generated.
If $\gamma\subset \widehat{\Gamma}(L)$ is a ray (that is, a one-dimensional face), 
   then we can define a homogeneous ideal in ${\mathcal R}(X,L)$:
\begin{equation}\label{equ_extremal_ideal}
   {\mathcal J}_\gamma = \bigoplus_{m\geq 0} \bigoplus_{u\not\in\gamma} \HH^0(X,L^{\otimes m})_u.
\end{equation}
The ideal ${\mathcal J}_\gamma$ defines a closed $H$-invariant subset of $X$.
Note that this subset is non-empty, since the Hilbert polynomial of ${\mathcal R}(X,L)/ {\mathcal J}_\gamma$
   is equal to $\dim H^0(X, L^{\otimes m})_u$ for large values of $m$, which is non-zero.
We will briefly use this set in the proof of Lemma~\ref{lem_compare_polytopes}, 
   and then discuss it in more details in Corollary~\ref{cor_ideal_of_extremal_component}.

\begin{lemma}\label{lem_compare_polytopes}
In the situation above the following holds:
\begin{enumerate}[leftmargin=*]
\item \label{item_compare_polytopes_rescaling} 
      $m\Delta(L)=\Delta(mL)$ and $m\Gamma(L)\subseteq\Gamma(mL)$ for any integer $m> 0$,
\item \label{item_compare_polytopes_Gamma_in_Delta} 
      $\widetilde{\Gamma}(L)\subset\Delta(L)$,
\item \label{item_compare_polytopes_base_point_free_Delta_eq_Gamma} 
      if $L$ is base point free then ${\Delta}(L)={\Gamma}(L)$.
\end{enumerate}
\end{lemma}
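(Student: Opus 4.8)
The plan is to treat the three claims in order, since the rescaling statement~\ref{item_compare_polytopes_rescaling} is formal, the inclusion~\ref{item_compare_polytopes_Gamma_in_Delta} is the crux, and~\ref{item_compare_polytopes_base_point_free_Delta_eq_Gamma} follows by combining~\ref{item_compare_polytopes_Gamma_in_Delta} with a short evaluation argument at fixed points.

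For~\ref{item_compare_polytopes_rescaling}: the linearization $\mu$ of $L$ induces a linearization of $L^{\otimes m}$ under which $H$ acts on the fibre $(L^{\otimes m})_p=(L_p)^{\otimes m}$ with character $m\mu(p)$, so every weight $\mu(Y)$ is replaced by $m\mu(Y)$; as scaling commutes with taking convex hulls this gives $\Delta(mL)=\conv(m\widetilde\Delta(L))=m\Delta(L)$. For the polytope of sections, if $0\neq s\in\HH^0(X,L)_u$ then $s^{\otimes m}\in\HH^0(X,L^{\otimes m})_{mu}$ is again nonzero because $X$ is integral, so $m\widetilde\Gamma(L)\subseteq\widetilde\Gamma(mL)$ and hence $m\Gamma(L)\subseteq\Gamma(mL)$; the inclusion may be strict because $\HH^0(X,L^{\otimes m})$ can carry weights that are not sums of $m$ weights of $\HH^0(X,L)$.

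For~\ref{item_compare_polytopes_Gamma_in_Delta}, fix $u\in\widetilde\Gamma(L)$ with eigensection $s\neq0$. Since $\Delta(L)$ is cut out by the half-spaces $\{v\in M_\RR:\langle\lambda,v\rangle\le\max_Y\langle\lambda,\mu(Y)\rangle\}$ as $\lambda$ ranges over one-parameter subgroups of $H$, and both sides vary continuously with $\lambda$, it suffices to establish $\langle\lambda,u\rangle\le\max_Y\langle\lambda,\mu(Y)\rangle$ for generic $\lambda$, for which in addition $X^\lambda=X^H$. I would choose $x$ with $s(x)\neq0$ and set $p=\lim_{t\to0}\lambda(t)\cdot x$, which exists and is a fixed point because $X$ is projective; let $Y\ni p$ be its component. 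Picking a $\lambda$-semiinvariant local frame $e$ of $L$ at $p$ (on which $\lambda$ acts with weight $\langle\lambda,\mu(Y)\rangle$) and $\lambda$-eigencoordinates centred at $p$, and writing $s=f\cdot e$, a direct computation shows that $f$ is a $\lambda$-eigenfunction of weight $\langle\lambda,u\rangle-\langle\lambda,\mu(Y)\rangle$. The point is that, since $p$ is the $t\to0$ limit of $x$, every coordinate not vanishing at $x$ carries a negative $\lambda$-weight; as $f(x)\neq0$, some monomial nonvanishing at $x$ realises the weight of $f$, which is therefore nonpositive, giving $\langle\lambda,u\rangle\le\langle\lambda,\mu(Y)\rangle\le\max_Y\langle\lambda,\mu(Y)\rangle$. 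This local bookkeeping at the limiting fixed point --- in particular getting the sign conventions right so that ``attracted to $p$ as $t\to0$'' becomes ``nonpositive eigenweight'' --- is the main obstacle; I would also need to justify that generic $\lambda$ satisfies $X^\lambda=X^H$ and that a semiinvariant frame exists. Alternatively one can argue via the weight cone $\widehat\Gamma(L)$ of~\eqref{equ_cone_of_grading}: applying the Borel fixed-point theorem to the nonempty $H$-invariant subscheme cut out by $\cJ_\gamma$ for an extremal ray $\gamma$, together with equivariant evaluation, shows that every extremal ray meets a fixed-point weight, which is the use of $\cJ_\gamma$ advertised above.

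Finally, for~\ref{item_compare_polytopes_base_point_free_Delta_eq_Gamma}: part~\ref{item_compare_polytopes_Gamma_in_Delta} already gives $\Gamma(L)=\conv\widetilde\Gamma(L)\subseteq\Delta(L)$, so only $\Delta(L)\subseteq\Gamma(L)$ remains, and for this it is enough to show $\mu(Y)\in\widetilde\Gamma(L)$ for every component $Y\subset X^H$. Fixing $y\in Y$, the evaluation map $\HH^0(X,L)\to L_y$ is $H$-equivariant for the action of $H$ on $L_y$ through the character $\mu(Y)$, hence it annihilates every eigencomponent of weight $\neq\mu(Y)$; base-point-freeness provides a section $s$ with $s(y)\neq0$, so its $\mu(Y)$-component is nonzero and $\mu(Y)\in\widetilde\Gamma(L)$. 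Taking convex hulls gives $\Delta(L)\subseteq\Gamma(L)$, and combined with~\ref{item_compare_polytopes_Gamma_in_Delta} this proves $\Delta(L)=\Gamma(L)$.
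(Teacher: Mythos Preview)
Your proof is correct. Parts~\ref{item_compare_polytopes_rescaling} and~\ref{item_compare_polytopes_base_point_free_Delta_eq_Gamma} match the paper's argument exactly: the paper also dismisses~\ref{item_compare_polytopes_rescaling} as immediate from the definitions, and for~\ref{item_compare_polytopes_base_point_free_Delta_eq_Gamma} uses the same equivariant evaluation at a fixed point (phrased there via the short exact sequence $0\to\HH^0(X,L\otimes\m_y)\to\HH^0(X,L)\to L_y\to 0$ with its $H$-splitting).

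For~\ref{item_compare_polytopes_Gamma_in_Delta} your primary route is genuinely different. The paper argues by contradiction through the weight cone $\widehat\Gamma(L)$: if some weight $u$ escaped $\Delta(L)$, an extremal ray $\gamma$ of $\widehat\Gamma(L)$ would miss $\Delta(L)\times\{1\}$, and the $H$-invariant closed subscheme cut out by $\cJ_\gamma$ would then contain no fixed point (any fixed point would support a homogeneous section of weight in $m\Delta(L)$, hence in $\cJ_\gamma$, hence vanishing there), contradicting Borel's fixed-point theorem. Your main argument instead bounds $\langle\lambda,u\rangle$ for each one-parameter subgroup by passing to the limit $p=\lim_{t\to0}\lambda(t)\cdot x$ and reading off the weight of the local defining function of the eigensection. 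This is the classical ``flow to a fixed point'' proof, and it is self-contained: it needs neither the weight cone nor the ideal $\cJ_\gamma$. The trade-off is that the paper's route simultaneously introduces $\cJ_\gamma$, which it reuses immediately afterwards (Corollary~\ref{cor_ideal_of_extremal_component}) to identify the scheme structure on extremal fixed-point components; your argument, while cleaner for the lemma in isolation, does not set up that machinery. Your brief ``alternatively\dots'' paragraph is in fact precisely the paper's proof.
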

\begin{proof}
The first part follows directly from the definitions. 

In the proof we will consider the following situation.
Fix any integer $m>0$, and suppose $y\in X^H$ is a fixed point that is not contained in the base point locus of $L^{\otimes m}$.
Then we have the short exact sequence of $H$-modules:
\begin{equation}\label{equ_restriction_to_point}
0\lra \HH^0(X,L^{\otimes m}\otimes\m_y)\lra \HH^0(X,L^{\otimes m})\lra L^{\otimes m}_y\lra0,
\end{equation} 
where $\m_y$ is the maximal ideal of $y$.
Note that the sequence admits an $H$-equivariant splitting.

Now we argue for the second item.
Assume by contradiction that $\widetilde{\Gamma}(L)$ is not contained in $\Delta(L)$.
Then there  exists a $1$-dimensional face $\gamma$ of the weight cone $\widehat{\Gamma}(L)$ 
   such that the intersection $\gamma\cap M_\RR\times\{1\}$ is not contained
in $\Delta(L)\times\{1\}$. 
Consider the closed non-empty $H$-invariant subset of $X$ defined by the ideal ${\mathcal J}_\gamma$ as in Equation~\eqref{equ_extremal_ideal}.
We claim, that the subset does not contain any $H$-fixed point.
Indeed, suppose $y$ is such a fixed point.
By the equivariant splitting in the exact sequence~\eqref{equ_restriction_to_point}
   there exists a homogeneous section of~$L^{\otimes m}$ (for $m$ sufficiently large) that does not vanish at $y$.
Its degree determines a character in $\Delta(L^{\otimes m})=m\Delta(L)$. 
Since $\gamma\cap\Delta(L)=\emptyset$, the section must be in ${\mathcal J}_\gamma$.
In other words, the section vanishes at $y$, a contradiction, which proves the claim.
This in turn concludes the proof of \ref{item_compare_polytopes_Gamma_in_Delta}, 
  since $X$ is projective and there is no closed $H$-invariant subset of $X$ with no fixed points.

For the third part, by \ref{item_compare_polytopes_Gamma_in_Delta} we only have to prove ${\Delta}(L) \subset {\Gamma}(L)$.
Using the equivariant splitting in the exact sequence~\eqref{equ_restriction_to_point} for $m=1$ we find a homogeneous section that 
  has the degree equal to $\mu(y)$, proving the inclusion.
\end{proof}

\begin{cor}\label{cor_Delta_has_maximal_dim}
   In the notation as above, suppose $H$ acts almost faithfully on $X$. 
   Then $\Delta(L)\subset M_{\RR}$ is a lattice polytope of full dimension
      (that is, $\dim \Delta(L) = \dim M_{\RR} = \dim H$).
\end{cor}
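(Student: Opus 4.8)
The assertion that $\Delta(L)$ is a lattice polytope is immediate from its very definition: it is the convex hull of the finitely many lattice points $\mu(Y_1),\dots,\mu(Y_s)\in M$ attached to the fixed-point components. The substance of the statement is therefore \emph{full-dimensionality}, and the plan is to prove it by contradiction, converting a dimension defect of $\Delta(L)$ into a positive-dimensional subgroup acting trivially on $X$.

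Suppose $\dim\Delta(L)<\dim M_\RR=r$, so that $\Delta(L)$ is contained in a proper affine subspace of $M_\RR$. Since $\Delta(L)$ is a lattice polytope, this affine subspace is rational, so its linear direction is annihilated by some nonzero rational functional; clearing denominators and dividing by the content, I would obtain a primitive cocharacter $\lambda\in N=\Hom(\CC^*,H)$ and an integer $c$ with $\langle\lambda,\mu(Y_i)\rangle=c$ for every component $Y_i\subset X^H$, hence $\langle\lambda,\cdot\rangle\equiv c$ on all of $\Delta(L)$.

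Next I would propagate this constancy to all powers of $L$ and thence to the sections. By Lemma~\ref{lem_compare_polytopes}\ref{item_compare_polytopes_rescaling} one has $\Delta(L^{\otimes m})=m\Delta(L)$, so $\langle\lambda,\cdot\rangle\equiv mc$ on $\Delta(L^{\otimes m})$; and by Lemma~\ref{lem_compare_polytopes}\ref{item_compare_polytopes_Gamma_in_Delta} every weight $u$ of the $H$-action on $\HH^0(X,L^{\otimes m})$ lies in $\Delta(L^{\otimes m})$, whence $\langle\lambda,u\rangle=mc$ for all such $u$. Restricting the characters of $H$ to the one-parameter subgroup $\lambda(\CC^*)$, this says precisely that $\lambda(\CC^*)$ acts on the entire space $\HH^0(X,L^{\otimes m})$ by the single scalar $t\mapsto t^{mc}$, for every $m$.

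Finally I would turn this scalar action into triviality. Choosing $m$ large enough that $L^{\otimes m}$ is very ample and $H$-linearized, evaluation of sections gives an $H$-equivariant embedding $X\hookrightarrow\PP\bigl(\HH^0(X,L^{\otimes m})^*\bigr)$; a subgroup acting on a vector space through a single scalar character acts trivially on its projectivization, so $\lambda(\CC^*)$ fixes $X$ pointwise (equivalently, writing $X=\operatorname{Proj}\cR(X,L)$, the element $\lambda(t)$ rescales each graded piece by a constant and hence preserves every homogeneous ideal). Thus $\lambda(\CC^*)$ lies in the kernel of $H\to\Aut(X)$; but $\lambda$ is a nontrivial cocharacter, so $\lambda(\CC^*)$ is a positive-dimensional subtorus, contradicting the almost faithfulness hypothesis that the kernel is finite. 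The computations are routine, and the only step requiring genuine care is this last passage from a scalar action on sections to a trivial geometric action, where it is essential that $L$ be ample so that $X$ is reconstructed from its section ring, together with the rationality of $\lambda$ furnished by $\Delta(L)$ being a lattice polytope.
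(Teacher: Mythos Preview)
Your proof is correct and follows essentially the same route as the paper: both arguments pass to a very ample power $L^{\otimes m}$, embed $X$ equivariantly in projective space, and use that an almost faithful action cannot have a positive-dimensional subtorus acting by scalars on the ambient $\PP^N$. The only cosmetic difference is that the paper argues directly via $\Gamma(L^{\otimes m})=\Delta(L^{\otimes m})$ (Lemma~\ref{lem_compare_polytopes}\ref{item_compare_polytopes_base_point_free_Delta_eq_Gamma}) and observes $\Gamma(\PP^N,\cO(1))$ is full-dimensional, whereas you argue contrapositively, using $\widetilde{\Gamma}\subset\Delta$ (part~\ref{item_compare_polytopes_Gamma_in_Delta}) to produce the offending one-parameter subgroup explicitly.
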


\begin{proof}
   If $mL$ is very ample and defines an embedding of $X$ into a projective space $\PP^N$,
      then $\Gamma(X, mL) = \Gamma(\PP^N, \cO(1))$ is of full dimension 
      since the action on $X$ and also on $\PP^N$ is almost faithful.
   Thus $\Delta(mL)$ is of full dimension by Lemma~\ref{lem_compare_polytopes}\ref{item_compare_polytopes_base_point_free_Delta_eq_Gamma}, 
      and also $\Delta(L)$ is of full dimension by Lemma~\ref{lem_compare_polytopes}\ref{item_compare_polytopes_rescaling}.
\end{proof}

\begin{lemma}\label{lem_Delta_of_submanifold}
    Suppose $X'$ is a projective manifold, $L'$ is a line bundle, a torus $H$ acts on $X'$ with a linearization $\mu_{L'}$ of the action. 
    Further assume $X\subset X'$ is a closed $H$-invariant submanifold, and $L = L'|_{X}$, 
      and the linearization $\mu_{L}$ is the restriction of $\mu_{L'}$ to $X$.
    Then:
    \begin{enumerate}
     \item For all $y \in X^H$ we have $\mu_{L}(y)= \mu_{L'}(y)$.
     \item $\Delta(X, L, H, \mu_{L}) \subset \Delta(X', L', H, \mu_{L'})$.
     \item \label{item_Delta_of_submanifold_equal_to_Delta_of_projective_space}
            In the special case, where $X'=\PP(\HH^0(X,L)^*)$, $L' = \cO(1)$, 
              and the embedding $X\subset X'$ is given by the linear system,   
              $\Delta(X', L', H, \mu_{L'}) = \Delta(X, L, H, \mu_{L})$. 
    \end{enumerate}
\end{lemma}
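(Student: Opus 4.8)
The plan is to handle the three items in sequence, with items (1) and (2) serving as routine preparation for the equality asserted in item (3).

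For item (1), I would simply unwind the definitions. Since $L = L'|_X$ and $y\in X$, the fibre $L_y$ is literally $L'_y$ as a one-dimensional vector space, and $\mu_L$ acts on it as the restriction of $\mu_{L'}$; hence $H$ acts with the same weight, i.e.\ $\mu_L(y)=\mu_{L'}(y)$. Item (2) then follows formally: because $X\subset X'$ is $H$-invariant we have $X^H\subseteq (X')^H$, so every connected component $Y$ of $X^H$ lies inside a unique connected component $Y'$ of $(X')^H$. Constancy of $\mu_{L'}$ on $Y'$ together with item (1) gives $\mu_L(Y)=\mu_{L'}(Y)=\mu_{L'}(Y')$, so $\widetilde{\Delta}(X,L,H,\mu_L)\subseteq\widetilde{\Delta}(X',L',H,\mu_{L'})$; passing to convex hulls yields the claimed inclusion of polytopes.

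For item (3), item (2) already provides $\Delta(X,L,H,\mu_L)\subseteq\Delta(X',L',H,\mu_{L'})$, so it remains to prove the reverse inclusion, which I would obtain from a short chain. First, $L'=\cO(1)$ on the projective space $X'=\PP(\HH^0(X,L)^*)$ is very ample, hence base point free, so Lemma~\ref{lem_compare_polytopes}\ref{item_compare_polytopes_base_point_free_Delta_eq_Gamma} gives $\Delta(X',L')=\Gamma(X',L')$. Second, the restriction-of-sections map along the complete-linear-system embedding is an $H$-equivariant isomorphism $\HH^0(X',\cO(1))\cong\HH^0(X,L)$, so the two polytopes of sections coincide, $\Gamma(X',L')=\Gamma(X,L)$. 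Third, Lemma~\ref{lem_compare_polytopes}\ref{item_compare_polytopes_Gamma_in_Delta} together with convexity of $\Delta(X,L)$ gives $\Gamma(X,L)\subseteq\Delta(X,L)$. Chaining these, $\Delta(X',L')=\Gamma(X',L')=\Gamma(X,L)\subseteq\Delta(X,L)$, and combined with item (2) this forces equality throughout.

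The only step demanding genuine care is the $H$-equivariant identification $\HH^0(X',\cO(1))\cong\HH^0(X,L)$ and the compatibility of the two linearizations. I would verify it explicitly: the embedding $x\mapsto[\mathrm{ev}_x]$ pulls $\cO(1)$ back to $L$ and pulls a linear form $s\in\HH^0(X,L)=\HH^0(X',\cO(1))$ back to $s$ itself, so under the canonical identification the restriction map is the identity, in particular an isomorphism; its $H$-equivariance is automatic once $\mu_{L'}$ is the linearization induced by the $H$-action on $\HH^0(X,L)$, which is compatible with $\mu_L$ by hypothesis. Once this bookkeeping is settled, the polytopes $\Gamma$ on the two sides literally coincide and the formal chain above closes the argument.
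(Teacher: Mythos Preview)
Your proposal is correct and matches the paper's approach: the paper states only that items (1) and (2) are straightforward from definitions and that item (3) additionally uses Lemma~\ref{lem_compare_polytopes}\ref{item_compare_polytopes_base_point_free_Delta_eq_Gamma}, which is precisely the chain $\Delta(X',L')=\Gamma(X',L')=\Gamma(X,L)\subseteq\Delta(X,L)$ you spell out. Your extra care with the $H$-equivariant identification $\HH^0(X',\cO(1))\cong\HH^0(X,L)$ is the right bookkeeping to make the middle equality precise.
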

The proof is straightforward from definitions
   and, for Item \ref{item_Delta_of_submanifold_equal_to_Delta_of_projective_space}, 
   also using Lemma~\ref{lem_compare_polytopes}\ref{item_compare_polytopes_base_point_free_Delta_eq_Gamma}.
We ommit the details.

As shown in Lemma~\ref{lem_compare_polytopes}, 
  for sufficiently large $m$ the polytopes satisfy $m\Delta(L)=\Delta(L^{\otimes  m})=\Gamma(L^{\otimes m})$.
Thus the vertices of $\Delta(L)$ are equal to 
  the generators of rays of the weight cone $\widehat{\Gamma}(L)$ defined in Equation~\eqref{equ_cone_of_grading}.
Let $v\in \Delta(L)$ be a vertex and let $\gamma_v$ be the corresponding ray of $\widehat{\Gamma}(L)$.
Consider the homogeneous ideal $\cJ_{\gamma_v} \subset \cR(X,L)$ as in Equation \eqref{equ_extremal_ideal}.

\begin{cor}\label{cor_ideal_of_extremal_component}
  The ideal $\cJ_{\gamma_v}$ is radical and defines a closed subset of $X$ consisting
     of all those fixed points $y\in X^H$ that $\mu(y)=v$.
\end{cor}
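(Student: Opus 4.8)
The plan is to analyze the graded quotient ring $\cR(X,L)/\cJ_{\gamma_v}$ directly. First I would unwind the definition~\eqref{equ_extremal_ideal}: since $\gamma_v=\RR_{\geq 0}\cdot(v,1)$, a pair $(u,m)\in M_\RR\times\RR$ lies on $\gamma_v$ precisely when $u=mv$, so the degree-$m$ part of $\cJ_{\gamma_v}$ is $\bigoplus_{u\neq mv}\HH^0(X,L^{\otimes m})_u$ and
\[
  \cR(X,L)/\cJ_{\gamma_v}\ \cong\ \bigoplus_{m\geq 0}\HH^0(X,L^{\otimes m})_{mv}=:S.
\]
The key observation is that $S$ is in fact a subring of $\cR(X,L)$: the multiplication $\HH^0(L^{\otimes m})\otimes\HH^0(L^{\otimes m'})\to\HH^0(L^{\otimes(m+m')})$ is $H$-equivariant, so weights add, and a product of sections of weights $mv$ and $m'v$ has weight $(m+m')v$. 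Under this identification the quotient map $\cR\to\cR/\cJ_{\gamma_v}$ restricts to an isomorphism on $S$ in each degree, so $\cR/\cJ_{\gamma_v}\cong S$ as graded rings.

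Radicality would then follow formally. Since $X$ is integral (it is a connected smooth variety) and $L$ is a line bundle, the section ring $\cR(X,L)$ is an integral domain --- it embeds into $K(X)[t]$ after trivializing $L$ on a dense open set, so products of nonzero homogeneous sections are nonzero. Hence its subring $S$ is a domain, and therefore $\cJ_{\gamma_v}$ is a prime, in particular radical, ideal. As a bonus this shows that the closed subscheme $V(\cJ_{\gamma_v})=\operatorname{Proj} S$ is integral, so the fixed locus attached to the vertex $v$ is automatically a single reduced, irreducible component.

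It remains to identify the underlying set of $V(\cJ_{\gamma_v})$ with $\{y\in X^H:\mu(y)=v\}$. For the inclusion $\subseteq$ I would choose $m$ large enough that $L^{\otimes m}$ is very ample and gives an $H$-equivariant embedding $X\hookrightarrow\PP(\HH^0(X,L^{\otimes m})^*)$. The degree-$m$ generators of $\cJ_{\gamma_v}$ cut out the linear subspace $\PP\big(\HH^0(X,L^{\otimes m})_{mv}^{\,*}\big)$, on which $H$ acts by the single weight $mv$ and hence trivially after projectivizing; thus every point of $V(\cJ_{\gamma_v})$ is $H$-fixed with $\mu$-weight $\tfrac{1}{m}\cdot mv=v$. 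For the reverse inclusion I would use that for such a $y$ the evaluation $\HH^0(X,L^{\otimes m})\to L^{\otimes m}_y$ is $H$-equivariant while $L^{\otimes m}_y$ carries the single weight $m\mu(y)=mv$; any homogeneous generator of $\cJ_{\gamma_v}$, having weight $u\neq mv$, must therefore vanish at $y$, so $y\in V(\cJ_{\gamma_v})$.

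The main difficulty I anticipate is bookkeeping rather than conceptual: correctly matching the combinatorial convention (a point of $M_\RR\times\RR$ lying on the ray $\gamma_v$) with the weight condition $u=mv$, and then verifying carefully that the quotient $\cR/\cJ_{\gamma_v}$ really coincides with the subring $S$, so that the domain argument applies. Once that identification is in place, both radicality (via the domain property of $S$) and the set-theoretic description (via the equivariant embedding and the equivariance of evaluation) follow cleanly, and no properties of $v$ beyond its being the generator of an extremal ray $\gamma_v$ of $\widehat{\Gamma}(L)$ are needed.
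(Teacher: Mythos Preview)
Your proof is correct, and the set-theoretic identification of $V(\cJ_{\gamma_v})$ with $\{y\in X^H:\mu(y)=v\}$ via the equivariant projective embedding is essentially the paper's argument as well. The difference lies in how you establish radicality. The paper gives a one-line weight argument: for a bihomogeneous $s$ of degree $(u,k)\in M\times\ZZ_{\geq 0}$ with $s^m\in\cJ_{\gamma_v}$, one has $(mu,mk)\notin\gamma_v$; since $\gamma_v$ is a ray through the origin this forces $(u,k)\notin\gamma_v$, so $s\in\cJ_{\gamma_v}$. Your route instead identifies $\cR(X,L)/\cJ_{\gamma_v}$ with the subring $S=\bigoplus_m\HH^0(X,L^{\otimes m})_{mv}$ of the integral domain $\cR(X,L)$, and hence shows that $\cJ_{\gamma_v}$ is \emph{prime}, not merely radical. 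This is a genuine strengthening: it yields for free that the associated closed subscheme is irreducible, which the paper only obtains later (Lemma~\ref{lem_extremal=sink}) via Bia{\l}ynicki-Birula decomposition. The paper's argument is shorter and uses only the combinatorics of the ray; yours extracts more geometric content at the cost of the extra (standard) observation that $\cR(X,L)$ is a domain and that $S$ is a subring complementary to $\cJ_{\gamma_v}$.
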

\begin{proof}
  Suppose $s\in \cR(X,L)_{u}$ is a homogeneous element of the section ring such that $s^m\in \cJ_{\gamma_v}$ for some integer $m>0$.
  Then $m u \notin \gamma_v$, hence $u\notin \gamma_v$ and $s\in \cJ_{\gamma_v}$. That is, $\cJ_{\gamma_v}$ is radical as claimed.

  We consider $m\gg 0$ such that $L^{\otimes m}$ is very ample and
    it equivariantly embeds $X$ into a projective space $\PP^N = \PP(\HH^0(X,L^{\otimes m})^*)$ 
    with the action of $H$
    determined by the decomposition 
  $$\HH^0(X,L^{\otimes m})=\bigoplus_{u\in\Gamma(L^{\otimes m})}\HH^0(X,L^{\otimes m})_u.$$

  Suppose $y\in X \subset \PP^N$ is such that all sections from $\cJ_{\gamma_v}$ vanish on $y$.
  In particular, all sections from $\cJ_{\gamma_v}\cap\HH^0(X,L^{\otimes m})$ vanish on $y$, 
     thus $y\in \PP((\HH^0(X,L^{\otimes m})^*)_{mv})$.
  All points in this subspace are fixed by the torus action and by Lemma~\ref{lem_Delta_of_submanifold} the corresponding character is
     \[
       m v = \mu_{\cO_{\PP^N}(1)} (y) = \mu_{L^{\otimes m}} (y) = m \mu_L (y).
     \]
  
  Conversely, suppose $y\in X$ is a fixed point of the torus action.
  Then $y\in \PP^N$ is in a fixed-point locus, and the character $\mu_L(y)$ forces
     it to be in the linear space $\PP((\HH^0(X,L^{\otimes m})^*)_{mv}$.
  Thus $(\cJ_{\gamma_v})_{mv}$ vanishes on $y$. 
  The same happens for all sufficiently large $m$, and since the ideal is radical, whole $\cJ_{\gamma_v}$ vanishes on $y$ as claimed. 
\end{proof}

\subsection{Reduction of torus action: downgrading and restriction}\label{subsec_downgrading_and_reduction}
Let $H_1\subset H$ be a subtorus with the quotient torus
$H_2=H/H_1$. Clearly the linearization $\mu$ of the action of $H$ on
$L$ defines the linearization $\mu_1$ of the action of $H_1$. We have
an exact sequence of the respective lattices of characters
\begin{equation}
\xymatrix{0\ar[r]& M_2\ar[r]^\iota& M\ar[r]^\pi &M_1\ar[r]& 0}
\end{equation} 

The following lemma describes the situation of {\em downgrading of the
  action} of $H$ to $H_1$ and the properties of the {\em restriction
  of the action} of $H$ to components of $X^{H_1}$ which yields
the action of the quotient torus $H_2$.
\begin{lemma}\label{lem_reduction_of_action}
Let $H$'s, $M$'s and $\mu$'s be as above.
\begin{enumerate}[leftmargin=*]
\item The projection $\pi\colon M\ra M_1$ restricts to surjective maps of polytopes 
\begin{align*}
  \Gamma(X,L,H,\mu)&\ra\Gamma(X,L,H_1,\mu_1) \text{ and} \\
  \Delta(X,L,H,\mu)&\ra\Delta(X,L,H_1,\mu_1)
\end{align*}
\item Let $Y_1\subset X^{H_1}$ be a connected component of the fixed-point locus of $H_1$.  Then $Y_1$ is $H$ invariant and 
  the action of $H$ on $Y_1$ descends to the action of the quotient $H_2$.
  The linearization $\mu$ of $L|_{Y_1}$ can twisted by $\widetilde{\mu_1(Y_1)}\in M$
  which is any preimage of $\mu_1(Y_1)\in M_1$, and this twisted linearisation $\mu - \widetilde{\mu_1(Y_1)}$ 
  induces a linearization  $\mu_2$ of the action of $H_2$ on $L|_{Y_1}$.
  (We stress that $\mu_2$ depends on the choice of the preimage $\widetilde{\mu_1(Y_1)}\in M$).
\item\label{item_reduction_of_action_restricting_to_fix_points_of_subgroup}
  Moreover, $Y_1^{H_2}=X^H\cap Y_1$ and
  $$\widetilde{\Delta}(Y_1,L_{|Y_1},H_2,\mu_2)\subseteq 
  \pi^{-1}(\mu_1(Y_1))\cap \widetilde{\Delta}(X,L,H,\mu)-\widetilde{\mu_1(Y_1)}.$$
\end{enumerate}
\end{lemma}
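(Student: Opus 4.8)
The plan is to reduce everything to a single observation, namely that the restriction of an $H$-character to the subtorus $H_1$ is precisely the map $\pi\colon M\ra M_1$; the remaining content is bookkeeping with linearizations. I would begin with the statement about $\Gamma$ in item~(1), which is purely representation-theoretic. Decompose $\HH^0(X,L)=\bigoplus_{u}\HH^0(X,L)_u$ into $H$-eigenspaces. A vector of $H$-weight $u$ has $H_1$-weight $\pi(u)$, so the $H_1$-eigenspace decomposition is obtained by grouping together all $u$ sharing a common image $\pi(u)$. Hence $\widetilde{\Gamma}(X,L,H_1,\mu_1)=\pi(\widetilde{\Gamma}(X,L,H,\mu))$, and, $\pi$ being linear, taking convex hulls shows that $\pi$ restricts to a surjection $\Gamma(X,L,H,\mu)\ra\Gamma(X,L,H_1,\mu_1)$.

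For item~(2), the $H$-invariance of $Y_1$ follows from $H$ being abelian: for $h\in H$ and $h_1\in H_1$ one has $h_1\cdot(h\cdot y)=h\cdot(h_1\cdot y)=h\cdot y$ whenever $y\in X^{H_1}$, so $H$ preserves $X^{H_1}$; being connected, $H$ fixes each component, in particular $Y_1$. Since $H_1$ acts trivially on $Y_1$ by definition, the $H$-action on $Y_1$ factors through $H_2=H/H_1$. The only subtlety is that $H_1$ may still act nontrivially on the fibres of $L$ over $Y_1$, by the single character $\mu_1(Y_1)\in M_1$ (constant on $Y_1$ as recorded earlier). Twisting by a chosen preimage $\widetilde{\mu_1(Y_1)}\in M$ of $\mu_1(Y_1)$ makes the $H_1$-weight on every fibre equal to $\mu_1(Y_1)-\pi(\widetilde{\mu_1(Y_1)})=0$; thus $H_1$ now acts trivially on $L|_{Y_1}$ and the $H$-action on $L|_{Y_1}$ descends to a linearization $\mu_2$ of $H_2$. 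Different preimages differ by an element of $M_2$, i.e.\ by a character of $H_2$, which accounts for the indicated dependence of $\mu_2$ on the choice.

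For item~(3) I would first record the identification of fixed loci: since the $H$-action on $Y_1$ factors through $H_2$, a point of $Y_1$ is $H_2$-fixed if and only if it is $H$-fixed, giving $Y_1^{H_2}=X^H\cap Y_1$. The containment of the $\widetilde{\Delta}$-sets then comes from one computation. For $y\in Y_1^{H_2}=X^H\cap Y_1$ the twisted linearization gives $\mu_2(y)=\mu(y)-\widetilde{\mu_1(Y_1)}$ in $M$; this indeed lies in $M_2=\ker\pi$ because $\pi(\mu(y))=\mu_1(y)=\mu_1(Y_1)=\pi(\widetilde{\mu_1(Y_1)})$, using $y\in Y_1$. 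As $y\in X^H$ we have $\mu(y)\in\widetilde{\Delta}(X,L,H,\mu)$, and $\pi(\mu(y))=\mu_1(Y_1)$ places $\mu(y)$ in $\pi^{-1}(\mu_1(Y_1))$; subtracting $\widetilde{\mu_1(Y_1)}$ yields exactly the asserted inclusion, constant over the $H_2$-component containing $y$.

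Finally, the surjectivity of $\pi\colon\Delta(X,L,H,\mu)\ra\Delta(X,L,H_1,\mu_1)$ in item~(1) follows: the inclusion $\pi(\widetilde{\Delta}(X,L,H))\subseteq\widetilde{\Delta}(X,L,H_1)$ is immediate from $\pi(\mu(y))=\mu_1(y)$, while for the reverse I would show that each component $Y_1$ of $X^{H_1}$ contains an $H$-fixed point. This is the one non-formal step and the main obstacle: $Y_1$ is projective and carries the descended action of the torus $H_2$, which by the Borel fixed-point theorem has a fixed point $y\in Y_1^{H_2}=X^H\cap Y_1$, whence $\mu_1(Y_1)=\mu_1(y)=\pi(\mu(y))$ lies in the image. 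Alternatively one can bypass fixed points and deduce the $\Delta$-surjectivity from the $\Gamma$-statement by passing to a very ample power $L^{\otimes m}$ and invoking Lemma~\ref{lem_compare_polytopes}, since there $\Delta=\Gamma$ for both tori and $m\Delta(L)=\Delta(L^{\otimes m})$. Everything else is a direct translation of the definitions.
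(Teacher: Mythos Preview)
Your proof is correct and follows the same approach as the paper, which simply states that ``the claims follow directly from definitions'' and spells out only the commutativity-plus-connectedness argument for $H$-invariance of $Y_1$ (identical to yours). You supply more detail than the paper does, in particular the Borel fixed-point step (or the alternative via Lemma~\ref{lem_compare_polytopes}) to get surjectivity for $\Delta$; the paper leaves this implicit, but your argument is the natural way to fill it in.
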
 
\begin{proof}

The claims follow directly from definitions.
Perhaps the only not completely trivial check is that $Y_1$ is $H$-invariant, which follows from commutativity and connectedness of $H$.
Indeed, if $h\in H$ and $y_1\in Y_1$, then for all $h_1\in H_1$ we have $h_1 \cdot (h \cdot y_1) =  h \cdot (h_1\cdot  y_1 ) =  h \cdot y_1$.
That is $h \cdot y_1 \in X^{H_1}$, and by connectedness of $H$, it must belong to the same connected component as $y_1$, which  is $Y_1$. 

The choice of $\widetilde{\mu_1(Y_1)}$ ensures that the linearization 
$\mu-\widetilde{\mu_1(Y_1)}$ of the action of $H$ is trivial after the restriction to $H_1$.
\end{proof}

\subsection{The compass}\label{subsec_compass}
For every $y\in X^H$ we have a natural linearization of the action of
$H$ on the cotangent space $T_yX^*=\m_y/\m_y^2$.
This determines the set of weights of this action $\nu_1(y),\dotsc, \nu_d(y)\in M$ (possibly with repetitions) which is the
same for every $y$ in a connected component $Y\subset X^H$, hence
denoted by $\nu_1(Y),\dots, \nu_d(Y)$.  The number of zero weights
among $\nu_i(Y)$ is equal to $\dim Y$. The set of non-zero
$\nu_i(Y)$'s, with possibly multiple entries, will be called {\em the
  compass} of the component $Y$ in $X$ with respect to the action of
$H$ and denoted ${\mathcal C}(Y,X,H)$. We will usually write
$\cC(Y,X,H)=(\nu_1^{a_1},\dots,\nu_k^{a_k})$ where $\nu_i$'s are
pairwise different elements of $M$ and the positive integers $a_i$ denote the number of occurences of $\nu_i$ in the compass.

In the literature a similar notion of \emph{moment graph} is discussed 
  \cite{braden_macpherson_moment_graphs}, \cite{fiebig_moment_graphs_in_repr_theory_and_geom}.
In some references it is also called the \emph{GKM-graph} \cite{guillemin_holm_zara_GKM_description_equivariant_cohomology}.
Under the assumption that there are only isolated fixed points and finitely many one dimensional orbits, 
  the behaviour of this graph encodes in particular the mutual relations between elements of compasses of various fixed points.

We first discuss a special case when the torus $H$ has dimension $1$.

\begin{ex}\label{ex_compass_for_Cstar_acting_on_PV}
Let $H=\CC^*$ be a torus acting on a vector space $V$ with weights $a_1,\dots, a_s$, 
  where $a_1 <\cdots < a_s$, and the associated eigenspaces $V_i$, $\dim V_i=d_i$. 
The action of $H$ on $X=\PP(V)$ 
  has $s$ fixed-point components $Y_i=\PP(V_i)$ of dimension $\dim Y_i =d_i-1$. 
There is a natural linearization $\mu$ of the action of $H$ on $L=\cO(1)$ (so that $H^0(L)=V^*$) such that $\mu(\PP(V_i))=-a_i$. 
Then  $\cC(Y_i,X,H)=((a_i-a_1)^{d_1},\dots, (a_i-a_s)^{d_s})$
  where the sequence ommits the zero terms $(a_i-a_i)^{d_i}$. 
The extremal
  fixed-point components of the action of $H$ are $Y_1$ and $Y_s$.
They coincide with those
  for which the sign of all the elements of the compass is the same.
They are called the source  and the sink of this action, 
  as a general orbit ``flows'' from the highest weight $\mu(Y_1)$ to the lowest weight $\mu(Y_s)$.
\end{ex}

\begin{lemma}\label{lem_compass_and_Delta_for_Cstar}
  Let $H=\CC^*$ be the torus acting almost faithfully on a projective manifold $X$ with an ample line bundle $L$.
  Suppose $v$ is a vertex of $\Delta$  and $Y \subset X^{\CC^*}$ is a corresponding extremal component. 
  Then:
  \begin{enumerate}
   \item if $v$ is such that $\Delta \subset v + \RR_{\ge 0}$ then ${\mathcal C}(Y,X,H) \subset \ZZ_{>0}$ 
      (in this situation, we say the extremal component $Y$ is a \emph{sink}),
   \item if $v$ is such that $\Delta \subset v + \RR_{\le 0}$ then ${\mathcal C}(Y,X,H) \subset \ZZ_{<0}$
      (in this situation, we say the extremal component $Y$ is a \emph{source}).
  \end{enumerate}
\end{lemma}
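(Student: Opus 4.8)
The plan is to reduce the statement to the explicit model computation of Example~\ref{ex_compass_for_Cstar_acting_on_PV} and then to transport the sign of the compass along the conormal sequence. Since the compass $\cC(Y,X,H)$ depends only on the triple $(Y,X,H)$ and not on the polarization, and since passing from $L$ to $L^{\otimes m}$ replaces $\Delta(L)$ by $m\Delta(L)$ by Lemma~\ref{lem_compare_polytopes}\ref{item_compare_polytopes_rescaling} --- which preserves both the choice of the vertex $v$ and the hypotheses $\Delta\subset v+\RR_{\ge 0}$ or $\Delta\subset v+\RR_{\le 0}$ --- I may assume from the outset that $L$ is very ample. Moreover the two cases are interchanged by reversing the $H$-action (equivalently, by negating all weights), so it is enough to treat case~(2), where $v$ is the right endpoint of the interval $\Delta$ and I must show $\cC(Y,X,H)\subset\ZZ_{<0}$. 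Recall that, $H$ being almost faithful, Corollary~\ref{cor_Delta_has_maximal_dim} guarantees that $\Delta\subset M_\RR\cong\RR$ is a nondegenerate interval, so that $v$ is a genuine endpoint.

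Next I would set $V=\HH^0(X,L)^*$ with its induced $H$-action and decompose it into weight spaces $V=\bigoplus_{i} V_i$ with weights $a_1<\cdots<a_s$; the very ampleness of $L$ embeds $X\subset \PP(V)$ with $L=\cO(1)|_X$. By Lemma~\ref{lem_Delta_of_submanifold}\ref{item_Delta_of_submanifold_equal_to_Delta_of_projective_space} one has $\mu_L(y)=\mu_{\cO(1)}(y)$ for every $y\in X^H$ and $\Delta(X,L)=\Delta(\PP(V),\cO(1))$, which by Example~\ref{ex_compass_for_Cstar_acting_on_PV} equals the interval $[-a_s,-a_1]$. In case~(2) the vertex $v$ is the maximum of $\Delta$, hence $v=-a_1$; consequently every point of the extremal component $Y$ is an $H$-fixed point of $\PP(V)$ on which $\mu_{\cO(1)}=-a_1$, which forces $Y\subset\PP(V_1)$.

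Finally, at any $y\in Y$ the conormal exact sequence $0\to N^*_{X/\PP(V),y}\to T_y^*\PP(V)\to T_y^*X\to 0$ splits $H$-equivariantly (as $H$ is reductive), so the weights of $T_y^*X$, counted with multiplicity, form a sub-collection of the weights of $T_y^*\PP(V)$. By the computation in Example~\ref{ex_compass_for_Cstar_acting_on_PV} the nonzero weights of $T_y^*\PP(V)$ at a point of $\PP(V_1)$ are exactly the numbers $a_1-a_l$ for $l>1$, each of which is strictly negative because $a_1$ is the smallest weight. Since $\cC(Y,X,H)$ consists precisely of the nonzero weights of $T_y^*X$, it is contained in $\{a_1-a_l: l>1\}\subset\ZZ_{<0}$, proving~(2); case~(1) follows by the symmetry noted above. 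I expect the only real subtlety to be the identification $\Delta(X,L)=\Delta(\PP(V),\cO(1))$, which is what pins $Y$ over the extreme weight space $V_1$, together with the equivariant splitting of the conormal sequence that lets the compass of $Y$ inherit the uniform sign from the compass of $\PP(V_1)$ in $\PP(V)$.
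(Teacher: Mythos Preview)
Your proof is correct and follows essentially the same approach as the paper's: pass to a very ample multiple of $L$, embed $X$ equivariantly in $\PP(V)$ with $V=\HH^0(X,L)^*$, use Lemma~\ref{lem_Delta_of_submanifold}\ref{item_Delta_of_submanifold_equal_to_Delta_of_projective_space} to identify the polytopes, and then read off the compass from the model in Example~\ref{ex_compass_for_Cstar_acting_on_PV}. The paper compresses your conormal-sequence step into the single remark ``$\cC(Y,X,H)\subset\cC(Y',\PP(V),H)$ by definition of the compass,'' but the content is the same.
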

\begin{proof}
  By passing to a multiple of $L$ we only rescale $\Delta$ (see Lemma~\ref{lem_compare_polytopes}\ref{item_compare_polytopes_rescaling}),
    thus we may assume that $L$ is very ample.
  Then $X$ is embedded equivariantly in the projective space on which the torus $H$ acts as in Example~\ref{ex_compass_for_Cstar_acting_on_PV}
    with $V=\HH^0(X,L)^*$. 
  By Lemma~\ref{lem_Delta_of_submanifold}\ref{item_Delta_of_submanifold_equal_to_Delta_of_projective_space}
    we have the equality $\Delta(\PP(V), \cO(1), \CC^*) = \Delta(X, L, \CC^*)$.
  Moreover, we have ${\mathcal C}(Y,X,H)\subset {\mathcal C}(Y',\PP(V),H)$ by definition of the compass 
    (where $Y'$ is the component of $\PP(V)^H$ containing $Y$).
  Thus the claim follows from Example~\ref{ex_compass_for_Cstar_acting_on_PV}.
\end{proof}

Lemma~\ref{lem_compass_and_Delta_for_Cstar} also has a partial converse. Namely, if $Y \subset X^{\CC^*}$ is a component of the fixed-point locus 
   and the compass consists of only positive or only negative elements, then $Y$ is an extremal component.
To show this one uses Bia{\l}ynicki-Birula decomposition (Theorem~\ref{thm_ABB-decomposition}), but we will not use this statement here, so we skip the proof.

We have the following extension of Lemma~\ref{lem_reduction_of_action}.

\begin{lemma}\label{lem_reduction_of_compass}
  In the situation of Lemma~\ref{lem_reduction_of_action} take connected
  components $Y_1\subset X^{H_1}$ and $Y\subset Y_1\cap X^H$. Then
  \begin{align*}
    {\mathcal C}(Y_1,X,H_1)  &= \pi({\mathcal C}(Y,X,H)) \text{ and}\\
     {\mathcal C}(Y,Y_1,H_2) &=     {\mathcal C}(Y,X,H)\cap\ker\pi.
  \end{align*}
\end{lemma}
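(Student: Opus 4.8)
The plan is to compute all three compasses at a single point $y\in Y$, which is legitimate because the compass is constant along a connected component of a fixed-point locus and, since $Y\subset Y_1\cap X^H$ with $Y_1\subset X^{H_1}$, the point $y$ is simultaneously fixed by $H$, by $H_1$, and by the induced $H_2$-action on $Y_1$ (the latter makes sense by Lemma~\ref{lem_reduction_of_action}). Everything then reduces to bookkeeping of weights of the three relevant tori on the cotangent space $T_y^*X$.

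First I would record the one structural input: an $H$-equivariant splitting of the (co)tangent space. Since $Y_1$ is a smooth component of $X^{H_1}$ and is $H$-invariant, one has $T_yY_1=(T_yX)^{H_1}$, and I would decompose $T_yX=T_yY_1\oplus N$, where $N$ is the sum of the nonzero $H_1$-weight spaces. This splitting is $H$-stable because $H$ is abelian, so its weight-space decomposition (indexed by $M$) refines the $H_1$-weight decomposition: an $H$-weight $\chi$ restricts to the $H_1$-weight $\pi(\chi)$, and the $H_1$-fixed part $T_yY_1$ is exactly the sum of the $H$-weight spaces with $\chi\in\ker\pi$. Dualizing gives $T_y^*X=T_y^*Y_1\oplus N^*$ as $H$-modules, with $T_y^*Y_1$ the zero-$H_1$-weight part and $N^*$ the nonzero-$H_1$-weight part. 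By definition $\cC(Y,X,H)$ is the multiset of nonzero $H$-weights on $T_y^*X$.

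For the first identity I would restrict the $H$-action on $T_y^*X$ to $H_1$, so that each $H$-weight $\chi$ becomes the $H_1$-weight $\pi(\chi)$, with $\pi(\chi)=0$ precisely when $\chi\in\ker\pi$. The zero $H$-weights and the weights of $T_y^*Y_1$ all lie in $\ker\pi$ and thus contribute nothing to $\cC(Y_1,X,H_1)$, whereas the weights on $N^*$ map to the nonzero $H_1$-weights, which by definition constitute $\cC(Y_1,X,H_1)$; matching multiplicities yields $\cC(Y_1,X,H_1)=\pi(\cC(Y,X,H))$, under the convention that any zeros produced by $\pi$ are discarded (the compass recording only nonzero characters). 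For the second identity I would look at $T_y^*Y_1$, on which $H_1$ acts trivially, so that the $H$-action factors through $H_2=H/H_1$; the occurring $H$-weights lie in $\ker\pi$ and are identified with the $H_2$-weights via the inclusion $\iota\colon M_2\hookrightarrow M$ with image $\ker\pi$. Hence $\cC(Y,Y_1,H_2)$, the nonzero $H_2$-weights on $T_y^*Y_1$, corresponds exactly to the nonzero $H$-weights on $T_y^*X$ lying in $\ker\pi$, that is to $\cC(Y,X,H)\cap\ker\pi$. I would also note explicitly that the linearization twist defining $\mu_2$ in Lemma~\ref{lem_reduction_of_action} is irrelevant, since the compass depends only on the action on the cotangent space, not on $L$ or its linearization.

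The computation is essentially routine once the equivariant splitting $T_y^*X=T_y^*Y_1\oplus N^*$ is in place. The step requiring the most care --- the main obstacle --- is the weight bookkeeping: ensuring that the two displayed relations are equalities of \emph{multisets} rather than of sets, and correctly carrying the identifications $M_1=M/\ker\pi$ and $M_2=\ker\pi$ together with the convention that the compass records only nonzero characters, so that the discarding of zeros on the right-hand side of the first equation is accounted for.
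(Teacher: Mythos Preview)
Your argument is correct and is precisely the unpacking of the definitions that the paper's one-line proof (``Again, the proof is a direct consequence of the definitions'') leaves implicit. In particular, your care with the multiset bookkeeping and the convention that zeros produced by $\pi$ are discarded is exactly what is needed to make the first displayed equality precise.
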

\begin{proof} 
  Again, the proof is a direct consequence of the definitions.
\end{proof}

\begin{cor}\label{cor_compass_and_fixpt}
  Let $Y\subset X^H$ be a fixed-point component of the action of an
  algebraic torus $H$ and $\nu\in\cC(Y,X,H)$ an element in the compass
  of $H$ at $Y$. Then there exists a component $Y'\subset X^H$ and
  $\lambda\in\QQ_{>0}$ such that $\mu(Y')=\mu(Y)+\lambda\cdot \nu$. In particular 
  \[
    (\mu(Y)+\QQ_{>0}\cdot \nu)\cap\Delta(X,L,H,\mu)\cap M\ne\emptyset.
  \]
\end{cor}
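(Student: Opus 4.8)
The plan is to reduce to the case of a rank-one torus and then invoke Example~\ref{ex_compass_for_Cstar_acting_on_PV}. First I would form the corank-one subtorus $H_1 = (\ker \nu)^\circ \subset H$ (the identity component of the kernel of the character $\nu \colon H \to \CC^*$), so that the quotient $H_2 = H/H_1$ is one-dimensional and, in the notation of Lemma~\ref{lem_reduction_of_action}, the lattice $M_2 = \ker\pi$ has rank one. Since $\nu$ is trivial on $H_1$ we have $\nu \in \ker\pi = M_2$, and $\nu \neq 0$ generates $M_2\otimes\RR$. Let $Y_1 \subset X^{H_1}$ be the connected component containing $Y$. By Lemma~\ref{lem_reduction_of_compass}, $\cC(Y, Y_1, H_2) = \cC(Y, X, H)\cap\ker\pi$, and as $\nu$ lies in both sets on the right, we get $\nu \in \cC(Y, Y_1, H_2)$. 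Thus it suffices to produce a component $Y' \subset Y_1^{H_2}$ with $\mu_2(Y') - \mu_2(Y) \in \QQ_{>0}\cdot\nu$: indeed $Y_1^{H_2} = X^H\cap Y_1$ by Lemma~\ref{lem_reduction_of_action}\ref{item_reduction_of_action_restricting_to_fix_points_of_subgroup}, so $Y'$ is a genuine component of $X^H$, and since $\mu_2$ and $\mu$ differ on $Y_1$ by the fixed character $\widetilde{\mu_1(Y_1)}$, the difference $\mu_2(Y')-\mu_2(Y)$ equals $\mu(Y')-\mu(Y)$.

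For the resulting rank-one problem I would, after replacing $L|_{Y_1}$ by a sufficiently high power (harmless, as it only rescales $\mu_2$ and $\Delta$ by Lemma~\ref{lem_compare_polytopes}\ref{item_compare_polytopes_rescaling}), embed $Y_1$ equivariantly into $\PP(V)$ with $V = \HH^0(Y_1, L|_{Y_1})^*$, so that $H_2 = \CC^*$ acts as in Example~\ref{ex_compass_for_Cstar_acting_on_PV}. Let $\tilde Y = \PP(V_{i_0})$ be the fixed component of $\PP(V)$ containing $Y$. By the definition of the compass as a subset of the ambient one, $\nu \in \cC(Y, Y_1, H_2) \subset \cC(\tilde Y, \PP(V), H_2)$, so the computation in Example~\ref{ex_compass_for_Cstar_acting_on_PV} yields a weight space $V_j$ with $\nu = a_{i_0} - a_j$; in particular the $\nu$-weight subspace of $T_yY_1$ is non-zero for any $y \in Y$. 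I would then pick a point $z \in Y_1$ moving off $Y$ along this direction and take the limits of its $H_2$-orbit: since $Y_1$ is projective and $H_2$-invariant, both $\lim_{t\to0}\sigma(t)z$ and $\lim_{t\to\infty}\sigma(t)z$ exist and lie in $Y_1^{H_2}$, and the limit in the direction dictated by the sign of $\nu$ lands on a fixed component $Y' \ne Y$. Because $M_2$ has rank one, $\mu_2(Y')-\mu_2(Y)$ is automatically an integer multiple of a generator of $M_2$, hence a rational multiple of $\nu$; the flow direction forces this multiple to be positive, giving $\lambda \in \QQ_{>0}$ with $\mu(Y') = \mu(Y)+\lambda\nu$.

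The main obstacle is this last step: one must guarantee that the relevant orbit limit actually leaves $Y$, and that it does so precisely along the direction of $\nu$ rather than some other compass direction. Controlling this requires the local Bia{\l}ynicki-Birula picture of the $H_2$-action near $Y$, in which the $\nu$-weight subspace of $T_yY_1$ spans a piece of the attracting (or repelling) cell of $Y$, so that a generic point of that cell has its nontrivial orbit limit on a component $Y'$ whose linearization weight is shifted from $\mu_2(Y)$ in the positive $\nu$-direction. Once this local-to-global flow statement is in hand, the ``in particular'' assertion is immediate, since $\mu(Y') \in \widetilde{\Delta}(X,L,H,\mu) \subset \Delta(X,L,H,\mu)\cap M$ exhibits the required lattice point of $(\mu(Y)+\QQ_{>0}\cdot\nu)\cap\Delta(X,L,H,\mu)$.
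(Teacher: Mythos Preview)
Your reduction to a one-dimensional torus is exactly the paper's: take $H_1$ with $M_2=\ker\pi$ spanned by $\nu$, pass to the component $Y_1\subset X^{H_1}$ containing $Y$, and use Lemma~\ref{lem_reduction_of_compass} to get $\nu\in\cC(Y,Y_1,H_2)$. The translation between $\mu$ and $\mu_2$ via Lemma~\ref{lem_reduction_of_action} is also fine.

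Where you diverge is the rank-one endgame. You try to produce $Y'$ by following an explicit orbit in the $\nu$-direction and controlling its limit, and you correctly identify that this step is not obvious: the limit could a priori land back on $Y$, or on a component whose $\mu_2$-value is not shifted in the $\nu$-direction. You then appeal to a ``local-to-global flow statement'' that you do not prove. As written this is a gap.

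The paper closes the rank-one case in one line using Lemma~\ref{lem_compass_and_Delta_for_Cstar}, which you never invoke. Argue by contradiction: since $\nu\neq 0$, the $H_2$-action on $Y_1$ is almost faithful, so $\Delta(Y_1,L_{|Y_1},H_2)$ is a genuine interval. Say $\nu>0$ in $M_2\simeq\ZZ$. If there were no component $Y'\subset Y_1^{H_2}$ with $\mu_2(Y')>\mu_2(Y)$, then $\mu_2(Y)$ would be the top vertex of this interval, so $Y$ would be an extremal component and by Lemma~\ref{lem_compass_and_Delta_for_Cstar} every element of $\cC(Y,Y_1,H_2)$ would be negative, contradicting $\nu>0$. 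Hence some $Y'$ has $\mu_2(Y')-\mu_2(Y)>0$ in the rank-one lattice $M_2$, which is automatically a positive rational multiple of $\nu$. No embedding into $\PP(V)$, no orbit limits, and no Bia{\l}ynicki-Birula input are needed at this stage; all of that was already absorbed into the proof of Lemma~\ref{lem_compass_and_Delta_for_Cstar}.
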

\begin{proof}
  We restrict the action of $H$ to the action of a torus $H_1$
  obtained by projection of $M$ along $\nu$. Then there exists a
  component $Y_1\subset X^{H_1}$ which contains $Y$. The action of the
  1-dimensional torus associated to $\nu$ on $Y_1$ has $Y$ as a fixed-point component by Lemma~\ref{lem_reduction_of_action}\ref{item_compare_polytopes_Gamma_in_Delta}, 
     and $\nu$ is in its compass by Lemma~\ref{lem_reduction_of_compass}.
  Thus the claim of the corollary follows from the rank one case discussed in Lemma~\ref{lem_compass_and_Delta_for_Cstar}.
\end{proof}

The usefulness of the notion of compass (particularly for the case of rank of $H$ at least $2$) lies in the ease of controlling the dimensions and fixed points.
We illustrate it in details on specific applications in Section~\ref{sec_contact_simple}.
Here we mention a general observation which allows us to even better control the directions and multiplicities of the compass.

\begin{lemma}\label{lem_number_of_vectors_on_compass_in_a_fixed_direction}
   Suppose $Y\subset X^H$ is a fixed-point component of dimension $a$, and that the compass of $Y$ in $X$ contains $b>0$ elements (counting with multiplicities) 
      on some linear subspace $W \subset M_\RR$.
   Then for every $\nu \in \cC(Y,X, H) \cap W$ there exists a fixed-point component $Y' \subset X^H$ such that:
   \begin{itemize}
     \item  $\mu(Y') -\mu(Y) \in \nu \cdot \QQ_{> 0}$,
     \item  $a'+ b' = a+ b$, where $a' =\dim Y'$ and $b'$ is the number of elements of $\cC(Y',X, H) \cap W$ (with multiplicities). 
   \end{itemize}
   In particular, $\dim Y < a' + b'$, where we can take $a'$ and $b'$ as in the second item, maximazing over all suitable $Y'$.
\end{lemma}
\begin{proof}
   Consider the subtorus $H_1 \subset H$ which corresponds to the projection $M  \to  M/(W\cap M)$.
   Let $Y_1\subset X^{H_1}$ be the fixed-point component containing $Y$ and let $H_2 = H/H_1$.
   By Lemma~\ref{lem_reduction_of_compass} the compass $\cC(Y, Y_1, H_2)$ consists of $b$ elements,
     in particular, $\dim Y_1 = a+b > \dim Y$.
   By Corollary~\ref{cor_compass_and_fixpt} there is a component $Y' \ne Y$ of $Y_1^{H_2}$
     with $\mu_2(Y') -\mu_2(Y) \in \nu \cdot \QQ_{> 0}$.
   We have $\dim Y' + \#\cC(Y',Y_1, H_2)  = \dim Y_1$ and $Y'$ is a component of $X^H$.
   By Lemma~\ref{lem_reduction_of_action} 
      we must have $\mu_2(Y) -\mu_2(Y') = \mu(Y) -\mu(Y')$ so the first item is satisfied.
   Therefore, by Lemma~\ref{lem_reduction_of_compass}, we have $ \cC(Y',Y_1, H_2)= \cC(Y',X, H) \cap W$ 
     and the itemized properties hold.
\end{proof}

Let $y\in Y\subset X^H$ be a fixed point in a connected component $Y$ with $\dim Y=d_Y$. 
Then, up to an \'etale cover,  the action of $H$ can be diagonalized with weights
associated to the compass of $H$ at $Y$, see \cite[Lemma~on p.~96]{Luna}.
That is, there exists a coordinate system $x_1,\dots x_d$ around $y=\{x_i=0, i=1,\dots, d\}$
  such that for $t \in H$ it holds
\begin{equation}\label{equ_local-action}
t\cdot(x_1,\dots,x_d)=
(t^{\nu_1} x_1,\dots,t^{\nu_{d-d_Y}} x_{d-d_Y}, x_{d-d_Y+1},
\dots,x_d),
\end{equation}
where $\cC(Y,X,H)=(\nu_1,\dots\nu_{d-d_Y})$ and $t^{\nu}$ denotes the application of the character $\nu\colon M \to \CC^*$ to $t \in H$.
(In coordinates, if $t = (\fromto{t_1}{t_r}) \in H\simeq (\CC^*)^r$, and $\nu = (\fromto{\nu^1}{\nu^r}) \in M \simeq \ZZ^r$, 
  then $t^{\nu} = t_1^{\nu^1} \dotsm t_r^{\nu^r}$.)  
In other words, the
functions $x_i$ are graded by elements of $M$, with $\deg x_i=\nu_i$, for $i\leq
d-d_Y$ and $\deg x_i= 0$ for $i > d - d_Y$.

\begin{lemma}\label{lem_local_description_inv_divisor}
  Let $D_\sigma\subset X$ be an $H$-invariant divisor associated to 
    an $H$-equivariant section $\sigma\in\HH^0(X,L)_u$, where $u\in M$.
  Suppose that $y$ and $Y$ are as above.
  Then in the above local coordinates near $y$
  the divisor $D_\sigma$ is described as the zero set of a homogeneous
  power series $f\in\CC[[x_1,\dots, x_d]]$ of degree $u-\mu(Y)$ with
  respect to the grading introduced above.
\end{lemma}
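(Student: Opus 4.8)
The plan is to trivialize $L$ in a formal neighbourhood of $y$ by an $H$-semiinvariant frame whose weight is exactly $\mu(Y)$, and then to read off the degree of the local equation of $D_\sigma$ from the equivariance of $\sigma$ alone. To construct the frame, note that since $y$ is fixed and $H$ is a torus, hence linearly reductive, the linearization of $L$ induces an $H$-action on the completed stalk $\widehat L_y$ lifting the action on $\widehat{\cO}_{X,y}\cong\CC[[x_1,\dots,x_d]]$ that was diagonalized in~\eqref{equ_local-action}. As $\widehat L_y$ is free of rank one and $H$ acts on the fibre $L_y=\widehat L_y/\m_y\widehat L_y$ through the single character $\mu(Y)$, one can pick a generator $e$ of $\widehat L_y$ that is an $H$-eigenvector; evaluating at $y$ identifies its weight with $\mu(Y)$, so $t\cdot e=t^{\mu(Y)}e$. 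This is exactly the equivariant local triviality of an $H$-linearized line bundle at a fixed point, resting on the same local linearization, \cite[Lemma~on p.~96]{Luna}, that produced the coordinates $x_i$.

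Next I would write $\sigma=f\cdot e$ with $f\in\CC[[x_1,\dots,x_d]]$. Because $e$ is nowhere vanishing near $y$, the divisor $D_\sigma$ is cut out there by $\{f=0\}$, so it only remains to determine the degree of $f$ in the grading with $\deg x_i=\nu_i$. Using that the $H$-action on sections is compatible with the $\cO_X$-module structure, $t\cdot\sigma=(t\cdot f)(t\cdot e)$; substituting $t\cdot\sigma=t^{u}\sigma$ (as $\sigma\in\HH^0(X,L)_u$) and $t\cdot e=t^{\mu(Y)}e$ yields $(t\cdot f)\,t^{\mu(Y)}=t^{u}f$, hence $t\cdot f=t^{\,u-\mu(Y)}f$ for all $t\in H$. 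This is precisely the assertion that $f$ is homogeneous of degree $u-\mu(Y)$, which finishes the argument.

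The only step that is not pure character bookkeeping is the existence of the eigen-frame $e$ of weight $\mu(Y)$, and I expect this to be the sole (though entirely standard) obstacle; everything downstream is just the additivity of weights under the factorization $\sigma=f\,e$. As no ingredient beyond the local linearization already used for~\eqref{equ_local-action} is required, the lemma follows with essentially no extra work.
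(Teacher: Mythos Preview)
Your proof is correct and follows essentially the same approach as the paper's: both pick an $H$-eigen frame of $L$ near $y$ of weight $\mu(Y)$ (the paper calls it a fibre coordinate $z$, you call it $e$), write $\sigma=f\cdot e$, and read off $\deg f=u-\mu(Y)$ from the equivariance of $\sigma$. Your version is slightly more explicit about why the eigen-frame exists, but the argument is otherwise identical.
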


\begin{pf}
  Let $z$ be a local coordinate in the fiber of $L$ around $y$. Then,
  locally around $y$ the bundle can be trivialized and the section
  $\sigma$ can be presented as $\sigma(x)=f(x)\cdot z$ where $f$ is a
  power series in $H$-homogeneous coordinates introduced above. By our
  assumption $\sigma$ is an eigenvector of the action of $H$
  associated to the weight $u$ while $H$ acts on $z$ with the weight
  $\mu(Y)$. Thus $H$ acts on $f$ with weight $u-\mu(Y)$ and the claim
  follows.
\end{pf}

We remark that the power series $f$ appearing in the lemma can be chosen to be convergent in some neighbourhood of $0$. We will not use this stronger statement.

\begin{cor}\label{cor_weights-jets}
  For every $u\in\widetilde{\Gamma}(X,H,L,\mu)$ and a component $Y\subset X^H$
  there exist non-negative integers $a_i$ such that
  $$u-\mu(Y)=\sum a_i\nu_i$$
  where $\nu_i$ are in the compass $\cC(Y,X,H)$.
\end{cor}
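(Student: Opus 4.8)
The plan is to deduce this directly from Lemma~\ref{lem_local_description_inv_divisor}, which already packages all the content needed. Since $u$ lies in $\widetilde{\Gamma}(X,L,H,\mu)$, the weight space $\HH^0(X,L)_u$ is nonzero, so I fix a nonzero $H$-eigensection $\sigma\in\HH^0(X,L)_u$. Then I pick any point $y$ in the component $Y$, with $\dim Y = d_Y$, and choose local coordinates $x_1,\dots,x_d$ around $y$ as in~\eqref{equ_local-action} together with a local trivialization $z$ of $L$. Recall the associated grading is $\deg x_i=\nu_i$ for $i\le d-d_Y$, where $\cC(Y,X,H)=(\nu_1,\dots,\nu_{d-d_Y})$ lists the compass directions (with repetitions), and $\deg x_i=0$ for $i>d-d_Y$.

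Next I would apply Lemma~\ref{lem_local_description_inv_divisor} to $\sigma$: locally around $y$ one writes $\sigma=f\cdot z$, where $f\in\CC[[x_1,\dots,x_d]]$ is homogeneous of degree $u-\mu(Y)$ with respect to this grading. The key observation is that $f\ne 0$: because $X$ is a connected, hence irreducible, projective manifold and $\sigma$ is a nonzero global section, its restriction to the chart around $y$ cannot vanish identically. Therefore $f$ contains at least one nonzero monomial $c\,x_1^{b_1}\cdots x_d^{b_d}$ with $c\ne 0$ and all $b_i\in\ZZ_{\ge 0}$.

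Homogeneity of $f$ forces every monomial of $f$ — in particular the one just selected — to have degree exactly $u-\mu(Y)$, which gives
$$u-\mu(Y)=\sum_{i=1}^{d}b_i\nu_i=\sum_{i=1}^{d-d_Y}b_i\nu_i,$$
the second equality holding because $\nu_i=0$ for $i>d-d_Y$. Setting $a_i=b_i\ge 0$ for the compass directions $\nu_i\in\cC(Y,X,H)$ then yields precisely the claimed expression.

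I do not expect a serious obstacle: the corollary is essentially a restatement of the local normal form of an equivariant section recorded in Lemma~\ref{lem_local_description_inv_divisor}. The only two points deserving explicit care are the nonvanishing of the local power series $f$, which relies on the irreducibility of $X$ together with the globality of $\sigma$, and the remark that the weight-zero coordinates contribute nothing to the degree, so that the resulting combination genuinely involves only elements of the compass.
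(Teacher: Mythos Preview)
Your proof is correct and follows essentially the same approach as the paper's: both pick a nonzero eigensection $\sigma\in\HH^0(X,L)_u$, invoke Lemma~\ref{lem_local_description_inv_divisor} to obtain the homogeneous local power series $f$ of degree $u-\mu(Y)$, and read off the claim from the degree of a nonzero monomial of $f$. Your version is simply more explicit about why $f\ne 0$ and why the weight-zero coordinates drop out, points the paper leaves implicit.
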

\begin{proof}
The equality follows from calculating the degree of $f$ from the
previous lemma, where $0\ne \sigma \in H^0(X,L)_{u}$, and $f$ is the local equation of the zero set of $\sigma$.
\end{proof}

\begin{cor}\label{cor_lower-bound}
  Let $L$ be an ample line bundle on a manifold $X$ with an action of
  a torus $H$. Then the dimension of $X$ is bigger or equal to the
  maximal number of edges from any vertex of $\Delta(X,H,L)$.
\end{cor}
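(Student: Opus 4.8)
The plan is to relate the edges of $\Delta(L)$ at a given vertex to the compass at the corresponding extremal component, and then to bound the number of extreme rays of the local cone at that vertex by the size of the compass, which is at most $\dim X$.

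First I would reduce to the case when $L$ is very ample. Replacing $L$ by $L^{\otimes m}$ only rescales $\Delta$ by the factor $m$ (Lemma~\ref{lem_compare_polytopes}\ref{item_compare_polytopes_rescaling}), so the number of edges emanating from any vertex is unchanged; choosing $m$ large enough that $L^{\otimes m}$ is very ample, and hence base point free, also gives $\Delta = \Gamma = \conv(\widetilde{\Gamma})$ by Lemma~\ref{lem_compare_polytopes}\ref{item_compare_polytopes_base_point_free_Delta_eq_Gamma}. I note that passing to a power does not affect the compass $\cC(Y,X,H)$, which is determined by the $H$-action on the cotangent space at the fixed points and is independent of $L$.

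Next, I would fix a vertex $v$ of $\Delta$ and an extremal component $Y\subset X^H$ with $\mu(Y)=v$, and write $\cC(Y,X,H)=(\nu_1,\dots,\nu_{d-\dim Y})$, so that the compass has $d-\dim Y \le d = \dim X$ elements counted with multiplicity. The key claim is that the local cone of $\Delta$ at $v$, namely $\RR_{\ge 0}(\Delta-v)$, coincides with the cone $\sum_i \RR_{\ge 0}\nu_i$ generated by the compass. The inclusion $\subseteq$ follows from Corollary~\ref{cor_weights-jets}: since $\Delta = \conv(\widetilde{\Gamma})$, every $u\in\widetilde{\Gamma}$ satisfies $u-v=\sum_i a_i\nu_i$ with $a_i\in\ZZ_{\ge 0}$, so $\Delta-v$ lies in the compass cone. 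The reverse inclusion $\supseteq$ follows from Corollary~\ref{cor_compass_and_fixpt}: each $\nu_i$ points from $v$ towards a character $\mu(Y')=v+\lambda\nu_i\in\Delta$ with $\lambda\in\QQ_{>0}$, so each $\nu_i$ lies in $\RR_{\ge 0}(\Delta-v)$.

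Finally, the edges of $\Delta$ incident to $v$ are precisely the one-dimensional faces (extreme rays) of the local cone $\RR_{\ge 0}(\Delta-v)$. A polyhedral cone generated by $k$ vectors has at most $k$ extreme rays, since every extreme ray is spanned by one of the generators. Hence the number of edges at $v$ is at most the number of compass elements, $d-\dim Y\le d$, and taking the maximum over all vertices $v$ yields the claim. The hard part will be the identification of the local cone with the compass cone: one must justify both inclusions carefully and ensure the reduction to the base point free case is legitimate, since Corollary~\ref{cor_weights-jets} is phrased in terms of $\widetilde{\Gamma}$ rather than directly in terms of the vertices of $\Delta$.
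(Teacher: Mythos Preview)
Your proof is correct and follows the approach the paper intends: the corollary is stated immediately after Corollary~\ref{cor_weights-jets} with no separate proof, so the paper treats it as a direct consequence of the compass machinery, exactly as you unfold it. One small remark: you correctly observe that the inclusion $\RR_{\ge 0}(\Delta-v)\subseteq\sum_i\RR_{\ge 0}\nu_i$ from Corollary~\ref{cor_weights-jets} alone would not suffice (a subcone can have more extreme rays than an ambient cone has generators), and that the reverse inclusion from Corollary~\ref{cor_compass_and_fixpt} is genuinely needed to conclude that every edge direction is realized by a compass element; this is the only subtle point, and you handle it.
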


\subsection{Examples}\label{subsec_examples_of_torus_action}
Projective toric varieties with the action of the "big" torus or its downgrading
to a smaller subtorus are the most natural examples to illustrate the
objects that we have disussed so far. 
Then the relations between geometry of torus actions, including fixed points and orbits, and the properties of polytopes of sections, 
  are described in a lot of details in classical works and modern textbooks, such as \cite{cox_book}. 

Quadrics are the next natural class of examples.

\begin{ex}
  The torus $H=(\CC^*)^r$ with coordinates $(t_1,\dots,t_r)$ acts on
  $\CC^{2r+1}$:
  $$\begin{array}{r}(t_1,\dots,t_r)\cdot (z_0,z_1,z_2,\dots,z_{2r-1},z_{2r})=\\
    (z_0,t_1z_1,t_1^{-1}z_2,\dots,t_rz_{2r-1},t_r^{-1}z_{2r})\end{array}.$$
  The action of $H$ descends to an almost faithful action on
  the quadric $\cQ^{2r-1}\subset\PP^{2r}$ given by the equation
  $$z_0^2+z_1z_2+\cdots+z_{2r-1}z_{2r}=0.$$
  The action has $2r$ isolated fixed points.
  If $M$ is the lattice of characters of $H$ with the
  basis $\fromto{e_1}{e_r}$, then
    $$\Delta(\cQ^{2r-1},\cO(1),H)={\rm conv}(\pm e_1,\dots,\pm e_r)$$
  and we see that all fixed points are extremal. 
  We will see in Corollary~\ref{cor_torus-on-quadrics} that this shape of $\Delta$ with some additional properties essentially identifies a quadric hypersurface.
  The compass of $H$
  at the fixed point associated to the character $e_i$ consists of
  $-e_i$ and $\pm e_j-e_i$, for $j\ne i$. Note that the compass
  generates the semigroup $\RR_{\geq 0}(\Delta-e_i)\cap M$.

  The case of even dimensional quadric is similar. The torus $H$ acts
  on $\CC^{2r}$
    $$\begin{array}{r}(t_1,\dots,t_r)\cdot (z_1,z_2,\dots,z_{2r-1},z_{2r})=\\
      (t_1z_1,t_1^{-1}z_2,\dots,t_rz_{2r-1},t_r^{-1}z_{2r}).\end{array}$$
  The action of $H$ descends to an action of the quotient
  torus $H'=H/\langle(-1,\dots,-1)\rangle$ on the quadric
  $\cQ^{2r-2}\subset\PP^{2r-1}$ given by equation
    $$z_1z_2+\cdots+z_{2r-1}z_{2r}=0$$
    The action has $2r$ isolated fixed points.  As before, $M=\ZZ^r$
    generated by $e_i$'s and $M'\subset M$ is an index 2 sublattice of
    vectors $\sum_i a_ie_i$ such that $\sum_i a_i$ is even.
    Now 
    $$\Delta(\cQ^{2r-2},\cO(1),H)={\rm conv}(\pm e_1,\dots,\pm e_r)$$ 
    and the compass of $H$ at the fixed point associated to
    the character $e_i$ consists of $\pm e_j-e_i$, for $j\ne i$.  Note
    that the compass generates $\RR_{\geq 0}(\Delta-e_i)\cap M'$.
\end{ex}

In the present paper we will attempt to recognize homogeneous spaces $X$ with respect to some simple group $G$
only looking at the properties of the action of the maximal torus $H \subset G$ (or even a smaller torus) on $X$.
For the rest of this subsection we assume (as an example) that $X$ is homogeneous with respect to $G$ in order to illustrate some characteristic properties of this situtation.
Then the action of $G$ (perhaps after taking a finite cover) admits a unique linearization and
$\HH^0(X,L)$ is a $G$-module (representation) with associated set of
$H$-weights $\widetilde{\Gamma}(X,H,L)$. In fact, the case of quadrics
discussed above arises from the standard representations of $SO(2r)$
and $SO(2r+1)$.

The case of main interest for us is when $X$ is the closed orbit of
$G$ in $\PP(\g)$, $\g$ denoting the Lie algebra of $G$, and the group
acts via the adjoint action. Then we have the root decomposition
$\g=\h\oplus\bigoplus_{u\in\cR}\CC_u$, where $\h$ denotes the tangent
space of $H$ on which the adjoint action is trivial and $\cR$ is the
set of roots. If $L$ denotes the restriction of $\cO(1)$ from
$\PP(\g)$ to $X$ then $\Gamma(X,L,H)=\Delta(X,L,H)$ is the root
polytope $\Delta(G)$ which, by definition, is the convex hull of roots in the
$M_\RR$, with $M$ denoting the space of weights of $G$.

The following example illustrates the concept of downgrading and
restricting the action of the maximal torus in the group $SO(7)$ (or more precisely, $Spin(7)$, the double cover of $SO(7)$).

\begin{ex}\label{ex-downgradingB3}
The figure on the left hand side presents the root system $B_3$
inscribed in the unit cube. Long roots are denoted by $\bullet$ and
short roots by $\circ$. The figure on the right presents the
polytope $\Delta(B_3)$.
The only fixed points of the action of the $3$-dimensional maximal torus $H \subset Spin(7)$ on $X = Gr(\PP^1, \cQ^5)$ are extremal.
There are 12 fixed points associated to long roots which are vertices of $\Delta$.
$$\begin{array}{ccc}
\begin{xy}<25pt,0pt>:
(2,2)*={}="a", (2,-2)*={}="b", (-2,-2)*={}="c", (-2,2)*={}="d",
(3,2.6)*={}="a1", (3,-1.4)*={}="b1", (-1,-1.4)*={}="c1", (-1,2.6)*={}="d1",
"a";"b" **@{.}, "a";"d" **@{.}, "a";"a1" **@{.}, 
"c";"b" **@{.}, "c";"d" **@{.}, "c";"c1" **@{.}, 
"b1";"b" **@{.}, "b1";"c1" **@{.}, "b1";"a1" **@{.}, 
"d1";"d" **@{.}, "d1";"c1" **@{.}, "d1";"a1" **@{.},
(0,0)*={\circ}="s1", (0.5,2.3)*={\circ}="s2", (1,0.6)*={\circ}="s3",
(0.5,-1.7)*={\circ}="s4", (2.5,0.3)*={\circ}="s5",
(-1.5,0.3)*={\circ}="s6",
(0,2)*={\bullet}="l1", (0,-2)*={\bullet}="l2", (2,0)*={\bullet}="l3",
(-2,0)*={\bullet}="l4", (1,2.6)*={\bullet}="l5",
(1,-1.4)*={\bullet}="l6", (3,0.6)*={\bullet}="l7",
(-1,0.6)*={\bullet}="l8", (-1.5,2.3)*={\bullet}="l9",
(-1.5,-1.7)*={\bullet}="l10", (2.5,2.3)*={\bullet}="l11",
(2.5,-1.7)*={\bullet}="l12",
\end{xy}
&\phantom{\longrightarrow}&
\begin{xy}<25pt,0pt>:
(2,2)*={}="a", (2,-2)*={}="b", (-2,-2)*={}="c", (-2,2)*={}="d",
(3,2.6)*={}="a1", (3,-1.4)*={}="b1", (-1,-1.4)*={}="c1", (-1,2.6)*={}="d1",
(0,0)*={\circ}="s1", (0.5,2.3)*={\circ}="s2", (1,0.6)*={\circ}="s3",
(0.5,-1.7)*={\circ}="s4", (2.5,0.3)*={\circ}="s5",
(-1.5,0.3)*={\circ}="s6",
(0,2)*={\bullet}="l1", (0,-2)*={\bullet}="l2", (2,0)*={\bullet}="l3",
(-2,0)*={\bullet}="l4", (1,2.6)*={\bullet}="l5",
(1,-1.4)*={\bullet}="l6", (3,0.6)*={\bullet}="l7",
(-1,0.6)*={\bullet}="l8", (-1.5,2.3)*={\bullet}="l9",
(-1.5,-1.7)*={\bullet}="l10", (2.5,2.3)*={\bullet}="l11",
(2.5,-1.7)*={\bullet}="l12",
"l1";"l3" **@{-}, "l3";"l2" **@{-}, "l2";"l4" **@{-}, "l4";"l1" **@{-},
"l5";"l7" **@{.}, "l7";"l6" **@{.}, "l6";"l8" **@{.}, "l8";"l5" **@{.},
"l9";"l1" **@{-}, "l9";"l4" **@{-}, "l9";"l5" **@{-}, "l9";"l8" **@{.},
"l11";"l1" **@{-}, "l11";"l5" **@{-}, "l11";"l3" **@{-}, "l11";"l7" **@{-},
"l12";"l3" **@{-}, "l12";"l7" **@{-}, "l12";"l2" **@{-}, "l12";"l6" **@{.},
"l10";"l2" **@{-}, "l10";"l6" **@{.}, "l10";"l4" **@{-}, "l10";"l8" **@{.}
\end{xy}
\end{array}
$$ 
The next figure shows a downgrading of the action to a 2-dimensional
torus: we take the planar projection along the diagonal in the
cube. The number of the fixed points does not change but now only six
of them are extremal.
$$
\begin{array}{ccc}
\begin{xy}<25pt,0pt>:
(2,2)*={}="a", (2,-2)*={}="b", (-2,-2)*={}="c", (-2,2)*={}="d",
(3,2.6)*={}="a1", (3,-1.4)*={}="b1", (-1,-1.4)*={}="c1", (-1,2.6)*={}="d1",
"a";"b" **@{.}, "a";"d" **@{.}, "a";"a1" **@{.}, 
"c";"b" **@{.}, "c";"d" **@{.}, "c";"c1" **@{.}, 
"b1";"b" **@{.}, "b1";"c1" **@{.}, "b1";"a1" **@{.}, 
"d1";"d" **@{.}, "d1";"c1" **@{.}, "d1";"a1" **@{.},
(0,0)*={\circ}="s1", (0.5,2.3)*={\circ}="s2", (1,0.6)*={\circ}="s3",
(0.5,-1.7)*={\circ}="s4", (2.5,0.3)*={\circ}="s5",
(-1.5,0.3)*={\circ}="s6",
(0,2)*={\bullet}="l1", (0,-2)*={\bullet}="l2", (2,0)*={\bullet}="l3",
(-2,0)*={\bullet}="l4", (1,2.6)*={\bullet}="l5",
(1,-1.4)*={\bullet}="l6", (3,0.6)*={\bullet}="l7",
(-1,0.6)*={\bullet}="l8", (-1.5,2.3)*={\bullet}="l9",
(-1.5,-1.7)*={\bullet}="l10", (2.5,2.3)*={\bullet}="l11",
(2.5,-1.7)*={\bullet}="l12",
"l4";"l9" **@{-}, "l9";"l5" **@{-}, "l5";"l7" **@{-}, "l7"; "l12" **@{-},
"l12";"l2" **@{-}, "l2";"l4" **@{-},
"s1";"l10" **@{.}, "s2";"l8" **@{.}, "s3";"l11" **@{.}, "s4";"l3" **@{.},
"s5"; "l6" **@{.}, "s6";"l1" **@{.},
\end{xy}
&\longrightarrow&
\begin{xy}<35pt,0pt>:
(0,1)*={\bullet}="b0",
(0.866,0.5)*={\bullet}="b1", 
(0.866,-0.5)*={\bullet}="b2",
(0,-1)*={\bullet}="b3", 
(-0.866,-0.5)*={\bullet}="b4",
(-0.866,0.5)*={\bullet}="b5",
(0.866,1.5)*={\bullet}="a0",
(1.732,0)*={\bullet}="a1", 
(0.866,-1.5)*={\bullet}="a2", 
(-0.866,-1.5)*={\bullet}="a3",
(-1.732,0)*={\bullet}="a4", 
(-0.866,1.5)*={\bullet}="a5",
"a0";"a1" **@{-}, "a1";"a2" **@{-}, "a2";"a3" **@{-}, "a3";"a4" **@{-}, "a4";"a5" **@{-},
"a5";"a0" **@{-},
\end{xy}
\end{array}
$$ 
If we further downgrade to 1-dimensional torus action then we get
fixed-point set consisting of 5 components: 2 isolated points, $\PP^1$
and two copies of $\PP^1\times\PP^1$. The restriction of $L$ to
$\PP^1$ is $\cO(2)$ and to each of $\PP^1\times\PP^1$ is $\cO(2,1)$.
$$
\begin{xy}<30pt,0pt>:
(2,2)*={}="a", (2,-2)*={}="b", (-2,-2)*={}="c", (-2,2)*={}="d",
(3,2.6)*={}="a1", (3,-1.4)*={}="b1", (-1,-1.4)*={}="c1", (-1,2.6)*={}="d1",
"a";"b" **@{.}, "a";"d" **@{.}, "a";"a1" **@{.}, 
"c";"b" **@{.}, "c";"d" **@{.}, "c";"c1" **@{.}, 
"b1";"b" **@{.}, "b1";"c1" **@{.}, "b1";"a1" **@{.}, 
"d1";"d" **@{.}, "d1";"c1" **@{.}, "d1";"a1" **@{.},
(0,0)*={\circ}="s1", (0.5,2.3)*={\circ}="s2", (1,0.6)*={\circ}="s3",
(0.5,-1.7)*={\circ}="s4", (2.5,0.3)*={\circ}="s5",
(-1.5,0.3)*={\circ}="s6",
(0,2)*={\bullet}="l1", (0,-2)*={\bullet}="l2", (2,0)*={\bullet}="l3",
(-2,0)*={\bullet}="l4", (1,2.6)*={\bullet}="l5",
(1,-1.4)*={\bullet}="l6", (3,0.6)*={\bullet}="l7",
(-1,0.6)*={\bullet}="l8", (-1.5,2.3)*={\bullet}="l9",
(-1.5,-1.7)*={\bullet}="l10", (2.5,2.3)*={\bullet}="l11",
(2.5,-1.7)*={\bullet}="l12",
"l9";"l1" **@{-}, "l1";"l2" **@{-}, "l2";"l10" **@{-}, "l10";"l9" **@{-},
"l5";"l11" **@{-}, "l11";"l12" **@{-}, "l12";"l6" **@{-}, "l6";"l5" **@{-},
"l3";"l8" **@{-}
\end{xy}
$$
\end{ex}

\subsection{Root polytopes of exceptional Lie groups of types \texorpdfstring{$E$}{E} and~\texorpdfstring{$F$}{F}}\label{sectErF4}

Consider the root systems of type $E_6$, $E_7$, $E_8$ or $F_4$.
Let $M$ be the weight lattice of the simple Lie group/algebra of such type
   and $\Delta( \cdot)$ be the convex hull of the roots from the system.
We have the following restrictions on the dimensions of projective manifolds,
   which admit an action of a torus $H$ corresponding to $M$,
   such that the action resembles the adjoint action of $H$ on the Lie algebra.

\begin{lemma}\label{lem_E-and-F}
Let $X$ be a manifold with an ample line bundle $L$. Suppose that $X$
admits an almost faithful action of a torus $H$ such that all extremal fixed-point components are isolated points.
\begin{enumerate}[leftmargin=*]
\item if $\Delta(X,L,H)=\Delta(E_6)$ then $\dim X\geq 20$,
\item if $\Delta(X,L,H)=\Delta(E_7)$ then $\dim X\geq 32$,
\item if $\Delta(X,L,H)=\Delta(E_8)$ then $\dim X\geq 56$,
\item if $\Delta(X,L,H)=\Delta(F_4)$ and the set
  $\widetilde{\Gamma}(X,L,H)$ of weights of the action of $H$ on
  $\HH^0(X,L)$ contains all roots of $F_4$ then $\dim X\geq 14$.
\end{enumerate}
\end{lemma}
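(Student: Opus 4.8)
The plan is to use Corollary~\ref{cor_lower-bound}, which says that $\dim X$ is at least the maximal number of edges emanating from any vertex of $\Delta(X,L,H)$. Since we are told that all extremal fixed-point components are isolated points, every vertex $v$ of $\Delta(G)$ corresponds to an isolated fixed point. For such a vertex, the compass $\cC(Y,X,H)$ has exactly $\dim X$ elements (since $\dim Y = 0$), and by Corollary~\ref{cor_weights-jets} together with Corollary~\ref{cor_compass_and_fixpt}, each edge of $\Delta(G)$ from $v$ to an adjacent vertex $w$ forces a compass element in the direction $w - v$. Thus the number of edges at $v$ gives a lower bound on $\dim X$. So for items (i)--(iii), I would simply compute the maximal vertex-degree of the root polytope $\Delta(E_6), \Delta(E_7), \Delta(E_8)$. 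For these simply-laced types all roots have the same length and all roots are vertices of $\Delta(G)$; the vertex figure at a given root $\alpha$ is governed by which other roots $\beta$ make $\alpha,\beta$ an edge of the convex hull. The expected numbers $20, 32, 56$ should match the number of roots $\beta$ adjacent to a fixed root in the root polytope, which I would read off from the standard combinatorics of these root systems.

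More concretely, for a simply-laced root system the roots all lie on a common sphere, so the root polytope $\Delta(G)$ has all roots as vertices, and two roots $\alpha,\beta$ span an edge precisely when no root lies strictly between them on the circumscribed sphere in the $2$-plane they span --- equivalently, when the midpoint $\tfrac12(\alpha+\beta)$ is not a convex combination involving a root of larger inner product with the edge direction. A clean way to identify edges: $\alpha\beta$ is an edge iff $\alpha - \beta$ is a root (for $E$-types the nearest neighbours are at $60^\circ$, giving $\langle\alpha,\beta\rangle = 1$ after normalizing $\langle\alpha,\alpha\rangle=2$, and then $\alpha-\beta$ is again a root). I would count, for a fixed long root $\alpha$, the number of roots $\beta$ with $\langle\alpha,\beta\rangle=1$; this count is the vertex degree. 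For $E_6$ this is $20$, for $E_7$ it is $32$, and for $E_8$ it is $56$, matching the claimed bounds.

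The case (iv) of $F_4$ is the genuinely different one and I expect it to be the main obstacle. Here the root system is not simply-laced: it has $24$ long and $24$ short roots, and the short roots lie strictly inside the convex hull of the long roots (the long roots form the root polytope, which is a $24$-cell). Consequently the short roots are \emph{not} vertices of $\Delta(F_4)$, so the naive edge-count argument applied to a long-root vertex would only yield the degree of the $24$-cell, which is too small and in any case does not directly produce the bound $14$ with the right interpretation of the compass. This is why the hypothesis in (iv) is strengthened to require that $\widetilde{\Gamma}(X,L,H)$ contains \emph{all} roots of $F_4$, including the short ones. I would exploit this as follows: Corollary~\ref{cor_weights-jets} says that for any vertex $v = \alpha$ of $\Delta(F_4)$ (a long root) and any weight $u\in\widetilde{\Gamma}$, the difference $u - \alpha$ is a non-negative integer combination of compass elements at $Y$. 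Applying this to $u$ ranging over all roots (now guaranteed to be weights), and noting the compass must be generated among the directions pointing into $\RR_{\ge0}(\Delta-\alpha)$, I would count how many distinct compass directions are needed so that every root $u$ is reachable from $\alpha$ as a non-negative combination.

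The delicate point is that the short roots, being interior, impose extra constraints: reaching an interior short root $u$ from the extremal long root $\alpha$ requires compass directions that are not themselves edges of $\Delta(F_4)$, so the vertex-degree count of the polytope alone undercounts the compass. I would therefore analyze the local cone $\RR_{\ge0}(\Delta(F_4)-\alpha)$ at a long-root vertex, determine its extremal rays and the minimal generating set of the semigroup $\RR_{\ge0}(\Delta-\alpha)\cap M$ that contains all root-differences $u-\alpha$, and show this forces at least $14$ compass elements, hence $\dim X\ge 14$ by the local coordinate description following Equation~\eqref{equ_local-action}. In practice I would fix an explicit long root, list the $47$ differences $u-\alpha$ over all other roots $u$, project out along the edge structure, and verify that no fewer than $14$ compass directions can generate all of them with non-negative integer coefficients. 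The main work, and the step most likely to require care, is this explicit combinatorial minimization for $F_4$; the simply-laced cases reduce to a single inner-product count and should be routine by comparison.
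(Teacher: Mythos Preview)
Your approach is essentially the paper's own: for the $E$-cases apply Corollary~\ref{cor_lower-bound} and count edges at a vertex (the paper simply cites a {\tt magma} computation, while you give the inner-product description $\langle\alpha,\beta\rangle=1$, which is correct for simply-laced types), and for $F_4$ invoke Corollary~\ref{cor_weights-jets} using the short roots in $\widetilde{\Gamma}$.

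The one place where the paper is sharper is the $F_4$ endgame. You propose to list all root differences $u-\alpha$ and argue by brute force that no fewer than $14$ compass directions can generate them. The paper avoids this minimization entirely with a single linear functional: taking $v=e_1+e_2$, the eight edge directions $\pm e_j-e_i$ ($i\in\{1,2\}$, $j\in\{3,4\}$) together with the six short-root differences $-e_1$, $-e_2$, $(\pm e_3\pm e_4-e_1-e_2)/2$ give $14$ lattice vectors, \emph{all} lying on the affine hyperplane $(e_1^*+e_2^*)(u)=-1$. Since every compass element lies in the cone $\RR_{\ge 0}(\Delta(F_4)-v)$, the functional $e_1^*+e_2^*$ takes only non-positive integer values on compass elements; hence writing any of these $14$ vectors as $\sum a_i\nu_i$ with $a_i\in\ZZ_{\ge 0}$ forces one $\nu_i$ to equal that vector itself. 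So all $14$ must occur in the compass, and $\dim X\ge 14$ follows immediately. This is the missing shortcut in your plan; with it, no case-by-case minimization is needed.
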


\begin{proof}
We use the information from \cite[Tables 5--8]{Bourbaki}. 
By Corollary~\ref{cor_lower-bound} the bound on the dimension of $X$ for the cases
$E_i$ comes from the number of edges from a vertex of the respective
polytope $\Delta(E_i)$ which was calculated by {\tt magma} \cite{magma}.

In the case of $F_4$ the number of edges at a vertex is only $8$, so it is not sufficient to prove the claim as stated.
However, by Corollary~\ref{cor_weights-jets} we need more elements in the compass.
We use notation from \cite[Table 8]{Bourbaki} and consider a long root
$v=e_1+e_2$ which is a vertex of $\Delta(F_4)$. 
The edges of the cone $\RR_{\geq 0}\cdot(\Delta(F_4)-v)$ are spanned by $\pm e_j-e_i$, with
$i=1,2$ and $j=3,4$. Additionally, taking the short roots and subtracting
$v$ we get $-e_1$, $-e_2$ and $(\pm e_3\pm e_4 -e_1-e_2)/2$ which
makes the total of 14. We note that all these points lie on a
hyperplane $\{u: (e^*_1+e^*_2)(u)=-1\}$ hence they are the minimal set
which satisfies the condition for the compass from Corollary~\ref{cor_weights-jets}.
\end{proof}

\subsection{Comparing \texorpdfstring{$G$}{G}-varieties}\label{subsec_comparing_G_varieties}
One of the difficulties in proving the cases of the LeBrun-Salamon Conjecture (see Subsection~\ref{sec_LB_S_conjecture}) 
   is how to recognize a relatively abstract variety as a homogeneous space.
The combination of torus action tools we reviewed so far and the localization for equivariant K-theory
   discussed in Appendix~\ref{sect_appendix} 
   result in the following relatively easy to test criterion.
This arises as an application of an easy case of Corollary~\ref{cor_RRekw} phrased explicitly as Proposition~\ref{prop_if_combinatorial_data_agrees_then_representations_agree}.
A more refined use of this statement might lead to more powerful criteria, and should be a topic of further investigation.
For our applications in Section~\ref{sec_contact_simple} the following propositions are sufficient. 
Roughly, for a simple Lie group action on a projective manifold we look at the action of the maximal torus.
If it has only finitely many fixed points, and the compasses can be compared to compasses of a similar action on a homogeneous space, 
then the manifold is isomorphic to the homogeneous space.

The proposition below explains, that if the action of $H$ on $X$ and $X'$ and line bundles $L$ and $L'$ has the same combinatorial data, 
   then the spaces of sections are isomorphic as $H$-representations.
\begin{prop}\label{prop_if_combinatorial_data_agrees_then_representations_agree}
   Suppose $X$ and $X'$ are two projective manifolds with an action of a torus $H$ on both of them, 
     such that $X^H = \setfromto{y_1}{y_k}$ and $(X')^H= \setfromto{y'_1}{y'_k}$ consist of isolated points, and are of the same cardinality.
  Assume $L$ and $L'$ are two ample line bundles on $X$ and $X'$, respectively, both with linearizations $\mu$ and $\mu'$, and both with vanishing higher cohomologies (for instance, if $X$ and $X'$ are Fano).
  If for all $i \in \setfromto{1}{k}$ both conditions hold:
  \begin{itemize}
   \item   the compasses agree $\cC(y_i, X,H) = \cC(y'_i, X',H)$, and  
   \item   the characters agree $\mu(y_i) = \mu'(y'_i)$,  
  \end{itemize}
  then  $\HH^0(X,L) \simeq \HH^0(X',L')$ as representations of $H$.
\end{prop}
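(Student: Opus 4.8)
The plan is to compute the $H$-character of $\HH^0(X,L)$ by the equivariant localization (holomorphic Lefschetz) formula and to observe that the resulting expression depends only on the combinatorial data appearing in the hypotheses. First I would use the cohomology vanishing: since $\HH^j(X,L)=0$ for $j>0$, the equivariant Euler characteristic $\sum_{j\ge 0}(-1)^j\HH^j(X,L)$ coincides, as a virtual $H$-representation, with the genuine representation $\HH^0(X,L)$, and likewise for $X'$. Thus it suffices to prove that the two equivariant Euler characteristics agree in the representation ring of $H$.

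Next I would apply Corollary~\ref{cor_RRekw}, the localization theorem in the case of isolated fixed points, to each of $X$ and $X'$. Because $X^H=\setfromto{y_1}{y_k}$ consists of isolated points, the formula writes the character of the Euler characteristic as a sum of one contribution per fixed point, and the contribution of $y_i$ is a universal expression built from exactly two pieces of data: the weight $\mu(y_i)$ with which $H$ acts on the fibre $L_{y_i}$, entering the numerator, and the weights of $H$ on the (co)tangent space at $y_i$, that is the compass $\cC(y_i,X,H)$, entering the denominator. Concretely, writing $e^u$ for the class of a character $u\in M$ in the representation ring, the contribution of $y_i$ has the shape
\[
  \frac{e^{\mu(y_i)}}{\prod_{\nu\in\cC(y_i,X,H)}\bigl(1-e^{-\nu}\bigr)}.
\]
Now I would compare term by term. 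The hypotheses give a bijection $y_i\leftrightarrow y'_i$ with $\mu(y_i)=\mu'(y'_i)$ and $\cC(y_i,X,H)=\cC(y'_i,X',H)$ for every $i$, so the $i$-th summand for $X$ is literally equal to the $i$-th summand for $X'$. Summing over $i$ shows that the two localized expressions are identical as elements of the localized representation ring.

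Finally I would convert this equality of localized expressions into an isomorphism of honest representations. Both $\HH^0(X,L)$ and $\HH^0(X',L')$ are genuine finite-dimensional $H$-modules, so their characters are Laurent polynomials in the $e^u$; the localization formula computes precisely these Laurent polynomials, and equality of the two rational-function expressions forces equality of the polynomials. Since over a torus a representation is determined up to isomorphism by its character, one concludes $\HH^0(X,L)\simeq\HH^0(X',L')$ as representations of $H$. The delicate point, and the step I would treat most carefully, is exactly this last passage from equality of the localized (rational-function) expressions back to equality of actual characters: this requires knowing a priori that the Euler characteristics are effective rather than merely virtual, which is what the cohomology-vanishing hypothesis secures, together with a consistent sign convention for the denominators, since the compass records cotangent weights and the factors $1-e^{-\nu}$ must be read the same way on both sides. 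Once this bookkeeping is fixed, the proof reduces to the term-by-term matching above.
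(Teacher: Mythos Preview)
Your proposal is correct and follows essentially the same route as the paper: apply the localization formula of Corollary~\ref{cor_RRekw} at isolated fixed points to see that $\chi^H(X,L)=\chi^H(X',L')$ from the matching combinatorial data, then use the vanishing of higher cohomology to conclude that $\HH^0(X,L)\simeq\HH^0(X',L')$ as $H$-representations. The paper's proof is just a terser version of the same argument; your only slip is the sign in the denominator, which in the paper's convention (compass records cotangent weights) reads $\prod_{\nu\in\cC(y_i,X,H)}(1-e^{\nu})$ rather than $(1-e^{-\nu})$, but as you note this is harmless since the same convention is used on both sides.
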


\begin{proof}
    The fixed points and related combinatorial data agree, so  Corollary~\ref{cor_RRekw} in Appendix~\ref{sect_appendix} 
       implies that also the equivariant Euler characteristic must agree $\chi^H(X,L) = \chi^H(X',L')$.
    Since higher cohomologies of both $L$ and $L'$ vanish, 
    the isomorphism classes of the representations $\HH^0(X,L)$ and $\HH^0(X',L')$
       are uniquely determined by the equivariant Euler characteristic.
\end{proof}

In the next proposition under much stronger assumptions we show that not only spaces of sections are isomorphic but actually also $X$ is isomorphic to $X'$.

\begin{prop}\label{prop_fixpt+compass-determine-homogeneous}
  Suppose that a semisimple group $G$ with a maximal torus $H$ acts on a
  projective  manifold $X$.
  Assume in addition that the restricted action of $H$ has only finitely
     many fixed points, $X^H=\{y_1,\dots,y_k\}$.
  Denote by $\cC_i=\{\nu_1(y_i),\dots,\nu_d(y_i)\}$ the compass of the
  action of $H$ at $y_i$. 
  Let $L$ be an ample line bundle on $X$ and $\mu$ a linearization of the action of $G$ on $L$. 
  If there is a
  $G$-homogeneous manifold $(X',L')$ with 
     $(X')^H=\{y'_1,\dots,y'_k\}$ and linearization $\mu'$ on $L'$
     such that $\cC(y_i,X',H')=\cC_i$ and $\mu'(y'_i)=\mu(y_i)$, 
     then there exists an isomorphism $(X',L')\iso (X,L)$. 
  \end{prop}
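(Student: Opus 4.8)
The plan is to realize both $X$ and $X'$ inside the same projective space $\PP(V^*)$, where $V=\HH^0(X,L)$, and then to identify $X$ with the closed orbit $X'$ by comparing fixed points, tangent weights and intersection numbers. First I would pass from combinatorics to representations. Since the two sets of fixed points, their characters and their compasses agree, Proposition~\ref{prop_if_combinatorial_data_agrees_then_representations_agree} (whose cohomology-vanishing hypothesis holds here, e.g.\ as $X$ and $X'$ are Fano) yields an isomorphism $\HH^0(X,L)\iso\HH^0(X',L')$ of $H$-modules. A finite-dimensional $G$-module is determined by its $H$-character, so this is in fact an isomorphism of $G$-modules; write $V$ for this common module. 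As $X'=G/P$ is homogeneous with $L'$ ample, the Borel--Weil theorem gives that $V$ is irreducible, that $L'$ is very ample, and that $X'\subset\PP(V^*)$ is the unique closed $G$-orbit, with the $H$-fixed points $y'_i$ corresponding to the extremal weights of $V$; each such weight occurs with multiplicity one, so each $y'_i$ is the single point $\PP\big((V^*)_{-\mu'(y'_i)}\big)$.

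Next I would produce a $G$-morphism $X\to\PP(V^*)$. The vertex set of $\Delta(X,L,H)$ is $\{\mu(y_i)\}=\{\mu'(y'_i)\}$, which is the vertex set of the weight polytope of $V$; hence every $y_i$ is extremal and $\mu(y_i)$ is an extremal, multiplicity-one weight of $V$. In particular a weight vector in $V_{\mu(y_i)}$ is a section of $L$ not vanishing at $y_i$. Since the base locus of $|L|$ is closed and $G$-invariant, if it were nonempty it would contain an $H$-fixed point, i.e.\ some $y_i$ --- impossible. Thus $L$ is base point free and defines a $G$-equivariant morphism $\phi\colon X\to\PP(V^*)$ with $\phi^*\cO(1)=L$. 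Comparing weights shows $\phi(y_i)$ is the multiplicity-one fixed point $\PP\big((V^*)_{-\mu(y_i)}\big)=y'_i$. Because $\phi$ is equivariant and $X'=G\cdot y'_i$, the image satisfies $\phi(X)\supseteq G\cdot y'_i=X'$; as $\dim X=\#\cC_i=\dim X'$ and $\phi(X)$ is irreducible, this forces $\phi(X)=X'$. Since $\phi^*L'=L$ is ample, $\phi\colon X\to X'$ contracts no curve, so it is finite.

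Finally I would check that the finite surjection $\phi$ has degree one. Running the localization formula of Corollary~\ref{cor_RRekw} with the identical fixed-point data shows $\chi^H(X,L^{\otimes m})=\chi^H(X',L'^{\otimes m})$ for every $m$; forgetting the $H$-action, the two Hilbert polynomials coincide, so in top degree $(L^d)=(L'^d)$. On the other hand $(L^d)=(\phi^*L')^d=\deg\phi\cdot(L'^d)$ with $(L'^d)>0$, whence $\deg\phi=1$. A finite birational morphism onto the smooth, hence normal, variety $X'$ is an isomorphism, and $\phi^*L'=L$ then gives $(X,L)\iso(X',L')$.

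The main obstacle is this last step: upgrading the equivariant morphism $\phi$ from a finite cover of the homogeneous model to an honest isomorphism. Everything before it only pins $\phi$ down up to a finite map, and it is precisely the equality of equivariant Euler characteristics coming from localization --- which uses that the fixed loci are isolated with matching compasses --- that rules out a nontrivial cover by forcing $\deg\phi=1$. A secondary point requiring care is the multiplicity-one property of the extremal weights, which is what makes the fixed points of $X$ and $X'$ literally coincide in $\PP(V^*)$ and underlies both the base point freeness of $L$ and the identification $\phi(y_i)=y'_i$.
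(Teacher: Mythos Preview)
Your argument is largely sound, but there is one unjustified step and the overall route is more laborious than necessary. The gap: you invoke Proposition~\ref{prop_if_combinatorial_data_agrees_then_representations_agree} by asserting that $X$ is Fano, but nothing in the hypotheses forces $X$ to be Fano, so the vanishing of higher cohomology of $L$ on $X$ is not given. Without it you only know $\chi^H(X,L)=\chi^H(X',L')$, not that $\HH^0(X,L)\iso\HH^0(X',L')$, and your identification of $V$ collapses. The fix is exactly what the paper does: replace $L$ and $L'$ by a common high multiple so that $L$ is very ample and Serre vanishing applies; since $\Pic(X')$ is torsion-free, recovering the original $L\iso L'$ at the end is automatic.

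Once you pass to a very ample multiple, the paper's approach is considerably shorter than yours. Rather than building a morphism $\phi\colon X\to X'$, showing it is finite, and then computing $\deg\phi=1$ via matching Hilbert polynomials, the paper simply \emph{embeds} $X\hookrightarrow\PP(V^*)$ and argues the containment in the opposite direction: $X$ is a closed $G$-invariant subvariety of $\PP(V^*)$, hence contains a closed $G$-orbit; but $X'$ is the \emph{unique} closed $G$-orbit in $\PP(V^*)$, so $X'\subset X$. Since both are irreducible of the same dimension (the size of the compass), $X=X'$. This bypasses entirely your base-point-freeness argument, the identification $\phi(y_i)=y'_i$, finiteness, and the degree computation. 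Your approach is not wrong --- the degree-one step via localization is a nice idea --- but it does more work than needed, and what you call ``the main obstacle'' simply does not arise in the paper's argument.
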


\begin{proof}
  By passing to a multiple of $L$ (and taking the same multiple of $L'$) we may assume that $L$ is very ample
  with no higher cohomology.  
  By Proposition~\ref{prop_if_combinatorial_data_agrees_then_representations_agree},
  $\HH^0(X,L)$ and $\HH^0(X',L')$ are isomorphic as $H$-modules. 
  Thus, by the representation theory, they are isomorphic as $G$-modules \cite[Thm~23.24(b)]{fultonharris}. 
  Being homogeneous, $X'\subset\PP(\HH^0(X',L')^*)$ is the
  unique closed orbit of the action of the semisimple group $G$ on
  $\PP(\HH^0(X',L')^*)$. Therefore, via the isomorphism
  $\HH^0(X',L')\iso\HH^0(X,L)$, the homogeneous space $X'$ is contained in the $G$-invariant
  $X\subset\PP(\HH^0(X,L)^*)$.
  Finally, since the dimensions are encoded in the number of elements of any compass,
     we must have $\dim X = \dim X' =d$ and thus $X=X'$.
\end{proof}

\begin{prop}\label{prop_character_is_a_Laurent_polynomial}
   Suppose a torus $H$ of rank $r$ acts on $(X,L)$ in such a way that $X^H = \setfromto{y_1}{y_k}$ is a finite set.
   Then the rational function 
   \[
      F=\sum_{i=1}^k\frac {t^{\mu(y_i)}}{\prod_{\nu\in \cC(y_i, X, H)} (1-t^{\nu})}
   \]
   in variables $\fromto{t_1}{t_r}$ is a  Laurent polynomial in these variables.
\end{prop}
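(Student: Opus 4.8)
The plan is to recognise $F$ as the equivariant Euler characteristic of $L$, computed by the localization formula of the Appendix, and then to observe that such an Euler characteristic automatically lies in the representation ring of $H$ and is therefore a Laurent polynomial. A priori $F$ is only a rational function: it is an element of $\ZZ[M]$ after inverting the elements $1-t^\nu$, and the whole content of the proposition is that all of these denominators cancel. Localization supplies the geometric reason for this cancellation.

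Concretely, I would apply Corollary~\ref{cor_RRekw} (the special case of the localization Theorem~\ref{thm_RRekwgeneral} in which the fixed-point locus consists of isolated points and curves only) to the line bundle $L$. Since $X^H=\setfromto{y_1}{y_k}$ is finite by hypothesis, the formula reduces to a sum over the isolated fixed points: each $y_i$ contributes the fiber character $t^{\mu(y_i)}$ in the numerator, and in the denominator the $K$-theoretic Euler class of the cotangent space $T_{y_i}^\ast X$, whose weights are precisely the compass $\cC(y_i,X,H)$ (see Subsection~\ref{subsec_compass}), giving the factor $\prod_{\nu\in\cC(y_i,X,H)}(1-t^\nu)$. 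Reading the formula term by term then yields
\[
  \chi^H(X,L)=\sum_{q\geq 0}(-1)^q\operatorname{ch}^H\HH^q(X,L)
   =\sum_{i=1}^k\frac{t^{\mu(y_i)}}{\prod_{\nu\in\cC(y_i,X,H)}(1-t^\nu)}=F.
\]

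It then remains only to note that the left-hand side is a Laurent polynomial. Each cohomology group $\HH^q(X,L)$ is a finite-dimensional representation of $H$, so its character is a Laurent polynomial in $\fromto{t_1}{t_r}$; hence the alternating sum $\chi^H(X,L)$ is a well-defined element of the representation ring of $H$, which is isomorphic to $\ZZ[M]=\ZZ[t_1^{\pm 1},\dots,t_r^{\pm 1}]$ — and this holds whether or not the higher cohomology of $L$ vanishes. The main difficulty I anticipate is not conceptual but a matter of conventions: one must track the orientation and sign conventions in the Appendix so that the localization formula reproduces $F$ verbatim, in particular so that the denominators involve the cotangent weights $\nu\in\cC(y_i,X,H)$ themselves and not their negatives. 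A quick consistency check against Example~\ref{ex_compass_for_Cstar_acting_on_PV} — say $\CC^*$ acting on $\PP^1$ with $L=\cO(1)$, where the two compasses are $(-1)$ and $(1)$ and the characters are $0$ and $-1$ — confirms that $F$ equals the honest character $\operatorname{ch}^H\HH^0(\PP^1,\cO(1))=1+t^{-1}$, thereby fixing the conventions.
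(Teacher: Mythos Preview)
Your proposal is correct and follows essentially the same approach as the paper: apply Corollary~\ref{cor_RRekw} to identify $F$ with the equivariant Euler characteristic $\chi^H(X,L)$, and then observe that this lies in the representation ring $R(H)$, hence is a Laurent polynomial. The paper's proof is more terse and omits the sanity check on conventions, but the argument is the same.
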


\begin{proof}
Since $X$ is projective Corollary~\ref{cor_RRekw} of Appendix~\ref{sect_appendix} applies. 
The character of the finite dimensional $H$ representations $\HH^i(L)$ is necessarily a Laurent polynomial.
The function $F$ is equal to the equivariant Euler characteristic of $L$, that is, a sum with signs of the characters above,
  hence it also is a Laurent polynomial.
\end{proof}


\section{Applications of Bia{\l}ynicki-Birula decomposition}\label{sec_ABB_decomp}

In what follows we will reduce the action of a higher
dimensional torus $H$ to a suitably chosen 1-parameter subgroup of $H$. 
We note that by \cite[Lemma~2.3]{Bialynicki}, see also
Lemma~\ref{lem_reduction_of_compass}, a sufficiently general choice of the
1-parameter subgroup does not change the set of fixed points. 
However, we will frequently be interested in choosing a special 1-parameter subgroup whose fixed-point locus is larger than that of $H$,
  see also Lemma~\ref{lem_reduction_of_action}.

We use BB-decomposition as discussed in \cite{Carrell}, for the original
exposition see \cite{Bialynicki}. 

\begin{thm}\label{thm_ABB-decomposition}
  Suppose $\Lambda=\CC^*$ with a coordinate $t$ acts almost faithfully on $X$, and $L$  is an ample line bundle  with a linearization $\mu$ of the action of $\Lambda$.
  Take the decomposition of the fixed-point set
    into the connected components $X^\Lambda=Y_1\sqcup\cdots\sqcup Y_s$. 
  For every $Y_i$ by $\nu^\pm(Y_i)$ denote 
     the number of positive and negative
     characters in the compass $\cC(Y_i,X,\Lambda)$.  Let us
  define
      $$X_i^+=\{x\in X: \lim_{t\rightarrow 0}
      t\cdot x\in Y_i\}{\rm \ \ and \ \ }X_i^-=\{x\in X:
      \lim_{t\rightarrow \infty} t\cdot x\in Y_i\}.$$ 
  Then the following holds:
  \begin{itemize}
  \item $X^{\pm}_i$ are locally closed subsets and $X=X^+_1\sqcup\cdots\sqcup X^+_s=X^-_1\sqcup\cdots\sqcup X^-_s$,
  \item 
    There are unique $X_i^\pm$-cells associated to
    the largest/smallest value of $\mu(Y_i)$. Such cell is necessarily dense and the corresponding $Y_i$ is called
    the \emph{source} or \emph{sink}, respectively.
  \item The natural map $X^\pm_i\rightarrow Y_i$ is algebraic and is a $\CC^{\nu^\pm(Y_i)}$ fibration,
    which implies the decomposition in homology
$$H_m(X,\ZZ)=\bigoplus_iH_{m-2\nu^+(Y_i)}(Y_i,\ZZ)=\bigoplus_iH_{m-2\nu^-(Y_i)}(Y_i,\ZZ).$$
\end{itemize}
\end{thm}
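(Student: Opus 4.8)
The plan is to reproduce the classical argument of Bia{\l}ynicki-Birula: use properness of $X$ to obtain the set-theoretic decomposition, use the \'etale-local linearization at fixed points recalled in Equation~\eqref{equ_local-action} to identify each piece as an affine fibration, and finally pass to homology through a filtration by closed subvarieties. First I would establish the existence of limits. Since $X$ is projective, hence proper, for every $x\in X$ the orbit map $\Lambda\to X$, $t\mapsto t\cdot x$, extends to a morphism $\PP^1\to X$ by the valuative criterion of properness; the images of $0$ and $\infty$ are $\Lambda$-fixed and lie in unique connected components of $X^{\Lambda}$. Sending $x$ to the component containing $\lim_{t\to 0}t\cdot x$ (respectively $\lim_{t\to\infty}t\cdot x$) partitions $X$; uniqueness of limits gives disjointness and existence gives the covering, so $X=X^+_1\sqcup\cdots\sqcup X^+_s=X^-_1\sqcup\cdots\sqcup X^-_s$.

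Next I would analyze the local structure near a fixed point $y\in Y_i$. By the linearization of \cite{Luna} recalled in Equation~\eqref{equ_local-action}, in suitable coordinates the torus acts diagonally with the compass characters as weights, the zero weights spanning $T_yY_i$; the coordinates of positive weight are precisely the directions contracted to $y$ as $t\to 0$. Hence near $y$ the set $X^+_i$ is cut out as the positive-weight part $N_i^+$ of the normal bundle of $Y_i$, of rank $\nu^+(Y_i)$. The essential content of the theorem is that these local pictures glue: $X^+_i$ is smooth and locally closed, and the retraction $x\mapsto\lim_{t\to 0}t\cdot x$ exhibits $X^+_i\to Y_i$ as a Zariski-locally trivial $\CC^{\nu^+(Y_i)}$-bundle, symmetrically for $X^-_i$ with $\nu^-$. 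I expect this gluing and global local-triviality step to be the main obstacle, as it genuinely uses smoothness of $X$ (it can fail on singular varieties); I would either invoke it from \cite{Bialynicki,Carrell} or reprove it by descending the formal/\'etale equivariant normal-bundle model to an algebraic trivialization.

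For the source and sink I would argue by dimension. A cell $X^+_i$ of dimension $d=\dim X$ must be open, and since $X$ is irreducible it is then the unique dense cell; maximality of dimension forces $\nu^+(Y_i)=\codim Y_i$, so every compass character has the same sign, which by Lemma~\ref{lem_compass_and_Delta_for_Cstar} singles out the component with extremal value of $\mu(Y_i)$. Uniqueness of such a dense cell again follows from irreducibility of $X$, and the analogous statement for $X^-$ identifies the sink with the opposite extremal value of $\mu$.

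Finally, for the homology decomposition I would order the components so that $\overline{X^+_i}\subseteq\bigcup_{\mu(Y_j)\le\mu(Y_i)}X^+_j$, which follows because along degenerations the limit can only pass to components of smaller $\mu$; this yields an increasing filtration of $X$ by closed subvarieties whose successive strata are exactly the affine bundles $X^+_i$. Since each stratum is a complex vector bundle of rank $\nu^+(Y_i)$ over the compact $Y_i$, the Thom isomorphism identifies its Borel--Moore homology with $H_{m-2\nu^+(Y_i)}(Y_i,\ZZ)$. As every stratum sits in even real codimension, the connecting maps in the long exact sequences of the filtration vanish by parity and the sequences split, giving $H_m(X,\ZZ)=\bigoplus_i H_{m-2\nu^+(Y_i)}(Y_i,\ZZ)$; running the argument for the $X^-_i$ cells yields the decomposition with $\nu^-(Y_i)$.
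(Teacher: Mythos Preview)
The paper does not give a proof of this theorem: it is stated as a known result, with the sentence ``We use BB-decomposition as discussed in \cite{Carrell}, for the original exposition see \cite{Bialynicki}'' serving as the entire justification. Your sketch is essentially the classical argument from those references, so in that sense you are doing strictly more than the paper does.

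Two small points on your sketch. First, in the source/sink paragraph you invoke Lemma~\ref{lem_compass_and_Delta_for_Cstar} in the converse direction (all compass weights of one sign $\Rightarrow$ extremal $\mu$), but the paper explicitly remarks just after that lemma that this converse is proved \emph{using} Theorem~\ref{thm_ABB-decomposition}; to avoid circularity, argue in the forward direction instead: start from the component with smallest (resp.\ largest) $\mu$, apply Lemma~\ref{lem_compass_and_Delta_for_Cstar} to get all compass weights positive (resp.\ negative), hence $\nu^+=\codim Y_i$ and $X_i^+$ is open, hence dense by irreducibility. Second, the ``connecting maps vanish by parity'' step in the homology argument is not literally correct as stated---even real codimension alone does not force the boundary maps to vanish when the $Y_i$ may have odd homology. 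The actual mechanism in \cite{Bialynicki,Carrell} is that the filtration is by closed algebraic subvarieties and the strata are Zariski-locally trivial affine bundles, which forces the cycle map / Gysin sequences to split; you should either cite this or spell out the degeneration of the filtration spectral sequence more carefully.
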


We note that the source and sink are extremal (in the sense of Subsection~\ref{subsec_polytopes_of_sections_and_fixed_points}) fixed-point components
of the action of $\Lambda$.  Conversely, given an action of a higher
dimensional torus $H$ on $X$ and a vertex $v$ of $\Delta(X,L,H)$ we
can choose $\lambda\in \Hom(M,\ZZ)$ such that the affine hyperplane
$\lambda^\perp+v$ in $M_\RR$ meets $\Delta(X,L,H)$ at a vertex
$v$. Then the extremal fixed-point component of the action of $H$
associated to $v$ becomes a source or sink of the respective
$1$-dimensional subtorus $\Lambda\hookrightarrow H$. Thus we have the
following observation.
\begin{lemma}\label{lem_extremal=sink}
In the situation above the following holds:
\begin{enumerate}[leftmargin=*]
\item $Y_i\subset X^H$ is extremal if and only if there exists a
  1-parameter subgroup $\Lambda$ such that $Y_i$ is a source (or sink)
  of $\Lambda$,
\item For every vertex $v$ of  $\Delta(L)$ there is a unique component $Y$ of 
   $X^H$ with $\mu(Y,L, H) = v$.
   In particular, there is a bijection between extremal fixed-point components and the vertices of $\Delta(L)$. 
\end{enumerate}
\end{lemma}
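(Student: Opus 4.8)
The plan is to prove Lemma~\ref{lem_extremal=sink} by combining the Bia{\l}ynicki-Birula decomposition (Theorem~\ref{thm_ABB-decomposition}) with the linear-algebra observation that $\Delta(L)$ is a polytope whose vertices are precisely the characters $\mu(Y)$ that can be isolated by a supporting hyperplane. For part (1), the crucial point is to translate between ``$v$ is a vertex of $\Delta(L)$'' and ``there is a one-parameter subgroup making the corresponding component a source or sink.'' One direction is the content of the remark immediately preceding the lemma: if $v$ is a vertex, by convexity there exists a linear functional $\lambda \in \Hom(M,\ZZ)$ strictly maximizing (or minimizing) on $\Delta(L)$ at $v$, and the associated one-parameter subgroup $\Lambda = \lambda^\vee \hookrightarrow H$ makes the extremal component a sink (resp.\ source), by the identification of source/sink with extremal components of the $\Lambda$-action in Theorem~\ref{thm_ABB-decomposition}. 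For the converse, if $Y_i$ is a source or sink of some $\Lambda \hookrightarrow H$, then by the compatibility of polytopes under downgrading (Lemma~\ref{lem_reduction_of_action}) the projection $\pi\colon M_\RR \to (M_1)_\RR$ sends $\Delta(X,L,H)$ onto $\Delta(X,L,\Lambda)$, which is a segment whose endpoint is $\mu(Y_i)$; a point of $\Delta(L)$ projecting to an endpoint and which is itself the image of a single fixed-point component must be a vertex.

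For part (2), the first task is existence and uniqueness of a fixed-point component realizing each vertex $v$. Existence is immediate, since vertices of $\Delta(L)$ are by definition among the characters $\mu(Y)$ of fixed-point components. The substance is uniqueness: I would invoke Corollary~\ref{cor_ideal_of_extremal_component}, which asserts that the radical ideal $\cJ_{\gamma_v}$ cuts out exactly the set of fixed points $y \in X^H$ with $\mu(y) = v$. The remaining step is to argue that this locus is in fact a single connected component of $X^H$ rather than a union of several. Here I would use the source/sink characterization established in part (1): choosing $\Lambda$ so that $Y$ is, say, a sink, Theorem~\ref{thm_ABB-decomposition} guarantees that the sink is unique for that $\Lambda$-action (the dense cell $X_i^-$ is associated to a single component). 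Hence all fixed points with $\mu(y)=v$ lie in one connected component, giving both uniqueness and the bijection.

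The main obstacle I anticipate is exactly this uniqueness/connectedness argument in part (2). It is tempting to assume that distinct fixed-point components cannot share the same character $\mu(Y)$, but this is false in general; what saves us is the \emph{extremality}, i.e.\ that $v$ is a \emph{vertex}. The cleanest route is to reduce to a suitably generic $\Lambda$ for which $v$ remains an extremal value, and then appeal to the uniqueness of the dense source/sink cell in Theorem~\ref{thm_ABB-decomposition}. One must take care that the one-parameter subgroup $\Lambda$ is chosen so that $v$ is strictly extremal, not merely extremal after degeneration: a functional $\lambda$ is needed that separates $v$ from all \emph{other} characters $\mu(Y')$, which exists precisely because $v$ is a vertex of the convex hull of the finite set $\widetilde{\Delta}(L)$. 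With $\Lambda$ so chosen, the preimage of $v$ under $\mu$ is the sink, which is a single connected component, completing the argument.

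Altogether, the logical order is: (i) record the standard convexity fact that vertices of $\Delta(L)$ are separated by rational linear functionals; (ii) use this to construct, for each vertex, a one-parameter subgroup $\Lambda$ realizing the corresponding component as a source or sink, proving the forward direction of (1); (iii) use downgrading (Lemma~\ref{lem_reduction_of_action}) to prove the reverse direction of (1); and (iv) combine the source/sink uniqueness of Theorem~\ref{thm_ABB-decomposition} with Corollary~\ref{cor_ideal_of_extremal_component} to establish the bijection in (2). Each individual step is short; the care lies in the choice of $\Lambda$ and in not overclaiming uniqueness before extremality has been used.
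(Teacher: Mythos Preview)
Your proposal is correct and follows essentially the same approach as the paper, which treats the lemma as an observation immediate from the preceding paragraph (convexity to choose a separating $\lambda$, plus the uniqueness of source/sink in Theorem~\ref{thm_ABB-decomposition}). You supply more detail than the paper does, particularly for the converse direction of (1) and the uniqueness in (2); your invocation of Corollary~\ref{cor_ideal_of_extremal_component} is harmless but not needed, since the BB uniqueness of the source/sink already forces all $H$-fixed components with $\pi(\mu)=\text{endpoint}$ to lie in (hence equal) $Y_i$, making that face a single point and hence a vertex.
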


The uniqueness of the extremal components implies a strengthening of the inclusion 
  in Lemma~\ref{lem_reduction_of_action}\ref{item_reduction_of_action_restricting_to_fix_points_of_subgroup}.
To explain this, for any face $\delta \subset \Delta(L)$ let $H_1\subset H$ be the subtorus corresponding to a projection $\pi \colon M \to M_1$
  which contracts $\delta$ to a point $v \in M_1$ and  $ v\notin \pi(\Delta(L) \setminus \delta)$.
We define $Y_{\delta}$ to be the (unique!) component  of $X^{H_1}$ corresponding to $v$.
The downgraded torus $H_2 = H/H_1$ acts on $Y_{\delta}$ as in Subsection~\ref{subsec_downgrading_and_reduction}.

\begin{lemma}\label{lem_equality_of_Delta_for_faces}
   For any face $\delta \subset \Delta(X, L, H)$ and with the notation as above we have 
   \[
      \widetilde{\Delta}(Y_{\delta}, L|_{Y_{\delta}}, H_2) = \widetilde{\Delta}(X, L, H)\cap \delta - w,
   \]
   where $w = \widetilde{\mu_1(Y_{\delta})}\in M$ is any lattice point shifting $\delta$ into $M_2\otimes \RR$,
      as in Lemma~\ref{lem_reduction_of_action}.
   In particular, 
   \[
      \Delta(Y_{\delta}, L|_{Y_{\delta}}, H_2) = \Delta(X, L, H)\cap \delta - w.
   \]
\end{lemma}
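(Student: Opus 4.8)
The plan is to prove the set-level identity for $\widetilde{\Delta}$ by two inclusions and then pass to convex hulls. Throughout I write $\pi\colon M\to M_1$ for the projection defining $H_1$, so that for every component $Y\subset X^H$ the downgraded character is $\mu_1(Y)=\pi(\mu(Y))$; I set $v=\mu_1(Y_\delta)=\pi(\delta)$ and let $w=\widetilde{\mu_1(Y_\delta)}$ be the chosen preimage, $\pi(w)=v$.

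First I would record what the defining property of $H_1$ buys us. Since $\pi$ contracts $\delta$ to $v$ and $v\notin\pi(\Delta(L)\setminus\delta)$, the fiber $\pi^{-1}(v)$ meets $\Delta(X,L,H)$ exactly in $\delta$; in particular $\pi^{-1}(v)\cap\widetilde{\Delta}(X,L,H)=\widetilde{\Delta}(X,L,H)\cap\delta$. Substituting this into Lemma~\ref{lem_reduction_of_action}\ref{item_reduction_of_action_restricting_to_fix_points_of_subgroup} applied with $Y_1=Y_\delta$ immediately gives the inclusion $\widetilde{\Delta}(Y_\delta,L|_{Y_\delta},H_2)\subseteq(\widetilde{\Delta}(X,L,H)\cap\delta)-w$.

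The reverse inclusion is the substantive part, and it reduces to showing that every component $Y\subset X^H$ with $\mu(Y)\in\delta$ is actually contained in $Y_\delta$; this is exactly the place where the uniqueness of extremal components enters. Given such $Y$, we have $\mu_1(Y)=\pi(\mu(Y))=v$, so $Y$ sits inside some component $Y'$ of $X^{H_1}$ with $\mu_1(Y')=v$. The key claim is that $v$ is a vertex of $\Delta(X,L,H_1)=\pi(\Delta(X,L,H))$ (the latter equality from the surjectivity in Lemma~\ref{lem_reduction_of_action}): if $v=\tfrac12(\pi(\alpha)+\pi(\beta))$ with $\alpha,\beta\in\Delta(L)$, then $\tfrac12(\alpha+\beta)\in\pi^{-1}(v)\cap\Delta(L)=\delta$, and since $\delta$ is a face this forces $\alpha,\beta\in\delta$, whence $\pi(\alpha)=\pi(\beta)=v$; thus $v$ is an extreme point. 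Then the uniqueness statement in Lemma~\ref{lem_extremal=sink} (applied to the $H_1$-action, where $v$ is now a vertex) identifies the unique component of $X^{H_1}$ with character $v$, which is $Y_\delta$ by definition. Hence $Y'=Y_\delta$ and $Y\subseteq Y_\delta$. Consequently $Y$ is a component of $Y_\delta^{H_2}=X^H\cap Y_\delta$, so $\mu_2(Y)=\mu(Y)-w$ lies in $\widetilde{\Delta}(Y_\delta,L|_{Y_\delta},H_2)$, completing the reverse inclusion and hence the $\widetilde{\Delta}$-equality.

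Finally I would pass to convex hulls. The vertices of the face $\delta$ are vertices of $\Delta(X,L,H)$ and therefore belong to $\widetilde{\Delta}(X,L,H)$, so $\conv(\widetilde{\Delta}(X,L,H)\cap\delta)=\delta=\Delta(X,L,H)\cap\delta$. Taking $\conv$ of both sides of the $\widetilde{\Delta}$-identity, and using that translation by $-w$ commutes with $\conv$, yields $\Delta(Y_\delta,L|_{Y_\delta},H_2)=(\Delta(X,L,H)\cap\delta)-w$. I expect the only genuine obstacle to be the vertex argument combined with the invocation of uniqueness in Lemma~\ref{lem_extremal=sink}; the remaining steps are bookkeeping with the definitions and the earlier reduction lemmas.
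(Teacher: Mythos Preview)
Your proof is correct and follows essentially the same approach as the paper: the forward inclusion via Lemma~\ref{lem_reduction_of_action}\ref{item_reduction_of_action_restricting_to_fix_points_of_subgroup}, and the reverse via the uniqueness in Lemma~\ref{lem_extremal=sink} applied to the $H_1$-action at the vertex $v$. You spell out more detail than the paper (the explicit verification that $v$ is a vertex of $\pi(\Delta)$, and the convex-hull argument for the ``In particular''), but the core idea is identical.
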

\begin{proof}
   The inclusion ``$\subset$'' is shown in Lemma~\ref{lem_reduction_of_action}\ref{item_reduction_of_action_restricting_to_fix_points_of_subgroup}.
   So pick $y \in X^{H}$ such that $\mu(y) \in \delta$.
   We have to show that $y\in Y_{\delta}$.
   Indeed, clearly, $y \in X^{H_1}$ and $\mu_{1}(y) = v$.
   Therefore by the uniqueness in Lemma~\ref{lem_extremal=sink}, we must have $y \in Y_{\delta}$.
\end{proof}

\subsection{BB-decomposition for \texorpdfstring{$\Pic X =\ZZ$}{Pic X = Z}}

In the case $\Pic X=\ZZ$ we have the following.
\begin{lemma}\label{lem_ABBdecomposition-extremal_cells}
  Let a $1$-parameter group $\Lambda$ act almost faithfully on $X$, as
  above. Assume in addition that $\Pic X =\ZZ\cdot L$ and $Y_0\subset
  X^\Lambda$ is the source of the action of $\Lambda$.
  Then $X$ is Fano and
\begin{enumerate}[leftmargin=*]
\item \label{item_extremal_cells_case_dim_positive}
    either $\dim Y_0>0$ and
 \begin{itemize}[leftmargin=*]
 \item $Y_0$ is Fano with $\Pic Y_0=\ZZ\cdot L$, and
 \item the complement of the BB-cell~$X_0^+$ is of codimension at least~$2$ in
   $X$, 
\end{itemize}
\item \label{item_extremal_cells_case_dim_0}
     or $Y_0$ is a point and
  \begin{itemize}[leftmargin=*]
  \item $X_0^{+}$ is an affine space with the linear action of
    $\Lambda$ associated to weights in
    $\cC(Y_0,X,\Lambda)=(\nu_1,\dots,\nu_d)$ with all $\nu_i$ negative.
  \item $D=X\setminus X_0^{+}$ is an irreducible divisor which
    is in the linear system $|L|$,
  \item there exists a unique fixed-point component $Y_1\subset
    X$ such that $\mu(Y_1)$ is minimal in
    $\widetilde{\Delta}(X,L,\Lambda) \setminus \mu(Y_0)$,
  \item the respective BB-cell $X_1^{+}$ associated to $Y_1$
    is dense in $D$.
  \end{itemize}
\end{enumerate}
\end{lemma}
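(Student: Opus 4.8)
The plan is to first prove that $X$ is Fano, then extract the structure of the source cell from Theorem~\ref{thm_ABB-decomposition}, and only afterwards split into the two cases according to $\dim Y_0$. Since $\Lambda$ acts almost faithfully and nontrivially, the general orbit is one-dimensional, so by Theorem~\ref{thm_ABB-decomposition} the source cell $X_0^+$ is the dense Bia{\l}ynicki--Birula cell, an affine bundle $\pi\colon X_0^+\to Y_0$ with fibre $\CC^{N}$, $N=\dim X-\dim Y_0\ge 1$. The fibres are swept out by affine lines whose closures are rational curves covering the dense open set $X_0^+$; hence $X$ is uniruled. A uniruled projective manifold with $\Pic X=\ZZ\cdot L$ and $L$ ample cannot have pseudoeffective $K_X$, so $K_X\in\RR_{<0}\cdot L$ and $-K_X=mL$ with $m>0$; thus $X$ is Fano.

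Assume first $\dim Y_0>0$. Since $X_0^+$ is the total space of a vector bundle over $Y_0$, pullback gives $\Pic X_0^+=\pi^*\Pic Y_0$, so every line bundle on $X_0^+$ is pulled back from $Y_0$. To see that $Z:=X\setminus X_0^+$ has $\codim Z\ge 2$, suppose it contained a prime divisor $D_0$. Its class lies in $\Pic X=\ZZ\cdot L$, so $\cO_X(D_0)=\cO_X(kL)$ with $k\ge 1$; as $D_0\cap X_0^+=\emptyset$, the restriction $\cO_X(kL)|_{X_0^+}\cong\pi^*\bigl((kL)|_{Y_0}\bigr)$ is trivial, forcing $(kL)|_{Y_0}\cong\cO_{Y_0}$, which contradicts the ampleness of $L|_{Y_0}$ (here $\dim Y_0>0$ is essential). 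Hence $\codim Z\ge 2$, and removing a closed set of codimension $\ge 2$ from the smooth $X$ does not change the Picard group, so restriction yields $\Pic X\cong\Pic X_0^+\cong\Pic Y_0$, i.e. $\Pic Y_0=\ZZ\cdot L|_{Y_0}$. Finally, adjunction gives $-K_{Y_0}=-K_X|_{Y_0}\otimes\det N_{Y_0/X}^{\vee}=(m-c)\,L|_{Y_0}$, where $\det N_{Y_0/X}=c\,L|_{Y_0}$, and the Fano property of $Y_0$ amounts to the inequality $m>c$.

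Now assume $\dim Y_0=0$, so $Y_0=\{y_0\}$. By Theorem~\ref{thm_ABB-decomposition} together with the local normal form~\eqref{equ_local-action}, $X_0^+\cong\CC^{d}$ is an affine space on which $\Lambda$ acts linearly with weights the compass $\cC(Y_0,X,\Lambda)=(\nu_1,\dots,\nu_d)$, all negative by Lemma~\ref{lem_compass_and_Delta_for_Cstar}. As $X_0^+$ is an affine open subset of the complete variety $X$, its complement $D=X\setminus X_0^+$ has pure codimension one. To prove $D$ irreducible I count codimension-one cells homologically: in the decomposition $H_{2d-2}(X,\ZZ)=\bigoplus_i H_{2d-2-2\nu^+(Y_i)}(Y_i,\ZZ)$, a codimension-one cell contributes the top class $H_{2\dim Y_i}(Y_i,\ZZ)\cong\ZZ$, the source (a point) contributes $H_{-2}(\mathrm{pt})=0$, and every cell of codimension $\ge 2$ contributes $0$. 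Hence the number of codimension-one cells equals $b_{2d-2}(X)=b_2(X)=\rho(X)=1$. There is therefore a unique codimension-one cell $X_1^+$, its closure is the single prime divisor $D=\overline{X_1^+}$, and $Y_1$ is the associated fixed-point component singled out in the statement, with $X_1^+$ dense in $D$. From the exact sequence $\ZZ\cdot[D]\to\Pic X\to\Pic(X\setminus D)=\Pic\CC^d=0$ the class $[D]$ generates $\Pic X=\ZZ\cdot L$; being effective, $\cO_X(D)=L$, so $D\in|L|$.

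The one genuinely delicate point is the positivity needed for $Y_0$ to be Fano in the first case: adjunction only yields $-K_{Y_0}=(m-c)\,L|_{Y_0}$, and one must show $m>c$, i.e. that $-K_X|_{Y_0}$ dominates $\det N_{Y_0/X}$. I expect this to follow from the fact that $Y_0$, being an \emph{extremal} (source) component, is rationally connected: given $p,q\in Y_0$, one connects them by a chain of rational curves in the Fano manifold $X$, flows the chain by $\lim_{t\to 0}t\cdot(-)$ into $\overline{X_0^+}$, and projects by $\pi$ to obtain a chain of rational curves joining $p$ and $q$ inside $Y_0$; hence $Y_0$ is uniruled, and with $\rho(Y_0)=1$ this forces $-K_{Y_0}$ ample. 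Rigorously controlling these degenerating limits is the main work; by contrast the remaining bookkeeping (the signs of the compass elements and the precise extremality of $\mu(Y_1)$ in $\widetilde\Delta\setminus\{\mu(Y_0)\}$) is routine once the conventions of Theorem~\ref{thm_ABB-decomposition} and Lemma~\ref{lem_compass_and_Delta_for_Cstar} are fixed.
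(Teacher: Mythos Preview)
Your overall architecture matches the paper's: uniruledness from orbit closures gives $X$ Fano, then the two cases are handled via the structure of the dense BB-cell and the homology formula. Your treatment of case~\ref{item_extremal_cells_case_dim_0} is in fact more explicit than the paper's (which simply cites \cite[Thm~4.2, 4.4]{Carrell}); the Betti-number count $b_{2d-2}(X)=\rho(X)=1$ is exactly the right mechanism.

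The one place where you make your life harder than necessary is the Fano property of $Y_0$ in case~\ref{item_extremal_cells_case_dim_positive}. You correctly isolate the issue---adjunction alone does not give the sign---but then propose to show $Y_0$ is rationally \emph{chain connected} by flowing chains of rational curves under $\lim_{t\to 0}t\cdot(-)$ and controlling the limits. This is genuinely delicate (the limit of a chain can collapse or acquire extra components), and it is also unnecessary. The paper's argument is much simpler: you have already shown that $X\setminus X_0^+$ has codimension $\ge 2$. Since $X$ is Fano it is covered by free rational curves, and a general member of such a covering family avoids any fixed codimension-$2$ locus; hence there is a complete rational curve $C\subset X_0^+$. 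Its image $\pi(C)\subset Y_0$ is a rational curve (nonconstant, because a complete curve cannot sit inside an affine fibre $\CC^N$), and it passes through a general point of $Y_0$ since $\pi$ is a smooth surjection. Thus $Y_0$ is uniruled, and combined with $\Pic Y_0=\ZZ\cdot L|_{Y_0}$ this forces $-K_{Y_0}$ ample. No limits, no chains.

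A small remark on your codimension-$2$ argument: the paper phrases it slightly differently (an effective divisor in $\Pic X=\ZZ\cdot L$ is ample, hence meets the positive-dimensional $Y_0$, but $X\setminus X_0^+$ does not), though your line-bundle restriction version is equivalent and equally clean.
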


\begin{proof}
  The manifold $X$ is generically covered by non-trivial orbits of the
  action of $\Lambda$ whose closures are rational curves, thus it is
  uniruled. Since $\Pic X=\ZZ$ this implies $X$ is Fano.
  If $\dim Y_0>0$, then $X\setminus X^{+}_0$ is of codimension $\geq 2$ in $X$:
  Indeed, its intersection with $Y_0$ is zero, and every effective divisor is ample, thus $X\setminus X^{+}_0$ cannot contain any divisor of $X$.
  
  The rest of the lemma follows by BB-decomposition, see Theorem~\ref{thm_ABB-decomposition}, or \cite[Thm~4.2 and
  Thm~4.4]{Carrell}. 
  To prove that $Y_0$ is Fano if its dimension is
  $\geq 1$ we use rational curves again.
  Namely, we can choose a
  rational curve in $X$ which does not meet $X\setminus X^{+}_0$. 
  Hence by the map $X_0^+\rightarrow Y_0$ we have a rational curve in $Y_0$.
\end{proof}

If $Y\subset X^H$ is an extremal component of the fixed-point locus 
  then by Corollary~\ref{cor_ideal_of_extremal_component} we have

\begin{equation}\label{equ_kernel_of_restriction_for_extremal_components}
\bigoplus_{u\ne \mu(Y)}\HH^0(X,L)_u = \ker\left(\HH^0(X,L)\lra\HH^0(Y,L|_Y)\right).
\end{equation}

\begin{lemma}\label{lem_ABB-surjectivity}
  Let $X$ be a projective manifold with an ample line bundle $L$ and
  an almost faithful action of an algebraic torus $H$.  Moreover 
  assume $\Pic X=\ZZ$.  If $Y\subset X^{H}$ is the extremal fixed-point
  component associated to a vertex $\mu(Y)\in \Delta(L)$ then the
  restriction $\HH^0(X,L)\lra \HH^0(Y,L|_{Y})$ is surjective and
  $\HH^0(X,L)_{\mu(Y)}=\HH^0(Y,L|_Y)$. Therefore
$$
  \cR(Y,L|_Y)\iso \cR(X,L)/\cJ_{\gamma_{\mu(Y)}},
$$
     where $\cJ_{\gamma_{\mu(Y)}}$ is the ideal defined in \eqref{equ_extremal_ideal}, 
     and $\gamma_{\mu(Y)}$ is the ray of the weight cone $\widehat{\Gamma}(L)$
     corresponding to the vertex of $\mu(Y)$ as in Subsection~\ref{subsec_polytopes_of_sections_and_fixed_points}.
\end{lemma}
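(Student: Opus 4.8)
The plan is to reduce the entire statement to the single geometric fact that the restriction map $\HH^0(X,L)\lra\HH^0(Y,L|_Y)$ is surjective, and then deduce both the eigenspace identity and the ring isomorphism formally. First I would observe that, granting surjectivity, the equality $\HH^0(X,L)_{\mu(Y)}=\HH^0(Y,L|_Y)$ is immediate from \eqref{equ_kernel_of_restriction_for_extremal_components}. Indeed, since $H$ fixes $Y$ pointwise it acts on all of $\HH^0(Y,L|_Y)$ through the single character $\mu(Y)$, so the $H$-equivariant restriction map has kernel exactly $\bigoplus_{u\ne\mu(Y)}\HH^0(X,L)_u$; hence the induced map $\HH^0(X,L)_{\mu(Y)}\to\HH^0(Y,L|_Y)$ is injective, and surjectivity of the full restriction makes it an isomorphism.

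For surjectivity I would invoke the Bia{\l}ynicki-Birula decomposition. By Lemma~\ref{lem_extremal=sink} there is a $1$-parameter subgroup $\Lambda\hookrightarrow H$ for which the extremal component $Y$ is the source; let $X^{+}\to Y$ be the associated BB-cell (the locus flowing to $Y$), which by Theorem~\ref{thm_ABB-decomposition} is a $\CC^{\nu^+}$-fibration $\rho$ with zero section $Y$. Because Picard groups are invariant under affine-space fibrations, $L|_{X^{+}}\iso\rho^*(L|_Y)$, so any $s\in\HH^0(Y,L|_Y)$ pulls back to a section $\rho^*s\in\HH^0(X^{+},L|_{X^{+}})$ restricting to $s$ along $Y$. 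It then remains to extend $\rho^*s$ to a global section of $L$, and here I would split into the two cases of Lemma~\ref{lem_ABBdecomposition-extremal_cells}. If $\dim Y>0$ the complement $X\setminus X^{+}$ has codimension $\ge 2$, so $\rho^*s$ extends uniquely by normality of the smooth variety $X$, and the extension restricts to $s$. If $Y$ is a point, then $\HH^0(Y,L|_Y)=L_y$ is one-dimensional and $X\setminus X^{+}$ is a divisor in $|L|$; its defining section does not vanish at $y\in X^{+}$, hence restricts to a generator of $L_y$, and surjectivity follows.

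Finally I would upgrade to the graded statement. The restriction $\cR(X,L)\to\cR(Y,L|_Y)$ is a homomorphism of graded rings. By Lemma~\ref{lem_compare_polytopes}\ref{item_compare_polytopes_rescaling} the component $Y$ is again extremal for each $L^{\otimes m}$, with character $m\mu(Y)$ and the same source structure for the same $\Lambda$; hence the surjectivity and the eigenspace identity above hold verbatim in every degree, giving $\HH^0(X,L^{\otimes m})_{m\mu(Y)}=\HH^0(Y,(L|_Y)^{\otimes m})$. Summing over $m$, the kernel of the restriction is precisely $\bigoplus_{m\ge0}\bigoplus_{u\ne m\mu(Y)}\HH^0(X,L^{\otimes m})_u$, which is exactly $\cJ_{\gamma_{\mu(Y)}}$ by its definition in \eqref{equ_extremal_ideal}, and the map is surjective in each degree; this yields the asserted isomorphism $\cR(Y,L|_Y)\iso\cR(X,L)/\cJ_{\gamma_{\mu(Y)}}$.

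I expect the main obstacle to be the surjectivity step, and more precisely the extension of the pulled-back section across the complement of the BB-cell: this is exactly where the hypotheses $\Pic X=\ZZ$ and ampleness enter, through the dichotomy of Lemma~\ref{lem_ABBdecomposition-extremal_cells} between a codimension-$\ge 2$ complement and a complement that is a divisor in $|L|$. The identification $L|_{X^{+}}\iso\rho^*(L|_Y)$ and the equivariance bookkeeping are routine by comparison.
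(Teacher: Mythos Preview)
Your proof is correct and follows essentially the same route as the paper: reduce to a one-parameter subgroup making $Y$ a source, pull back sections along the BB-fibration $X^{+}\to Y$, and extend using the dichotomy of Lemma~\ref{lem_ABBdecomposition-extremal_cells} (codimension~$\ge 2$ complement versus a divisor in $|L|$). Your write-up is in fact slightly more explicit than the paper's in two places---the justification $L|_{X^{+}}\iso\rho^*(L|_Y)$ via invariance of Picard groups under affine fibrations, and the degree-by-degree upgrade to the graded ring statement---but the underlying argument is the same.
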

\begin{proof}
  We choose a suitable 1-parameter subgroup of $H$ which does not change the extremal component $Y$. 
  Up to the inverse of the action, by Theorem~\ref{thm_ABB-decomposition},
     $Y$ is associated to the maximal cell $X_0^+$  which is dense in $X$ 
     and admits a fibration (retract) $p_0: X_0^+\ra Y\hookrightarrow X_0^+$. 
  
  If $Y$ is a point then our statement follows by Lemma~\ref{lem_ABBdecomposition-extremal_cells}\ref{item_extremal_cells_case_dim_0}.
  Indeed, there is a divisor in the linear system of $L$, which does not contain $Y$. Thus the restriction map in \eqref{equ_kernel_of_restriction_for_extremal_components} is surjective, and all the other claims follow.
  
  Also, if $\dim Y> 0$ then any 
  section in $\HH^0(Y,L|_{Y})$ lifts via $p_0: X_0^+\ra Y$ to a section $\HH^0(X_0^+,L)$,
    and the complement of $X_0^+$ is of codimension $\geq 2$ in $X$ by Lemma~\ref{lem_ABBdecomposition-extremal_cells}\ref{item_extremal_cells_case_dim_positive}.
  Therefore the section extends uniquely to $X$ and the map is $\HH^0(X,L)\lra\HH^0(Y,L|_Y)$ is surjective.
  Equation~\eqref{equ_kernel_of_restriction_for_extremal_components} shows that $\cR(Y,L|_Y)\iso \cR(X,L)/\cJ_{\gamma_{\mu(Y)}}$.
\end{proof}

\subsection{Extending divisors and equality of polytopes}

\begin{lemma}\label{lem_Fano_3_folds_H0L}
   Suppose $Y$ is a Fano manifold of dimension at most $3$ 
      and $L_Y$ is an ample line bundle on $Y$ such that 
      $\Pic Y = \ZZ \cdot L$.
   Then $h^0(L_Y) \ge 2$ or $Y$ is a point. 
\end{lemma}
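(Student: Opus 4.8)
The plan is to compute $h^0(L_Y)$ directly via Hirzebruch--Riemann--Roch after killing all higher cohomology with Kodaira vanishing, handling the three possible dimensions by a uniform mechanism. First I would dispose of the degenerate case: if $\dim Y = 0$ then $Y$ is a point and there is nothing to prove, so assume $1 \le \dim Y \le 3$. Since $Y$ is Fano, $-K_Y$ is ample, and the hypothesis $\Pic Y = \ZZ \cdot L_Y$ forces $-K_Y = r\,L_Y$ for a positive integer $r$ (the Fano index), as $-K_Y$ must be a positive multiple of the ample generator.

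Next I would invoke Kodaira vanishing twice. Writing $\cO_Y = K_Y \otimes (-K_Y)$ and $L_Y = K_Y \otimes ((r+1)L_Y)$, with both $-K_Y$ and $(r+1)L_Y$ ample, vanishing gives $\HH^i(Y,\cO_Y)=0$ and $\HH^i(Y,L_Y)=0$ for all $i>0$. Hence $\chi(\cO_Y)=1$ and, crucially, $h^0(L_Y) = \chi(Y,L_Y)$, so it suffices to bound this Euler characteristic below by $2$.

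In dimensions $1$ and $2$ this is immediate. For a curve, Fano forces $g=0$, and $\chi(L_Y)=\deg L_Y + 1 \ge 2$ since $\deg L_Y \ge 1$. For a surface, Riemann--Roch gives $\chi(L_Y) = \chi(\cO_Y) + \tfrac12 L_Y\cdot(L_Y-K_Y) = 1 + \tfrac{(r+1)d}{2}$ with $d = L_Y^2 \ge 1$ and $r \ge 1$, which is at least $2$.

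The threefold case is where the real work lies, and I expect the term $L_Y \cdot c_2(T_Y)$ to be the main obstacle, since it is not visibly controlled by $L_Y$ and $r$ alone. I would resolve it using $\chi(\cO_Y)=\tfrac1{24}c_1 c_2 = 1$, so that $c_1 \cdot c_2 = 24$; combined with $c_1 = -K_Y = r L_Y$ this yields $L_Y \cdot c_2 = 24/r$. Substituting into the threefold formula $\chi(L) = \tfrac16 L^3 + \tfrac14 L^2 c_1 + \tfrac1{12}L(c_1^2+c_2) + \tfrac1{24}c_1 c_2$, with $d = L_Y^3 \ge 1$ and $r \ge 1$, gives
\[
  \chi(L_Y) = \frac{d}{6} + \frac{rd}{4} + \frac{r^2 d}{12} + \frac{2}{r} + 1 > \frac16 + \frac14 + \frac1{12} + 1 = \frac32.
\]
Since $\chi(L_Y) = h^0(L_Y)$ is an integer strictly greater than $\tfrac32$, it is at least $2$, completing the argument. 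I note that this approach requires neither the Kobayashi--Ochiai bound on the Fano index nor the classification of Fano threefolds.
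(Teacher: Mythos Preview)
Your proof is correct and takes essentially the same approach as the paper's: both compute $h^0(L_Y)=\chi(L_Y)$ via Riemann--Roch after Kodaira vanishing in dimension three. The paper quotes Iskovskih's Hilbert polynomial for $-mK_Y$ and substitutes $m=1/r$, whereas you derive the identical expression directly from Hirzebruch--Riemann--Roch together with $c_1 c_2 = 24$; the two computations are the same up to that packaging, and your version is arguably more self-contained.
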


\begin{proof}
   If $\dim Y \le 2$, then the statement is clear.
   If $\dim Y = 3$, then \cite[Prop.~(1.3)(ii)]{Iskovskih_Fano_3fold_1} 
     gives a formula for the Hilbert polynomial of $-K_Y$:
   \[
     h^0(\cO_Y(-m K_Y))= \frac{m(m+1)(2m+1)}{12} (-K_Y)^3 + 2m +1.
   \]
   Since $L= -\frac{1}{i}K_Y$ for a positive integer $i$,
     and by Serre vanishing the formula above also works for fractional $m$,
     we must have $h^0(L) >1$.
\end{proof}

We have the following immediate corollaries.
\begin{cor}\label{cor_Delta=Gamma-small_components}
  If $\Pic X=\ZZ\cdot L$ and every extremal component of $X^H$ is of
  dimension $\leq 3$ then $\Gamma(L)=\Delta(L)$.
\end{cor}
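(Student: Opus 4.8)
The plan is to combine the always-valid inclusion $\Gamma(L) \subseteq \Delta(L)$ with a vertex-by-vertex check of the reverse inclusion. First I would record that Lemma~\ref{lem_compare_polytopes}\ref{item_compare_polytopes_Gamma_in_Delta} gives $\widetilde{\Gamma}(L) \subset \Delta(L)$, hence $\Gamma(L) \subseteq \Delta(L)$ with no hypotheses on dimension. Since $\Gamma(L)$ is convex, to obtain $\Delta(L) \subseteq \Gamma(L)$ it is enough to show that every vertex $v$ of $\Delta(L)$ already lies in $\widetilde{\Gamma}(L)$; that is, that $L$ carries a global section of weight $v$.

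The central reduction is to translate the condition $v \in \widetilde{\Gamma}(L)$ into nonvanishing of cohomology on an extremal component. Fixing a vertex $v$ of $\Delta(L)$, by Lemma~\ref{lem_extremal=sink} there is a unique extremal component $Y \subset X^H$ with $\mu(Y) = v$. Here the assumption $\Pic X = \ZZ \cdot L$ enters through Lemma~\ref{lem_ABB-surjectivity}, which identifies the eigenspace $\HH^0(X,L)_v = \HH^0(Y, L|_Y)$. Consequently $v \in \widetilde{\Gamma}(L)$ if and only if $\HH^0(Y, L|_Y) \neq 0$, so the whole problem is reduced to establishing this nonvanishing for each extremal $Y$.

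It is at this final step that the dimension bound is used. By Lemma~\ref{lem_ABBdecomposition-extremal_cells} an extremal component $Y$ is either a single point, in which case $\HH^0(Y, L|_Y)$ is the nonzero fiber $L_Y$, or it is positive-dimensional, and then Lemma~\ref{lem_ABBdecomposition-extremal_cells}\ref{item_extremal_cells_case_dim_positive} guarantees that $Y$ is Fano with $\Pic Y = \ZZ \cdot L|_Y$. In the latter case $L|_Y$ is ample (being the restriction of an ample bundle) and $\dim Y \leq 3$ by hypothesis, so Lemma~\ref{lem_Fano_3_folds_H0L} yields $h^0(L|_Y) \geq 2 > 0$. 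Either way $\HH^0(Y, L|_Y) \neq 0$, hence $v \in \widetilde{\Gamma}(L)$; running over all vertices of $\Delta(L)$ gives $\Delta(L) \subseteq \Gamma(L)$, and combined with the first paragraph this proves $\Gamma(L) = \Delta(L)$.

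I do not expect a genuine obstacle, as the corollary is essentially a repackaging of the preceding lemmas, with the real content residing in Lemma~\ref{lem_ABB-surjectivity} and Lemma~\ref{lem_Fano_3_folds_H0L}. The only points deserving care are confirming that the two cases of Lemma~\ref{lem_ABBdecomposition-extremal_cells} exhaust all extremal components, and that the Fano and Picard-rank-one hypotheses required by Lemma~\ref{lem_Fano_3_folds_H0L} are exactly the ones supplied by Lemma~\ref{lem_ABBdecomposition-extremal_cells}\ref{item_extremal_cells_case_dim_positive}; the dimension hypothesis $\dim Y \leq 3$ is precisely what makes that lemma applicable.
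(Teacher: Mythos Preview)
Your proof is correct and follows essentially the same approach as the paper: both use Lemma~\ref{lem_compare_polytopes}\ref{item_compare_polytopes_Gamma_in_Delta} for one inclusion, then combine Lemma~\ref{lem_ABB-surjectivity} with Lemma~\ref{lem_Fano_3_folds_H0L} to produce a weight-$\mu(Y)$ section for each extremal component $Y$. Your version is slightly more explicit in invoking Lemma~\ref{lem_ABBdecomposition-extremal_cells} to justify the Fano and Picard-rank-one hypotheses needed by Lemma~\ref{lem_Fano_3_folds_H0L}, which the paper leaves implicit.
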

\begin{pf}
  If $\dim Y\le 3$, then $\HH^0(Y,L)\ne 0$ by Lemma~\ref{lem_Fano_3_folds_H0L}.
  Therefore, if $Y\subset X^H$ is an extremal fixed-point component then by Lemma~\ref{lem_ABB-surjectivity} there exists 
  a section in $\HH^0(X,L)$ which is an eigenvalue of the action of $H$.
  Moreover, it does not vanish identically on $Y$,
    and its weight is $\mu(Y)$.
  Thus $\mu(Y) \in \Gamma(L)$. 
  This shows $\Delta(L) \subset \Gamma(L)$, while the opposite inclusion is shown in general in
     Lemma~\ref{lem_compare_polytopes}\ref{item_compare_polytopes_Gamma_in_Delta}.
\end{pf}

\begin{prop}\label{prop_Delta=Gamma-small_corank}
  Suppose that a torus $H$ of rank $r$ acts almost faithfully on the projective
  manifold $X$ of dimension $d$ with $\Pic X=\ZZ\cdot L$. If $d\leq
  r+4$ then $\Gamma(L)=\Delta(L)$.
\end{prop}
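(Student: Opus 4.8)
The plan is to establish the nontrivial inclusion $\Delta(L)\subset\Gamma(L)$, the reverse one $\widetilde{\Gamma}(L)\subset\Delta(L)$ (hence $\Gamma(L)\subset\Delta(L)$) being automatic by Lemma~\ref{lem_compare_polytopes}\ref{item_compare_polytopes_Gamma_in_Delta}. Since $\Gamma(L)$ is convex and $\Delta(L)$ is the convex hull of its vertices, it is enough to show that every vertex $v$ of $\Delta(L)$ lies in $\Gamma(L)$; and as $v$ is an extreme point of $\Delta(L)\supseteq\widetilde{\Gamma}(L)$, this is equivalent to $v\in\widetilde{\Gamma}(L)$, that is, to the existence of a global section of $L$ of weight $v$. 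By Lemma~\ref{lem_extremal=sink} every vertex is $v=\mu(Y)$ for a unique extremal component $Y\subset X^H$, and by Lemma~\ref{lem_ABB-surjectivity} one has $\HH^0(X,L)_{\mu(Y)}=\HH^0(Y,L|_Y)$. The whole statement therefore reduces to the claim that $\HH^0(Y,L|_Y)\neq 0$ for every extremal component $Y$.

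First I would bound $\dim Y$. As $Y$ is fixed pointwise, the compass $\cC(Y,X,H)$ consists of the nonzero weights of $H$ on the normal directions at a point $y\in Y$, and has exactly $d-\dim Y$ elements. These weights must span $M_\RR$: otherwise some one-parameter subgroup $\lambda$ would annihilate all of them, so by the local diagonalization~\eqref{equ_local-action} the subgroup $\lambda$ would act trivially on a whole coordinate neighbourhood of $y$, hence trivially on the connected manifold $X$, contradicting almost faithfulness of the action. Thus $\#\cC(Y,X,H)\geq r$, and consequently $\dim Y=d-\#\cC(Y,X,H)\leq d-r\leq 4$.

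Next I would dispose of all cases except $\dim Y=4$. If $Y$ is a point then $\HH^0(Y,L|_Y)=L_Y\neq 0$. If $\dim Y\geq 1$, then after possibly inverting the relevant one-parameter subgroup $Y$ is a source, so Lemma~\ref{lem_ABBdecomposition-extremal_cells}\ref{item_extremal_cells_case_dim_positive} shows $Y$ is Fano with $\Pic Y=\ZZ\cdot L$; for $\dim Y\leq 3$ Lemma~\ref{lem_Fano_3_folds_H0L} then gives $\HH^0(Y,L|_Y)\neq 0$ (these cases are exactly those already covered by Corollary~\ref{cor_Delta=Gamma-small_components}). One can even narrow the hard case further: via the downgrade of Lemma~\ref{lem_equality_of_Delta_for_faces} and the rank-one analysis of Lemma~\ref{lem_compass_and_Delta_for_Cstar}, each edge of $\Delta(L)$ at $v$ carries a compass element, so if $v$ is a non-simplicial vertex then $\#\cC(Y,X,H)\geq r+1$ and $\dim Y\leq 3$. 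Hence $\dim Y=4$ can occur only at a simplicial vertex, forcing $d=r+4$.

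The main obstacle is precisely this borderline case: proving that the ample generator $L|_Y$ of $\Pic Y$ on a Fano fourfold $Y$ is effective. Writing $-K_Y=i\,L|_Y$ with $i\geq 1$, one has $L|_Y=K_Y+(i+1)L|_Y$ with $(i+1)L|_Y$ ample, so Kodaira vanishing yields $\HH^0(Y,L|_Y)=\chi(Y,L|_Y)$, and it remains to see this Euler characteristic is positive. The natural route is to extend the Hilbert-polynomial computation of Lemma~\ref{lem_Fano_3_folds_H0L} to dimension four by an explicit Riemann--Roch calculation, or else to appeal to the classification of Picard-number-one Fano fourfolds of index $i\geq 1$, in each of which the fundamental divisor is effective. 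Granting this, $v=\mu(Y)\in\widetilde{\Gamma}(L)$ for every vertex $v$, which proves $\Delta(L)=\Gamma(L)$.
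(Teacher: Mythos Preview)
Your reduction to showing $\HH^0(Y,L|_Y)\neq 0$ for each extremal component $Y$ is correct, and your compass--spanning argument giving $\dim Y\leq d-r\leq 4$ is clean. The genuine gap is precisely where you flag it: the case $\dim Y=4$. You propose to close it either by a Riemann--Roch computation in dimension four or by appealing to the classification of Fano fourfolds of Picard number one, but you do not actually carry out either. The first route does not obviously succeed (the Hilbert polynomial in dimension four involves $c_2(Y)$, which you have no control over), and the second requires case-by-case work through index $i=1,2,3,\geq 4$, with the index-one case in particular not covered by any elementary statement. So as written the proof is incomplete.

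The paper avoids this fourfold problem altogether by a sharper dimension bound. Instead of using only that the compass spans $M_\RR$, it builds a flag of faces through the vertex $v$,
\[
\{v\}=\delta_0\subsetneq\delta_1\subsetneq\cdots\subsetneq\delta_{r-1}\subset\Delta(L),
\]
with $\dim\delta_i=i$, and the corresponding nested extremal components
$Y=Y^0\subsetneq Y^1\subsetneq\cdots\subsetneq Y^{r-1}$
for the associated subtori. Each inclusion is strict because $Y^{i+1}$ contains extremal components of $X^H$ over vertices of $\delta_{i+1}$ not in $\delta_i$; hence $\dim Y^{r-1}\geq\dim Y+(r-1)$. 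Now either $Y^{r-1}$ is a divisor, in which case the one-dimensional torus $H^{r-1}$ produces a nonzero section of $TX\otimes L^{-1}$ and Wahl's theorem forces $X\cong\PP^d$ (where the result is immediate); or $\dim Y^{r-1}\leq d-2$, giving $\dim Y\leq d-r-1\leq 3$. This extra ``$-1$'' compared to your bound is exactly what eliminates the fourfold case and lets Lemma~\ref{lem_Fano_3_folds_H0L} finish the argument.
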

\begin{proof}
  By Corollary~\ref{cor_Delta=Gamma-small_components} we will be done if we prove
  that every extremal component of $X^H$ is of dimension $\leq 3$.

  For a vertex $v\in\Delta(L)$ and we choose a flag of linear spaces
  $V_0=\{0\}\subsetneq\cdots\subsetneq V_{r-1}\subset M_\RR$ such that
  $(v+V_i)\cap\Delta(L)$ is an $i$-dimensional face of $\Delta(L)$. 
  The flag determines a sequence of subtori
  $$H=H^0\supsetneq H^1\supsetneq\cdots\supsetneq H^{r-1}$$
  where $H^i$ is of dimension $r-i$ associated to the quotient
  $M/(M\cap V_i)$.
  For each such subtorus $H_i$ one has an irreducible
  variety $Y_i$, where 
  $$Y=Y^0\subsetneq Y^1\subsetneq\cdots\subsetneq Y^{r-1},$$
  and $Y^i$ is an extremal component of $X^{H^i}$.
  Note that $Y^i\ne Y^{i+1}$ because $Y^{i+1}$ contains not only $Y^i$ but also
  some other extremal components of $X^H$ associated to vertices of
  $\Delta(L)$ which are in the face $V^{i+1}\cap \Delta(L)$ but are
  not in the face $V^i\cap\Delta(L)$. Therefore $\dim Y^{r-1}\geq \dim
  Y^0 + r-1$.

  By our assumptions $H^{r-1}$ acts almost faithfully on $X$ and if
  $Y^{r-1}\subset X^{H^{r-1}}$ is a divisor then it is in $|L|$ and
  the action of $H^{r-1}$ determines a non-trivial section of
  $TX\otimes L^{-1}$ and thus $X\iso \PP^d$, by a result of Wahl,
  \cite{Wahl}.

  Since the result is true for $X=\PP^d$ by Lemma~\ref{lem_compare_polytopes}\ref{item_compare_polytopes_base_point_free_Delta_eq_Gamma} 
     we may assume $\dim Y^{r-1}\leq d-2$ and therefore
  $$\dim Y^0\leq \dim Y^{r-1} - r + 1\leq d-r-1$$
  and we are in situation of Proposition~\ref{prop_Delta=Gamma-small_corank}.
\end{proof}

We also note that the property $\Gamma=\Delta$ propagates to extremal components corresponding to subfaces.
\begin{lemma}\label{lem_Gamma_eq_Delta_propagates_to_faces}
  Suppose $\Pic X = \ZZ$, $L$ is an ample line bundle on $X$, and a torus $H$ acts on $(X,L)$
    in such a way that $\Gamma(X,H, L) = \Delta(X,H,L)$.
  Let $\delta\subset \Delta(X,H,L) $ be a proper face, $H' \subset H$ 
    a subtorus corresponding to a projection $M \to M'$ that contracts $\delta$ to a point $v\in M'$ and no other face is contracted to $v$.
  Let $Y_{\delta}$ be the extremal component of $X^{H'}$ corresponding to $v$.
  Then also for the action of $H/H'$ on $Y_{\delta}$ the section and fixed-point polytopes are equal:
   \[
      \Delta(Y_{\delta}, H/H', L|_{Y_{\delta}}) = \Gamma(Y_{\delta}, H/H', L|_{Y_{\delta}}) = \delta - w, 
   \]
   where $w$ is any lattice point in the affine span of $\delta$ 
      (the choice of the shift by $w$ corresponds to the choice of the linearization as in Lemma~\ref{lem_reduction_of_action}).
\end{lemma}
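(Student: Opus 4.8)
The plan is to split the claimed double equality into the two polytope identities and to exploit that the inclusion $\Gamma\subseteq\Delta$ is automatic. The equality $\Delta(Y_{\delta},H/H',L|_{Y_{\delta}})=\delta-w$ is essentially already available: it is Lemma~\ref{lem_equality_of_Delta_for_faces} applied to the face $\delta$, together with the trivial observation that $\Delta(X,H,L)\cap\delta=\delta$ because $\delta$ is a face. Moreover $\Gamma(Y_{\delta},H/H',L|_{Y_{\delta}})\subseteq\Delta(Y_{\delta},H/H',L|_{Y_{\delta}})=\delta-w$ by Lemma~\ref{lem_compare_polytopes}\ref{item_compare_polytopes_Gamma_in_Delta}. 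So the whole content is the reverse inclusion $\delta-w\subseteq\Gamma(Y_{\delta},H/H',L|_{Y_{\delta}})$, and since $\Gamma$ is convex and $\delta$ is the convex hull of its vertices, it suffices to produce, for each vertex $p$ of $\delta$, an eigensection of $L|_{Y_{\delta}}$ of weight $p-w$.

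Fix a vertex $p$ of $\delta$. Being a vertex of a face, $p$ is a vertex of $\Delta(X,H,L)$; let $Y_p\subset X^H$ be the associated extremal component. The hypothesis $\Gamma(X,H,L)=\Delta(X,H,L)$ forces $p\in\widetilde{\Gamma}(X,H,L)$, since the vertices of a polytope presented as the convex hull of a finite weight set must lie in that set. Hence there is a nonzero $s\in\HH^0(X,L)_p$, and by Lemma~\ref{lem_ABB-surjectivity} the restriction $\HH^0(X,L)_p\to\HH^0(Y_p,L|_{Y_p})$ is an isomorphism, so $s|_{Y_p}\neq 0$.

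The geometric crux is the inclusion $Y_p\subseteq Y_{\delta}$. Indeed $Y_p\subseteq X^H\subseteq X^{H'}$, and the $H'$-character of the component of $X^{H'}$ containing $Y_p$ equals $\pi(p)=v$. Since the projection contracts $\delta$ to $v$ and contracts no other face there, a short convexity argument shows $v$ is a vertex of $\Delta(X,L,H')$, so by the uniqueness in Lemma~\ref{lem_extremal=sink} that component must be $Y_{\delta}$ itself. Therefore $s|_{Y_{\delta}}$ is a section of $L|_{Y_{\delta}}$ which is nonzero (it restricts to $s|_{Y_p}\neq 0$) and whose $H/H'$-weight, after the shift by the chosen lift $w$ of $v$, is $p-w$. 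Thus $p-w\in\widetilde{\Gamma}(Y_{\delta},H/H',L|_{Y_{\delta}})$ for every vertex $p$ of $\delta$, which gives $\delta-w\subseteq\Gamma(Y_{\delta},H/H',L|_{Y_{\delta}})$ and hence the asserted equality.

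I expect the main obstacle to be precisely the propagation of non-vanishing: one must ensure that the weight-$p$ section $s$ does not vanish identically on the larger component $Y_{\delta}$. This is where the inclusion $Y_p\subseteq Y_{\delta}$ and the identification $\HH^0(X,L)_p\cong\HH^0(Y_p,L|_{Y_p})$ from Lemma~\ref{lem_ABB-surjectivity} do the real work; the remaining points — the $\Delta$-equality, the inclusion $\Gamma\subseteq\Delta$, and the tracking of the shift by $w$ under $\pi$ — are routine bookkeeping.
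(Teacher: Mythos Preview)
Your proof is correct and uses the same ingredients as the paper: Lemma~\ref{lem_compare_polytopes}\ref{item_compare_polytopes_Gamma_in_Delta} for $\Gamma\subseteq\Delta$, the identification of $\Delta(Y_\delta)$ with $\delta-w$, and Lemma~\ref{lem_ABB-surjectivity} to produce the needed sections. The only organizational difference is that the paper applies Lemma~\ref{lem_ABB-surjectivity} once to the $H'$-action on $X$ (with extremal component $Y_\delta$), obtaining directly $\widetilde{\Gamma}(Y_\delta,H/H',L|_{Y_\delta})=\widetilde{\Gamma}(X,H,L)\cap W - w$ and then closing a chain of inclusions, whereas you apply it vertex-by-vertex to the $H$-action and check non-vanishing on $Y_\delta$ via $Y_p\subset Y_\delta$; both routes are equivalent.
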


\begin{proof}
  Suppose $W\subset M\otimes \RR$ denotes the affine span of $\delta$ and set $Y:=Y_{\delta}$ for brevity.
  We have the following inclusions:
  \begin{align*}
    \Delta(Y, L|_{Y}, H/H') & 
        \stackrel{\text{Lem.~\ref{lem_reduction_of_action}\ref{item_reduction_of_action_restricting_to_fix_points_of_subgroup}}}{\subset}
        \delta-w\\
     &= 
         \Gamma(X, L, H) \cap W  - w  \\
     & = \conv (\widetilde{\Gamma}(X, L, H) \cap W  - w) \\
     &\stackrel{\text{Lem.~\ref{lem_ABB-surjectivity}}}{=} \conv (\widetilde{\Gamma}(Y, L|_{Y}, H/H'))\\
     &= \Gamma(Y, L|_{Y}, H/H')\\ 
     &\stackrel{\text{Lem.~\ref{lem_compare_polytopes}\ref{item_compare_polytopes_Gamma_in_Delta}}}{\subset}
     \Delta(Y, L|_{Y}, H/H').
  \end{align*}
  Therefore all the inclusions are equalities, as claimed in the lemma.
\end{proof}

\subsection{Fano manifolds, projective spaces and quadrics}\label{subsec_Fano_Pd_Qd}
Suppose $X$ is a projective manifold with an almost faithful action of a nontrivial torus $H$.
Then the line bundle $\det TX$ has a natural linearization coming from that of $TX$. 
Recall, that the \emph{index} of a Fano manifold is the maximal positive integer,
   such that $\det TX \iso L^{\otimes\iota}$ for an ample line bundle $L$.

\begin{lemma}\label{Fano-case}
Let $\mu$ be the natural linearization of $L^{\otimes\iota}= \det
TX$. Then for every fixed-point component $Y_i\subset X^H$ we have
$$\mu_{L^{\otimes\iota}}(Y_i)=-\sum_{\nu_j\in\cC(Y_i)} \nu_j$$
\end{lemma}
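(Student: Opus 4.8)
The plan is to reduce the statement to a single weight computation in the fiber of $\det TX$ over one point $y\in Y_i$. First I would pin down what the ``natural linearization'' means: the torus $H$ acts on $X$, hence by differentials on the tangent bundle $TX$, and therefore functorially on its top exterior power $\det TX = \Wedge{d} TX$. This induced linearization is canonical, so there is no ambiguity by a character. By the definition of $\mu$ recalled in Subsection~\ref{subsec_polytopes_of_sections_and_fixed_points}, the quantity $\mu_{L^{\otimes\iota}}(Y_i)$ is exactly the weight with which $H$ acts on the one-dimensional fiber $(\det TX)_y = \Wedge{d}(T_yX)$, for any $y\in Y_i$ (this weight being constant along the connected component $Y_i$).

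Next I would compute that weight. Because $\Wedge{d}(T_yX)$ is the \emph{top} exterior power, the character of $H$ on it is the sum of the characters of $H$ on $T_yX$. So the problem becomes: list the tangent weights at $y$ and add them. Here I would invoke the description of the compass from Subsection~\ref{subsec_compass} together with the local normal form~\eqref{equ_local-action}. The compass $\cC(Y_i,X,H)$ records the weights of $H$ on the cotangent space $T_y^*X=\m_y/\m_y^2$: these are the nonzero characters $\nu_1,\dots,\nu_{d-\dim Y_i}$ of the compass, supplemented by $\dim Y_i$ zero weights coming from the directions tangent to $Y_i$, on which $H$ acts trivially. Dualizing from the cotangent to the tangent space negates every weight, so the weights of $H$ on $T_yX$ are $-\nu_1,\dots,-\nu_{d-\dim Y_i}$ together with $\dim Y_i$ zeros.

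Summing these tangent weights then gives $\sum_j(-\nu_j) = -\sum_{\nu_j\in\cC(Y_i)}\nu_j$, which is precisely the asserted value of $\mu_{L^{\otimes\iota}}(Y_i)$; the zero weights from the tangent-to-$Y_i$ directions contribute nothing. This completes the identification.

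The only genuinely delicate point — and the one I would state carefully rather than grind through — is the sign bookkeeping relating cotangent and tangent weights, together with the compatibility of the induced linearization on $\det TX$ with the definition of $\mu$ as the fiber weight. Once the convention of~\eqref{equ_local-action} is fixed (where the local coordinates $x_i$, which span the conormal part of $T_y^*X$, carry weights $\nu_i$), the remainder is a one-line summation and no deeper input is required; all the content lies in correctly accounting for the weights and discarding the zero ones.
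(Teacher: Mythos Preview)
Your proposal is correct and follows essentially the same approach as the paper's proof, which reads in its entirety: ``For every $y\in Y_i$ the character of the action of $H$ on $\det T_yX$ is the sum of weights of the action on $T_yX$. Hence the claim.'' You have simply unpacked this one-line argument carefully, making explicit the sign coming from the passage between cotangent weights (which define the compass) and tangent weights, and noting that the zero weights along $Y_i$ contribute nothing to the sum.
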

\begin{proof}
For every $y\in Y_i$ the character of the action of $H$ on $\det T_yX$
is the sum of weights of the action on $T_yX$. Hence the claim.
\end{proof}

\begin{prop}\label{prop_smallDelta2}
  Suppose that a projective manifold $X$ of dimension $d$ with $\Pic
  X \simeq \ZZ$ admits a nontrivial action of a 1-dimensional torus $H$. 
  Assume
  that $\Delta(L)=[0,2]\subset M_\RR=\RR$ and the extremal fixed-point
  components are of dimension zero. Then one of the following holds:
  \begin{enumerate}[leftmargin=*]
  \item \label{item_small_Delta2_P1}
         $d=1$ and $(X,L)$ is either $(\PP^1,\cO(1))$ or
    $(\PP^1,\cO(2))$,
  \item \label{item_small_Delta2_Pd} 
        $d \geq 2$ and $(X,L)=(\PP^d,\cO(1))$ and in some coordinates
    $[z_0,\dots,z_d]$ the action of $H$ has weights $(0,1,\dots,1,2)$, 
  \item \label{item_small_Delta2_Qd}
        $d \geq 3$ and $(X,L)=(\cQ^d,\cO(1))$ and for some equivariant
    embedding $\cQ^d \hookrightarrow \PP^{d+1}$ in which $\cQ^d$ has
    equation $z_0z_{d+1}+z_1z_d+\cdots=0$ the action of $H$ on
    $\PP^{d+1}$ has weights $(0,1,\dots,1,2)$.
 \end{enumerate}
\end{prop}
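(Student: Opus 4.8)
The plan is to pin down the two extremal fixed points, read off the signs of their compasses, and then convert the sizes of these compasses into a lower bound for the Fano index of $X$; by the Kobayashi--Ochiai theorem this bound leaves only $\PP^d$ and $\cQ^d$ as possibilities, after which the weights and the line bundle can be recovered. First I would note that $\Delta(L)=[0,2]$ has exactly the two vertices $0$ and $2$, so by Lemma~\ref{lem_extremal=sink} there are exactly two extremal fixed-point components; by hypothesis these are isolated points $p_0$ and $p_2$ with $\mu_L(p_0)=0$ and $\mu_L(p_2)=2$. Being isolated, each carries a compass with $d=\dim X$ elements, and by Lemma~\ref{lem_compass_and_Delta_for_Cstar} the compass $\cC(p_2,X,H)=(\nu_1,\dots,\nu_d)$ consists of $d$ negative integers (so $p_2$ is a source) while $\cC(p_0,X,H)=(\nu'_1,\dots,\nu'_d)$ consists of $d$ positive integers (so $p_0$ is a sink). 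By Lemma~\ref{lem_ABBdecomposition-extremal_cells} the manifold $X$ is Fano; since $\Pic X\cong\ZZ$, let $A$ be its ample generator, write $L=A^{\otimes k}$ with $k\ge 1$, and let $\iota$ be the index, so $\det TX=A^{\otimes\iota}$.

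The core step is the index estimate. By Lemma~\ref{Fano-case} the natural linearization of $\det TX$ satisfies $\mu_{\det TX}(p_2)=-\sum_j\nu_j=\sum_j|\nu_j|\ge d$ and $\mu_{\det TX}(p_0)=-\sum_j\nu'_j\le -d$, hence $\mu_{\det TX}(p_2)-\mu_{\det TX}(p_0)\ge 2d$. On the other hand, since $\det TX=A^{\otimes\iota}$ and any two linearizations of a line bundle differ by a constant character, this same difference equals $\iota\,m$, where $m:=\mu_A(p_2)-\mu_A(p_0)$; moreover $k\,m=\mu_L(p_2)-\mu_L(p_0)=2$, so $m>0$ and $km=2$. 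Thus $\iota m\ge 2d$, while Kobayashi--Ochiai \cite{kobayashi_ochiai} gives $\iota\le d+1$, so $2d\le\iota m\le (d+1)m$. If $m=1$ (hence $k=2$) this forces $d=1$ and $\iota=2$, giving $(X,L)=(\PP^1,A^{\otimes2})=(\PP^1,\cO(2))$, which is case~\ref{item_small_Delta2_P1}. If $m=2$ (hence $k=1$, $L=A$) then $\iota\ge d$, and Kobayashi--Ochiai identifies $X$ either as $\PP^d$ (when $\iota=d+1$) or as the quadric $\cQ^d$ (when $\iota=d$).

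It remains to determine $L$ and the weights. For $X=\PP^d$ the $H$-action is diagonalizable, and the weights of $H$ on $\HH^0(\PP^d,\cO(1))$ form the set $\widetilde\Gamma(L)\subset\Delta(L)\cap\ZZ=\{0,1,2\}$ (using Lemma~\ref{lem_compare_polytopes}), with $0$ and $2$ of multiplicity one because the corresponding fixed points are isolated (Example~\ref{ex_compass_for_Cstar_acting_on_PV}); hence the weight vector is $(0,1,\dots,1,2)$, giving case~\ref{item_small_Delta2_Pd} for $d\ge 2$ and case~\ref{item_small_Delta2_P1} for $d=1$. For $X=\cQ^d$ the bundle $L=\cO(1)$ is the generator with $h^0(L)=d+2$, and the embedding $\cQ^d\hookrightarrow\PP^{d+1}=\PP(\HH^0(L)^*)$ is $H$-equivariant. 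Again the weights lie in $\{0,1,2\}$ with $0$ and $2$ simple, and the defining quadratic form is an $H$-semiinvariant, so its associated nondegenerate bilinear pairing matches weight $u$ with weight $c-u$ for a single constant $c$. Stability of $\{0,1,2\}$ under $u\mapsto c-u$, together with the presence of both $0$ and $2$, forces $c=2$; thus $0$ pairs with $2$ and the weight-$1$ space (of dimension $d$) pairs with itself, and diagonalizing gives weights $(0,1,\dots,1,2)$ and an equation $z_0z_{d+1}+z_1z_d+\cdots=0$, i.e.\ case~\ref{item_small_Delta2_Qd}. Here $\cQ^2\cong\PP^1\times\PP^1$ has $\Pic\cong\ZZ^2$ and is excluded, while $\iota=d=1$ is impossible, so the quadric case occurs only for $d\ge 3$.

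The main obstacle I anticipate lies in the quadric case: converting the semi-invariance of the quadratic form into the precise normal form and weight vector, and cleanly excluding the degenerate low-dimensional quadrics, is more delicate than the projective-space case. A secondary point requiring care is the linearization bookkeeping, namely that the difference $\mu_{\det TX}(p_2)-\mu_{\det TX}(p_0)$ equals \emph{exactly} $\iota m$; phrasing the whole argument in terms of differences of weights (which are insensitive to the additive ambiguity in a linearization) is what makes this step airtight.
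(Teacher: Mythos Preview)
Your proof is correct and follows essentially the same route as the paper's: bound the Fano index from below via Lemma~\ref{Fano-case} applied to the two extremal isolated fixed points, then invoke Kobayashi--Ochiai, and finish by reading off the weights. Your version is in fact slightly more careful than the paper's in two respects: you explicitly separate the ample generator $A$ from $L=A^{\otimes k}$ (which cleanly produces the $(\PP^1,\cO(2))$ case when $k=2$, whereas the paper's write-up tacitly folds this into ``straightforward verification''), and you spell out the weight determination for $\PP^d$ and $\cQ^d$ that the paper leaves to the reader.
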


\begin{proof}
$X$ is Fano by Lemma~\ref{lem_ABBdecomposition-extremal_cells}.
Let $\mu$ denote the linearization of $L$ such that
$\Delta(L,\mu)=[0,2]$ and by $y_0, y_2\in X^H$ denote the extremal
fixed points associated to, respective endpoints of $\Delta(L,\mu)$. If
$\mu'$ is the natural linearization of $\det TX = L^{\otimes\iota}$
then
$$
\begin{array}{ccc} 
  \mu'(y_0)=-\sum_{\nu_j\in\cC(y_0)}\nu_j\leq
  -d&&\mu'(y_2)=-\sum_{\nu_j\in\cC(y_2)}\nu_j\geq d
\end{array}
$$ 
by  
Lemma~\ref{Fano-case}.
In particular, $\mu'(y_2) - \mu'(y_0) \ge 2d$.
On the other hand
$\Delta(L^{\otimes\iota},\mu_{L^{\otimes\iota}})=[0,2\iota]$.
Since these two linearizations differ by a constant in $M$, it
follows that $\iota\geq d$.
By the Kobayashi and Ochiai characterization \cite{kobayashi_ochiai} 
  the pair $(X,L)$ is either $(\PP^d,\cO(1))$ or $(\cQ^d,\cO(1))$.
The rest of the claim follows 
by a straightforward verification.
\end{proof}

\begin{cor}\label{cor_smallDelta2-compass}
  In the situation of Proposition~\ref{prop_smallDelta2} the compass at the
  extremal fixed point associated to 0 is $(1^{d-1},2)$ in the case \ref{item_small_Delta2_Pd}
  and $(1^d)$ in the case  \ref{item_small_Delta2_Qd}.
\end{cor}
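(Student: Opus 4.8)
The plan is to read off both compasses directly from the explicit coordinate descriptions of the action furnished by Proposition~\ref{prop_smallDelta2}, using Example~\ref{ex_compass_for_Cstar_acting_on_PV} as the computational engine. First I would pin down which fixed point is meant: since $\Delta(L)=[0,2]$ and $\mu(y_0)=0$ is the minimal value of $\mu$ on $\Delta$, the component $y_0$ is a sink in the sense of Lemma~\ref{lem_compass_and_Delta_for_Cstar}, so its compass consists of positive characters only. Under the normalization making $\Delta=[0,2]$, the fixed point $y_0$ is exactly the coordinate point carrying the top weight $2$ of the linear action.

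In case \ref{item_small_Delta2_Pd}, writing $X=\PP(V)$ with $V=\CC^{d+1}$ carrying the weights $(0,1,\dots,1,2)$ (that is, one weight $0$, then $d-1$ weights equal to $1$, then one weight $2$), the point $y_0$ is $[0:\cdots:0:1]$, the projectivization of the one-dimensional top weight eigenspace. Example~\ref{ex_compass_for_Cstar_acting_on_PV} then gives the compass as the multiset of differences between the weight of $y_0$ and the remaining weights, namely $(2-0,\ (2-1)^{d-1})=(2,1^{d-1})$, which is the asserted $(1^{d-1},2)$.

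In case \ref{item_small_Delta2_Qd} I would first compute the compass in the ambient $\PP^{d+1}=\PP(V)$, where now $V$ carries the weights $(0,1,\dots,1,2)$ with $d$ weights equal to $1$; exactly as above Example~\ref{ex_compass_for_Cstar_acting_on_PV} yields $\cC(y_0,\PP^{d+1},H)=(2,1^d)$. Since $\cQ^d\subset\PP^{d+1}$ is an $H$-invariant submanifold through $y_0$, the conormal exact sequence is $H$-equivariant and, by the very definition of the compass, $\cC(y_0,\cQ^d,H)$ is obtained from $\cC(y_0,\PP^{d+1},H)$ by deleting the single character by which $H$ acts on the conormal line. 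The main point is therefore to identify that conormal weight: working in the affine chart $w_j=z_j/z_{d+1}$ around $y_0$, the defining equation $z_0z_{d+1}+z_1z_d+\cdots$ dehomogenizes to $w_0+(w_1w_d+\cdots)=0$, whose differential at $y_0$ is $dw_0$. Since $w_0$ has weight $2-0=2$, the conormal direction carries weight $2$; deleting it from $(2,1^d)$ leaves $\cC(y_0,\cQ^d,H)=(1^d)$, as claimed. The only subtlety — and the step I would treat most carefully — is this identification of the conormal character, which forces us to use the specific quadratic form rather than merely the ambient weights.
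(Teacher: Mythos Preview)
Your proof is correct. The paper states this corollary without proof, treating it as an immediate verification from the explicit coordinate description in Proposition~\ref{prop_smallDelta2}; your argument via Example~\ref{ex_compass_for_Cstar_acting_on_PV} together with the conormal computation for the quadric is exactly the intended straightforward check.
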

The following special case will be relevant to our investigations in Section~\ref{sec_contact_simple}.
\begin{cor}\label{cor_torus-on-quadrics}
  Let $H$ be a torus of rank $r$ with a basis $x_1,\dots, x_r$ of the
  lattice of characters $M\iso \ZZ^r$. Suppose that
  $H$ acts almost faithfully on a projective manifold $X$ of dimension at least $2$, such
  that $\Pic X=\ZZ\cdot L$, and all extremal fixed-point components of
  $X^H$ are isolated points. If $\Delta(X,H,L)= \conv(\fromto{x_1, -x_1}{x_r, -x_r })$
     and the compass of the action
     of $H$ at any extremal fixed point corresponding to the vertex $\pm x_i$ of $\Delta(X,H,L)$ 
     does not contain $\mp 2 x_i $, then $X\iso\cQ^d$ where $d=2r+\dim(\HH^0(X,L)_0)-2$.
\end{cor}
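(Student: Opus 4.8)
The plan is to recognize $X$ by downgrading the $H$-action to carefully chosen one-parameter subgroups and then invoking Proposition~\ref{prop_smallDelta2}. Fix an index $i$ and let $\Lambda_i\hookrightarrow H$ be the one-parameter subgroup dual to the coordinate functional $\pi=x_i^*\colon M\to\ZZ$, in the sense of the downgrading of Subsection~\ref{subsec_downgrading_and_reduction} with $M_1=\ZZ$ and projection $x_i^*$. Since $\ker(H\to\Aut X)$ is finite, so is its intersection with $\Lambda_i$, hence $\Lambda_i$ acts almost faithfully. As the only values taken by $x_i^*$ on the cross-polytope $\conv(\pm x_1,\dots,\pm x_r)$ are $\{-1,0,1\}$, Lemma~\ref{lem_reduction_of_action} gives $\Delta(X,L,\Lambda_i)=\pi(\Delta(X,L,H))=[-1,1]$; after shifting the linearization by the constant $1$ this becomes $[0,2]$.

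First I would verify that the source and sink of $\Lambda_i$ are isolated points, so that Proposition~\ref{prop_smallDelta2} applies. The endpoint $+1$ is the image of the single vertex $x_i$, because $\pi^{-1}(1)\cap\Delta(X,L,H)=\{x_i\}$ by the arithmetic of the cross-polytope. Let $Y_{x_i}\subset X^{\Lambda_i}$ be the corresponding source. Its fixed points under the quotient $H_2=H/\Lambda_i$ are exactly $X^H\cap Y_{x_i}$, and each has $\mu$-value in $\pi^{-1}(1)\cap\widetilde{\Delta}(X,L,H)=\{x_i\}$; by the uniqueness of extremal components (Lemma~\ref{lem_extremal=sink}) there is a single such component, namely the extremal point at $x_i$. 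Thus $H_2$ has exactly one fixed point on the projective manifold $Y_{x_i}$, which forces the $H_2$-action to be trivial and hence $Y_{x_i}$ to coincide with that extremal point. The same reasoning applies to the sink at $-x_i$.

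Now Proposition~\ref{prop_smallDelta2}, together with $\dim X\ge 2$ excluding the $\PP^1$ cases, shows that $(X,L)$ is either $(\PP^d,\cO(1))$ or $(\cQ^d,\cO(1))$. To eliminate the projective space I would use the compass hypothesis. If $X=\PP^d=\PP(\HH^0(X,L)^*)$, then $H$ acts linearly, the fixed points correspond to the weights $u\in\widetilde{\Gamma}(L)\subset\Delta(X,L,H)\cap M=\{0,\pm x_1,\dots,\pm x_r\}$, and by Lemma~\ref{lem_ABB-surjectivity} each vertex weight $\pm x_j$ occurs with multiplicity $1$ since its extremal component is a point. By the higher-rank version of Example~\ref{ex_compass_for_Cstar_acting_on_PV}, the compass of $H$ at the isolated fixed point with $\mu=-x_i$ equals $\{\,u+x_i : u\in\widetilde{\Gamma}(L),\ u\ne -x_i\,\}$; taking $u=x_i$ shows it contains $2x_i$. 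This contradicts the assumption that the compass at the vertex $-x_i$ avoids $2x_i$, so $X\cong\cQ^d$.

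It remains to identify $d$. Exactly as above, every nonzero weight of $\HH^0(X,L)$ is one of $\pm x_j$ with multiplicity $1$, whence $\dim\HH^0(X,L)=2r+\dim\HH^0(X,L)_0$; on the other hand, for the quadric hypersurface $\cQ^d\subset\PP^{d+1}$ one has $\HH^0(\cQ^d,\cO(1))\cong\CC^{d+2}$, so $d=2r+\dim\HH^0(X,L)_0-2$. The step I expect to be most delicate is the verification that the source and sink of $\Lambda_i$ are genuinely zero-dimensional: here one must combine the uniqueness of extremal components with the special feature that $\pm x_i$ are the only lattice points of the cross-polytope on the extreme hyperplanes $x_i^*=\pm 1$, rather than merely projecting the polytope.
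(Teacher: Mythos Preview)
Your proof is correct and follows the same strategy as the paper: downgrade along each coordinate projection $x_i^*$ to land in the setting of Proposition~\ref{prop_smallDelta2}, then use the compass hypothesis to rule out $\PP^d$. The paper's own proof is extremely terse---it simply asserts that the projection ``yields the situation as in Proposition~\ref{prop_smallDelta2}'' and that ``the claim follows from Corollary~\ref{cor_smallDelta2-compass}''---whereas you carefully justify the two points the paper glosses over: (i) that the source and sink of each $\Lambda_i$ are isolated (your argument via the unique $H$-fixed point on $Y_{\pm x_i}$ is exactly what is needed), and (ii) that $\PP^d$ is excluded, which you do by computing the full $H$-compass directly rather than by pushing the $\CC^*$-compass back through Lemma~\ref{lem_reduction_of_compass}. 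Both routes for (ii) amount to the same observation that on $\PP(\HH^0(X,L)^*)$ the weight $x_i$ contributes $2x_i$ to the compass at the vertex $-x_i$; your version is arguably cleaner since it avoids the extra step of checking that no other $\nu$ with $x_i^*(\nu)=2$ can occur.
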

\begin{proof}
  We note that for every $i$ the projection $M\rightarrow \ZZ\cdot x_i$
  yields the situation as in Proposition~\ref{prop_smallDelta2}, hence the claim
  follows from Corollary~\ref{cor_smallDelta2-compass}
\end{proof}

The last result in this sub-section seems to be known to experts 
but we were not able to find a proper reference for it. 
\begin{lemma}\label{lem_2-fixpts-comp}
  Let $X$ be a projective manifold with an action of 1-dimensional
  torus $H$. If the fixed-point set has two components $X^H=Y_0\sqcup
  Y_1$ and $Y_0$ is a point then $X\iso \PP^d$ and $Y_1\iso
  \PP^{d-1}$.
\end{lemma}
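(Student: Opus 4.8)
The plan is to use the Bia{\l}ynicki-Birula decomposition (Theorem~\ref{thm_ABB-decomposition}) twice --- once to recognize $Y_1$ as a divisor whose complement is affine space, and once to present $X\setminus Y_0$ as the total space of a line bundle --- and then to identify the resulting compactified line bundle with $\PP^d$. Since $H=\CC^*$ acts nontrivially on the connected projective manifold $X$, the source and the sink are two \emph{distinct} extremal fixed-point components; as $X^H$ has exactly the two components $Y_0,Y_1$, these must be the source and the sink. Replacing the coordinate $t$ by $t^{-1}$ if necessary, I would assume that the point $Y_0=\{p\}$ is the source, so that its compass $\cC(p,X,H)=(\nu_1,\dots,\nu_d)$ consists of $d$ positive characters, while the compass of $Y_1$ consists solely of negative characters.

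First I would run the plus-decomposition. The big cell $X_0^+$ is a $\CC^d$-fibration over the point $p$, hence $X_0^+\cong\AAA^d$ and it is dense; the only other cell is $X_1^+=Y_1$, because $\nu^+(Y_1)=0$ for the sink. Thus $Y_1=X\setminus\AAA^d$ is the complement of a dense affine open subset of a smooth projective variety, so it is of pure codimension $1$ (otherwise $\Gamma(\AAA^d,\cO)=\Gamma(X,\cO)=\CC$, impossible for positive-dimensional affine $\AAA^d$); being a connected smooth fixed-point component it is a smooth irreducible divisor and $\dim Y_1=d-1$. From the right-exact excision sequence $\ZZ\cdot[Y_1]\to\Pic X\to\Pic(\AAA^d)=0$, together with the fact that $[Y_1]$ has infinite order (it is a nonzero effective class), I get $\Pic X=\ZZ\cdot\cO_X(Y_1)\cong\ZZ$; since $X$ is covered by the rational orbit-closures it is uniruled, so Lemma~\ref{lem_ABBdecomposition-extremal_cells} shows $X$ is Fano. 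Dually, the minus-decomposition has $X_0^-=\{p\}$ and a dense cell $X_1^-$ which is a $\CC^1$-fibration over $Y_1$ (as $\nu^-(Y_1)=d-\dim Y_1=1$). Hence $X\setminus\{p\}$ is the total space $\mathrm{Tot}(N)$ of the line bundle $N=N_{Y_1/X}$ with $Y_1$ as zero section, and $X$ is a smooth one-point compactification of $\mathrm{Tot}(N)$.

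It then remains to identify this compactification. I would blow up $p$ to obtain $\widetilde X=\mathrm{Bl}_pX$, which carries the lifted $\CC^*$-action, an exceptional divisor $E\cong\PP^{d-1}$ with $N_{E/\widetilde X}=\cO_{\PP^{d-1}}(-1)$, and satisfies $\widetilde X\setminus E=\mathrm{Tot}(N)$. The idea is to show that the projection $\mathrm{Tot}(N)\to Y_1$ extends to a morphism $\widetilde X\to Y_1$ whose fibers are the closures of the one-parameter orbit-lines emanating from $p$ (each such closure joins $p$ to one point of $Y_1$, and the blow-up separates the distinct tangent directions at $p$); these fibers are smooth rational curves, so $\widetilde X\cong\PP(N\oplus\cO_{Y_1})$ with $E$ its infinity section. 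Consequently $Y_1\cong E\cong\PP^{d-1}$, and comparing normal bundles of the infinity section forces $N\cong\cO_{\PP^{d-1}}(1)$, whence $X\cong\PP^d$ with $Y_1$ a hyperplane. As an alternative to this last step I would compute the index of $X$ from the compasses via Lemma~\ref{Fano-case} (using that, by Corollary~\ref{cor_compass_and_fixpt}, every $\nu_i$ is a positive multiple of $\mu_L(Y_1)-\mu_L(p)$), conclude the index equals $d+1$, and invoke the Kobayashi--Ochiai characterization \cite{kobayashi_ochiai}; the quadric alternative is excluded because its divisor ``at infinity'' would be a singular hyperplane section, contradicting smoothness of $Y_1$.

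The hard part will be the final identification: establishing that the fiberwise projection really extends to an everywhere-defined morphism $\widetilde X\to Y_1$ exhibiting a $\PP^1$-bundle structure --- equivalently, that the section of the projective bundle contracts to a \emph{smooth} point only when $Y_1\cong\PP^{d-1}$. This is precisely the place where the shape of $Y_1$ is produced, and it is the one point requiring more than formal manipulation of the decomposition; the index computation plus Kobayashi--Ochiai offers a way to sidestep the explicit $\PP^1$-bundle argument, at the cost of separately ruling out the quadric by the smoothness of $Y_1$.
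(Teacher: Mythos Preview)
Your outline is correct, but the paper reaches the conclusion by a much shorter route that avoids the ``hard part'' you identify. After establishing via the BB-decomposition (exactly as you do) that $\dim Y_1=d-1$ and $b_2(X)=1$, the paper simply observes that the vector field $v\in \HH^0(X,TX)$ tangent to the $\CC^*$-action vanishes along the ample divisor $Y_1$, hence yields a nonzero section of $TX\otimes\cO_X(-Y_1)=TX\otimes L^{-1}$ with $L$ ample. Wahl's theorem \cite{Wahl} (already invoked in the proof of Proposition~\ref{prop_Delta=Gamma-small_corank}) then gives $X\cong\PP^d$ at once; the identification $Y_1\cong\PP^{d-1}$ is immediate afterwards.

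By contrast, your blow-up argument must justify that $\mathrm{Bl}_pX\to Y_1$ is a genuine $\PP^1$-bundle, which hinges on the tangent weights at $p$ all being equal --- a fact that is a \emph{consequence} of $X\cong\PP^d$ rather than something you have in hand a priori. Your Kobayashi--Ochiai alternative likewise needs control on the compass at $p$ (not just the sign of each $\nu_i$) to pin down the index. Both can be made to work, but Wahl's criterion sidesteps this entirely: it needs only an ample divisor in the zero locus of a global vector field, which your first paragraph already produces. (A minor point: in the paper's conventions the source has \emph{negative} compass, cf.\ Lemma~\ref{lem_compass_and_Delta_for_Cstar}; this is cosmetic.)
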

\begin{proof}
  By the Bia{\l}ynicki-Birula decomposition of cohomology in Theorem~\ref{thm_ABB-decomposition},
     we see that $\dim Y_1=\dim X-1$ and the
  second Betti number of $X$ is 1. Hence $Y_1$ is an ample divisor and
  the vector field tangent to the action of $H$ vanishes along
  it. Thus the result follows from \cite{Wahl}.
\end{proof}

\begin{ex}\label{ex_Hirzebruch_surface}
   Assume $X$ is a smooth connected projective surface with an action of $1$-dimensional torus $\Lambda$ 
     and an ample linearized line bundle $L$.
   Suppose that $\Delta(X,L, \Lambda)$ is an interval $[0,3] \subset M_{\RR} $ and $X^{\Lambda}$ consists of isolated fixed points only.
   Moreover, suppose $\Gamma(X, L, \Lambda) = \Delta(X,L, \Lambda)$ and for all  $y \in X^{\Lambda}$
   \[
     \cC(y, X, \Lambda) = \begin{cases}
                           (1^2) &\text{if } \mu(y) =0,\\
                           (-1,1) &\text{if } \mu(y) \in {1, 2}, \text{ and}\\
                           ((-1)^2) &\text{if } \mu(y) =3,\\
                           \end{cases}
   \]
   Then $X^{\Lambda}$ consist of $4$ points, one for each integral point of $\Delta(X,L, \Lambda)$.
\end{ex}
We provide two proofs of the example, illustrating the strength of various techniques presented in
  Sections~\ref{sec_torus_action}, \ref{sec_ABB_decomp} and Appendix~\ref{sect_appendix}.
A reader who is not fond of the geometric analysis presented in the first proof will certainly
  appreciate a short argument based on localization theorem which however does not explain the geometry 
  as thoroughly as the first argument does. 

\begin{proof}[Proof using BB-decomposition]
   By Lemma~\ref{lem_extremal=sink} there is a unique extremal point $y_i$ for each vertex $i \in \set{0,3}$.
   Note that these are not the only fixed points by Theorem~\ref{thm_ABB-decomposition},
      as then $X$ would be a one point compactification of $X_0^+ \simeq \CC^2$, which is impossible.
   Without loss of generality, suppose there is $y_2 \in X^{\Lambda}$ with $\mu(y_2)=2$.
   
   Since $\Gamma(X, L, \Lambda) = \Delta(X,L, \Lambda)$ there are sections $\sigma_0$ and $\sigma_3$ of $L$ which have weights $0$ and $3$ respectively.
   Let $D_0$ and $D_3$ be the corresponding divisors.
   Consider the local defining equations $f_{0,y}$ and $f_{3,y}$ of $D_0$ and $D_3$ at a fixed point $y$, as in Lemma~\ref{lem_local_description_inv_divisor}.
   The weights of $f_{0,y_0}$ and $f_{3,y_3} $ are $0$, while all the weights of the remaining $f_{i,y}$  are nonzero and equal to $i -\mu(y)$ (for $i=0$ or $i=3$).
   In particular, the local expression of $\sigma_0$ near a fixed point $y\ne y_0$ is equal to $f_{0,y}$, which is homogeneous and nonconstant, thus vanishing at $y$.
   Therefore $D_0$ contains all fixed points except $y_0$, 
      and analogously $D_3$ contains all fixed points except $y_3$.      
   Moreover, $D_0$ near $y_3$ is defined by $f_{0, y_3}$, which is a homogeneous polynomial of degree $-3$ 
     in two variables both of degree $-1$.
   Thus $D_0$ is a union of $3$ orbits of $\Lambda$ (perhaps with multiplicities).
   
   Now consider any point $y'_2 \in X^{\Lambda}$ such that $\mu(y'_2)=2$ (by our choice, there is at least one such point).
   By the local coordinates as in Lemma~\ref{lem_local_description_inv_divisor}, 
   the neighbourhood of $y'_2$ is isomorphic to (up to an \'etale cover) $\Spec \CC[x_1, x_2]$ 
     where the action of $\Lambda$ has weights $1$ and $-1$.
     In particular, there are only two one dimensional orbits in $X$,
     whose closures $C_{y'_2, -1}$, and $C_{y'_2, 1}$ contain $y'_2$,
      and one of them, say $C_{y'_2, 1}$, contains also $y_3$ in its closure.
   The divisor $D_0$ near $y'_2$ is defined by $f_{0,y'_2} \in \CC[x_1, x_2]$ of degree $-2$, 
      thus $D_0$ is equal near $y'_2$ to $a C_{y'_2, -1} + (a+2) C_{y'_2, 1}$ for some nonnegative integer $a$.
   In particular, looking again near $y_3$  (where the multiplicity is $3$ as shown above) we see that $a\le 1$,
      and that $D_0$ near $y_3$ has a component of multiplicity at least $2$.
   Therefore, there can be at most one point $y'_2$ with $\mu(y'_2)=2$, namely $y_2$, 
      as any other such fixed point would lead to another component of multiplicity $2$ near $y_3$, 
      contradicting its multiplicity $3$ at $y_3$.

   If $a=0$, then $D_0$ has another component (closure of an orbit) with multiplicity $1$ near $y_3$.
   The other point in the closure of the orbit is a fixed point, which is not equal to $y_0$ (because $y_0 \notin D_0$), nor $y_2$ 
    (because, $C_{y_2, 1}$ is the only orbit, whose closure contains both $y_2$ and $y_3$), nor $y_3$ (by the local description of the action as in Equation~\eqref{equ_local-action}).
   Thus there must exist a point $y_1 \in X^{\Lambda}$ with $\mu(y_1)=1$.
   Similarly, if $a=1$, then $C_{y_2, -1}$ is contained in $D$ with multiplicity $1$ and the other end of  $C_{y_2, -1}$ must be a point  $y_1$ as above.
   In both cases, we can swap the roles of $0$ and $3$, and argue in the same way, to show that $y_1$ is a unique point with $\mu(y_1)=1$, which concludes the proof.
\end{proof}

\begin{proof}[Proof using Localisation Theorem]
    There are unique fixed points corresponding to $0$ and $3$ by Lemma~\ref{lem_extremal=sink}.
    Suppose that there are $a$ fixed points corresponding to $1$ and $b$ fixed points corresponding to $2$.
    Then the rational function from Proposition~\ref{prop_character_is_a_Laurent_polynomial} gives:
    \begin{align*}
        F &= \frac{1}{(1-t)^2} + \frac{a t}{(1-t^{-1})(1-t)} + \frac{b t^2}{(1-t^{-1})(1-t)} + \frac{t^3}{(1-t^{-1})^2}\\
          &= \frac{1 - at^2  - bt^3 + t^5 }{(t-1)^2}
    \end{align*}
    Thus by the same proposition, $F$ must be a Laurent polynomial, in particular, the numerator must have a double root at $1$.
    It is straightforward to check that this happens if and only if $a=b=1$.
\end{proof}

We remark that in the situation of Example~\ref{ex_Hirzebruch_surface}, the surface $X$ must be isomorphic either to $\PP^1 \times \PP^1$ or the Hirzebruch surface $\FF_{2}$.
Indeed, by the cohomology description in Theorem~\ref{thm_ABB-decomposition} the variety $X$ is a rational surface with $\Pic X\simeq \ZZ^2$, 
  thus it is a Hirzebruch surface $\FF_a$ for some $a$. 
Using the toric geometry one shows that $a=0$ or $a=2$.
We leave out the details as we are not going to use this observation.

\section{Torus actions on contact manifolds}\label{sec_contact_general}
Throughout this section we assume that $X$ is a contact Fano ma\-ni\-fold of
dimension $d=2n+1$,  with a contact form $\theta\in
\HH^0(X,\Omega_X(L))$ which detemines the exact sequence of vector bundles on
$X$:
\begin{equation*}
  0\lra F\lra TX \lra L\lra 0.
\end{equation*}
The form $d\theta|_{F}\in \HH^0(X,\Wedge{2} F\otimes L)$ determines a nondegenerate
bilinear skew symmetric pairing $F\times F \lra L$.
See \cite[Sect.~E.3 and~Chapt.~C]{jabu_dr} and references therein for introduction and more details about contact manifolds, 
   the \emph{contact distribution $F$}, the \emph{contact form $\theta$}, the \emph{contact line bundle}, and the skew-pairing $d\theta|_{F}$.
   
By a slight abuse of standard notation we will assume that any smooth projective curve is a contact manifold as well. In this case $n=1$, $F=0 \subset TX =L$, and $X$ is Fano if and only if $X \simeq \PP^1$.
This is need for a uniform statement of Corollary~\ref{cor_contact-fixedpt-contact} and consequently also in multiple proofs in Section~\ref{sec_contact_simple}, where the case of $n=0$ is critical.

We will use the set up introduced in \cite{Beauville} or \cite[Sect.~E.3.2]{jabu_dr}.
We consider a connected nontrivial reductive group $G$ of automorphims of $X$.
In particular, we will consider a maximal torus
$H\subset G$. The tangent action of $G$ and of $H$ on $TX$ induces a
canonical linearization $\mu_0$ of $L$.

We remark that unless $X \simeq \PP^{2n+1}$, the action of $G$ always preserves the contact distribution $F$, see \cite[Cor.~4.5]{kebekus_lines1}, since the contact structure on $X$ is unique.
Our main interest is in the case $\Pic X= \ZZ \cdot L$, which in particular excludes the case of $\PP^{2n+1}$.
In fact, from Subsection~\ref{subsect_adjoint_representation} onwards we will suppose that $\Pic X= \ZZ \cdot L$,
   but in \ref{subsect_fixed_pts_contact} we need this more general setup for applications in Section~\ref{sec_contact_simple}.
There we will also consider contact submanifolds of $X$, which arise as fixed points of a torus action 
   (see Corollary~\ref{cor_contact-fixedpt-contact}) 
   and they do not necessarily satisfy $\Pic X= \ZZ \cdot L$.

\subsection{Fixed points components of 
contact automorphisms}\label{subsect_fixed_pts_contact}

\begin{lemma}\label{lem_t-action-contact}
  Assume that $H$ is a torus acting on a contact manifold $X$
  and preserving the contact structure. For a fixed point $y\in X^H$
  we consider $\nu_i(y)\in M$, the weights of the action of $H$ on
  $T_y^*X$. Then after a renumbering, for $i=1,\dots,n$, the weights
  satisfy the following equalities in $M$: $$\nu_i+\nu_{i+n}=\nu_0$$
  and $\mu_0(y)=-\nu_0$ is the weight of the action of $H$ on $L_y$.
\end{lemma}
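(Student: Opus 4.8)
The plan is to work entirely on the fiber level at the fixed point $y$ and to transport the contact data into relations among weights. First I would restrict the defining exact sequence $0\to F\to TX\to L\to 0$ to the fiber over $y$. Since $y\in X^H$ and $H$ preserves the contact distribution $F$, this is a short exact sequence of $H$-representations; as $H$ is a torus, it splits equivariantly, so $T_yX\cong F_y\oplus L_y$ as $H$-modules. By the very definition of the canonical linearization $\mu_0$ (induced from the tangent action on $TX$ via the quotient $L=TX/F$), the one-dimensional module $L_y$ carries the weight $\mu_0(y)$. Writing $w_0:=\mu_0(y)$ and dualizing, the line $L_y^*\subset T_y^*X$ (the annihilator of $F_y$) contributes the weight $-w_0$ to the cotangent space; this is the weight I will call $\nu_0$, so that indeed $\mu_0(y)=-\nu_0$.

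Next I would exploit the nondegenerate skew pairing. The bundle map $d\theta|_F\colon \Wedge{2} F\to L$ is a morphism of $H$-equivariant bundles, so at $y$ it yields an $H$-equivariant, nondegenerate, skew-symmetric bilinear form $\omega\colon F_y\times F_y\to L_y$. Decomposing $F_y=\bigoplus_\alpha F_y^\alpha$ into weight spaces, equivariance forces $\omega(F_y^\alpha,F_y^\beta)=0$ whenever $\alpha+\beta\ne w_0$, because $\omega$ lands in $L_y$, which has weight $w_0$. Nondegeneracy of $\omega$ then implies that the induced pairing between $F_y^\alpha$ and $F_y^{w_0-\alpha}$ is perfect for every weight $\alpha$ occurring in $F_y$.

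From here the pairing of weights is immediate. For a weight $\alpha$ with $2\alpha\ne w_0$ the perfect pairing gives $\dim F_y^\alpha=\dim F_y^{w_0-\alpha}$, so these two weights occur with equal multiplicity and can be matched off against each other. For the self-paired case $2\alpha=w_0$, the form restricts to a nondegenerate skew form on $F_y^\alpha$, which forces $\dim F_y^\alpha$ to be even; choosing a symplectic basis I pair each copy of $\alpha$ with another, again summing to $2\alpha=w_0$. In all cases the $2n$ weights of $F_y$ can be listed as $\beta_1,\dots,\beta_n,w_0-\beta_1,\dots,w_0-\beta_n$. Passing to the cotangent space negates weights, so $F_y^*$ contributes $-\beta_1,\dots,-\beta_n,\beta_1-w_0,\dots,\beta_n-w_0$; setting $\nu_i:=-\beta_i$ and $\nu_{i+n}:=\beta_i-w_0$ yields $\nu_i+\nu_{i+n}=-w_0=\nu_0$ for $i=1,\dots,n$, together with the leftover weight $\nu_0$ coming from $L_y^*$. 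This is exactly the asserted renumbering.

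I would expect the only genuinely delicate point to be the self-dual weight spaces where $2\alpha=w_0$: one must observe that the restriction of a nondegenerate skew form to such a weight space remains nondegenerate (the weight decomposition is $\omega$-orthogonal), hence even-dimensional, so that the pairing of weights survives even though $\alpha$ is matched with itself. Everything else is bookkeeping with the equivariant splitting and the weight grading of the contact form.
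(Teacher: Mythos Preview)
Your argument is correct and follows the same route as the paper, which simply says ``the claim follows from the duality $F\simeq F^*\otimes L$ defined by the nondegenerate form $d\theta|_F$.'' Your version is in fact more careful: the paper's one-line invocation of the duality does not spell out why the self-paired weight space (where $2\alpha=w_0$) has even dimension, whereas you correctly isolate this as the one nontrivial point and resolve it via the skew-symmetry of $\omega$.
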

\begin{proof}
The claim follows from the duality $F\simeq F^* \otimes L$ defined by the nondegenerate form $d\theta|_{F}$.
\end{proof}
\begin{cor}
  In the situation of the previous lemma suppose in addition that $Y\subset X^H$
  is an extremal fixed-point component. 
  If the action of $H$ on $X$ is nontrivial, then $\mu_0(Y)\ne 0$.
\end{cor}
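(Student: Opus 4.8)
The plan is to assume $\mu_0(Y)=0$ and derive a contradiction by pitting two opposite properties of the compass against each other: the symmetry forced by the contact pairing (Lemma~\ref{lem_t-action-contact}), and the one-sidedness forced by extremality. Write $v:=\mu_0(Y)$, which is a vertex of $\Delta(X,L,H,\mu_0)$ because $Y$ is extremal.

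First I would observe that the compass is nonempty. Since the action of $H$ on $X$ is nontrivial and $Y$ is a connected component of $X^H$, we have $Y\neq X$, so $\dim Y<\dim X$ and hence $\cC(Y,X,H)\neq\emptyset$ (its cardinality is $\dim X-\dim Y$).

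Next I would extract the one-sidedness from extremality. As $v$ is a vertex of the polytope $\Delta:=\Delta(X,L,H,\mu_0)$, pick a supporting functional $\ell\in\Hom(M,\RR)$ with $\ell(v)<\ell(w)$ for every $w\in\Delta\setminus\{v\}$. For any $\nu\in\cC(Y,X,H)$, Corollary~\ref{cor_compass_and_fixpt} produces a fixed-point component $Y'$ and a scalar $q\in\QQ_{>0}$ with $\mu_0(Y')=v+q\nu\in\Delta$. Since $q\nu\neq 0$ we have $\mu_0(Y')\neq v$, so $\ell(v+q\nu)>\ell(v)$ and therefore $\ell(\nu)>0$. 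Thus every element of the compass lies in the open half-space $\{\ell>0\}$. (Equivalently, one could reduce to a $1$-parameter subgroup via Lemma~\ref{lem_extremal=sink} and invoke Lemma~\ref{lem_compass_and_Delta_for_Cstar} together with Lemma~\ref{lem_reduction_of_compass}.)

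Finally I would close the argument. Suppose toward a contradiction that $\mu_0(Y)=0$. Then $\nu_0=-\mu_0(Y)=0$ in the notation of Lemma~\ref{lem_t-action-contact}, so the cotangent weights at a point of $Y$ satisfy $\nu_{i+n}=\nu_0-\nu_i=-\nu_i$; consequently $\cC(Y,X,H)$ is invariant under $\nu\mapsto-\nu$ (with multiplicities). Choosing any $\nu\in\cC(Y,X,H)$, which exists by the first step, we find that both $\nu$ and $-\nu$ lie in the compass, forcing $\ell(\nu)>0$ and $\ell(-\nu)>0$ at once — impossible. Hence $\mu_0(Y)\neq 0$. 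The main obstacle is the second step: establishing that all compass vectors lie strictly on one side of a hyperplane through $v$ is exactly where extremality is indispensable, whereas the symmetry of the compass is an immediate consequence of the contact pairing.
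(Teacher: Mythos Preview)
Your proof is correct and follows essentially the same idea as the paper: one-sidedness of the compass at an extremal component versus the symmetry $\nu\mapsto-\nu$ forced by $\mu_0(Y)=0$ via Lemma~\ref{lem_t-action-contact}. The only cosmetic difference is that the paper establishes one-sidedness by passing to a $1$-parameter subgroup for which $Y$ is a source or sink (your parenthetical alternative), whereas your main argument uses Corollary~\ref{cor_compass_and_fixpt} and a supporting functional directly; these are equivalent, since the corollary is itself proved by that same reduction.
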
 
\begin{proof}
  We can reduce the action to a 1-dimensional torus for which $Y$ is the
  source (or sink) and thus all elements in the compass of $Y$ are
  negative (or positive). Thus $\mu_0(Y)=0$ contradicts the previous
  lemma.
\end{proof}
Now, in view of Lemma~\ref{lem_t-action-contact}, as immediate
consequences we get the following description of fixed-point components
$Y\subset X^H$ depending on whether $\mu_0(Y)= 0$ or not.
\begin{cor}\label{cor_contact-fixedpt-isotropic}
  In the situation of Lemma~\ref{lem_t-action-contact} assume that
  $Y\subset X^H$ is a component such that $\mu_0(Y)\ne 0$. Then $Y$ is
  isotropic with respect to $\theta$, that is
    $$TY\hookrightarrow F_{|Y}\hookrightarrow TX_{|Y}$$ 
 and the form $d\theta$ is zero on $TY$. In particular, $\dim Y\leq
 n$ and, in fact, $\dim Y+1$ is equal to the multiplicity of the weight
 $-\mu_0(Y)$ in the compass $\cC(Y,X,H)$.
\end{cor}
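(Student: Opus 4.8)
The plan is to reduce everything to bookkeeping of $H$-weights at a single fixed point $y \in Y$. Since $H$ is a torus, hence reductive, the defining sequence $0 \lra F \lra TX \lra L \lra 0$ splits $H$-equivariantly at $y$, so that $T_yX = F_y \oplus L_y$ as $H$-modules, where $L_y$ carries the single weight $\mu_0(Y)$. The tangent space $T_yY$ to the fixed-point component is exactly the weight-zero subspace of $T_yX$, and $\dim Y$ equals the number of zero weights, consistent with the definition of the compass.

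First I would establish the isotropy inclusion $TY \hookrightarrow F|_Y$. Because $\mu_0(Y) \neq 0$ by hypothesis, the summand $L_y$ contains no weight-zero vector; hence the weight-zero subspace of $T_yX$ lies inside $F_y$, which is precisely the statement $T_yY \subset F_y$. For the vanishing of $d\theta$ on $TY$, I would use that the pairing $d\theta|_F \colon \Wedge{2} F \lra L$ is $H$-equivariant: for $v,w \in F_y$ of weights $\alpha,\beta$ the value lies in $L_y$ and can be nonzero only if $\alpha + \beta = \mu_0(Y)$. Taking $v,w \in T_yY$, both of weight $0$, forces the value to have weight $0 \neq \mu_0(Y)$, so it vanishes. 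Thus $T_yY$ is isotropic in the $2n$-dimensional symplectic space $(F_y, d\theta|_{F_y})$, whence $\dim Y \leq n$.

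For the multiplicity statement I would read weights off the cotangent space, as in the definition of the compass, and apply Lemma~\ref{lem_t-action-contact}: after renumbering, the weights of $T_y^*X$ satisfy $\nu_i + \nu_{i+n} = \nu_0$ for $i = 1,\dots,n$, with $\nu_0 = -\mu_0(Y)$ the weight on the $L^*$-summand. Within each pair $(\nu_i,\nu_{i+n})$ summing to $\nu_0 \neq 0$, the value $0$ occupies a slot exactly when $\nu_0$ occupies the partner slot, and neither $0$ nor $\nu_0$ can occupy both slots at once. Hence, among the $F^*$-weights, the number of entries equal to $0$ (which is $\dim Y$, since $\nu_0 \neq 0$ contributes no zero) equals the number of entries equal to $\nu_0$. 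Adding the single contribution $\nu_0$ from the $L^*$-summand, the weight $-\mu_0(Y) = \nu_0$ occurs with multiplicity $\dim Y + 1$ among the nonzero cotangent weights, that is, in $\cC(Y,X,H)$.

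I do not anticipate a genuine obstacle: the corollary is an immediate consequence of the weight pairing in Lemma~\ref{lem_t-action-contact}. The only points demanding care are keeping the dualization consistent --- the compass is recorded on $T_y^*X$, so signs flip relative to the splitting $T_yX = F_y \oplus L_y$ used for the isotropy argument --- and remembering to count the distinguished weight $\nu_0$ of the $L^*$-summand alongside the $F^*$-contributions, which is exactly the ``$+1$'' in the final tally.
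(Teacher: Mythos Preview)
Your proposal is correct and is exactly the argument the paper intends: the corollary is stated as an ``immediate consequence'' of Lemma~\ref{lem_t-action-contact} with no separate proof, and your weight-bookkeeping at a fixed point is precisely how one unpacks that lemma. The only comment is that your care with the dualization and the ``$+1$'' from the $L^*$-summand makes explicit what the paper leaves to the reader.
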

\begin{cor}\label{cor_contact-fixedpt-contact}
  In the situation of Lemma~\ref{lem_t-action-contact} assume that
  $Y\subset X^H$ is a component such that $\mu_0(Y)=0$.
  Then $Y$ is a contact manifold, with a contact form defined by the 
  restriction of $\theta$ 
  (in particular, $L|_{Y}$ is the contact line bundle on $Y$, that is the quotient of $TY$ by the contact distribution of $Y$ is $L|_Y$)
  and $d\theta|_{F}$ induces bilinear pairing on the normal bundle
  $N_{Y/X}\times N_{Y/X}\ra L|_Y$.
\end{cor}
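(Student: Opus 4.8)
The plan is to transport the whole $H$-equivariant contact datum along $Y$ and split it into weight subbundles, exploiting that the hypothesis $\mu_0(Y)=0$ makes $L|_Y$ a trivial $H$-representation.

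First I would record that, $Y$ being a smooth component of $X^H$ (by \cite{Luna}), the restriction $TX|_Y$ splits $H$-equivariantly as $TX|_Y=TY\oplus N_{Y/X}$, where $TY$ is the zero-weight part and $N_{Y/X}$ carries only the nonzero weights (up to sign, these are the elements of $\cC(Y,X,H)$). Since the contact structure is preserved by $H$, the surjection $\theta\colon TX\to L$ is $H$-equivariant; restricting it to $Y$ and using $\mu_0(Y)=0$, the target $L|_Y$ has weight $0$, so $\theta$ must kill every nonzero-weight vector. Hence $N_{Y/X}\subseteq F|_Y=\ker(\theta|_Y)$, and setting $F_Y:=F|_Y\cap TY$ for the zero-weight part of $F|_Y$ gives $F|_Y=F_Y\oplus N_{Y/X}$. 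Composing $TY\hookrightarrow TX|_Y$ with $\theta$ then yields a surjection $TY\to L|_Y$ (surjective, as $\theta$ is, since its kernel $N_{Y/X}$ lies off $TY$) with kernel $F_Y$, that is, an exact sequence $0\to F_Y\to TY\to L|_Y\to 0$ on $Y$. Thus $\theta$ restricts to a twisted form $\theta_Y\in\HH^0(Y,\Omega_Y\otimes L|_Y)$ whose kernel distribution is $F_Y$ and whose quotient line bundle is $L|_Y$, as the statement requires (in particular $\dim Y\ge 1$).

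Next I would establish nondegeneracy by a weight count. The nondegenerate skew pairing $\omega:=d\theta|_F\colon F\times F\to L$ restricts to a nondegenerate $L|_Y$-valued pairing on $F|_Y$. Because $\omega$ is $H$-equivariant with weight-$0$ values in $L|_Y$, while $F_Y$ sits in weight $0$ and $N_{Y/X}$ in nonzero weights, the mixed block $\omega(F_Y,N_{Y/X})$ would have nonzero weight inside a weight-$0$ bundle and therefore vanishes. So $\omega$ is block diagonal for $F|_Y=F_Y\oplus N_{Y/X}$, and nondegeneracy on $F|_Y$ forces each block to be nondegenerate. The block on $N_{Y/X}$ is precisely the asserted pairing $N_{Y/X}\times N_{Y/X}\to L|_Y$; the block on $F_Y$ is a nondegenerate skew form, forcing $\rank F_Y$ to be even and hence $\dim Y=\rank F_Y+1$ odd, consistent with $Y$ being contact.

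The step I expect to require the most care is matching the $F_Y$-block of the ambient form $d\theta|_F$ with the intrinsic derivative of the restricted form $\theta_Y$. For this I would use the bracket description $\omega(u,v)=-\theta([\tilde u,\tilde v])$ for $u,v\in F$ and any vector-field extensions $\tilde u,\tilde v$ valued in $F$; this value is tensorial, hence independent of the extensions. Choosing extensions that are tangent to $Y$ along $Y$, their bracket stays tangent to $Y$, whence $\theta([\tilde u,\tilde v])=\theta_Y([\tilde u,\tilde v])$ and the ambient block agrees with the intrinsic derivative of $\theta_Y$ on $F_Y$. This is the only point where one genuinely uses that $Y$ is a submanifold and not merely an $H$-subrepresentation, and it is what certifies that $(Y,\theta_Y)$ is a contact manifold with contact line bundle $L|_Y$.
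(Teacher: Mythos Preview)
Your proof is correct and follows essentially the same approach as the paper, which merely states that this corollary is an ``immediate consequence'' of Lemma~\ref{lem_t-action-contact} without further detail. You have fully written out the weight-splitting argument that the paper leaves implicit, including the careful verification that the ambient $d\theta|_F$ restricts to the intrinsic derivative of $\theta_Y$ on $F_Y$.
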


\subsection{Adjoint representation}\label{subsect_adjoint_representation}
In addition to the assumptions listed at the beginning of this section, from now on we suppose 
$\Pic X= \ZZ \cdot L$.
Let $\g\subset \HH^0(X,TX)$ denote the Lie algebra of vector fields
tangent to the action of $G$. 
\begin{lemma}\label{lem_contact-canonical-linearization}
  The canonical $G$-linearization $\mu_0$ of $L$ makes $\HH^0(X,L)$ into a representation of $G$ isomorphic to $\HH^0(X, TX)$.
  If in addition $G=\Aut(X)$, then the set and
  multiplicities of weights in $\widetilde{\Gamma}(X,L,H,\mu_0)$
  coincide with the set and multiplicities of weights of the adjoint
  representation of $G$.
\end{lemma}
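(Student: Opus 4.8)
The plan is to realize the isomorphism as the map on global sections induced by the contact form $\theta$, and then to identify the resulting $G$-module with the adjoint representation by means of the standard identification of global vector fields with $\mathrm{Lie}(\Aut X)$.

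First I would observe that the surjection $\theta\colon TX\to L$ appearing in the defining contact sequence is $G$-equivariant. Since $\Pic X=\ZZ\cdot L$ excludes $X\simeq\PP^{2n+1}$, the group $G$ preserves the contact distribution $F$ by \cite[Cor.~4.5]{kebekus_lines1}, so the whole sequence $0\to F\to TX\to L\to 0$ is $G$-equivariant; moreover $\mu_0$ is by construction the linearization that the tangent action on $TX$ induces on the quotient $L$, so that $\theta$ is $G$-equivariant tautologically. Passing to global sections gives a $G$-equivariant (hence $H$-equivariant) map $\theta_*\colon\HH^0(X,TX)\to\HH^0(X,L)$, which is the candidate isomorphism.

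The core of the first assertion is that $\theta_*$ is an isomorphism. From the long exact cohomology sequence of $0\to F\to TX\to L\to 0$ this reduces to the vanishing $\HH^0(X,F)=0$ (giving injectivity) together with the triviality of the connecting map $\HH^0(X,L)\to\HH^1(X,F)$ (giving surjectivity). Both are precisely the content of the contact-Fano analysis of \cite{Beauville}: on a contact Fano manifold with $\Pic X=\ZZ\cdot L$ the form $\theta$ identifies $\HH^0(X,L)$ with the algebra of infinitesimal contactomorphisms, which here is all of $\HH^0(X,TX)$. I expect this to be the main obstacle, as it is exactly the point where the contact hypotheses (nondegeneracy of $d\theta|_F$ and the positivity $\Wedge{2n+1}TX\simeq L^{\otimes(n+1)}$) are indispensable; rather than re-derive the vanishing of $\HH^\bullet(X,F)$ from scratch I would invoke Beauville's result.

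For the second assertion, assume $G=\Aut(X)$. Then global vector fields coincide with the Lie algebra of the automorphism group, $\HH^0(X,TX)=\g=\mathrm{Lie}(G)$, and the natural $G$-action on $\HH^0(X,TX)$ by pushforward of vector fields is exactly the adjoint action of $G$ on $\g$. Combining this with the $G$-equivariant isomorphism $\theta_*$ from the first part yields an isomorphism of $G$-representations $\HH^0(X,L)\simeq\g$ carrying the adjoint action. Restricting to $H$ and reading off weights, the multiset of weights of $\widetilde{\Gamma}(X,L,H,\mu_0)$ becomes that of the adjoint representation: the zero weight with multiplicity $\dim H=\rank G$ coming from the Cartan subalgebra $\h$, together with each root of $G$ occurring with multiplicity one. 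This is the asserted coincidence, and it is what later identifies $\Gamma(X,L,H)$ with the root polytope $\Delta(G)$.
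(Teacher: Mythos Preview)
Your proposal is correct and follows essentially the same approach as the paper: the paper's proof simply cites \cite[Sect.~1.2]{Beauville} (and \cite[Thm~E.13, Cor.~E.14]{jabu_dr}) for the isomorphism $\HH^0(X,L)\simeq\HH^0(X,TX)$ and then notes that $\HH^0(X,TX)=\g$ when $G=\Aut(X)$. You have unpacked what lies behind that citation---the $G$-equivariance of the contact sequence (via \cite[Cor.~4.5]{kebekus_lines1}, already recorded in the paper just before this subsection) and the vanishing $\HH^0(X,F)=0$ together with surjectivity of $\theta_*$---but the substance and the ultimate appeal to Beauville are the same.
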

\begin{proof}
  The first statement is discussed in 
  \cite[Sect.~1.2]{Beauville} or in \cite[Thm~E.13, Cor.~E.14]{jabu_dr}. 
  The second claim follows, as then $\HH^0(X, TX) = \g$.
\end{proof}

\begin{lemma}\label{lem_contact->Gsemisimple}
  Let $X$ be a projective contact manifold with $\Pic X = \ZZ \cdot L$ as above.
  Suppose that $G=\Aut(X)$ is a reductive group with a maximal torus $H$.
  If $\Gamma(X,L,H,\mu_0)=\Delta(X,L,H,\mu_0)$ then $G$ is semisimple.
\end{lemma}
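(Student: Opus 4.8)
The plan is to compare the dimensions of the two polytopes. Since we assume $G=\Aut(X)$, Lemma~\ref{lem_contact-canonical-linearization} identifies $\HH^0(X,L)$ with the adjoint representation $\g$ of $G$, and therefore identifies $\widetilde{\Gamma}(X,L,H,\mu_0)$ with the set of weights of the adjoint action of $H$ on $\g$. Writing the root decomposition $\g=\h\oplus\bigoplus_{\alpha\in\cR}\g_\alpha$, these weights are exactly the roots $\cR$ together with the zero weight, the latter coming from $\h$ with multiplicity $\dim H$. Because $\cR$ is symmetric, $0=\tfrac{1}{2}\bigl(\alpha+(-\alpha)\bigr)$ lies in $\conv(\cR)$ as soon as $G$ is not a torus, so that $\Gamma(X,L,H,\mu_0)=\conv(\cR\cup\{0\})=\conv(\cR)$.

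Next I would compute the dimension of this polytope. The convex hull of the roots spans the linear subspace of $M_\RR$ generated by $\cR$, whose dimension equals the semisimple rank of $G$, that is the rank of the commutator subgroup $[G,G]$. This rank is strictly smaller than $r=\dim H$ exactly when the connected center $Z(G)^\circ$ is a positive-dimensional torus, i.e. exactly when $G$ fails to be semisimple; conversely, $G$ is semisimple precisely when $\cR$ spans all of $M_\RR$. (If $G$ happened to be a torus, then $\cR=\emptyset$ and $\Gamma$ would be a single point, which the final step will also rule out.)

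To finish, I would invoke full-dimensionality of $\Delta$. Because $G$, and hence its maximal torus $H$, acts almost faithfully, Corollary~\ref{cor_Delta_has_maximal_dim} gives $\dim\Delta(X,L,H,\mu_0)=\dim M_\RR=r$. The standing hypothesis $\Gamma=\Delta$ then forces $\dim\Gamma=r$, so by the previous paragraph the roots $\cR$ span $M_\RR$, the semisimple rank of $G$ equals $r$, and $Z(G)^\circ$ is trivial. Therefore $G$ is semisimple.

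The argument is essentially a dimension count, so I do not expect a serious obstacle; the only points that need care are the two inputs, namely that the $H$-weights of $\HH^0(X,L)$ really are the adjoint weights (supplied by Lemma~\ref{lem_contact-canonical-linearization} under the assumption $G=\Aut(X)$, together with the identification $\HH^0(X,TX)=\g$), and the standard structural fact that a connected reductive group is semisimple if and only if the roots of any maximal torus span its full character space. Both are routine, so the main content of the proof is simply to combine them with the full-dimensionality of $\Delta$.
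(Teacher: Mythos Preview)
Your proof is correct and follows essentially the same approach as the paper: both identify $\Gamma$ with the convex hull of the adjoint weights via Lemma~\ref{lem_contact-canonical-linearization}, observe that $\Gamma$ is full-dimensional if and only if $G$ is semisimple, and then use Corollary~\ref{cor_Delta_has_maximal_dim} together with the hypothesis $\Gamma=\Delta$ to force full dimension. Your version is simply more explicit about the root-lattice dimension count and the edge case where $G$ could be a torus, but the structure of the argument is identical.
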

\begin{proof}
  By Lemma~\ref{lem_contact-canonical-linearization} the polytope $\Gamma(L)$ is the convex
  hull of the weights of the adjoint action of $H$ on $\g$. 
  Hence $\Gamma(L)$ is of maximal dimension if and only if $G$ is
  semisimple. 
  On the other hand, since the action of $H$ is almost faithful, 
    the polytope $\Delta(L)$ is of maximal dimension by Corollary~\ref{cor_Delta_has_maximal_dim}.
  Therefore $G$ is semisimple.
\end{proof}

\subsection{Semisimple groups of automorphisms of high rank}
As before $X$ is a contact Fano manifold of dimension $2n+1$ with 
$\Pic X=\ZZ \cdot L$ and $-K_X=(n+1)L$. By $G$ we denote the automorphism group of $X$ and we assume it is reductive of rank $r$.
\begin{lemma}\label{lem_contact-Gamma=Delta}
  Le $X$ be a contact Fano manifold of dimension $2n+1$ and $\Pic
  X=\ZZ\cdot L$. Suppose that the group $G$ of automorphisms
  of $X$ is reductive with a maximal torus $H$ of rank $r$. If
  $r+2\geq n$ then $\Delta(X,L,H,\mu_0) = \Gamma(X,L,H,\mu_0)$ and all
  extremal fixed-point components of $X^H$ are isolated points.
\end{lemma}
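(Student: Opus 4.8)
The plan is to first bound the dimension of every extremal fixed-point component by $3$, which immediately yields $\Gamma=\Delta$, and then to sharpen this to dimension $0$ using the adjoint description of $\HH^0(X,L)$.

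For the dimension bound I would imitate the flag construction from the proof of Proposition~\ref{prop_Delta=Gamma-small_corank}. Fix an extremal component $Y=Y^0$ with associated vertex $v=\mu_0(Y^0)$ of $\Delta$, and choose a flag $V_0\subsetneq\cdots\subsetneq V_{r-1}\subset M_\RR$ cutting out faces of $\Delta$ through $v$, giving a descending chain of subtori $H=H^0\supsetneq\cdots\supsetneq H^{r-1}$ with $\dim H^i=r-i$ and a strictly increasing chain of extremal components $Y^0\subsetneq\cdots\subsetneq Y^{r-1}$, where $Y^i$ is extremal in $X^{H^i}$. The key point, and the place where the contact hypothesis enters, is that each $H^i$ with $i\le r-1$ is a nontrivial torus preserving the contact structure (here $\Pic X=\ZZ\cdot L$ excludes $X\simeq\PP^{2n+1}$, so $G$ preserves $F$); hence the Corollary following Lemma~\ref{lem_t-action-contact} forces $\mu_0(Y^i)\ne 0$, and then Corollary~\ref{cor_contact-fixedpt-isotropic} shows each $Y^i$ is isotropic, so $\dim Y^i\le n$. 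Applied to the top of the chain this gives $\dim Y^0+(r-1)\le\dim Y^{r-1}\le n$, whence $\dim Y^0\le n-r+1\le 3$ by the assumption $r+2\ge n$. Since this holds for every extremal component, Corollary~\ref{cor_Delta=Gamma-small_components} yields $\Gamma(L)=\Delta(L)$.

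To upgrade ``$\dim\le 3$'' to ``isolated'', I would use that $G=\Aut(X)$, so by Lemma~\ref{lem_contact-canonical-linearization} we have $\HH^0(X,L)\cong\g$ with $\widetilde\Gamma(L)$ equal to the set of weights of the adjoint representation; in particular every nonzero weight is a root and occurs with multiplicity $1$. An extremal component $Y$ has $\mu_0(Y)\ne 0$ by the step above, so its vertex $\mu_0(Y)$ of $\Delta=\Gamma$ is a root, and therefore $\dim\HH^0(X,L)_{\mu_0(Y)}=1$. By Lemma~\ref{lem_ABB-surjectivity} the restriction identifies $\HH^0(Y,L|_Y)\cong\HH^0(X,L)_{\mu_0(Y)}$, so $h^0(Y,L|_Y)=1$. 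If $\dim Y>0$, then Lemma~\ref{lem_ABBdecomposition-extremal_cells} makes $Y$ a Fano manifold with $\Pic Y=\ZZ\cdot L|_Y$ and $\dim Y\le 3$, and Lemma~\ref{lem_Fano_3_folds_H0L} would force $h^0(Y,L|_Y)\ge 2$, a contradiction. Hence every extremal component is a point.

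The main obstacle is the dimension bound: the crude flag estimate used for Proposition~\ref{prop_Delta=Gamma-small_corank} only gives $\dim Y^0\le d-r-1$, which is far too weak here, since $d=2n+1$ can be as large as $2r+5$. The contact structure is exactly what rescues it, and the crucial observation is that \emph{every} component with $\mu_0\ne 0$ (not only the extremal ones) is isotropic of dimension $\le n$; this lets me replace the ceiling $d-2$ by $n$ for the whole chain, the halving that makes $r+2\ge n$ sufficient. The second, multiplicity-one argument is then essentially formal, but it is indispensable: the flag bound by itself never reaches dimension $0$, and it is the combination of the root multiplicity with the low-dimensional Fano estimate of Lemma~\ref{lem_Fano_3_folds_H0L} that forces the extremal components to be isolated.
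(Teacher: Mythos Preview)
Your proof is correct and follows essentially the same approach as the paper: the flag construction from Proposition~\ref{prop_Delta=Gamma-small_corank} combined with the isotropy bound $\dim Y^{r-1}\le n$ from Corollary~\ref{cor_contact-fixedpt-isotropic} gives $\dim Y^0\le 3$, and then the multiplicity-one weight via Lemmas~\ref{lem_contact-canonical-linearization} and~\ref{lem_ABB-surjectivity} combined with Lemma~\ref{lem_Fano_3_folds_H0L} forces $Y^0$ to be a point. You are in fact slightly more careful than the paper in explicitly justifying $\mu_0(Y^i)\ne 0$ before invoking the isotropy corollary.
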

\begin{proof}
  The arguments are similar to the proof of Proposition~\ref{prop_Delta=Gamma-small_corank}.
  By Corollary~\ref{cor_Delta=Gamma-small_components} the first statement will follow
  if we prove that every extremal component $Y^0$ of $X^H$ is of
  dimension $\leq 3$. Indeed, as in the proof of
  Proposition~\ref{prop_Delta=Gamma-small_corank} we construct a flag of submanifolds
  $Y^0\subsetneq Y^1\subsetneq\cdots\subsetneq Y^{r-1}$ and by
  Corollary~\ref{cor_contact-fixedpt-isotropic} we have $\dim Y^{r-1}\leq n$ hence
  $\dim Y^0\leq n - (r-1)\leq 3$. 

  Now we know that $\dim Y^0\leq 3$ and $Y^0$ is Fano 
  with $\Pic Y^0=\ZZ\cdot L_{|Y^0}$ by Lemma~\ref{lem_ABBdecomposition-extremal_cells}.
  Moreover, by Lemma~\ref{lem_ABB-surjectivity} we know
  that $\dim\HH^0(Y^0,L_{|Y^0})$ is the same as the multiplicity of
  the respective root in the weights of the adjoint representation of
  $G$, hence it is one. Finally,  
  $\dim\HH^0(Y^0,L_{|Y^0})=1$ implies that $Y^0$ is a point by Lemma~\ref{lem_Fano_3_folds_H0L}.
\end{proof}

\begin{prop}\label{prop_contact->Gsimple} 
  Let $X$ be a contact Fano manifold of dimension $2n+1$ and $\Pic
  X=\ZZ\cdot L$. Suppose that the group $G$ of automorphisms
  of $X$ is reductive of rank $r\geq n-2$. Then $G$ is simple.
\end{prop}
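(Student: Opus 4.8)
The plan is to assume the contrary and exploit that a \emph{reducible} adjoint representation cannot support the contact structure. By Lemma~\ref{lem_contact-Gamma=Delta} we already have $\Gamma=\Delta=\Delta(G)$ with all extremal components isolated, by Lemma~\ref{lem_contact->Gsemisimple} the group $G$ is semisimple, and by Lemma~\ref{lem_contact-canonical-linearization} we have $\HH^0(X,L)\cong\g$ as a $G$-module. Suppose, for contradiction, that $G$ is isogenous to a product $G_1\times\cdots\times G_k$ of simple groups with $k\ge 2$. Then the root system splits as an orthogonal disjoint union, so $\Delta(G)$ is the convex hull of roots lying in pairwise orthogonal subspaces $M_{1,\RR},\dots,M_{k,\RR}$ of $M_\RR$, and its vertices are precisely the long roots of the individual factors.

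First I would record what the contact structure says at the vertices. Fix a vertex $v$ that is a long root of $G_1$ and consider the corresponding isolated extremal fixed point. By Lemma~\ref{lem_t-action-contact} the weights on the cotangent space pair up to $\nu_0=-v$, and by Corollary~\ref{cor_contact-fixedpt-isotropic} the character $-v$ occurs in the compass with multiplicity exactly one (the $L^*$-direction). Tracing each compass element to the fixed-point character it points at, via Corollary~\ref{cor_compass_and_fixpt}, and using that each root subsystem is reduced, one finds that every compass element not contained in $M_{1,\RR}$ must equal $\tfrac12(\beta-v)$ for a root $\beta$ of one of the other factors, and that these occur in pairs $\beta\leftrightarrow-\beta$. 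In particular every cross-factor vertex is joined to $v$ by an edge of $\Delta$.

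The decisive step is global, and is where I expect the real difficulty. The element $\tfrac12(\beta-v)$ has to be an honest character in $M$; its $M_{1,\RR}$-component is $-\tfrac12 v$, and for several types (for instance $A_m$ with $m\ge2$, or $B_m$ with $m\ge3$) this already violates integrality and rules the factor out. This observation is not uniform, however: factors such as $A_1$ or $C_2$ survive it, so a purely lattice- or compass-theoretic argument does not close the case. To finish uniformly I would invoke Beauville's analysis of the contact moment map \cite{Beauville}: the isomorphism $\HH^0(X,L)\cong\g$ together with the contact form realizes the affine cone over $X$ as an irreducible, $G$-equivariant subvariety of $\g^*$ whose image is the closure of the minimal nilpotent orbit. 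When $\g=\g_1\oplus\cdots\oplus\g_k$ the nilpotent orbits are products, and the minimal nonzero ones are supported on a single simple factor; irreducibility of $X$ then forces the image into one $\g_j^*$, so each $G_i$ with $i\ne j$ acts with vanishing moment and hence trivially, contradicting the almost-faithfulness of the action. Equivalently, $\widetilde{\Gamma}$ would then lie in $M_{j,\RR}$, contradicting that $\Delta(G)$ is full-dimensional. Therefore $k=1$ and $G$ is simple.

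The main obstacle is exactly the passage from local to global. As the second paragraph shows, the combinatorics of compasses at the fixed points is perfectly consistent with a nontrivial product, so the contradiction cannot be extracted from a single fixed point; one must use the global contact structure to break the symmetry among the simple factors. If one insists on a self-contained, polytope-only route, the stubborn case is $G\cong (SL_2)^k$, where $\Delta(G)$ is a cross-polytope $\conv(\pm\alpha_1,\dots,\pm\alpha_k)$: here Corollary~\ref{cor_torus-on-quadrics} would identify $X$ with a quadric $\cQ^d$, and a quadric of dimension at least $2$ has index $d$, incompatible with the contact index $n+1=\tfrac{d+1}{2}$. Making this rigorous requires verifying the hypotheses of Corollary~\ref{cor_torus-on-quadrics} (that the $\alpha_i$ generate the character lattice, and that no compass contains $\mp 2\alpha_i$), which is the delicate part of the elementary approach.
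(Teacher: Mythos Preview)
Your proposal has a genuine gap in its main argument. The claim that the contact moment map realizes the affine cone over $X$ as the closure of the \emph{minimal} nilpotent orbit in $\g^*$ is not established by Beauville's work and is essentially what the LeBrun--Salamon conjecture asserts. Beauville shows the image lies in the nilpotent cone, but identifying it with the minimal orbit requires an additional hypothesis (generic finiteness of $X\dashrightarrow\PP(\g^*)$), which is not available here. And even granting that the image is \emph{some} nilpotent orbit closure, for $\g=\g_1\oplus\g_2$ the nilpotent cone is $\cN_1\times\cN_2$, and a generic $G$-orbit closure therein is not supported on a single factor---only the minimal ones are. So irreducibility of $X$ alone does not force the image into a single $\g_j^*$, and the argument is circular.

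The paper's proof is entirely combinatorial and avoids this. For vertices $u_1\in R_1$ and $u_2\in R_2$ of distinct simple factors, one first checks that $u_2-u_1$ is primitive on its ray in $M$ unless both factors are of type $A_1$ or $C$. In the primitive case, Corollary~\ref{cor_weights-jets} forces $u_2-u_1$ into the compass at the extremal point over $u_1$; the contact duality of Lemma~\ref{lem_t-action-contact} then puts $-u_2=-u_1-(u_2-u_1)$ into the compass as well, and Corollary~\ref{cor_compass_and_fixpt} produces a fixed-point character $u_1-\lambda u_2$ with $\lambda>0$, which lies outside $\Delta(G)$---contradiction. In the non-primitive case (both factors of type $A_1$ or $C$, where $(u_2-u_1)/2$ is a weight) one restricts to the edge joining $u_1$ and $u_2$: the corresponding extremal component $Y'$ has $\Pic Y'=\ZZ\cdot L|_{Y'}$, fixed-point interval of length $2$, isolated extremal points, and $h^0(L|_{Y'})=2$ by Lemma~\ref{lem_ABB-surjectivity}, so Proposition~\ref{prop_smallDelta2} gives $(Y',L|_{Y'})\simeq(\PP^1,\cO(1))$. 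Hence $(u_2-u_1)/2$ is \emph{not} in the compass, and the previous contradiction applies with $u_2-u_1$ itself. Your alternative route via Corollary~\ref{cor_torus-on-quadrics} for $(SL_2)^k$ is in the right spirit but both more complicated than necessary and incomplete, since it does not cover mixed $A_1$/$C_r$ products.
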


\begin{proof}
  Let $H$ be the maximal torus of $G$.
  By Lemma~\ref{lem_contact-Gamma=Delta} the polytopes $\Gamma(X,L,H,\mu_0)$ and $\Delta(X,L,H,\mu_0)$
    are equal. 
  Lemma~\ref{lem_contact->Gsemisimple} thus implies that $G$ is semisimple.
  Up to a finite cover we can write a decomposition of $G$ into
  simple factors $G=G_1 \times \cdots \times G_s$.
  Correspondingly, the maximal torus and character lattice decompose as 
     $H=H_1\times\cdots\times H_s$ and $M=M_1 \oplus \cdots\oplus M_s$.
  Here $H_i$ is a maximal torus in $G_i$ and $M_i$ is the lattice of weights of $G_i$, hence also the lattice of characters of $H_i$. 
  If $R_i$ is a root system of $G_i$ in $M_i \hookrightarrow M$
  then $\Delta(L)$ is the convex hull of $\bigcup R_i \subset M$.

  Suppose that $s>1$. For $i=1,\ 2$ take $u_i\in R_i\subset M_i
  \hookrightarrow M$ which is the vertex of the root polytope
  $\Delta(G_i)\subset M_i$. Then $\RR_{\geq 0}(u_2-u_1)$ is a ray of
  the cone $\RR_{\geq 0}(\Delta(X,L)-u_1)$. The element $(u_2-u_1)$ is
  primitive in the semigroup $\RR_{\geq 0}(u_2-u_1)\cap M$ unless both 
  $G_1$ and $G_2$ are groups of type $A_1$ or $C$ and then $\RR_{\geq
    0}(u_2-u_1)\cap M$ is generated by $(u_2-u_1)/2$. 

  In the former case, by Corollary~\ref{cor_weights-jets} the element $(u_2-u_1)$ is
  in the compass of the extremal fixed-point component $Y_1$
  associated to $u_1$. Then, however by Lemma~\ref{lem_t-action-contact},
  $-u_2=-u_1-(u_2-u_1)$ should be contained in $\cC(Y_1,X,H)$ as well,
  hence by Corollary~\ref{cor_compass_and_fixpt} we get $u_1-\lambda u_2\in\Delta(X,L)$ for a positive integer $\lambda$, a
  contradiction.

  If both $G_1$ and $G_2$ are of type $A_1$ or $C$ then we consider the
  projection of $M_R$ along the direction of the edge spanned by $u_1$
  and $u_2$. That is we consider the reduction of the action of the
  torus $H$ to $H'$ associated to the
  $\ZZ\cdot(u_2-u_1)/2\hookrightarrow M\ra M'$. 
  Let $Y'$ be the extremal component of $X^{H'}$ corresponding to the vertex, 
    which is the image of the edge spanned by $u_1$ and $u_2$.
  The manifold $Y'$ admits an almost faithful $\CC^* = H/H'$ action with $\Delta(Y', L|_{Y'}, \CC^*)$
     equal to an interval of length $2$ by 
     Lemma~\ref{lem_reduction_of_action}\ref{item_reduction_of_action_restricting_to_fix_points_of_subgroup}.
  Since $\dim Y'>0$, by Lemma~\ref{lem_ABBdecomposition-extremal_cells}\ref{item_extremal_cells_case_dim_positive} 
     we also have $\Pic Y' = \ZZ\cdot L|_{Y'}$.
  Moreover, the extremal components of $(Y')^{\CC^*}$ are the extremal components of $X^{H}$ corresponding to vertices $u_1$ and $u_2$,
  thus they are points.
  Thus $Y'$ with the action of $\CC^*$ satisfies the assumptions of Proposition~\ref{prop_smallDelta2},
     and the pair $(Y', L|_{Y'})$ is isomorphic either to $(\PP^d, \cO_{\PP^d}(1))$ for $d\ge 1$
     or to $(\cQ^d, \cO_{\cQ^d}(1))$ for $d\ge 3$.
  By Lemma~\ref{lem_ABB-surjectivity} we get $\dim \HH^0(Y',L|_{Y'})=2$, hence
  $(Y',L|_{Y'}) \simeq (\PP^1,\cO(1))$.
  But then $(u_2-u_1)/2$ is not in the compass of the action of $\CC^*$ on $Y'$ at the fixed point associated to $u_1$.
  By Lemma~\ref{lem_reduction_of_compass} the vector $(u_2-u_1)/2$ is not in the compass of the action of $H$ on $X$ at the same point either.
  Therefore, we get a contradiction as in the previous case.
\end{proof}

\section{Contact manifolds with a simple automorphisms group}\label{sec_contact_simple}
In this section, we always suppose the following assumptions hold.

\begin{assumpt}[contact manifolds with an action of torus]\label{assumptions-weaker} \hfill
\begin{enumerate}[leftmargin=*]
   \item \label{item_assumption_contact} $X$ is a contact Fano manifold of dimension $2n+1$, 
           with a contact form $\theta\in\HH^0(X,\Omega_X\otimes L)$.
           In particular, $L$ is an ample line bundle on $X$.
  \item  \label{item_assumption_PicX_ZL}
           $\Pic X = \ZZ \cdot L$.
  \item \label{item_assumption_torus_action}
           There exists a simply-connected simple group $G$ of rank $r$ with a maximal torus $H$, such that $H$ acts on $X$.
           $M$ is the lattice of weights of $G$, thus also the lattice of characters of $H$.
  \item \label{item_assumption_Delta_weak}
    $\Delta(X,L,H)=\Delta(G)$, where $\Delta(G)$ is
    the convex hull of roots.
  \item \label{item_assumption_extremal_points_are_isolated_pts}
    all extremal fixed-point components in $X^H$ are isolated  points
    thus, in particular, $\Delta(X,L,H)=\Gamma(X,L,H)$ by Corollary~\ref{cor_Delta=Gamma-small_components}
    and for every vertex $v$ of $\Delta(X,L,H)$ the dimension of $\Gamma(X,L,H)_v$ is $1$ by Lemma~\ref{lem_ABB-surjectivity}.
  \end{enumerate}
\end{assumpt}

Moreover, our main interest is in the stronger version of these assumptions.

\begin{assumpt}[contact manifolds with simple automorphism group]\label{assumptions-simple}
In addition to Assumptions~\ref{assumptions-weaker} we suppose:
\begin{enumerate}[leftmargin=*]
   \addtocounter{enumi}{5}
   \item \label{item_assumption_torus_simple_automorphisms}
         The identity component of the group of automorphisms of $X$ is a simple group.
         The group $G$ from \ref{item_assumption_torus_action} is the universal cover of that simple group.
   \item \label{item_assumption_H0L} 
         $\HH^0(X,L)$, as a representation of $G$, can be identified with~$\g$,
         the Lie algebra of $G$, with the adjoint action of $G$. 
         Hence $\widetilde{\Gamma}(X,L,H)$ contains roots of $G$, each with
         multiplicity $1$ and $0\in \widetilde{\Gamma}(X,L,H)$ with multiplicity $r$.
  \end{enumerate}
\end{assumpt}

These assumptions are motivated by the hypothesis of Theorem~\ref{main_thm} 
   together with results of Section~\ref{sec_contact_general}.
The details are discussed in Subsection~\ref{subsec_classification_contact}.
As we will see, the role of $G$ is rather decorative, 
   but, instead, the action of $H$ rules the roost. 
The importance of $G$ is mainly in Item~\ref{item_assumption_H0L}, 
   which forces the weights of $H$ to be prescribed by the simple root system.
We use the weaker set of assumptions to be able to say something explicit about some of the existing homogeneous cases, 
   but with an action of a subtorus of the maximal torus, which highly resembles a general situation for a smaller group (see Subsection~\ref{sectA2}).

\begin{thm}\label{thm_contact_simple}
   Suppose Assumptions~\ref{assumptions-simple} are satisfied. 
   Then one of the following holds:
   \begin{itemize}
    \item $X\simeq Gr(\PP^1,\cQ^{n+2})$ and $G$ is of type $B_{\frac{n+3}{2}}$ or $D_{\frac{n+4}{2}}$ (depending on the parity of $n$), or
    \item $G$ is of type $G_2$ and either $X$ is the (homogeneous) $5$-di\-men\-sional adjoint variety of $G_2$ or $\dim X \ge 11$, or
    \item $\dim X \ge 11$ and $G$ is of type $A_2$, or
    \item $\dim X \ge 21$ and $G$ is of type $E_6$, or
    \item $\dim X \ge 33$ and $G$ is of type $E_7$, or
    \item $\dim X \ge 57$ and $G$ is of type $E_8$, or
    \item $\dim X \ge 15$ and $G$ is of type $F_4$, or
    \item $G$ is of type $A_1$.
   \end{itemize}
\end{thm}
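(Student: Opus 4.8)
The plan is to run a case-by-case analysis over the Dynkin type of the simple group $G$, since by Assumption~\ref{item_assumption_Delta_weak} the polytope $\Delta(X,L,H)=\Delta(G)$ fixes the type and the rank $r$, while $\dim X=2n+1$ is what remains to be bounded. Throughout I would lean on one uniform observation at each vertex $v$ of $\Delta(G)$, i.e.\ at a long root with $\mu_0(Y)=v$ for the corresponding extremal isolated point $Y$ (isolated by Assumption~\ref{item_assumption_extremal_points_are_isolated_pts}): by Corollary~\ref{cor_contact-fixedpt-isotropic} the weight $-v$ occurs in $\cC(Y,X,H)$ with multiplicity exactly one, and by Lemma~\ref{lem_t-action-contact} the remaining $2n$ compass elements split into $n$ pairs each summing to $-v$. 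Moreover, every $\alpha-v$ (for a root $\alpha$) must be a non-negative integral combination of compass elements by Corollary~\ref{cor_weights-jets}, and the extremal rays of the cone $\RR_{\geq 0}(\Delta(G)-v)$ must themselves lie in the compass. These constraints reduce the determination of the compass to a finite combinatorial problem inside each root system, to which I would add the edge-counting of Corollary~\ref{cor_lower-bound} and the global consistency furnished by Proposition~\ref{prop_character_is_a_Laurent_polynomial}.

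The exceptional types $E_6,E_7,E_8$ are immediate: the extremal components are isolated points, so Lemma~\ref{lem_E-and-F} gives $\dim X\geq 20,32,56$, and since $\dim X=2n+1$ is odd these sharpen to $\geq 21,33,57$. For $F_4$, Assumption~\ref{item_assumption_H0L} guarantees that $\widetilde{\Gamma}(X,L,H)$ contains all roots, so the fourth item of Lemma~\ref{lem_E-and-F} applies and yields $\dim X\geq 14$, hence $\geq 15$ by the same oddness trick.

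For the types $A_r$ with $r\geq 4$ and $C_r$ with $r\geq 3$ I would argue by contradiction. At a long-root vertex $v$ one checks that the contact pairing together with Corollary~\ref{cor_weights-jets} pins the compass down to the minimal configuration compatible with the edges of the cone (for $C_r$, the vertex $v=2e_1$ forces $\cC=\{e_j-e_1,\,-e_1-e_j\}_{j\geq 2}\cup\{-2e_1\}$). By Proposition~\ref{prop_fixpt+compass-determine-homogeneous} this identifies $X$ with the corresponding adjoint variety, namely $\PP^{2r-1}$ for $C_r$ and the flag manifold $\PP(T^*\PP^{r})$ for $A_r$; in both the contact line bundle fails to generate the Picard group (the adjoint $L$ is $\cO(2)$ on $\PP^{2r-1}$, while $\Pic$ has rank $2$ for the $A_r$ flag), contradicting Assumption~\ref{item_assumption_PicX_ZL}. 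The delicate point is to rule out every strictly larger compass; here I expect to use the multiplicity of the zero weight in Proposition~\ref{prop_character_is_a_Laurent_polynomial}, which must equal $r=\dim H$, together with Corollary~\ref{cor_compass_and_fixpt}, which manufactures a fixed point sitting on a non-weight lattice point as soon as the pairing is perturbed away from the minimal shape.

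The genuinely hard cases are $B_r/D_r$, $G_2$ and $A_2$, which demand the recognition criterion rather than a dimension bound. For $B_r$ and $D_r$ (including the coincidence $A_3=D_3$) the plan is to compute the full compass at a vertex $e_1+e_2$ from the pairing and the edges, match it with the compass of $Gr(\PP^1,\cQ^{n+2})$ at the analogous point, and then verify, via the downgrading and restriction Lemmas~\ref{lem_reduction_of_action} and~\ref{lem_reduction_of_compass} and the quadric-recognition Corollary~\ref{cor_torus-on-quadrics}, that the characters and compasses agree at \emph{every} fixed point; Proposition~\ref{prop_fixpt+compass-determine-homogeneous} then yields $X\simeq Gr(\PP^1,\cQ^{n+2})$. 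For $G_2$ one dichotomizes according to whether the compass at a long-root vertex is the minimal $5$-element configuration, in which case Proposition~\ref{prop_fixpt+compass-determine-homogeneous} recognizes the $5$-dimensional adjoint variety, or whether it carries the extra elements forced by the short roots through Corollary~\ref{cor_weights-jets}, which drives $\dim X\geq 11$. Finally $A_2$ is to be settled by the explicit localization identity of Proposition~\ref{prop_character_is_a_Laurent_polynomial}, excluding the small odd dimensions below $11$. I expect the main obstacle to be precisely the $B_r/D_r$ identification: unlike the bound-only cases it requires controlling the non-extremal part of $X^H$ and establishing compass agreement at all fixed points, which is where the combinatorics of the orthogonal root systems and the inductive restriction to subtori become essential.
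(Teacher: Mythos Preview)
Your broad plan and the treatment of $E_r$, $F_4$, and $B_r/D_r$ match the paper. However, there are two genuine gaps.

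For $C_r$ your ``pinned down'' compass at $v=2e_1$ is incorrect. Under Assumption~\ref{assumptions-weaker}\ref{item_assumption_PicX_ZL}, each edge of $\Delta(C_r)$ satisfies \ref{item_star_one} (its midpoint is a short root), so by Lemma~\ref{lem_edges_of_root_polytopes}\ref{item_long_edge} the edge variety is $\PP^2$, and Corollary~\ref{cor_compass_and_faces}\ref{item_edges_are_in_compass} then forces not only $e_j-e_1$ but also $2(e_j-e_1)$ into the compass. The latter is not in your list, and its contact dual $-2e_j$ lies outside the cone $\sigma$; this is the paper's one-line contradiction (Lemma~\ref{lem_C_r-big}). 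Your recognition-via-$\PP^{2r-1}$ route never gets started because the compass is inconsistent before you can match it. For $A_r$ ($r\ge 3$) the paper likewise avoids the full recognition criterion entirely: it restricts to the facet $\delta(\{0,1\})$ and derives a low-dimensional toric (for $r\ge 4$) or $\PP^3$ (for $r=3$) contradiction directly (Proposition~\ref{prop_A_r-rbig}), so controlling the non-extremal fixed points is unnecessary.

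For $G_2$ and $A_2$ your dichotomy does not close. The short-root contributions $\beta_i-\alpha_i$ and $\beta_{i+1}-\alpha_i$ are \emph{always} in the compass with some multiplicity $m\ge 1$, and increasing $m$ by one raises $\dim X$ by only $2$; so ``extra elements'' yields $\dim X\ge 7$, not $\ge 11$. The paper's argument for dimensions $7$ and $9$ is substantially different and is the technical core of Subsection~\ref{sectA2}: one first determines the complete fixed-point set, including the inner components (via localization, as in Example~\ref{ex_Hirzebruch_surface} and Lemma~\ref{lem_A2_9fold_3_fixed_pts}, and the delicate divisor analysis of Lemma~\ref{lem_9fold_no1dim}), and then observes that a downgrading of the $B_3$ (resp.\ $D_4$) maximal torus inside $Gr(\PP^1,\cQ^5)$ (resp.\ $Gr(\PP^1,\cQ^6)$) produces \emph{identical} combinatorial data. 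Proposition~\ref{prop_if_combinatorial_data_agrees_then_representations_agree} then forces $h^0(L)=21$ (resp.\ $28$), contradicting $h^0(L)=\dim\g$ from Assumption~\ref{assumptions-simple}\ref{item_assumption_H0L}. Your proposal does not anticipate this comparison mechanism, and Proposition~\ref{prop_character_is_a_Laurent_polynomial} alone does not deliver the contradiction without first pinning down the number and compasses of the inner fixed points.
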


A simple Lie group is of one of the types $A_r$ (for $r\ge 1$), $C_r$ (for $r \ge 2$), $B_r$ (for $r\ge 3$), $D_r$ (for $r \ge 4$), 
   $E_6$, $E_7$, $E_8$, $F_4$, or $G_2$.
The goal of this section is to analyze case-by-case all of them and figure out which of them can appear under the assumptions above.
The cases $A_r$ are excluded in Subsection~\ref{sectAr} (for $r\ge 3$), the cases $C_r$ in Subsection~\ref{sectCr}, 
  while the cases $B_r$ and $D_r$ are analyzed in Subsection~\ref{sectBrDr}.
The exceptional cases $E_r$ and $F_4$ follow from  Lemma~\ref{lem_E-and-F} (and the fact, that for a contact manifold $X$ its dimension is odd).
Finally, the types $A_2$ and $G_2$, which require much more attention than the other ones,
  are treated in Subsection~\ref{sectA2}.
Moreover, in Corollary~\ref{cor_A1} using a different method we strengthen the last case of type $A_1$, showing that then $\dim X\ge 11$,
  analogously to the $A_2$ and $G_2$ cases.

\subsection{General techniques}

The highlights of the general strategy for the cases are as follows. 
Firstly, we consider subtori of $H$ associated to hyperplanes supporting faces and use Lemma~\ref{lem_ABB-surjectivity} to understand the extremal fixed-point components
of these actions. In particular, we use Proposition~\ref{prop_smallDelta2} and its
corollaries. 
Secondly, we study the compass of $H$ at any extremal fixed point of $X^H$  
and apply  Lemma~\ref{lem_t-action-contact} and Corollaries~\ref{cor_compass_and_fixpt} and~\ref{cor_weights-jets}.

We gather a few observations that we exploit in all cases.
For every face $\delta$ of $\Delta(G)$ we denote by $Y_{\delta}$ the corresponding extremal component of $X^{H'}$, 
where $H' \subset H$ is the subtorus corresponding to the quotient of $M$ by the linear space parallel to $\delta$.

\begin{lemma}\label{lem_faces_and_h0L}
   Suppose Assumptions~\ref{assumptions-weaker} hold.
   For every proper face $\delta$ of $\Delta$, that is not a vertex, we have $\Pic {Y_{\delta}} = \ZZ \cdot L|_{Y_{\delta}}$.
   For the action of the quotient torus $H/H'$ on $Y_{\delta}$
       the polytopes of sections and fixed points, 
       $ \Gamma(Y_{\delta}, H/H', L|_{Y_{\delta}})$ and $\Delta(Y_{\delta}, H/H', L|_{Y_{\delta}})$, 
       coincide and both are equal to $\delta$ up to a shift by a lattice vector in the affine span of $\delta$. 
   In particular, $h^0(L|_{Y_{\delta}})$ is at least the number of vertices of $\delta$. 
   If $\delta$ has no lattice points other than the vertices, then equality holds.
   If Assumptions~\ref{assumptions-simple} hold, then $h^0(L|_{Y_{\delta}})$ is equal to the number of roots of $G$ contained in $\delta$. 
\end{lemma}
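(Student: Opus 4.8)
The plan is to reduce the whole statement to one identity describing $\HH^0(Y_{\delta},L|_{Y_{\delta}})$ as a subspace of $\HH^0(X,L)$, from which every assertion about $h^0$ follows by inspection. Write $\pi\colon M\to M'$ for the projection defining $H'$ and $v=\pi(\delta)$ for the point to which $\delta$ is contracted; by the choice of $H'$ no face other than $\delta$ maps to $v$, so $\pi^{-1}(v)\cap\Delta=\delta$. First I would apply Lemma~\ref{lem_ABB-surjectivity} to the \emph{downgraded} torus $H'$ (not to $H$): since $Y_{\delta}$ is the extremal component of $X^{H'}$ attached to the vertex $v$ of $\Delta(X,L,H')=\pi(\Delta)$ and $\Pic X=\ZZ\cdot L$, the restriction $\HH^0(X,L)\to\HH^0(Y_{\delta},L|_{Y_{\delta}})$ is surjective and identifies $\HH^0(Y_{\delta},L|_{Y_{\delta}})$ with the $H'$-weight-$v$ eigenspace of $\HH^0(X,L)$. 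The latter is the sum of the $H$-eigenspaces whose $H$-weight lies in $\pi^{-1}(v)$, so using $\pi^{-1}(v)\cap\Delta=\delta$ I obtain
\[
   \HH^0(Y_{\delta},L|_{Y_{\delta}}) \cong \bigoplus_{u\in\widetilde{\Gamma}(X,L,H)\cap\delta}\HH^0(X,L)_u .
\]

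For the Picard group I would argue that $\dim Y_{\delta}>0$ and then invoke Lemma~\ref{lem_ABBdecomposition-extremal_cells}. Since $\delta$ is not a vertex it has at least two vertices; these are vertices of $\Delta(X,L,H)$, hence by Assumptions~\ref{assumptions-weaker}\ref{item_assumption_extremal_points_are_isolated_pts} correspond to distinct isolated fixed points of $X^H\subset X^{H'}$ lying in $Y_{\delta}$, so $\dim Y_{\delta}\ge 1$. Using Lemma~\ref{lem_extremal=sink} for the $H'$-action I would pick a one-parameter subgroup $\Lambda\subset H'$ whose source is exactly $Y_{\delta}$; as $Y_{\delta}\subsetneq X$ this $\Lambda$ is nontrivial and hence almost faithful, so Lemma~\ref{lem_ABBdecomposition-extremal_cells}\ref{item_extremal_cells_case_dim_positive} applies and gives $\Pic Y_{\delta}=\ZZ\cdot L|_{Y_{\delta}}$ (with $Y_{\delta}$ Fano).

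The equality of the two polytopes is then just Lemma~\ref{lem_Gamma_eq_Delta_propagates_to_faces}, whose hypotheses hold here because $\Pic X=\ZZ\cdot L$ and $\Gamma(X,L,H)=\Delta(X,L,H)$ (the latter from Assumptions~\ref{assumptions-weaker}\ref{item_assumption_extremal_points_are_isolated_pts}); this yields $\Gamma(Y_{\delta},H/H',L|_{Y_{\delta}})=\Delta(Y_{\delta},H/H',L|_{Y_{\delta}})=\delta-w$. Taking dimensions in the displayed identity gives $h^0(L|_{Y_{\delta}})=\sum_{u\in\widetilde{\Gamma}\cap\delta}\dim\HH^0(X,L)_u$. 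The vertices of $\delta$ are vertices of $\Delta$, hence lie in $\widetilde{\Gamma}$ with multiplicity at least one, which proves the lower bound by the number of vertices. If $\delta$ contains no lattice point other than its vertices then $\widetilde{\Gamma}\cap\delta$ is exactly the vertex set, and each vertex has multiplicity one in $\widetilde{\Gamma}$ by Assumptions~\ref{assumptions-weaker}\ref{item_assumption_extremal_points_are_isolated_pts}, so equality holds. Finally, under Assumptions~\ref{assumptions-simple} the multiset $\widetilde{\Gamma}$ consists of the roots of $G$ (each with multiplicity one) together with $0$ (with multiplicity $r$); since the root polytope is full-dimensional and centrally symmetric, $0$ is an interior point and therefore $0\notin\delta$ for every proper face. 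Hence $\widetilde{\Gamma}\cap\delta$ is precisely the set of roots contained in $\delta$, each of multiplicity one, and $h^0(L|_{Y_{\delta}})$ equals their number.

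I expect the only genuinely delicate point to be the first paragraph: transporting Lemma~\ref{lem_ABB-surjectivity} from $H$ to the subtorus $H'$ and matching the single $H'$-eigenspace with the sum of $H$-eigenspaces indexed by $\widetilde{\Gamma}\cap\delta$, which rests on the identity $\pi^{-1}(v)\cap\Delta=\delta$ (this is also the content behind Lemma~\ref{lem_equality_of_Delta_for_faces}). A secondary point worth checking is that the one-parameter subgroup used for the Picard statement genuinely has $Y_{\delta}$, rather than some larger fixed component, as its source; this is exactly what Lemma~\ref{lem_extremal=sink} provides once $v$ is recognized as a vertex of $\pi(\Delta)$.
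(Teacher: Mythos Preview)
Your proof is correct and follows essentially the same route as the paper's: Lemma~\ref{lem_ABBdecomposition-extremal_cells}\ref{item_extremal_cells_case_dim_positive} for the Picard group, Lemma~\ref{lem_Gamma_eq_Delta_propagates_to_faces} for the equality of polytopes, and Lemma~\ref{lem_ABB-surjectivity} (applied to the subtorus $H'$) for the statements on $h^0$. You supply more detail than the paper does---in particular the verification that $\dim Y_\delta>0$, the explicit decomposition $\HH^0(Y_\delta,L|_{Y_\delta})\cong\bigoplus_{u\in\widetilde\Gamma\cap\delta}\HH^0(X,L)_u$, and the observation that $0\notin\delta$ for a proper face---all of which the paper leaves implicit.
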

\begin{proof}
    The claim on $\Pic Y_{\delta} = \ZZ \cdot L|_{Y_{\delta}}$ follows from Assumption~\ref{assumptions-weaker}\ref{item_assumption_PicX_ZL} 
        and Lemma~\ref{lem_ABBdecomposition-extremal_cells}\ref{item_extremal_cells_case_dim_positive}.
    The equality of polytopes follows from Lemma~\ref{lem_Gamma_eq_Delta_propagates_to_faces}.
    Finally, the statements on $h^0(L|_{Y_{\delta}})$ are implied by Lem\-ma~\ref{lem_ABB-surjectivity} 
        and Assumption~\ref{assumptions-simple}\ref{item_assumption_H0L}.
\end{proof}

We note that for most simple groups we have one of the following properties of an edge $\delta$ of $\Delta(G)$.
\renewcommand{\theenumi}{\textnormal{($\star_{\arabic{enumi}}$)}}
\begin{enumerate}
 \addtocounter{enumi}{-1}
  \item \label{item_star_zero}
    There is no lattice point of $\delta$ other than the end points.
  \item \label{item_star_one}
    There is exactly one lattice point of $\delta$ other than the end points and this middle lattice point is a root of $G$.
\end{enumerate}
\renewcommand{\theenumi}{(\arabic{enumi})}

It can be verified that (except in the case of type $A_1$) either \ref{item_star_zero} or \ref{item_star_one} holds for all edges of the root polytope.
We check this for the relevant cases in the subsequent subsections.
Now we list some consequences.

\begin{lemma}\label{lem_edges_of_root_polytopes}
   Under Assumptions~\ref{assumptions-weaker}, consider an edge $\delta$ of the polytope $\Delta$ and let $Y_{\delta}$ 
     be the corresponding extremal component as above.
   \begin{enumerate}
    \item \label{item_short_edge}
          If $\delta$ satisfies \ref{item_star_zero},
             then $(Y_{\delta}, L|_{Y_{\delta}}) \simeq (\PP^1, \cO(1))$.
    \item \label{item_long_edge}
          If Assumptions~\ref{assumptions-simple} hold and $\delta$ satisfies \ref{item_star_one},
             then $(Y_{\delta}, L|_{Y_{\delta}}) \simeq (\PP^2, \cO(1))$.
   \end{enumerate}
\end{lemma}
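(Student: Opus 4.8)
The plan is to analyze, in both cases, the induced action of the rank-one quotient torus $H/H'$ on the extremal component $Y_\delta$, and to read off the isomorphism type of $(Y_\delta, L|_{Y_\delta})$ from the one-dimensional polytope $\delta$. First I would invoke Lemma~\ref{lem_faces_and_h0L}: since an edge $\delta$ is a proper face that is not a vertex, we have $\Pic Y_\delta = \ZZ\cdot L|_{Y_\delta}$, the polytopes of sections and of fixed points for the $H/H'$-action coincide and equal $\delta$ up to a lattice shift, and $h^0(L|_{Y_\delta})$ is controlled by the lattice points (respectively roots) lying on $\delta$. The endpoints of $\delta$, being the vertices of the edge, are vertices of $\Delta=\Delta(X,L,H)$, hence correspond to extremal components of $X^H$, which are isolated points by Assumption~\ref{assumptions-weaker}\ref{item_assumption_extremal_points_are_isolated_pts}; by Lemma~\ref{lem_extremal=sink} each is the unique fixed component carrying its weight.

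For part~\ref{item_short_edge}, under \ref{item_star_zero} the segment $\delta$ carries no lattice point besides its two endpoints, so the shifted polytope is the unit interval and the fixed locus $Y_\delta^{H/H'}=Y_\delta\cap X^H$ consists of exactly the two corresponding isolated points. As $Y_\delta$ has two distinct fixed points it is positive-dimensional, so the rank-one torus $H/H'$ acts nontrivially with fixed set equal to two points. I would then apply Lemma~\ref{lem_2-fixpts-comp} with one of these points as $Y_0$: it yields $Y_\delta\simeq\PP^{\dim Y_\delta}$ and the second fixed component $\simeq\PP^{\dim Y_\delta-1}$; since the latter is a point we get $\dim Y_\delta=1$, i.e.\ $Y_\delta\simeq\PP^1$. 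Finally $L|_{Y_\delta}$ generates $\Pic\PP^1=\ZZ$, so it must be $\cO(1)$.

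For part~\ref{item_long_edge}, under \ref{item_star_one} the segment $\delta$ contains exactly one interior lattice point, which is a root, and its two endpoints are roots as well; hence by Lemma~\ref{lem_faces_and_h0L} together with Assumption~\ref{assumptions-simple}\ref{item_assumption_H0L} the shifted polytope is $[0,2]$ and $h^0(L|_{Y_\delta})=3$. The pair $(Y_\delta,L|_{Y_\delta})$ with the $H/H'$-action then satisfies the hypotheses of Proposition~\ref{prop_smallDelta2} (isolated extremal fixed points, $\Pic=\ZZ$, $\Delta=[0,2]$), so it is one of $(\PP^1,\cO(1))$, $(\PP^1,\cO(2))$, $(\PP^d,\cO(1))$ with $d\ge 2$, or $(\cQ^d,\cO(1))$ with $d\ge 3$. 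Computing the number of sections in each case (namely $2$, $3$, $d+1$, and $d+2$, respectively) and imposing $h^0=3$ leaves only $(\PP^1,\cO(2))$ and $(\PP^2,\cO(1))$.

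The main obstacle --- indeed essentially the only delicate point --- is to discard the spurious case $(\PP^1,\cO(2))$, which has the correct section count $h^0=3$ and the correct polytope $[0,2]$ but the wrong geometry. Here I would use that $L|_{Y_\delta}$ must generate $\Pic Y_\delta$ by Lemma~\ref{lem_faces_and_h0L}: the generator of $\Pic\PP^1=\ZZ$ is $\cO(1)$, not $\cO(2)$, so $(\PP^1,\cO(2))$ cannot occur. This leaves $(Y_\delta,L|_{Y_\delta})\simeq(\PP^2,\cO(1))$, as claimed. The same generator observation also streamlines part~\ref{item_short_edge}, should one prefer to argue by passing to $L^{\otimes 2}|_{Y_\delta}$ (which has $\Delta=[0,2]$) and applying Proposition~\ref{prop_smallDelta2}, since among the possible outcomes only $(\PP^1,\cO(2))$ admits a square root of its polarization in its Picard group, forcing $L|_{Y_\delta}=\cO(1)$ on $\PP^1$.
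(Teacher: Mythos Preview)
Your proof is correct and, for part~\ref{item_long_edge}, follows the same route as the paper: reduce to the rank-one action of $H/H'$ on $Y_\delta$, invoke Lemma~\ref{lem_faces_and_h0L} for $\Pic Y_\delta=\ZZ\cdot L|_{Y_\delta}$ and $h^0=3$, apply Proposition~\ref{prop_smallDelta2}, and eliminate the leftover cases using the section count and the fact that $L|_{Y_\delta}$ generates $\Pic Y_\delta$. The paper's proof is terser (it simply cites Proposition~\ref{prop_smallDelta2} after recording $\Pic$ and $h^0$), but the underlying argument is the same; you have merely made the elimination of $(\PP^1,\cO(2))$ explicit.

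For part~\ref{item_short_edge} you take a slightly different path. The paper invokes Proposition~\ref{prop_smallDelta2} uniformly for both cases, but that proposition is stated for $\Delta=[0,2]$, whereas under \ref{item_star_zero} the shifted edge is $[0,1]$. Your argument via Lemma~\ref{lem_2-fixpts-comp} (two isolated fixed components, one a point, forces $\PP^d$ with the other component $\PP^{d-1}$, hence $d=1$) is a clean and self-contained replacement, and your alternative of passing to $L^{\otimes 2}$ to land in the $[0,2]$ setting of Proposition~\ref{prop_smallDelta2} is exactly the implicit fix the paper's phrasing needs. Either way, your treatment of this case is arguably more careful than the paper's compressed reference.
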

\begin{proof}
   The $1$-dimensional quotient torus $\Lambda$ corresponding to the line in $M_{\RR}$ parallel to $\delta$ acts on $Y_{\delta}$ 
     and the polytope $\Delta(Y_{\delta}, L|_{Y_{\delta}}, \Lambda)$ 
     is equal to $\delta$ by Lemma~\ref{lem_reduction_of_action}\ref{item_reduction_of_action_restricting_to_fix_points_of_subgroup} 
     and Lemma~\ref{lem_faces_and_h0L}.
   Also by Lemma~\ref{lem_faces_and_h0L} we must have $\Pic Y_{\delta} = \ZZ \cdot L$ and $h^0(L|_{Y_{\delta}}) =2$ or $3$.
   Finally, the claim follows from Proposition~\ref{prop_smallDelta2}. 
\end{proof}

One of the ideas to exlude many of the cases is to study the compass of an extremal fixed-point component in $X$.
This way we can restrict the dimension of $X$ and many other of its properties.

\begin{cor}\label{cor_compass_and_faces}
   Under Assumptions~\ref{assumptions-weaker}, pick a vertex $v\in \Delta$ and suppose $y \in X^H$ is the corresponding fixed point.
   Let $\cC= \cC(y, X, H)$ be the compass of $y$ in $X$. Then:
   \begin{enumerate}
    \item \label{item_edges_are_in_compass}
          If $\delta \subset \Delta$ is an edge containing $v$ and satisfying \ref{item_star_zero} (or \ref{item_star_one}, if in addition Assumptions~\ref{assumptions-simple} are satisfied),
            then for every lattice point $m \in \delta\setminus\set{v}$ we have $m-v\in \cC$ with multiplicity exactly $1$.
          Moreover, for any lattice point $m'$ in the line containing $\delta$, 
            the vector $m'-v$ is in the compass if and only if $m'\in \delta \setminus \set{v}$.
    \item \label{item_minus_v_in_compass}
          The compass $\cC$ contains $-v$ with multiplicity exactly $1$.
    \item \label{item_compass_in_cones}
          Consider a convex cone $\sigma \subset M_{\RR}$ generated by the shift $\Delta - v$.
          Then the elements of the compass $\cC$ are contained in the set of lattice points of the intersection $\sigma \cap (-v -\sigma)$.
   \end{enumerate}
\end{cor}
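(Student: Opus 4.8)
The plan is to treat the three items in turn; in each case I reduce to a one-parameter subgroup and feed in the structural facts about edges of $\Delta$ and about the contact pairing that were established just above. Throughout, $y\in X^H$ is the extremal fixed point with $\mu_0(y)=v$, and I write $\cC=\cC(y,X,H)$.

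For item~\ref{item_edges_are_in_compass} I fix an edge $\delta$ through $v$ and let $H'\subset H$ be the subtorus contracting $\delta$, so that $\Lambda:=H/H'\cong\CC^*$ acts on the extremal component $Y_\delta$ with $y\in Y_\delta$. By Lemma~\ref{lem_reduction_of_compass} the compass of $\Lambda$ at $y$ inside $Y_\delta$ is exactly $\cC\cap W$, where $W$ is the line through the origin parallel to $\delta$; thus reading off the part of $\cC$ lying on this line is the same as computing the compass of the $\CC^*$-action on $Y_\delta$ at $y$. Now Lemma~\ref{lem_edges_of_root_polytopes} identifies $(Y_\delta,L|_{Y_\delta})$ with $(\PP^1,\cO(1))$ under \ref{item_star_zero} and with $(\PP^2,\cO(1))$ under \ref{item_star_one} (the latter needing Assumptions~\ref{assumptions-simple}). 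In either case $y$ sits at the endpoint $v$ of the interval $\Delta(Y_\delta,L|_{Y_\delta},\Lambda)=\delta$, and Example~\ref{ex_compass_for_Cstar_acting_on_PV} together with Corollary~\ref{cor_smallDelta2-compass} gives its compass explicitly: it consists of $m-v$ for each lattice point $m\in\delta\setminus\set{v}$, each with multiplicity one. Since $\cC\cap W$ records precisely the elements of $\cC$ lying in $W$ with their multiplicities, this yields both the multiplicity-one statement and the equivalence in the ``moreover'' clause.

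For item~\ref{item_minus_v_in_compass} I recall that $y$ is isolated by Assumption~\ref{assumptions-weaker}\ref{item_assumption_extremal_points_are_isolated_pts}, so none of the $2n+1$ cotangent weights $\nu_0,\nu_1,\dotsc,\nu_{2n}$ at $y$ vanishes. By Lemma~\ref{lem_t-action-contact} one of these weights equals $\nu_0=-\mu_0(y)=-v$, and the remaining $2n$ split into pairs with $\nu_i+\nu_{i+n}=-v$. As $v$ is a nonzero root, $-v$ is a genuine compass element; and if $-v$ occurred a second time, say as some $\nu_i$, then its partner would satisfy $\nu_{i+n}=-v-(-v)=0$, contradicting that all cotangent weights are nonzero. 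Hence $-v\in\cC$ with multiplicity exactly one.

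For item~\ref{item_compass_in_cones} I would prove the two inclusions $\cC\subset\sigma$ and $\cC\subset -v-\sigma$ separately. The first is immediate from Corollary~\ref{cor_compass_and_fixpt}: any $\nu\in\cC$ is a positive rational multiple of $\mu(Y')-v$ for some component $Y'$, and $\mu(Y')-v\in\Delta-v$, so $\nu\in\RR_{\geq 0}(\Delta-v)=\sigma$. For the second I use the contact pairing again: for $\nu\in\cC$ the weight $-v-\nu$ is either $0$ (when $\nu=\nu_0=-v$) or the paired cotangent weight, which is a nonzero element of $\cC$; in both cases $-v-\nu\in\sigma$, whence $\nu\in -v-\sigma$. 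Intersecting and recalling that compass elements lie in $M$ gives the claim. The routine but delicate point, which I would write out most carefully, is item~\ref{item_edges_are_in_compass}: one must track multiplicities through $\cC\cap W=\cC(y,Y_\delta,\Lambda)$ and check the orientation so that $y$ really corresponds to the endpoint $v$ (and not the opposite endpoint) before applying Corollary~\ref{cor_smallDelta2-compass}; items~\ref{item_minus_v_in_compass} and~\ref{item_compass_in_cones} are then short consequences of Lemma~\ref{lem_t-action-contact}, the only subtlety being the separate treatment of the distinguished weight $\nu_0=-v$.
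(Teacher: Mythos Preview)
Your proof is correct and follows essentially the same route as the paper: item~\ref{item_edges_are_in_compass} via Lemma~\ref{lem_reduction_of_compass} and Lemma~\ref{lem_edges_of_root_polytopes} to reduce to the compass of $\PP^1$ or $\PP^2$; item~\ref{item_minus_v_in_compass} via Lemma~\ref{lem_t-action-contact} (the paper phrases this through the exact sequence $0\to L^*_y\to T^*_yX\to F^*_y\to 0$, but the content is identical to your pairing argument); and item~\ref{item_compass_in_cones} via Corollary~\ref{cor_compass_and_fixpt} and Lemma~\ref{lem_t-action-contact}. If anything, you spell out a couple of steps (the multiplicity-one argument in \ref{item_minus_v_in_compass}, the separate handling of $\nu=-v$ in \ref{item_compass_in_cones}) more explicitly than the paper does.
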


\begin{proof}
    To see \ref{item_edges_are_in_compass} consider $Y_{\delta} \subset X$.
    By Lemma~\ref{lem_reduction_of_compass} it is enough to prove $m-v\in \cC(y, Y_{\delta})$ with multiplicity $1$. 
    If $\delta$ satisfies \ref{item_star_zero}, then the only possible $m$ is the end point of $\delta$, 
      and $Y_{\delta} \simeq \PP^1$ with $L|_{Y_{\delta}}\simeq \cO(1)$ by Lemma~\ref{lem_edges_of_root_polytopes}.
    In particular, there is only one element of the compass and it must be equal to $v'-v$, 
       where $v'\in \delta$ is the other vertex.
    If $\delta$ satisfies \ref{item_star_one}, then there are two possible $m$, the end point of $\delta$ and the middle point, and both are roots of $G$. 
    By Lemma~\ref{lem_edges_of_root_polytopes} we have $Y_{\delta} \simeq \PP^2$ with $L|_{Y_{\delta}}\simeq \cO(1)$
      and by Corollary~\ref{cor_smallDelta2-compass} there are exactly two elements of the compass equal to the two possible values of $m-v$.
      
    Item~\ref{item_minus_v_in_compass} follows from  the short exact sequence of vector spaces $0 \to L^*_y \to T^*_yX \to F^*_y \to 0$, because $-v$ is the weight of the action on $L^*_y$,
       and from the duality in Lemma~\ref{lem_t-action-contact}. 
    
    To prove Item~\ref{item_compass_in_cones}, we note that the compass is contained in $\sigma$ by Corollary~\ref{cor_compass_and_fixpt}.
    It is also contained in  $-v -\sigma$ by Lemma~\ref{lem_t-action-contact}.
\end{proof}

\begin{lemma}\label{lem_zero_not_in_Delta_tilde}
   Under Assumptions~\ref{assumptions-weaker} suppose in addition that $0 \in \widetilde{\Delta}$, that is there exists a fixed-point component $Y \subset X^H$ with $\mu_0(Y)=0$.
   Then  $\widetilde{\Delta}$ also contains another point, which is nonzero, and not a vertex of $\Delta$.
\end{lemma}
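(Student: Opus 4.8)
The plan is to locate the extra point by following the compass of a fixed-point component lying over the origin, reducing to a one-parameter contact subsituation whenever the point is not produced immediately.

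First I would fix a component $Y\subset X^H$ with $\mu_0(Y)=0$. Applying Lemma~\ref{lem_t-action-contact} with $\nu_0=-\mu_0(Y)=0$, the nonzero cotangent weights at $Y$ pair up as $\nu$ and $-\nu$, so the compass $\cC(Y,X,H)$ is symmetric under $\nu\mapsto-\nu$; it is nonempty because the action is nontrivial. The same lemma shows $\dim Y\ge 1$, since the distinguished weight $\nu_0=0$ is a genuine zero cotangent weight and the number of zero weights equals $\dim Y$; hence $Y$ is a positive-dimensional contact submanifold by Corollary~\ref{cor_contact-fixedpt-contact}. Because $\Delta=\Delta(G)$ is a root polytope it is full-dimensional and symmetric about the origin, so $0$ lies in its interior and is not a vertex; consequently any nonzero point of $\widetilde{\Delta}$ that lands in the interior of $\Delta$ automatically avoids every vertex, which is exactly what I need to produce.

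Next I would pick $\nu\in\cC(Y,X,H)$ and pass to the subtorus $H_1=\ker\nu\subset H$, with $Y_1\subset X^{H_1}$ the component containing $Y$ and $\Lambda=H/H_1\cong\CC^*$ acting on $Y_1$ as in Subsection~\ref{subsec_downgrading_and_reduction}. Since $\mu_0(Y_1)=0$ for the $H_1$-action, Corollary~\ref{cor_contact-fixedpt-contact} makes $Y_1$ a contact (hence contact Fano) manifold, and by Lemma~\ref{lem_reduction_of_action}\ref{item_reduction_of_action_restricting_to_fix_points_of_subgroup} the polytope $\Delta(Y_1,L|_{Y_1},\Lambda)$ is the symmetric interval $\Delta\cap\RR\nu=[-a,a]$ with $0$ in its interior. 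Thus $Y$ is neither the source nor the sink of $\Lambda$, and Corollary~\ref{cor_compass_and_fixpt} yields a component $Y'$ with $\mu_0(Y')=\lambda\nu$ for some $\lambda\in\QQ_{>0}$. If for some choice of $\nu$ this $Y'$ can be taken with $\mu_0(Y')$ strictly inside $[-a,a]$, then $\mu_0(Y')$ is a nonzero interior point of $\Delta$, hence a nonzero non-vertex element of $\widetilde{\Delta}$, and the lemma follows.

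The remaining and genuinely harder case is the degenerate one, in which for every compass direction $\nu$ the fixed values of $Y_1$ collapse to the three endpoints $\{-a,0,a\}$; equivalently every element of $\cC(Y,X,H)$ is proportional to a vertex of $\Delta(G)$ and the only points of $\widetilde{\Delta}$ on each segment $[0,v]$, for $v$ a vertex, are its endpoints. I expect this exclusion to be the crux of the argument, and it must use contact-specific input since it is consistent with the purely combinatorial constraints of Corollary~\ref{cor_weights-jets} (for instance in type $A_2$, where a vertex can be written as a sum of two vertex directions). To rule it out I would work inside such a $Y_1$: the contact identification $\HH^0(Y_1,L|_{Y_1})\simeq \HH^0(Y_1,TY_1)$ of Lemma~\ref{lem_contact-canonical-linearization} would confine the $\Lambda$-weights on the Lie algebra of $\Aut(Y_1)$ to $\{-a,0,a\}$, producing a short three-term grading incompatible with the five-term (highest-root) grading carried by the contact structure; alternatively, I would combine the isotropy bound of Corollary~\ref{cor_contact-fixedpt-isotropic} at the source with the nondegenerate pairing on the normal bundle $N_{Y/X}$ from Corollary~\ref{cor_contact-fixedpt-contact} to force a fixed component with an intermediate $\mu_0$-value. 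Either route exhibits a nonzero non-vertex point of $\widetilde{\Delta}$ and completes the proof.
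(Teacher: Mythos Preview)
Your setup is correct and parallels the paper's: $Y$ is a positive-dimensional contact submanifold by Corollary~\ref{cor_contact-fixedpt-contact}, its compass is symmetric under $\nu\mapsto-\nu$, and following a compass direction must produce another fixed component on the line $\RR\nu$. However, the proof is genuinely incomplete at the point you yourself flag as the crux: the ``degenerate case'' where the only fixed values along $\RR\nu$ are $0$ and the vertices. Neither of your two sketched routes is carried out, and both are too vague to constitute an argument.

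The missing ingredient is Lemma~\ref{lem_number_of_vectors_on_compass_in_a_fixed_direction}. The paper's proof is essentially two lines once this is invoked: with $W=\RR\nu$, that lemma produces a component $Y'\subset X^H$ with $\mu_0(Y')\in\QQ_{>0}\cdot\nu$ and $\dim Y'+b'=\dim Y+b$, where $b,b'$ count compass elements on $W$ at $Y$ and $Y'$ respectively. Since $\dim Y\ge 1$ and $b\ge 1$, one gets $\dim Y'+b'\ge 2$. If $Y'$ were extremal, then $\dim Y'=0$ by Assumption~\ref{assumptions-weaker}\ref{item_assumption_extremal_points_are_isolated_pts} and $b'=1$ by Corollary~\ref{cor_compass_and_faces}\ref{item_minus_v_in_compass}, a contradiction. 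So $\mu_0(Y')$ is the required nonzero non-vertex point. You essentially construct the same submanifold $Y_1\subset X^{H_1}$ but never extract the key equality $\dim Y_1=\dim Y+b=\dim Y'+b'$, which is precisely what turns your degenerate case into an immediate dimension contradiction rather than a delicate structural analysis.
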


\begin{proof}
   By Corollary~\ref{cor_contact-fixedpt-contact} the manifold $Y$ is contact, in particular, $\dim Y \ge 1$.
   Let $\nu \in \cC(Y, X, H)$ be any element and define $W$ to be the linear span of $\nu$.
   By Lemma~\ref{lem_number_of_vectors_on_compass_in_a_fixed_direction}, there is another component $Y' \subset X^H$,
      such that $\mu(Y') \in W$ and $ \dim Y' + b' \ge 2$, where $b'$ is the number of elements in $\cC(Y', X, H) \cap W$.
   But if $Y'$ was an extremal component, then $\dim Y' =0$ by Assumption~\ref{assumptions-weaker}\ref{item_assumption_extremal_points_are_isolated_pts}
      and $b' =1$ by Corollary~\ref{cor_compass_and_faces}\ref{item_minus_v_in_compass}, a contradiction. 
   Thus $Y'$ is not extremal, showing the claim of the lemma.
\end{proof}

\subsection{Case of \texorpdfstring{$A_r$}{Ar} with \texorpdfstring{$r\geq 3$}{r at least 3}}\label{sectAr}
We use the notation consistent with \cite[Table 1]{Bourbaki}.
That is, the roots of $A_r$ are located on a hyperplane $\sum_{i=0}^r e_i^*=0$ 
in a Euclidean space~$\RR^{r+1}$ with an orthonormal basis~$e_0,\dots, e_r$, 
where by $e_i^*$ we denote the dual functionals. 
The roots are $e_i-e_j$, $i\ne j$, and they are the vertices of $\Delta(A_r)$. 
The lattice of weights is generated by the roots and $e_0-(e_0+\cdots+e_r)/(r+1)$.
All facets of $\Delta(A_r)$ can be described as follows: 
  take a proper nonempty subset of indices $J\subset\{0,\dots, r\}$ and define the facet $\delta(J)$ as 
$$
  \delta(J)=\Delta(A_r)\cap 
    \left\{u\in M_{\RR}: \left(\sum_{i\in J}e_i^*\right)(u)=1 \right\}.
$$ 
Note that $e_i-e_j\in \delta(J)$ if and only if $i\in J$ and $j\not\in
J$. 
Our main interest is in the face $\delta(\set{0,1})$ and the hyperplane $\RR^{r-1}$ containing this face, given by the equation $e_0^*+ e_1^* =1$ 
   (together with the omnipresent equation $\sum_{i=0}^r e_i^*=0$).

\begin{lemma}\label{lem_Ar_lattice_points_on_delta}
   Suppose $r \ge 2$. Then all weight lattice points in $\delta(\set{0,1})$ are the vertices, except in the case $r=3$,
      when there is in addition one interior lattice point $\frac{1}{2}(e_0 + e_1 - e_2 - e_3)$.
   In particular, all edges of $\Delta$ satisfy \ref{item_star_zero}.
\end{lemma}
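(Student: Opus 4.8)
The plan is to work in the coordinates $\fromto{e_0}{e_r}$ and compute the lattice points of the facet directly, the only genuine input being an explicit description of the weight lattice $M$. First I would record that, since $M$ is generated by the root lattice $Q=\set{u\in\ZZ^{r+1}:\textstyle\sum_i u_i=0}$ together with $\omega_1=e_0-\tfrac{1}{r+1}(e_0+\dots+e_r)$, one has
\[
  M=\set{u\in\RR^{r+1} : \textstyle\sum_i u_i=0 \text{ and all coordinates } u_i \text{ are congruent modulo } 1}.
\]
Indeed every coordinate of $\omega_1$ is $\equiv-\tfrac{1}{r+1}\pmod 1$, so elements of $Q+\ZZ\omega_1$ have mutually congruent coordinates; conversely, if $u$ has zero coordinate sum and common residue $c$, then $(r+1)c\in\ZZ$ lets one subtract a suitable multiple of $\omega_1$ to land in $Q$. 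Writing $u_i=e_i^*(u)$, the facet $\delta(\set{0,1})$ is the product $\conv(e_0,e_1)\times\conv(-e_2,\dots,-e_r)$ (a segment times a simplex), so every $u\in\delta(\set{0,1})$ satisfies $u_0+u_1=1$ with $u_0,u_1\ge 0$ and $u_j\le 0$ for $j\ge 2$ with $\sum_{j\ge2}u_j=-1$.

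The heart of the argument is then a dichotomy obtained by imposing $u\in M$. Applying the common-residue condition to $u_0+u_1=1$ gives $2u_0\equiv 0\pmod 1$, hence $u_0\in\tfrac12\ZZ$. If all $u_i\in\ZZ$, then $(u_0,u_1)\in\set{(1,0),(0,1)}$ and exactly one $u_j$ ($j\ge2$) equals $-1$, so $u$ is a vertex $e_i-e_j$. If instead all $u_i\in\tfrac12+\ZZ$, then $u_0,u_1\ge 0$ with $u_0+u_1=1$ forces $u_0=u_1=\tfrac12$, while each $u_j\le-\tfrac12$ for $j\ge 2$ together with $\sum_{j\ge2}u_j=-1$ forces every $u_j=-\tfrac12$ and therefore $r-1=2$. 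This is exactly where $r=3$ is singled out: for $r=2$ the single coordinate $u_2=-1$ is an integer (so no half-integer point exists), and for $r\ge 4$ the sum of $r-1$ terms each $\le-\tfrac12$ is strictly below $-1$. For $r=3$ one checks $\tfrac12=\tfrac{2}{r+1}$, so $\tfrac12(e_0+e_1-e_2-e_3)\in M$, and it is the barycenter of the (square) facet. I expect this case-split, rather than any single computation, to be the main point to get right.

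Finally I would deduce property~\ref{item_star_zero} for every edge. Each edge of $\Delta(A_r)$ joins two roots sharing an index, i.e.\ has the form $[e_{i_1}-e_j,\,e_{i_2}-e_j]$ or $[e_i-e_{j_1},\,e_i-e_{j_2}]$: any pair of roots with four distinct indices is a diagonal, since its midpoint coincides with that of another such pair, hence is not an edge (whereas every shared-index pair is an edge of a facet $\delta(J)$ and so of the polytope). A direct parametrization $t\mapsto t\alpha+(1-t)\beta$ of either type has coordinate residues $\pm t$ and $0$ modulo $1$, so membership in $M$ forces $t\equiv 0\pmod 1$, i.e.\ $t\in\set{0,1}$. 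Equivalently, every edge is $S_{r+1}$-equivalent to an edge of $\delta(\set{0,1})$, and the unique non-vertex lattice point found above (the $r=3$ barycenter) lies on no edge; either way no edge carries an interior lattice point, which is~\ref{item_star_zero}.
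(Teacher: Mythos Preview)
Your argument is correct and, for the main facet computation, takes a cleaner route than the paper's. The paper parametrizes weight lattice points as $\sum a_ie_i+b\bigl(e_0-\tfrac{1}{r+1}\sum_i e_i\bigr)$ with integer $a_i$ and $b\in\{0,\dots,r-1\}$, deduces from $(e_0^*+e_1^*)(v)=1$ that $2b\equiv 0\pmod{r{+}1}$ (forcing $r$ odd and $b=\tfrac{r+1}{2}$), and then feeds in several facet inequalities of $\Delta(A_r)$ one at a time ($\{0\}$, $\{1\}$, $\{0,1,i\}$, complements of singletons) to pin down the remaining $a_i$. You instead use the intrinsic description $M=\{u:\sum u_i=0,\ u_i\equiv u_j\pmod 1\text{ for all }i,j\}$ together with the product structure $\delta(\{0,1\})=\conv(e_0,e_1)+\conv(-e_2,\dots,-e_r)$, which immediately gives the per-coordinate bounds $u_0,u_1\in[0,1]$, $u_j\in[-1,0]$; the common-residue condition then collapses the problem to the dichotomy integer/half-integer, and a one-line counting argument ($r-1$ terms each $\le -\tfrac12$ summing to $-1$) isolates $r=3$. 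This is shorter and more transparent; what the paper's approach buys is that it never needs to identify the full shape of the facet, only its supporting hyperplane plus selected inequalities of $\Delta$.

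One small looseness: your midpoint argument only rules out pairs of roots with four distinct indices, but you also need to exclude the segments $[e_i-e_j,\,e_j-e_k]$ and $[\alpha,-\alpha]$ from being edges before concluding that every edge has one of your two listed forms. Your second concluding remark already contains the fix once stated squarely: every edge of $\Delta$ is an edge of some facet $\delta(J)$, and since $\delta(J)$ is a product of two simplices its edges are precisely $[e_{i_1}-e_j,\,e_{i_2}-e_j]$ and $[e_i-e_{j_1},\,e_i-e_{j_2}]$. (The paper, incidentally, leaves the entire ``In particular'' deduction implicit.)
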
 

\begin{proof}
   Any point in the weight lattice is of the form $v = \sum a_i e_i + b ( e_0-(e_0+\cdots+e_r)/(r+1))$ for some integers $a_i$ satisfying $\sum a_i =0$, and $b \in \setfromto{0}{r-1}$.
   If $b=0$ and $v$ is in $\delta(\set{0, 1})$, then it is straightforward to verify that $v$ is one of the vertices $e_0-e_j$ or $e_1-e_j$ ($j \notin \set{0,1}$).
   Thus assume $b \in \setfromto{1}{r-1}$.
   The value $(e_0^*+ e_1^*)(v)$ is a sum of an integer and $\frac{b (r-1)}{r+1}$.
   Thus if this value is an integer, then 
   $ 2 b \equiv 0 \mod (r+1)$, in particular, $r$ must be odd, say equal to $2k+1$.
   Then by our assumptions $b=k+1$ and $k\ge 1$. 

   So assume $v \in \delta(\set{0,1})$ so that $1= a_0 + a_1 + \frac{2k (k+1)}{2k+2} = a_0 + a_1 + k$. 
   Moreover, $v$ safisfies the inequalities of $\Delta(A_r)$, that is for all proper subsets $J \subset \setfromto{0}{r}$
      we have $\sum_{i \in J} e_i^*(v) \le 1$.
   The relevant inequalities are those for $J$ equal to one of the following: $\set{0}$, $\set{1}$, $\set{0,1 ,i}$, or $\setfromto{0}{r}\setminus\set{i}$.
   The first two inequalities give:
   \[
      a_0 + k + \tfrac{1}{2} \le 1 \text{ and } a_1 - \tfrac{1}{2} \le 1.
   \]
   Combining them with the equation $a_0 + a_1  = 1-k$ and the assumption that $a_i$ are integers we get $a_0 = -k$ and $a_1 = 1$.
   Then
   \[
     v = \tfrac{1}{2} e_0 + \tfrac{1}{2} e_1  + \sum_{i=2}^{2k+1} (a_i -\tfrac{1}{2}) e_i.
   \]
   Applying the remaining two inequalities we get 
   \begin{align*}
      1 + a_i -\tfrac{1}{2} & \le 1  & (\Rightarrow a_i &\le 0)\\
      \underbrace{1 + \sum_{j=2}^{2k+1} (a_j - \tfrac{1}{2})}_{= 0 \text{ since }\sum_{i=0}^{2k+1} a_i =0} - (a_i - \tfrac{1}{2}) &\le 1   & (\Rightarrow a_i &\ge 0).
   \end{align*}
   Therefore each $a_i$ for $i\ge 2$ is zero, and 
   \[
     v = \tfrac{1}{2} (e_0 +  e_1  - \sum_{i=2}^{2k+1} e_i),
   \]
   which satisfies the omnipresent equation $\sum_{i=0}^{2k+1} e_i^*=0$ if and only if $k=1$, that is $r=3$, and the proof is concluded.
\end{proof}

\begin{prop}\label{prop_A_r-rbig}
  Suppose that $(X,L)$ and $(G,H)$ satisfy Assumptions~\ref{assumptions-weaker} 
  with $G$ of type $A_r$.  
  Then $r\le 3$.
  If, in addition, they satisfy Assumptions~\ref{assumptions-simple}, then $r\le 2$.
\end{prop}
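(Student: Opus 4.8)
The plan is to derive a contradiction for $r\ge 4$ (and, under Assumptions~\ref{assumptions-simple}, already for $r=3$) by studying the single facet $\delta:=\delta(\{0,1\})$. Combinatorially $\delta$ is a prism: its vertices are the roots $e_0-e_k$ and $e_1-e_k$ with $k\ge 2$, the two families forming parallel $(r-2)$-simplices joined in the direction $e_1-e_0$. Let $H'\subset H$ be the subtorus contracting the linear space $V$ parallel to $\delta$, so that $H/H'$ has rank $r-1$ and acts on $Y_\delta$. By Lemma~\ref{lem_faces_and_h0L}, $Y_\delta$ is Fano with $\Pic Y_\delta=\ZZ\cdot L|_{Y_\delta}$, its polytope $\Delta(Y_\delta,H/H')$ equals $\delta$, and the vertices of $\delta$ are its isolated extremal fixed points. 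The whole argument hinges on computing $\dim Y_\delta$.

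The key step is to show $\dim Y_\delta=r-1$. Fix a vertex $w$ of $\delta$, say $w=e_0-e_2$, with associated fixed point $y_w\in X^H$, and set $\phi=e_0^*+e_1^*$, so that $\delta$ lies on $\{\phi=1\}$ and $V=\{\phi=0\}$. By Lemma~\ref{lem_reduction_of_compass} one has $\dim Y_\delta=\#(\cC(y_w,X,H)\cap V)$. I claim this intersection consists precisely of the $r-1$ edge directions of $\delta$ at $w$. Indeed, any $\eta\in\cC(y_w,X,H)\cap V$ satisfies $\phi(w+\lambda\eta)=1$ for all $\lambda$, so the ray $w+\RR_{\ge 0}\eta$ stays in the affine hull of $\delta$; since the compass lies in the cone $\RR_{\ge 0}(\Delta-w)$ (Corollary~\ref{cor_compass_and_faces}), Corollary~\ref{cor_compass_and_fixpt} produces a fixed point at some $w+\lambda_0\eta\in\Delta\cap\{\phi=1\}\cap M=\delta\cap M$, which by Lemma~\ref{lem_Ar_lattice_points_on_delta} (for $r\ne 3$) is a vertex $v''$ of $\delta$. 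The adjacent vertices account for exactly the $r-1$ edge directions, each with multiplicity one by Corollary~\ref{cor_compass_and_faces}.

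It remains to exclude the non-adjacent (diagonal) vertices, and here the contact structure is decisive. If $\eta=v''-w$ with $v''=e_1-e_k$ ($k\ge 3$) non-adjacent, then the contact pairing (Lemma~\ref{lem_t-action-contact}, with $\mu_0(y_w)=w$) forces the partner $-w-\eta=-v''=e_k-e_1$ to lie in $\cC(y_w,X,H)$ as well. But the supporting functional $e_0^*+e_k^*$ of $\Delta$ takes the value $1$ at $w$ and is strictly positive on $-v''$, so $w+\lambda(-v'')$ leaves $\Delta$ for every $\lambda>0$; by Corollary~\ref{cor_compass_and_fixpt} this means $-v''\notin\cC(y_w,X,H)$, a contradiction. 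Hence no diagonal occurs, $\cC(y_w,X,H)\cap V$ is exactly the set of edge directions, and $\dim Y_\delta=r-1$. Consequently $Y_\delta$ is a smooth projective variety of dimension $r-1$ carrying an almost faithful action of the rank-$(r-1)$ torus $H/H'$, i.e.\ it is toric; being Fano with $\Pic=\ZZ$ it must be $\PP^{r-1}$, whose section polytope is a simplex. This contradicts $\Delta(Y_\delta)=\delta$ being a prism as soon as $r\ge 4$ (the two polytopes even have different vertex numbers, $r$ versus $2(r-1)$), which proves $r\le 3$.

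For the sharper bound under Assumptions~\ref{assumptions-simple} I would treat $r=3$ separately: then $\delta$ is a square whose only non-vertex lattice point is its centre $\tfrac12(e_0+e_1-e_2-e_3)$, which is not a root, so Lemma~\ref{lem_faces_and_h0L} gives $h^0(L|_{Y_\delta})=4$. The diagonal of the square is excluded exactly as above, so $\cC(y_w,X,H)\cap V$ is either the two edge directions or those two together with the direction towards the centre. In the first case $\dim Y_\delta=2$ and $Y_\delta\cong\PP^2$, whose triangular polytope has three, not four, vertices. In the second case $\dim Y_\delta=3$, and a Fano threefold with $\Pic=\ZZ$ and $h^0(L)=4$ embedded by $|L|$ in $\PP^3$ must be $\PP^3$ itself; but then the rank-$2$ torus has only four fixed points, contradicting the fixed point forced at the centre. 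Either way $r=3$ is impossible, giving $r\le 2$. The main obstacle throughout is the determination of $\dim Y_\delta$, that is, ruling out compass directions parallel to $\delta$ beyond the edges; this is exactly where the lattice-point description of $\delta$ (Lemma~\ref{lem_Ar_lattice_points_on_delta}) and the contact pairing (Lemma~\ref{lem_t-action-contact}) must be combined.
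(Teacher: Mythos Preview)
Your treatment of $r\ge 4$ is essentially the paper's argument: restrict to the facet $\delta(\{0,1\})$, compute the compass at a vertex, rule out diagonal directions via the contact pairing and a facet inequality, conclude $Y_\delta$ is toric of dimension $r-1$ with $\Pic=\ZZ$, hence $\PP^{r-1}$, and compare vertex counts. One small omission: having found that $\eta$ points toward a non-adjacent vertex $v''$, you only exclude $\eta=v''-w$, not arbitrary lattice multiples $\eta=c(v''-w)$. This is easily repaired (for $0<c<1$ the point $w+\eta$ would be a non-vertex lattice point of $\delta$, impossible; for $c\ge 1$ your functional argument gives $(e_0^*+e_k^*)(-w-\eta)=2c-1>0$), and the paper's version handles it explicitly.

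Your argument for $r=3$ under Assumptions~\ref{assumptions-simple} has two genuine gaps in the second subcase ($\dim Y_\delta=3$). First, the assertion that a Fano threefold with $\Pic=\ZZ$ and $h^0(L)=4$ ``embedded by $|L|$ in $\PP^3$ must be $\PP^3$'' is unjustified: you have not shown $|L|$ is an embedding, and in fact there exist Fano threefolds with $\Pic=\ZZ\cdot L$ and $h^0(L)=4$ that are not $\PP^3$ (for instance the index-$2$ del~Pezzo threefold $V_2$, a double cover of $\PP^3$). The torus action does rule these out, but an argument is required. Second, even granting $Y_\delta\cong\PP^3$, the centre direction in the compass does \emph{not} force a fixed point at the centre: the ray from $w=e_0-e_2$ through the centre also passes through the opposite vertex $e_1-e_3$, so Corollary~\ref{cor_compass_and_fixpt} is satisfied by the extremal point there. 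The paper closes both gaps at once by downgrading to a one-dimensional subtorus so that $\delta$ projects to an interval of length~$2$; Proposition~\ref{prop_smallDelta2} then forces $Y_\delta\cong\PP^d$ or $\cQ^d$, and $h^0=4$ pins down $\PP^3$. The contradiction comes from Corollary~\ref{cor_smallDelta2-compass}: for $\PP^3$ the compass at an extremal point must contain one weight of length~$2$, whereas all three candidate compass vectors at $w$ project to length~$1$.
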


\begin{proof}
  Suppose by contradiction that $r\ge 3$.
  We take a facet $\delta(\{0,1\})$ associated to $J$ consisting of two
  indices, $0$ and $1$.
  The vertices of this facet are $e_0-e_i$ and $e_1-e_i$, with $i=2,\dots, r$.
  It follows that $\delta(\{0,1\})$ (without taking the lattice structure into account) 
     has the same combinatorial structure as a product
     of an interval and a simplex of dimension $r-2$. 
  In particular, $\delta$ has $2(r-1)$ vertices. 
  Fix a vertex of $\delta$, say  $e_0-e_2$. 
  The edges of $\delta$ that contain this vertex are those connecting it to vertices $e_1 - e_2$ and $e_0 - e_i$ (for $i \in \setfromto{3}{r}$). 
  In particular, there are $r-1$ of them.

  We will run the torus action program on the manifold  
  \[
    Y = Y_{\delta(\set{0,1})} \subset X
  \]
  associated to the facet
  $\delta(\{0,1\})$.
  It admits an almost faithful action of a torus of rank $r-1$ and the compass of a fixed point $y$ associated to a vertex $e_0-e_2$ is the intersection of the compass of $X$ at $y$ 
     and the hyperplane parallel to the facet.
  In particular, by Corollary~\ref{cor_compass_and_faces}\ref{item_edges_are_in_compass} there are $r-1$ elements of the compass coming from the edges, all of them of multiplicity $1$.
  
  We claim that the compass of $y$ in $X$ or in $Y$ does not contain the vectors of the form $a\left((e_1 - e_i) - (e_0 - e_2)\right)$ 
     for any $i \in \setfromto{3}{r}$ and $a\ge 1$.
  Indeed, the symmetric counterpart (see Corollary~\ref{cor_compass_and_faces}\ref{item_compass_in_cones}, or Lemma~\ref{lem_t-action-contact}) 
    of this vector is equal to  
    \[
      -(e_0 - e_2) - a\left((e_1 - e_i) - (e_0 - e_2)\right) = a(e_i - e_1) + (a - 1)(e_0 - e_2),
    \]
    which does not belong to the cone $\sigma=\RR_{\geq 0}(\Delta(A_r)-(e_0-e_2))$.
  This last claim can be seen using the inequalities of $\sigma$, one of them is $e_0^* +e_i^* \le 0$.
     
  Suppose $r\ge 4$. By Lemma~\ref{lem_Ar_lattice_points_on_delta}, the only lattice points in $\delta$ are the vertices.
  Thus by Corollary~\ref{cor_compass_and_fixpt}, 
     any element of the compass must be of the form $a(v - (e_0 - e_2))$ for some vertex $v$ and $a\ge 1$.
  Therefore there are only $r-1$ elements of the compass,
     $\dim Y = r-1$, and thus $Y$ is a (smooth, projective) toric variety with 
     $\Pic Y = \ZZ\cdot  L|_{Y}$ 
     and $h^0(L|_{Y}) = 2(r-1)$  by Lemma~\ref{lem_faces_and_h0L}.
  But the only smooth complete toric variety of dimension $r-1$  with $\Pic Y \simeq \ZZ$ is $\PP^{r-1}$, 
     and the ample generator of the Picard group has an $r$-dimensional space of sections.
  Therefore, $2(r-1) =r$, a contradiction with our assumption that $r\ge 4$.
  
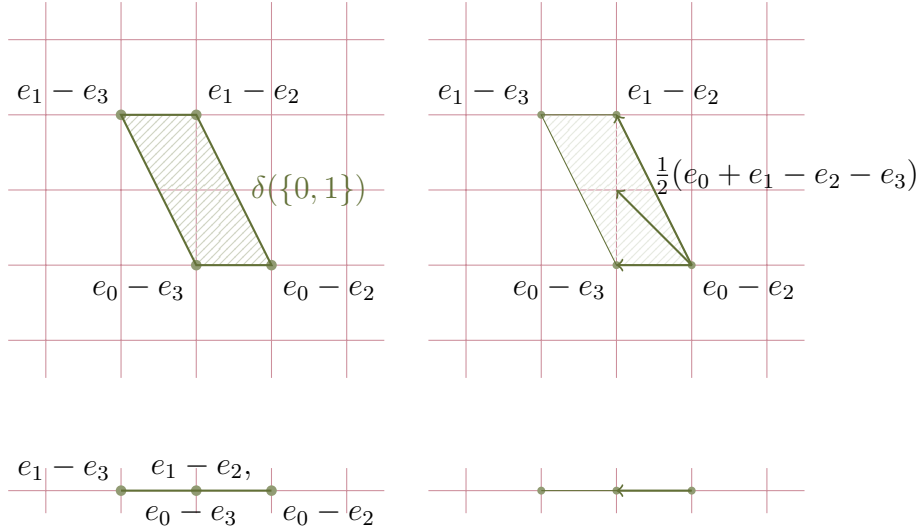
\begin{figure}[hbt]
\begin{center}
\begin{tikzpicture}[baseline=.4cm,scale=1]
\draw[step=1, fiolet, thin] (-2.5,-2.5) grid (2.5, 2.5);
\fill[pattern color=oliwkowy!30, pattern=north east lines] (0,-1) -- (1,-1) -- (0,1) -- (-1,1) -- cycle;
\fill[oliwkowy!70] (0,-1) circle (2pt);
\fill[oliwkowy!70] (1,-1) circle (2pt);
\fill[oliwkowy!70] (0,1) circle (2pt);
\fill[oliwkowy!70] (-1,1) circle (2pt);
\draw[thick, oliwkowy] (0,-1) node[anchor=north east, color=black]{$e_0- e_3$} -- (1,-1) node[anchor=north west, color=black]{$e_0- e_2$} -- (0,1) node[anchor=south west, color=black]{$e_1- e_2$}-- (-1,1) node[anchor=south east, color=black]{$e_1- e_3$} -- cycle;
\draw (1.5,0) node[color=oliwkowy]{$\delta(\set{0,1})$};
\draw[step=1, fiolet, thin] (-2.5, -4.3) grid (2.5, -3.7);
\fill[oliwkowy!70] (-1,-4) circle (2pt) node[anchor=south east, color=black]{$e_1- e_3$};
\fill[oliwkowy!70] (0,-4) circle (2pt) node[anchor=south, color=black]{$\ e_1- e_2, $} node[anchor=north,  color=black]{$e_0- e_3 \ $};
\fill[oliwkowy!70] (1,-4) circle (2pt) node[anchor=north west, color=black]{$e_0- e_2$};
\draw[thick, oliwkowy] (-1,-4) -- (1,-4);
\end{tikzpicture}
\quad 
\begin{tikzpicture}[baseline=.4cm,scale=1]
\draw[step=1, fiolet, thin] (-2.5,-2.5) grid (2.5, 2.5);
\fill[pattern color=oliwkowy!15, pattern=north east lines] (0,-1) -- (1,-1) -- (0,1) -- (-1,1) -- cycle;
\fill[oliwkowy!70] (0,-1) circle (1.5pt);
\fill[oliwkowy!70] (1,-1) circle (1.5pt);
\fill[oliwkowy!70] (0,1) circle (1.5pt);
\fill[oliwkowy!70] (-1,1) circle (1.5pt);
\draw[oliwkowy] (0,-1) node[anchor=north east, color=black]{$e_0- e_3$} -- (1,-1) node[anchor=north west, color=black]{$e_0- e_2$} -- (0,1) node[anchor=south west, color=black]{$e_1- e_2$}-- (-1,1) node[anchor=south east, color=black]{$e_1- e_3$} -- cycle;
\draw (0.35,-0.2) node[anchor=south west]{$\tfrac{1}{2}(e_0+e_1 - e_2- e_3)$};
\draw[->, thick, oliwkowy] (1,-1) -- (0,-1);
\draw[->, thick, oliwkowy] (1,-1) -- (0,0);
\draw[->, thick, oliwkowy] (1,-1) -- (0,1);
\draw[step=1, fiolet, thin] (-2.5, -4.3) grid (2.5, -3.7);
\fill[oliwkowy!70] (-1,-4) circle (1.5pt); 
\fill[oliwkowy!70] (0,-4) circle (1.5pt);
\fill[oliwkowy!70] (1,-4) circle (1.5pt);
\draw[oliwkowy] (-1,-4) -- (1,-4);
\draw[->, thick, oliwkowy] (1,-4) -- (0,-4);
\end{tikzpicture}
\end{center}
\caption{Face $\delta(\set{0,1})$ for $r=3$ with a compass and a downgrading to one dimensional torus.} \label{fig_Ar_delta_for_A3}
\end{figure}

  It remains to consider the case of $r=3$, which has an additional lattice point in the relative interior of $\delta$, as shown in Lemma~\ref{lem_Ar_lattice_points_on_delta} 
    and illustrated on the top left in Figure~\ref{fig_Ar_delta_for_A3}.
  By Corollary~\ref{cor_compass_and_fixpt} the compass of $y$ in $Y$ must be contained in the union of three half lines starting at the vertex $e_0-e_2$ shifted to $0$
    and passing through the lattice points of $\delta$.
  By the above considerations, the only possibilities are the three vectors indicated on the top right in Figure~\ref{fig_Ar_delta_for_A3}.

Downgrading the action of the two dimensional torus by a vertical squeeze of the lattice (bottom of Figure~\ref{fig_Ar_delta_for_A3}),
   we are in the situation of Proposition~\ref{prop_smallDelta2}.
Thus the manifold $Y$ is either a projective space or a quadric of dimension at least $3$. 
By Lemma~\ref{lem_faces_and_h0L} the space of sections $H^0(Y, L|_{Y})$ is $4$-dimensional, 
  thus $Y \simeq \PP^3$. 
(Note that here we exploit Assumption~\ref{assumptions-simple}\ref{item_assumption_H0L}.)
The compass of $y$ in $Y$ after the downgrading must be just the vector of length $1$ with some multiplicity by Lemma~\ref{lem_reduction_of_compass}.
This is in contradiction with Corollary~\ref{cor_smallDelta2-compass} for $\PP^3$ and concludes the proof.
\end{proof}

\subsection{Case of \texorpdfstring{$C_r$}{Cr} with \texorpdfstring{$r\geq 2$}{r at least 2}}\label{sectCr}
\begin{lemma}\label{lem_C_r-big} 
  Assume $X$, $L$, $G$, and $H$ satisfy Assumptions~\ref{assumptions-simple}.
  Then $G$ is not of type $C_r$ (for any $r\geq 2$).
\end{lemma}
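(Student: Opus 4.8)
The plan is to exploit the feature that distinguishes type $C_r$ among all simple types: the vertices of $\Delta(C_r)$ are the \emph{long} roots $\pm 2e_i$, while every edge joins two of them through a \emph{short} root sitting exactly at its midpoint. Using the notation of \cite[Table~3]{Bourbaki}, the roots are the long roots $\pm 2e_i$ and the short roots $\pm e_i\pm e_j$ ($i\ne j$), the weight lattice is $M=\ZZ\langle e_1,\dots,e_r\rangle$, and $\Delta(C_r)=\conv(\pm 2e_1,\dots,\pm 2e_r)$ is the cross-polytope $\{w:\sum_i|w_i|\le 2\}$ whose vertices are precisely the long roots.

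First I would verify the edge hypothesis. Fix the vertex $v=2e_1$, let $y\in X^H$ be the fixed point associated to it (so $\mu_0(y)=v$ by Lemma~\ref{lem_extremal=sink}), and let $\delta$ be the edge joining $v$ to $2e_2$. A point $2(1-t)e_1+2te_2$ lies in $M$ only for $t\in\{0,\tfrac12,1\}$, and the interior value $t=\tfrac12$ gives the short root $e_1+e_2$; hence $\delta$ satisfies \ref{item_star_one}. Therefore Corollary~\ref{cor_compass_and_faces}\ref{item_edges_are_in_compass}, applied to the endpoint $m=2e_2\in\delta\setminus\{v\}$, shows that the \emph{long} vector $2e_2-2e_1=m-v$ lies in the compass $\cC=\cC(y,X,H)$, with multiplicity exactly one.

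The contradiction then comes from the contact symmetry. By Corollary~\ref{cor_compass_and_faces}\ref{item_compass_in_cones} one has $\cC\subset\sigma\cap(-v-\sigma)$ with $\sigma=\RR_{\ge 0}(\Delta(C_r)-v)$; in particular $2e_2-2e_1\in -v-\sigma$, which forces $-2e_2=-v-(2e_2-2e_1)\in\sigma$. But $-2e_2\in\sigma$ would mean $-2e_2=\lambda(w-v)$ for some $\lambda>0$ and $w\in\Delta(C_r)$, i.e.\ $w=2e_1-\tfrac{2}{\lambda}e_2$, and this $w$ violates $\sum_i|w_i|\le 2$; so $-2e_2\notin\sigma$, a contradiction. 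Equivalently, in the language of Lemma~\ref{lem_t-action-contact} the contact pairing sends the compass element $2e_2-2e_1$ to its partner $-v-(2e_2-2e_1)=-2e_2$, which must then also lie in $\cC$, whereas Corollary~\ref{cor_compass_and_fixpt} forbids this since $v+\lambda(-2e_2)\notin\Delta(C_r)$ for every $\lambda>0$. This rules out $G$ of type $C_r$.

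I expect the only delicate point to be bookkeeping rather than a genuine obstacle: one must ensure that the \emph{full} long-root direction $2e_2-2e_1$ (not merely the half-step $e_2-e_1$) enters the compass, which is exactly what the identification $Y_\delta\simeq\PP^2$ (as opposed to $\PP^1$) provides via Lemma~\ref{lem_edges_of_root_polytopes}\ref{item_long_edge} and Corollary~\ref{cor_smallDelta2-compass}; here Assumption~\ref{item_assumption_H0L} enters, through $h^0(L|_{Y_\delta})=3$. The coincidence $C_2\simeq B_2$ causes no difficulty, as the argument is uniform in $r\ge 2$: the genuine $B_2$-adjoint variety $\PP^3$ escapes it only because there $L=\cO(2)$ and $\Pic X=\ZZ\cdot\tfrac12 L$, so Assumption~\ref{item_assumption_PicX_ZL} is violated and the hypotheses of the lemma fail to hold.
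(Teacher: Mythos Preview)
Your proof is correct and follows essentially the same approach as the paper's: both identify that the edge from $2e_1$ to $2e_2$ satisfies \ref{item_star_one}, invoke Corollary~\ref{cor_compass_and_faces}\ref{item_edges_are_in_compass} to put $2e_2-2e_1$ into the compass, and then obtain a contradiction from Corollary~\ref{cor_compass_and_faces}\ref{item_compass_in_cones}. Your version is somewhat more detailed in verifying that $-2e_2\notin\sigma$ and in explaining why Assumptions~\ref{assumptions-simple} (specifically~\ref{item_assumption_H0L}) are needed, but the underlying argument is identical.
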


\begin{proof}
  We use information from \cite[Table 3]{Bourbaki}. The polytope
  $\Delta(C_r)$ is in a Euclidean space $\RR^r$ with a basis $e_i$,
  $i=1,\dots, r$. The vertices of $\Delta(C_r)$ are long roots $\pm
  2e_i$, each edge of $\Delta(C_r)$ contains a short root $\pm e_i\pm
  e_j$, $i\ne j$ and the roots are the only weights contained in the
  edges.
  In other words, the condition \ref{item_star_one} is satisfied.
  By Lemma~\ref{lem_edges_of_root_polytopes} the manifolds corresponding to edges are isomorphic to $\PP^2$,
     and by  Corollary~\ref{cor_compass_and_faces}\ref{item_edges_are_in_compass} the difference $2 (e_2 - e_1)$
     is in the compass of the fixed point corresponding to $2e_1$.
  This is in a contradiction with Corollary~\ref{cor_compass_and_faces}\ref{item_compass_in_cones},
     thus $G$ cannot be of type $C_r$.
\end{proof}


\subsection{Cases of \texorpdfstring{$B_r$}{Br} and \texorpdfstring{$D_r$}{Dr}}\label{sectBrDr}
To avoid an overlap with the previous cases we assume $r\geq 3$ for the
case $B_r$ and $r\geq 4$ for the case $D_r$.  We use \cite[Table 2 and
4]{Bourbaki}.  We take the lattice $\ZZ^r$ generated by $e_1,\dots, e_r$
and define $M=\sum_i\ZZ e_i+\ZZ(\sum_i e_i)/2$.  Then
$\Delta=\Delta(B_r)=\Delta(D_r)$ has vertices $\pm e_i\pm e_j$ for
$i\ne j$. The vertex $\pm e_i\pm e_j$ is connected by an edge to $\pm
e_r\pm e_s$ if they have one common index in pairs $(i,j)$ and $(r,s)$
and the same sign for this index.  There are distinguished facets of
$\Delta$ defined as $\delta_i^{\pm}:=\{u\in M_\RR: e_i^*(u)=\pm 1\}$ with vertices
$e_i\pm e_j,\ j\ne i$ and $-e_i\pm e_j, j\ne i$, respectively.
More generally, the inequalities defining $\Delta$ include:
\[
   \pm e_i^* \le 1 \text{ and } \sum_i (\pm e_i^*) \le 2
\]
These give $2r + 2^r$ linear inequalities which can be briefly described as $|e_i^*|\le 1$ and $\sum_i |e_i^*| \le 2$.
It is straightforward to see that each inequality is the supporting inequality of a facet, that is these are minimal inequalities.
It can also be verified that these are all inequalities of $\Delta$, but we are not going to use this last statement.

All the non-zero weights contained in $\Delta$ are the following:
\begin{itemize}
 \item the vertices of $\Delta$, which are all roots in the case $D_r$, or long roots in the case $B_r$, 
 \item points $\pm e_i$  which lie on the facets $\{u\in M: e_i^*(u)=\pm 1\}$ (and which in the case $B_r$ are the short roots),  and
 \item for $r\leq 4$ points $(\sum_{i=1}^r \pm e_i)/2$ which for $r=3$ lie in the
         interior of $\Delta$ and for $r=4$ they lie on the facets of type $\{u\in
         M_\RR: \sum_i \pm e_i^*(u)= 2\}$.
\end{itemize}

\begin{lemma}\label{lem_points_that_can_be_in_compass}
   Suppose $r\ge 3$ and fix a vertex of $\Delta$, say $e_1+ e_2$. Let $\sigma = \RR_{\ge 0}\cdot (\Delta - (e_1+e_2))$, 
      as in Corollary~\ref{cor_compass_and_faces}\ref{item_compass_in_cones}.
   Then every lattice point $v$ in $\sigma \cap (-e_1 - e_2 -\sigma)$ satisfies at least one of the following:
   \begin{itemize}
      \item $v$ is contained in at least one of the hyperplanes $e_1^* =0$ or $e_2^* =0$, or
      \item $v = -e_1 - e_2$, or
      \item $r\le 4$ and $v = -\frac{1}{2} (e_1 + e_2 \pm e_3 \pm e_4)$  (if $r=4$) or $v = -\frac{1}{2} (e_1 + e_2 \pm e_3)$ (if $r=3$). 
   \end{itemize}
\end{lemma}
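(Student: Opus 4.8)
The plan is to compute the relevant cone explicitly at the vertex $w := e_1 + e_2$ and then intersect it with its reflection $-w - \sigma$. First I would determine which of the supporting inequalities $|e_i^*| \le 1$ and $\sum_i |e_i^*| \le 2$ of $\Delta$ are active at $w$. Evaluating at $w$, only $e_1^* \le 1$, $e_2^* \le 1$, and those inequalities $e_1^* + e_2^* + \sum_{i \ge 3} \epsilon_i e_i^* \le 2$ (over all sign choices $\epsilon_i \in \{\pm 1\}$, $i \ge 3$) are tight. Since $\Delta$ is contained in the polyhedron cut out by all these inequalities, the cone $\sigma = \RR_{\ge 0}(\Delta - w)$ is contained in the tangent cone of that polyhedron at $w$, which is defined by the active inequalities shifted to the origin: $e_1^*(v) \le 0$, $e_2^*(v) \le 0$, and $e_1^*(v) + e_2^*(v) + \sum_{i\ge 3}|e_i^*(v)| \le 0$ (the last one absorbing the whole sign family, since maximizing $\sum_{i\ge3}\epsilon_i e_i^*(v)$ over signs gives $\sum_{i\ge3}|e_i^*(v)|$). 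Only this containment is needed, so I never have to prove that the listed inequalities exhaust the facets of $\Delta$.

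Second, writing $v = \sum_i x_i e_i$, I would translate $v \in \sigma \cap (-w - \sigma)$ into coordinate inequalities. Membership in $\sigma$ gives $x_1 \le 0$, $x_2 \le 0$, and $x_1 + x_2 + \sum_{i\ge3}|x_i| \le 0$, while $-w - v \in \sigma$ gives $x_1 \ge -1$, $x_2 \ge -1$, and $-x_1 - x_2 + \sum_{i\ge3}|x_i| \le 2$. Adding the two summation inequalities immediately yields the crucial bound $\sum_{i\ge3}|x_i| \le 1$, together with $-1 \le x_1 \le 0$ and $-1 \le x_2 \le 0$.

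Third, I would invoke the lattice structure $M = \sum_i \ZZ e_i + \ZZ(\sum_i e_i)/2$, so that a lattice point has either all $x_i \in \ZZ$ or all $x_i \in \ZZ + \tfrac12$. In the integer case $x_1, x_2 \in \{-1, 0\}$ and at most one coordinate $x_i$ with $i \ge 3$ is nonzero (equal to $\pm 1$); if $x_1$ or $x_2$ vanishes we land in a coordinate hyperplane (first bullet), and if $x_1 = x_2 = -1$ then $-x_1 - x_2 + \sum_{i\ge3}|x_i| \le 2$ forces all remaining coordinates to vanish, giving $v = -e_1 - e_2$ (second bullet). In the half-integer case the only admissible value in $[-1,0]$ is $x_1 = x_2 = -\tfrac12$, so $v$ lies off both coordinate hyperplanes; since each $|x_i| \ge \tfrac12$, the bound $\sum_{i\ge3}|x_i| \le 1$ allows at most two further nonzero coordinates, which forces $r \le 4$ and produces exactly $-\tfrac12(e_1 + e_2 \pm e_3)$ for $r = 3$ and $-\tfrac12(e_1 + e_2 \pm e_3 \pm e_4)$ for $r = 4$ (third bullet); for $r \ge 5$ the sum would be at least $\tfrac32 > 1$, so no half-integer point occurs.

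The only delicate point I anticipate is the first step: correctly reading off the inequalities active at $w$, and recognizing that the family indexed by sign choices collapses to the single condition with $\sum_{i\ge3}|e_i^*(v)|$. Once the cone is pinned down, the rest is an elementary integer versus half-integer case analysis driven by the single bound $\sum_{i\ge3}|x_i| \le 1$.
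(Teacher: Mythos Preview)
Your proposal is correct and follows essentially the same route as the paper: both arguments read off the facet inequalities of $\Delta$ that are active at $e_1+e_2$, translate them into inequalities for $\sigma$ and $-w-\sigma$, and then run the integer versus half-integer case split coming from the lattice $M$. Your presentation is slightly slicker in that you collapse the sign family into the single absolute-value inequality and add the two sum conditions to obtain $\sum_{i\ge 3}|x_i|\le 1$ directly, whereas the paper keeps the sign choices separate and argues case by case; but the underlying argument is the same.
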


\begin{proof}
   Some inequalities of $\sigma$ are easily obtained from the inequalities of $\Delta - (e_1+e_2)$.
   In particular, for all $v\in \sigma$ we have
   \renewcommand{\theenumi}{\textnormal{(\roman{enumi})}}
   \begin{enumerate}
    \item  $e_1^*(v) \le 0$,
    \item  $e_2^*(v) \le 0$,  
    \item  $(e_1^* + e_2^*  + \sum_{i=3}^r \pm e_i^*)(v) \le 0$ for any choices of signs $\pm$.
   \end{enumerate}
   Similarly, the inequalities of $-e_1 - e_2 -\sigma$ include:
   \begin{enumerate}
       \addtocounter{enumi}{3}
    \item  $e_1^*(v) \ge -1$,
    \item  $e_2^*(v) \ge -1$,  
    \item  \label{item_inequality_of_shifted_minus_C_sum_ge_2}
           $(e_1^* + e_2^*  + \sum_{i=3}^r \pm e_i^*)(v) \ge -2$ for any choices of signs $\pm$.
   \end{enumerate}
\renewcommand{\theenumi}{(\arabic{enumi})}
   In particular, the only possible values of $e_1^*(v)$ for a lattice point $v\in M$
      in the intersection of the two cones are $0$, $-\frac{1}{2}$, or $-1$ 
      (and similarly for $e_2^*(v)$).
   The case of one of them equal to $0$ is the first item in the lemma.
   Thus assume otherwise, that both $e_1^*(v)$ and $e_2^*(v)$ are non-zero.
   By the construction of $M$, either all coordinates of $v$ are integral,
      or all are congruent to $\frac{1}{2}$ modulo $1$.
   In particular, $e_1^*(v) =e_2^*(v) \in \set{-\frac{1}{2}, -1}$.
   
   If $e_1^*(v) =e_2^*(v) =-1$, then by \ref{item_inequality_of_shifted_minus_C_sum_ge_2} 
      we must have $\sum_{i=3}^r \pm e_i^*(v) \ge 0$ for any choices of signs.
   Thus $e_i^*(v) =0 $ for all $i \in \setfromto{3,4}{r}$, that is $v=-e_1 - e_2$ as in the second item of the lemma.

   Finally, consider the case $e_1^*(v) =e_2^*(v) =-\frac{1}{2}$, and all the other coordinates are also non-integral.
   We must have $-1 \le \sum_{i=3}^r \pm e_i^*(v) \le 1$, thus $r\le 4$ and $e_i^*(v) = \pm \frac{1}{2}$ for $i=3$ and $i=4$ (when $r=4$).
\end{proof}

\begin{lemma}\label{lem_B_r-D_r-facets}
  Suppose $(X,L)$ and $(G,H)$ satisfy Assumptions~\ref{assumptions-simple} 
  with $G$ of type $B_r$, $r\geq 3$ or $D_r$,
  $r\geq 4$.  Then the following conditions hold:
  \begin{enumerate}
  \item \label{item_Br_Dr_facets_projection_ei}
    the source/sink of the action on $X$ of the 1-dimensional
    subtorus associated to the projection $e_i^*: M\rightarrow \ZZ\cdot \tfrac{1}{2}$ is the
    quadric $\cQ^{2r-3}$ in the case $B_r$ and quadric $\cQ^{2r-4}$ in
    the case $D_r$,
  \item \label{item_Br_Dr_middle_point_is_not_fixed}
    the middle point $\pm e_i$ of the facet $e_i^* = \pm 1$
      does not correspond to any fixed-point component of $X^H$,
    that is $\pm e_i \notin \widetilde{\Delta}(X, L, H)$,
  \item \label{item_Br_Dr_facets_projection_sum_of_ei}
    for $r=4$ the source/sink of the action on $X$ of the
    1-di\-men\-sional subtorus associated to the projection $\sum_{i=1}^4
    e_i^*: M\rightarrow \ZZ$ is the quadric $\cQ^4$.
  \end{enumerate}
\end{lemma}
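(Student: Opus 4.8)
The plan is to recognise each source/sink as a quadric via Corollary~\ref{cor_torus-on-quadrics}, and then to read off part~\ref{item_Br_Dr_middle_point_is_not_fixed} from the fixed-point structure of that quadric. Throughout I use that, for a facet $\delta$ of $\Delta=\Delta(B_r)=\Delta(D_r)$, the associated one-dimensional subtorus $H_1$ has source/sink $Y_\delta$, on which the quotient $H_2=H/H_1$ acts with $\Delta(Y_\delta,H_2,L|_{Y_\delta})=\Gamma(Y_\delta,H_2,L|_{Y_\delta})$ equal to $\delta$ up to a lattice shift, by Lemma~\ref{lem_faces_and_h0L} and Lemma~\ref{lem_Gamma_eq_Delta_propagates_to_faces}.

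For part~\ref{item_Br_Dr_facets_projection_ei} take $i=1$ and $\delta=\delta_1^+=\{e_1^*=1\}$ (the sink $\delta_1^-$ is symmetric). Since $e_1^*<1$ off $\delta$, the component $Y:=Y_\delta$ is well defined, $H_2$ has rank $r-1$ with character lattice $M_2=\ker(e_1^*)\cap M$ having basis $e_2,\dots,e_r$, and the fixed-point polytope of $Y$ is $\delta-e_1=\conv(\pm e_2,\dots,\pm e_r)$. Its vertices $\pm e_j$ correspond, via Lemma~\ref{lem_equality_of_Delta_for_faces}, to the $\Delta$-vertices $e_1\pm e_j$, which are isolated points by Assumption~\ref{assumptions-weaker}\ref{item_assumption_extremal_points_are_isolated_pts}; being full-dimensional, the $H_2$-action is almost faithful and $\dim Y\ge r-1\ge 2$. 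The only hypothesis of Corollary~\ref{cor_torus-on-quadrics} left to check is that $-2e_j$ is absent from the compass at the vertex $+e_j$, i.e. at $y=e_1+e_j$. By Lemma~\ref{lem_reduction_of_compass} this compass is $\cC(y,X,H)\cap\{e_1^*=0\}$, so it suffices to show $-2e_j\notin\cC(y,X,H)$; by Corollary~\ref{cor_compass_and_faces}\ref{item_compass_in_cones} the compass lies in $\sigma\cap(-y-\sigma)$ with $\sigma=\RR_{\ge 0}(\Delta-y)$, and $-2e_j\in -y-\sigma$ would force $e_j-e_1\in\sigma$. But the supporting inequality $e_j^*\le 1$ of $\Delta$ is tight at $y$ while $e_j^*(e_j-e_1)=1>0$, so $e_j-e_1\notin\sigma$ and $-2e_j$ is excluded (the vertices $\pm e_j$ are equivalent under the Weyl symmetry). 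Crucially this uses the contact half $-y-\sigma$ of the cone, coming from Lemma~\ref{lem_t-action-contact}.

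Hence $Y\simeq\cQ^d$ with $d=2(r-1)+\dim\HH^0(Y,L|_Y)_0-2$. By the weight-graded isomorphism $\cR(Y,L|_Y)\simeq\cR(X,L)/\cJ$ of Lemma~\ref{lem_ABB-surjectivity} the multiplicity of the $H_2$-weight $0$ equals the multiplicity of the central facet weight $e_1$ in $\g=\HH^0(X,L)$ (Assumption~\ref{assumptions-simple}\ref{item_assumption_H0L}); this is $1$ for $B_r$ ($e_1$ a short root) and $0$ for $D_r$ ($e_1$ not a root), giving $d=2r-3$, resp.\ $2r-4$. This proves part~\ref{item_Br_Dr_facets_projection_ei}, and part~\ref{item_Br_Dr_middle_point_is_not_fixed} follows at once: the quadric $\cQ^{2r-3}$ (resp.\ $\cQ^{2r-4}$) has fixed points only at the vertices $\pm e_j$ — none over the centre — as in the quadric examples of Subsection~\ref{subsec_examples_of_torus_action}; since by Lemma~\ref{lem_equality_of_Delta_for_faces} every $H$-fixed point with weight on $\delta_i^\pm$ lies in $Y_{\delta_i^\pm}$, we get $\pm e_i\notin\widetilde{\Delta}(X,L,H)$.

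For part~\ref{item_Br_Dr_facets_projection_sum_of_ei} take $r=4$ and the facet $\delta=\{\sum_{i=1}^4 e_i^*=2\}$, an octahedron whose six vertices $e_i+e_j$ form antipodal pairs about its centre $c=\tfrac12(e_1+e_2+e_3+e_4)$. After the shift by $c$ the fixed-point polytope is $\conv(\pm f_1,\pm f_2,\pm f_3)$ with $f_1=\tfrac12(e_1+e_2-e_3-e_4)$, $f_2=\tfrac12(e_1-e_2+e_3-e_4)$, $f_3=\tfrac12(e_1-e_2-e_3+e_4)$ a lattice basis of $M_2=\ker(\sum_ie_i^*)\cap M$, and $H_2$ has rank $3$. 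The compass condition is checked as before: at the vertex $+f_1$, i.e. $y=e_1+e_2$, having $-2f_1\in -y-\sigma$ would force $-e_3-e_4\in\sigma$, which is impossible because the supporting inequality $e_1^*+e_2^*-e_3^*-e_4^*\le 2$ is tight at $y$ and strictly increases along $-e_3-e_4$. Since $c$ is a root of neither $B_4$ nor $D_4$, the weight-$0$ multiplicity is $0$, so $d=2\cdot3+0-2=4$ and $Y_\delta\simeq\cQ^4$. The main obstacle throughout is precisely this compass verification: it succeeds only by pairing the cone containment of Corollary~\ref{cor_compass_and_faces}\ref{item_compass_in_cones} — where the contact structure enters — with the correct supporting inequality of the root polytope, the delicate point being to identify which inequality becomes tight at the chosen vertex in each case.
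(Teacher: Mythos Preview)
Your proof is correct and follows essentially the same route as the paper: identify the source/sink $Y_\delta$ via Lemma~\ref{lem_faces_and_h0L}, apply Corollary~\ref{cor_torus-on-quadrics} to recognise it as a quadric, read off the dimension from the weight-$0$ multiplicity of $\HH^0(X,L)$ on the facet, and deduce part~\ref{item_Br_Dr_middle_point_is_not_fixed} from the fixed-point structure of the quadric. The only notable difference is that you verify the compass hypothesis of Corollary~\ref{cor_torus-on-quadrics} explicitly via Corollary~\ref{cor_compass_and_faces}\ref{item_compass_in_cones} (using the contact duality $-y-\sigma$), whereas the paper's proof invokes the corollary more tersely; your extra care here is warranted and the argument is sound.
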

\begin{proof}
  In \ref{item_Br_Dr_facets_projection_ei} fix $i=1$ 
     and consider the source, that is the fixed-point component corresponding to $1 \in \ZZ\cdot \tfrac{1}{2}$.
  The projection $e_1^*$ 
     maps the facet $\delta_1^{+}= \conv( e_1\pm e_j,\ j\ne 1)$ to $\set{+1}$.
  Thus the source is just $Y:=Y_{\delta_1^{+}}$.
  After shifting $e_1$ to $0$, the polytope $\delta_1^{+}$ becomes the convex hull of $\set{\pm e_j,\ j\ne 1}$, 
      and the lattice $M$ intersected with the linear span of $e_j$ (for $j \ne 1$) is equal to the group generated by the $e_j$'s.

   Therefore $\Delta(Y, L|_{Y}, (\CC^*)^{r-1}) = \conv(\pm e_j,\ j\ne 1)$ by Lemma~\ref{lem_faces_and_h0L}
      and by Corollary~\ref{cor_torus-on-quadrics} the manifold $Y$ is a quadric hypersurface. 
   The dimension of the quadric follows from Lemma~\ref{lem_faces_and_h0L}.
   
   To see \ref{item_Br_Dr_middle_point_is_not_fixed}, 
      note that by Lemma~\ref{lem_equality_of_Delta_for_faces} the middle point corresponds to a fixed point in $X^H$ (that is, the middle point is in $\widetilde{\Delta}(X,L, H)$)
      if and only if the point is in $\widetilde{\Delta}(Y_{\delta_i^{\pm}},L|_{Y_{\delta_i^{\pm}}}, (\CC^*)^{r-1})$.
   But by the above arguments, $Y_{\delta_i^{\pm}}$ is a quadric with the automorphisms group of rank $r-1$, 
      thus the action of $(\CC^*)^{r-1}$ must be (up to a finite cover) 
      the standard action of the maximal torus on a quadric, which has no non-extremal fixed points, see Subsection~\ref{subsec_examples_of_torus_action}.
   Thus $\pm e_i \notin \widetilde{\Delta}(X,L, H)$. 

   Now we restrict to the case $r=4$ in order to show \ref{item_Br_Dr_facets_projection_sum_of_ei}.
   Say, consider only the source $Y_{\delta}$ for the facet  $\delta:= \Delta \cap \{u\in M_\RR: (e_1^* + e_2^* + e_3^* + e_4^*)(u)= 2\}$.
   The lattice points on $\delta$ are the $6$ vertices $e_i + e_j$ 
      and the interior point $\frac{1}{2}(e_1 + e_2 + e_3 + e_4)$.
   Shifting $\frac{1}{2}(e_1 + e_2 + e_3 + e_4)$ to the origin,
      and choosing $\frac{1}{2}(e_1 + e_2 - e_3 - e_4)$, $\frac{1}{2}(e_1 - e_2 + e_3 - e_4)$, 
      $\frac{1}{2}(e_1 - e_2 - e_3 + e_4)$ as the basis of the $3$-dimensional sublattice containing the lattice span of $\delta$,
      this facet becomes a $3$-dimensional polytope such as in Corollary~\ref{cor_torus-on-quadrics}.
   Thus $Y_{\delta}$ is a quadric and by counting the number of sections in Lemma~\ref{lem_faces_and_h0L} we get $\dim Y_{\delta} = 4$,
      proving the claim.
\end{proof}
\begin{lemma}\label{lem_B_r-D_r-big}
  If $(X,L)$ and $(G,H)$ satisfy Assumption~\ref{assumptions-simple} 
  with $G$ of type $B_r$ and $r\geq 3$ or $D_r$ and $r\geq 4$. Then:
  \begin{itemize}
   \item $     \dim X =  \begin{cases}
                 \dim Gr(\PP^1, \cQ^{2r-3})= 4r-5 & \text{in the case $B_r$}\\
                 \dim Gr(\PP^1, \cQ^{2r-4})= 4r-7 & \text{in the case $D_r$,}
               \end{cases}$
   \item  the only points in $\widetilde{\Delta}$ are the vertices of $\Delta$,
   \item  for a fixed type of the group, the compass of any extremal component $y \in X$ corresponding to a fixed vertex of $\Delta$ is independent of $X$.
  \end{itemize}
\end{lemma}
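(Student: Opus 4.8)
The plan is to determine, for one chosen vertex, the \emph{entire} compass of the corresponding fixed point as a set depending only on the type $B_r$ or $D_r$; all three assertions then follow simultaneously, since the dimension of $X$ is the number of compass elements, the third bullet is then immediate, and the second bullet is read off along the way. By the Weyl symmetry of $\Delta$ it suffices to treat the vertex $v=e_1+e_2$; write $y\in X^H$ for the fixed point with $\mu_0(y)=v$ and $\cC=\cC(y,X,H)$. The whole argument reduces to computing $\cC$, and the only genuinely resistant point is the interior half-integer directions in type $B_3$.

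\textbf{The compass in the coordinate hyperplanes.} First I would compute $\cC\cap\{e_1^*=0\}$ and $\cC\cap\{e_2^*=0\}$. The facet $\delta_1^+=\{e_1^*=1\}$ contains $v$, and by Lemma~\ref{lem_B_r-D_r-facets}\ref{item_Br_Dr_facets_projection_ei} the manifold $Y_{\delta_1^+}$ is the quadric $\cQ^{2r-3}$ (type $B_r$) or $\cQ^{2r-4}$ (type $D_r$). By Lemma~\ref{lem_reduction_of_compass}, $\cC\cap\{e_1^*=0\}$ equals the compass of $y$ in this quadric, listed explicitly in Subsection~\ref{subsec_examples_of_torus_action}: after shifting $v$ to the vertex $e_2$ of the quadric polytope it is $\{-e_2\}\cup\{\pm e_j-e_2:j\ge3\}$ in type $B_r$, and $\{\pm e_j-e_2:j\ge3\}$ in type $D_r$. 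The symmetric computation with $\delta_2^+=\{e_2^*=1\}$ gives $\cC\cap\{e_2^*=0\}$. These two sets are disjoint (every vector in the first has $e_2^*=-1$, every vector in the second has $e_1^*=-1$). Together with $-v$, which lies in $\cC$ with multiplicity exactly $1$ by Corollary~\ref{cor_compass_and_faces}\ref{item_minus_v_in_compass}, they comprise $2(2r-3)+1=4r-5$ (resp.\ $2(2r-4)+1=4r-7$) elements; moreover the contact involution $\nu\mapsto-v-\nu$ of Lemma~\ref{lem_t-action-contact} interchanges $\cC\cap\{e_1^*=0\}$ and $\cC\cap\{e_2^*=0\}$ bijectively and fixes $-v$, so these vectors already form $2r-3$ (resp.\ $2r-4$) complete contact pairs together with the axis $-v$. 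Hence the dimension formula and the third bullet follow \emph{as soon as} $\cC$ has no further element.

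\textbf{No further elements; the second bullet.} The non-vertex weights of $\Delta$ are the short roots $\pm e_i$ (type $B_r$), the origin, and---only for $r\le4$---the half-integer points $\tfrac12(\sum\pm e_i)$. By Lemma~\ref{lem_B_r-D_r-facets}\ref{item_Br_Dr_middle_point_is_not_fixed} we have $\pm e_i\notin\widetilde\Delta$, and Lemma~\ref{lem_zero_not_in_Delta_tilde} then forces $0\notin\widetilde\Delta$. By Lemma~\ref{lem_points_that_can_be_in_compass} every element of $\cC$ outside $\{e_1^*=0\}\cup\{e_2^*=0\}\cup\{-v\}$ is likewise a half-integer point, so for $r\ge5$ both tasks are finished. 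For $r=4$ each such point $-\tfrac12(e_1+e_2\pm e_3\pm e_4)$ is the center of a facet $\{e_1^*+e_2^*\pm e_3^*\pm e_4^*=2\}$ through $v$, whose manifold is the even quadric $\cQ^4$ by Lemma~\ref{lem_B_r-D_r-facets}\ref{item_Br_Dr_facets_projection_sum_of_ei}. A short calculation identifies this half-integer point with the direction from $y$ to its \emph{antipodal} vertex in that $\cQ^4$; since the compass of an even quadric (Subsection~\ref{subsec_examples_of_torus_action}) contains no vector pointing to the antipodal vertex, Lemma~\ref{lem_reduction_of_compass} excludes it from $\cC$, and the contact involution excludes its partner. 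For the second bullet, the same facet centers cannot lie in $\widetilde\Delta$: by Lemma~\ref{lem_equality_of_Delta_for_faces} a point of $\widetilde\Delta(X)$ on a facet must lie in $\widetilde\Delta(\cQ^4)$, which consists only of the facet's vertices.

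\textbf{The main obstacle: $B_3$.} The remaining difficulty is type $B_3$, where the half-integer points $-\tfrac12(e_1+e_2\pm e_3)$ lie in the \emph{interior} of $\Delta(B_3)$, hence on no facet, so none of the quadric-restriction arguments above reach them. Here I would pass to a global argument via localization. By Corollary~\ref{cor_compass_and_fixpt} such a compass element would force a fixed-point component over the interior point $\tfrac12(e_1+e_2-e_3)$ (the short root $-e_3$ on the same ray being excluded by Lemma~\ref{lem_B_r-D_r-facets}\ref{item_Br_Dr_middle_point_is_not_fixed}), so it suffices to show $\widetilde\Delta$ has no interior point. For this I would feed the fixed-point data into Proposition~\ref{prop_character_is_a_Laurent_polynomial}: the twelve vertices with their already-determined quadric compasses reproduce the adjoint character $\chi(\g)$ of $B_3$---an identity of rational functions that is calibrated against the homogeneous model $Gr(\PP^1,\cQ^5)$, which carries the very same twelve fixed points and compasses---so any additional fixed point would have to contribute zero to $\chi^H(X,L)=\chi(\g)$, which a generic one-parameter subgroup shows is impossible for a nonempty family of contributions. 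This pins $\widetilde\Delta$ down to the vertices, removes the interior half-integer directions from every compass, and completes all three assertions uniformly. The delicate step, and the part I expect to require the most care, is precisely this last cancellation argument in the $B_3$ case.
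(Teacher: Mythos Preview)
Your argument for $r\ge 4$ is essentially the paper's, with only cosmetic differences (for $r=4$ the paper counts four compass elements already parallel to the facet, leaving no room for the center direction; you phrase the same exclusion as ``no antipodal direction in the even-quadric compass''). So far so good.

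The $B_3$ step, however, has a genuine gap. Your localization argument is circular: to feed the vertex data into Proposition~\ref{prop_character_is_a_Laurent_polynomial} you need the \emph{full} compass at each vertex, but at this stage the compass at $e_1+e_2$ is only known to contain the seven ``quadric'' elements $\{-e_1,-e_2,\pm e_j-e_1,\pm e_j-e_2,-e_1-e_2\}$; whether it also contains $-\tfrac12(e_1+e_2\pm e_3)$, and with what multiplicity $m$, is exactly the open question. If $m>0$ the denominator at each vertex picks up extra factors $(1-t^{\nu})(1-t^{-v-\nu})$, the twelve vertex contributions no longer sum to the adjoint character, and the ``additional fixed point must contribute zero'' conclusion evaporates. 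Calibrating against $Gr(\PP^1,\cQ^5)$ only tells you what happens when $m=0$; it does not force $m=0$ for $X$.

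The paper handles $B_3$ by a direct contact-geometric argument rather than localization. Assuming $m>0$, it restricts to the plane $e_1^*-e_2^*=0$ and takes the component $Y_0$ of the corresponding subtorus fixed locus through $y$; by Corollary~\ref{cor_contact-fixedpt-contact} this is a contact manifold of dimension $2m+1$. One then determines the shape of $\Delta(Y_0)$ (a hexagon through the interior half-integer weights, using that $\pm e_3\notin\widetilde\Delta$), applies Lemma~\ref{lem_2-fixpts-comp} to see that the extremal components of $Y_0$ at the half-integer weights are $\PP^{m-1}$'s, and finally bounds the compass of such a component $Y_1$ in $X$: the elements inside the plane contribute $m+2$, and a short list-check shows at most four more can come from outside, each with multiplicity one. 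This yields $\dim X\le (m-1)+(m+2)+4=2m+5$, contradicting $\dim X=7+2m$. That dimension squeeze is the missing idea your sketch needs for $B_3$.
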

\begin{proof}
  We count the elements of the compass at the fixed point $y\in X^H$ associated to the
  vertex $e_1+e_2$.
  By Corollary~\ref{cor_compass_and_faces}\ref{item_compass_in_cones} and Lemma~\ref{lem_points_that_can_be_in_compass}
    any element $\nu$ of the compass must be of one of the following types:
   \begin{itemize}
      \item $\nu$ is contained in at least one of the hyperplanes $e_1^* =0$ or $e_2^* =0$, or
      \item $\nu = -e_1 - e_2$, or
      \item $r=4$ and $\nu = -\frac{1}{2} (e_1 + e_2 \pm e_3 \pm e_4)$. 
      \item $r=3$ and $\nu = -\frac{1}{2} (e_1 + e_2 \pm e_3)$. 
   \end{itemize}

  If $\nu$ is of the first type, 
     from Lemmas~\ref{lem_reduction_of_compass} 
     and~\ref{lem_B_r-D_r-facets}\ref{item_Br_Dr_facets_projection_ei}
     we conclude that $\nu$ has multiplicity $1$ in the compass and is one of the 
     $\pm e_i -e_2$ and $\pm e_i -e_1$, for $i\geq 3$ and in the case $B_r$ also
       $-e_2$ and $-e_1$.
   This gives a total of $4(r - 2)$ elements in the case $D_r$ and $2$ more elements in the case $B_r$.

  If $\nu= -(e_1+e_2)$, then it is of the second type, 
     and thus $\nu$ also has multiplicity $1$ in the compass by Corollary~\ref{cor_compass_and_faces}\ref{item_minus_v_in_compass}.
  Thus to show the dimension part of the lemma, we have to prove, that the third or fourth type of points does not appear in the compass. 
  
  Note that, the remaining claims about $\widetilde{\Delta}$ and the compass at $e_1+e_2$ also follow, once we show that the third and fourth types do not appear.
  Indeed, by Corollary~\ref{cor_compass_and_fixpt}, the only other candidates for points in $\widetilde{\Delta}$ are the midpoints of the faces $\pm e_i$, which are excluded by 
  Lemma~\ref{lem_B_r-D_r-facets}\ref{item_Br_Dr_middle_point_is_not_fixed}, or $0\in \Delta$, which is excluded by Lemma~\ref{lem_zero_not_in_Delta_tilde}.
  The statement about compass is straightforward from the above calculations.
  
  Thus there is nothing left to prove for $r>4$. 
  First assume $r=4$.
  Say we want to show that $\nu = -\frac{1}{2} (e_1 + e_2 +e_3 + e_4)$ is not in the compass (the proof for the other cases is analogous).
  This point is the middle point of the facet $\delta$ with the supporting hyperplane $e_1^*+e_2^*+e_3^*+e_4^*=2$ as in 
    Lemma~\ref{lem_B_r-D_r-facets}\ref{item_Br_Dr_facets_projection_sum_of_ei}.
  In particular, $\dim Y_{\delta}=4$, and there are already $4$ distinct elements of the compass of the form 
     $e_3-e_1$, $e_3-e_2$, $e_4-e_1$, and $e_4-e_2$ that are parallel to this face.
  Thus by Lemma~\ref{lem_reduction_of_compass}, they are all in the compass of $y$ in $Y_{\delta}$,
     hence they are the only elements of this compass, and $\nu$ is not in either of the compasses $\cC(y, Y_{\delta}, (\CC^*)^3)$ or $\cC(y, X, H)$.

  Finally, we treat the most delicate case, $r=3$ and $G$ of type $B_3$.
  The
  following diagram presents the root polytope $\Delta=\Delta(B_3)$
  with $\bullet$ denoting the long roots (vertices) and $\circ$ denoting the short
  roots of $B_3$. The doubled line segments indicate the six elements of the
  compass at one of the extremal fixed points, we ignored the element
  pointing towards the center of the polytope.
$$
\begin{xy}<30pt,0pt>:
(2,2)*={}="a", (2,-2)*={}="b", (-2,-2)*={}="c", (-2,2)*={}="d",
(3,2.6)*={}="a1", (3,-1.4)*={}="b1", (-1,-1.4)*={}="c1", (-1,2.6)*={}="d1",
(0,0)*={\circ}="s1", (0.5,2.3)*={\circ}="s2", (1,0.6)*={\circ}="s3",
(0.5,-1.7)*={\circ}="s4", (2.5,0.3)*={\circ}="s5",
(-1.5,0.3)*={\circ}="s6",
(0,2)*={\bullet}="l1", (0,-2)*={\bullet}="l2", (2,0)*={\bullet}="l3",
(-2,0)*={\bullet}="l4", (1,2.6)*={\bullet}="l5",
(1,-1.4)*={\bullet}="l6", (3,0.6)*={\bullet}="l7",
(-1,0.6)*={\bullet}="l8", (-1.5,2.3)*={\bullet}="l9",
(-1.5,-1.7)*={\bullet}="l10", (2.5,2.3)*={\bullet}="l11",
(2.5,-1.7)*={\bullet}="l12",
"l1";"l3" **@{=}, "l3";"l2" **@{-}, "l2";"l4" **@{-}, "l4";"l1" **@{=},
"l5";"l7" **@{.}, "l7";"l6" **@{.}, "l6";"l8" **@{.}, "l8";"l5" **@{.},
"l9";"l1" **@{=}, "l9";"l4" **@{-}, "l9";"l5" **@{-}, "l9";"l8" **@{.},
"l11";"l1" **@{=}, "l11";"l5" **@{-}, "l11";"l3" **@{-}, "l11";"l7" **@{-},
"l12";"l3" **@{-}, "l12";"l7" **@{-}, "l12";"l2" **@{-}, "l12";"l6" **@{.},
"l10";"l2" **@{-}, "l10";"l6" **@{.}, "l10";"l4" **@{-}, "l10";"l8" **@{.},
"l1";"s1" **@{=}, "l1";"s2" **@{=}
\end{xy}
$$
Let $\nu =-\frac{1}{2}(e_1+e_2+e_3)$ and denote by $m$ the multiplicity of $\nu$ in the compass $\cC(y, X, H)$.
By the duality Lemma~\ref{lem_t-action-contact} the multiplicity of $\nu':=-\frac{1}{2}(e_1+e_2-e_3)$ is also equal to $m$ and we aim to show $m=0$.
We assume $m>0$ and argue to get a contradiction.

The next diagram is a cross section of the root polytope $\Delta$
(dotted line segments) by the plane $e_1^*-e_2^*=0$. Now $\otimes$
denotes the zero weight while $\star$'s denote the other weights in
the interior of $\Delta$.
$$
\begin{xy}<50pt,0pt>:
(0,0)*={\otimes}="0", (0.2,0)*={0},
(0,1)*={\circ}="s1", (0,1.2)*={e_3}, (0,-1)*={\circ}="s2", (0,-1.2)*={-e_3},
(2,0)*={\bullet}="l1", (2.2,-0.2)*={e_1+e_2}, (-2,0)*={\bullet}="l2", (-2.2,-0.2)*={-e_1-e_2},
(1,1)*={}="a", (1,-1)*={}="b", (-1,-1)*={}="c", (-1,1)*={}="d",
"s1";"a" **@{.}, "a";"l1" **@{.}, "l1";"b" **@{.}, 
"b";"s2" **@{.}, "s2";"c" **@{.}, "c";"l2" **@{.},
"l2";"d" **@{.}, "d";"s1" **@{.},
(1,0.5)*={\star}="w1", (1,-0.5)*={\star}="w2", (-1,-0.5)*={\star}="w3", (-1,0.5)*={\star}="w4",
(1,0.7)*={\frac{e_1+e_2+e_3}{2}}, (1,-0.7)*={\frac{e_1+e_2-e_3}{2}}, 
(-1,-0.7)*={\frac{-e_1-e_2-e_3}{2}}, (-1,0.7)*={\frac{-e_1-e_2+e_3}{2}},
"w1";"l1" **@{-}, "l1";"w2" **@{-}, "w2";"w3" **@{-}, 
"w3";"l2" **@{-}, "l2";"w4" **@{-}, "w4";"w1" **@{-},
\end{xy}
$$
Consider the 1-dimensional subtorus $H' \subset H$ associated to the projection $e^*_1-e^*_2: M \rightarrow \ZZ\cdot\frac{1}{2}$.
Note that $0 \in \widetilde\Delta(X, L, H')$, as the fixed-point set $X^{H'}$ contains in particular $y$.
Let $Y_0 \subset X^{H'}$ be the component that contains $y$.
By Corollary~\ref{cor_contact-fixedpt-contact} the variety $Y_0$ is a contact
manifold with an action of the $2$-dimensional torus $H/H'$.
Moreover, by Lemma~\ref{lem_reduction_of_compass} the compass $\cC(y, Y_0, H/H')$ consists of $\nu$ and $\nu'$ both with multiplicity $m$, and $-e_1 -e_2$ with multiplicity $1$.
In particular, $\dim Y_0= 2m+1$.

We claim that the solid line segments on the above figure are the boundary of the fixed-point polytope $\Delta(Y_0, L|_{Y_0}, H/H')$. 
Indeed, $\Delta(Y_0, L|_{Y_0}, H/H')$ is a convex hull of some of the lattice points in $\widetilde{\Delta}(X, L, H) \cap \set{e^*_1-e^*_2 =0}$ 
   by Lemma~\ref{lem_reduction_of_action}\ref{item_reduction_of_action_restricting_to_fix_points_of_subgroup}.
Note however, that $\pm e_3$ does not belong to $\widetilde{\Delta}(X, L, H) $ by Lemma~\ref{lem_B_r-D_r-facets}\ref{item_Br_Dr_middle_point_is_not_fixed}.
Hence the $\Delta(Y_0, L|_{Y_0}, H/H')$ is a subset of the region bounded by the solid lines. 
Moreover, the compass at $y$  together with Corollary~\ref{cor_compass_and_fixpt} indicates that there must be fixed-point components $Z$ and $Z'$ corresponding to $\nu$ and $\nu'$, respectively.
Now move to the compass at $Z$ (or $Z'$).
By the duality in Lemma~\ref{lem_t-action-contact}, the next edge must also be included in $\Delta(Y_0, L|_{Y_0}, H/H')$, which shows the claim.

 From Lemma~\ref{lem_2-fixpts-comp} applied to
the action of 1-dimensional torus associated to the projection along
the respective edge we conclude that the extremal fixed-point
components in $Y_0$ associated to $\star$'s are of dimension $m-1$ (in
fact they are $\PP^{m-1}$'s). By Lemma~\ref{lem_t-action-contact}
applied to $Y_0$ the compass $\cC(Y_1,Y_0,H')$ of $H'$ on the extremal
fixed-point component $Y_1$ associated to, say $(e_1+e_2+e_3)/2$
contains $-(e_1+e_2+e_3)/2$ with multiplicity $m$ and $(e_1+e_2-e_3)/2$,
$-e_1-e_2$, both with multiplicity 1, coming from the decomposition
$$
   -\frac{1}{2}(e_1+e_2+e_3)=(-e_1-e_2)+\frac{1}{2}(e_1+e_2-e_3).
$$  

Hence by Lemma~\ref{lem_reduction_of_compass} the compass  $\cC(Y_1,X,H)$ contains $m+2$ elements 
listed above and, possibly, a few other elements from outside the plane $e_1^* - e_2^* = 0$.
Such an element $w$ must satisfy the following conditions:
\begin{itemize}
 \item the halfline $(e_1+e_2+e_3)/2  + w \cdot \RR_{>0}$ must intersect the set $\widetilde{\Delta}$ (Corollary~\ref{cor_compass_and_fixpt}),
 \item the dual element $-(e_1+e_2+e_3)/2-w$ must also be in the compass (Lemma~\ref{lem_t-action-contact}), hence satisfies the above.
\end{itemize}
There are $27$ lattice points in $\Delta$, out of which at most $21$ are in $\widetilde{\Delta}$ 
  (Lemma~\ref{lem_B_r-D_r-facets}\ref{item_Br_Dr_middle_point_is_not_fixed}).
Further removing the points on the plane $e_1^* - e_2^* = 0$ we are left with $14$ points.
Listing them and explicitly checking if they satisfy the second condition above, we are left only with $4$ candidates, 
  coming in two pairs:
$$
  \frac{1}{2}(e_1-e_2+e_3) \text{ and } (-e_1-e_3); \quad 
  \frac{1}{2}(-e_1+e_2+e_3) \text{ and } (-e_2-e_3).
$$
The first pair is contained in the plane $e_1^*-e_3^*=0$, and the second is contained in the plane $e_2^*-e_3^*=0$, 
  and they are both analogous to the pair of  $\frac{1}{2}(e_1+e_2-e_3)$ and $(-e_1-e_2)$ in the plane $e_1^*-e_2^*=0$.
In particular, the same proof as above shows that the multiplicity of these four vectors in the compass is equal to $1$.
Therefore $\dim X\leq \dim Y_1 + m+6 = 2m+5$, a contradiction.
\end{proof}

\begin{cor}\label{cor_contactB_r-D_r}
  Let $X$ be a contact Fano manifold which satisfies Assumptions~\ref{assumptions-simple} 
  with $G$ of type $B_r$ or $D_r$. Then $X$
  is isomorphic to the Grassmanian of lines on the quadric
  $\cQ^{2r-1}$ or $\cQ^{2r-2}$, respectively.
\end{cor}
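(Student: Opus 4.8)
The plan is to recognize $X$ through the homogeneous model supplied by Proposition~\ref{prop_fixpt+compass-determine-homogeneous}. I take $X'$ to be the adjoint variety of $G$, i.e.\ the closed $G$-orbit in $\PP(\g)$: for $G$ of type $B_r$ this is the orthogonal line Grassmannian $Gr(\PP^1,\cQ^{2r-1})$, and for $G$ of type $D_r$ it is $Gr(\PP^1,\cQ^{2r-2})$, both equipped with the contact line bundle $L'$ obtained by restricting $\cO_{\PP(\g)}(1)$. By construction $X'$ is a $G$-homogeneous contact Fano manifold, and as recalled in Subsection~\ref{subsec_examples_of_torus_action} its section and fixed-point polytopes both equal the root polytope $\Delta(G)$, with $\HH^0(X',L')\simeq\g$ as a $G$-representation. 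Hence $X'$ itself satisfies Assumptions~\ref{assumptions-simple}, and $\Delta(X',L',H)=\Delta(G)=\Delta(X,L,H)$.

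First I would apply Lemma~\ref{lem_B_r-D_r-big} simultaneously to $X$ and to $X'$. For each manifold its second bullet shows that the only points of $\widetilde{\Delta}$ are the vertices of $\Delta(G)$; combined with Assumption~\ref{assumptions-weaker}\ref{item_assumption_extremal_points_are_isolated_pts} and Lemma~\ref{lem_extremal=sink} this forces $H$ to have only finitely many fixed points, all extremal and in canonical bijection with the vertices of $\Delta(G)$ (the long roots in type $B_r$, all roots in type $D_r$). Thus $X^H=\setfromto{y_1}{y_k}$ and $(X')^H=\setfromto{y'_1}{y'_k}$ are identified so that $\mu_0(y_i)$ and the corresponding canonical character for $X'$ both equal the vertex indexed by $i$. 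Crucially, the third bullet of Lemma~\ref{lem_B_r-D_r-big} asserts that, for a fixed type of $G$, the compass of the extremal component at each vertex is determined purely by the combinatorics of $\Delta(G)$ and the contact duality of Lemma~\ref{lem_t-action-contact}, so it is the same for $X$ and for $X'$; that is, $\cC(y_i,X,H)=\cC(y'_i,X',H)$ for every $i$.

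With these two agreements in place --- matching characters and matching compasses at all fixed points --- the hypotheses of Proposition~\ref{prop_fixpt+compass-determine-homogeneous} are satisfied, and it yields an isomorphism $(X',L')\iso(X,L)$. The equality $\dim X=\dim X'$ need not be checked separately, as it is automatic: both are the common number of elements of any compass. This identifies $X$ with the claimed line Grassmannian $Gr(\PP^1,\cQ^{2r-1})$ or $Gr(\PP^1,\cQ^{2r-2})$.

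The main obstacle I expect is not the final application of the proposition, but the verification that the homogeneous model $X'$ genuinely satisfies Assumptions~\ref{assumptions-simple}, so that Lemma~\ref{lem_B_r-D_r-big} applies to it \emph{verbatim} and produces literally the same combinatorial data as for $X$. This rests on the standard but essential facts that the adjoint varieties of $B_r$ and $D_r$ are exactly these orthogonal line Grassmannians, that they are contact Fano with $\Pic=\ZZ\cdot L'$, and that $\HH^0(L')$ is the adjoint representation. Once these are secured, the agreement of compasses is forced by the earlier case analysis rather than by any fresh computation, and the recognition of $X$ follows immediately.
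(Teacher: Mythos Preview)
Your proposal is correct and follows essentially the same route as the paper: invoke Lemma~\ref{lem_B_r-D_r-big} to see that $X^H$ consists only of the extremal isolated points with uniquely determined compasses, observe that the adjoint Grassmannian $X'$ satisfies Assumptions~\ref{assumptions-simple} so the same lemma applies to it, and conclude via Proposition~\ref{prop_fixpt+compass-determine-homogeneous}. The paper's proof is just a terser version of what you wrote.
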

  
\begin{proof}
  In Lemma~\ref{lem_B_r-D_r-big} we showed that the fixed-point locus $X^H$ consists of extremal fixed points only and the compass
  at each of these points is determined uniquely. 
  The respective Grassmannian of lines on the quadric of matching dimension 
    satisfies Assumptions~\ref{assumptions-simple}.
  Thus the result follows by Proposition~\ref{prop_fixpt+compass-determine-homogeneous}.
\end{proof}

Note that in the above corollary for the first time in Section~\ref{sec_contact_simple} we used the whole action of $G$, not only the action of $H$.

\subsection{Case of \texorpdfstring{$A_2$}{A2} and \texorpdfstring{$G_2$}{G2}}\label{sectA2}
In this subsection we deal with the action of groups of types $A_2$ (that
is $SL(3)$) or $G_2$ on a contact manifold $X$ of dimension $7$ or $9$. 
Below we illustrate the weights of $G=SL(3)$ in the root polytope
$\Delta(A_2)$. The same polytope and weigth lattice is obtained for the group $G$ of type
$G_2$. The vertices of $\Delta(A_2)$ are denoted by $\bullet$ and
labeled clockwise by $\alpha_i$, the center $\otimes$ is the zero
weight and the other weights inside $\Delta(A_2)$ are denoted by $\circ$
and labeled by $\beta_i$ also clockwise. 
The indices of $\alpha$'s and  $\beta$'s are considered modulo $6$. 
The fixed-point
components associated to $\alpha$'s are extremal. The fixed-point
components associated to $\beta$'s will be called inner and the ones
associated to the zero weight will be called central. The dotted line
segment illustrates the fixed-point locus of some $1$-parameter subgroup
of $H\subset G$ associated to the respective projection $M\rightarrow
\ZZ$ on which we have the action of the quotient $\CC^*$.

$$
\begin{xy}<40pt,0pt>:
(0,0)*={\otimes}="1", (0.2,0)*={0},
(0,1)*={\circ}="x^(-1)*y", (0.2,1)*={\beta_0},
(0.866,0.5)*={\circ}="y", (1.1,0.5)*={\beta_1}, 
(0.866,-0.5)*={\circ}="x", (1.1,-0.5)*={\beta_2},
(0,-1)*={\circ}="x*y^(-1)", (0.2,-1)*={\beta_3},
(-0.866,-0.5)*={\circ}="y^(-1)", (-1.1,-0.5)*={\beta_4},  
(-0.866,0.5)*={\circ}="x^(-1)", (-1.1,0.5)*={\beta_5},
(0.866,1.5)*={\bullet}="a0", (1.1,1.5)*={\alpha_0},
(1.732,0)*={\bullet}="a1", (2,0)*={\alpha_1},  
(0.866,-1.5)*={\bullet}="a2", (1.1,-1.5)*={\alpha_2},
(-0.866,-1.5)*={\bullet}="a3", (-1.1,-1.5)*={\alpha_3},
(-1.732,0)*={\bullet}="a4", (-2,0)*={\alpha_4},
(-0.866,1.5)*={\bullet}="a5", (-1.1,1.5)*={\alpha_5},
"a5";"a1" **@{.},
\end{xy}
$$

Throughout we work with Assumptions~\ref{assumptions-weaker}, with $G$ of type $A_2$ or $G_2$.
Denote by $y_{\alpha_i} \in X^H$ the unique extremal point with $\mu(y_{\alpha_i})=\alpha_i$.

In the proofs below we will exploit the downgrading mentioned above and its consequences.
Thus fix $i \in \setfromto{0}{5}$ and let $H'=H'_{i}$ be the subtorus corresponding 
  to the projection  $\pi = \pi_i \colon M\ra \ZZ$ 
  which maps all of $\alpha_{i-1},\beta_{i},\beta_{i+1},\alpha_{i+1}$ to $1\in \ZZ$.
For $i=0$, the dotted line in the figure passes through all these points.
By the last item of Lemma~\ref{lem_ABBdecomposition-extremal_cells}\ref{item_extremal_cells_case_dim_0} 
    there is a unique connected component $Y= Y_i \subset X^{H'}$  
    which contains the extremal fixed points $y_{\alpha_{i\pm 1}}$ and all inner components of $X^H$
    associated to $\beta_i$ and $\beta_{i+1}$. 
This manifold $Y$ admits the restricted action of a $1$-dimensional torus $H/H'$ with 
\[
  \Delta(Y, L|_{Y}, H/H') = \Gamma (Y, L|_Y, H/H') = [0,3]
\]
where $0$ represents $\alpha_{i-1}$ and $3$ represents $\alpha_{i+1}$.
Using Lemma~\ref{lem_reduction_of_compass} and the compass calculation at $y_{\alpha_{i-1}}$ in Lemma~\ref{lem_A2-components-compass} below
   we see that $\dim Y = 2$ if $\dim X = 7$ and $\dim Y = 3$ if $\dim X = 9$. 
    
\begin{lemma}\label{lem_A2-components-compass}
  Under Assumptions~\ref{assumptions-weaker} with the group $G$ of type $A_2$ or $G_2$, 
     suppose in addition that dimension of $X$ is either $7$ or $9$. 
  Then the following holds:
  \begin{enumerate}[leftmargin=*]
  \item\label{item_A2_compass_extremal}
    the extremal components of $X^H$ are points and the compass of the
    action of $H$ at a point associated to character $\alpha_i$
    consists of the following characters:
    \begin{itemize}[leftmargin=*]
    \item $\alpha_{i-1}-\alpha_i$, $\alpha_{i+1}-\alpha_i$, $-\alpha_i$, all three of 
      multiplicity $1$ and
    \item $\beta_i-\alpha_i$, $\beta_{i+1}-\alpha_i$, both with
      multiplicity $2$ if $\dim X =7$ or multiplicity $3$ if $\dim X =9$,
    \end{itemize}
  \item \label{item_A2_compass_inner}
    inner components of $X^H$ are points if $\dim X=7$ or points
    or curves if $\dim X=9$; the compass of the action of $H$ at a
    component associated to the weight $\beta_i$ consists of the
    following characters:
    \begin{itemize}[leftmargin=*]
    \item $\alpha_i-\beta_i$, $\alpha_{i-1}-\beta_i$, both with
      multiplicity $1$,
    \item $\beta_j-\beta_i$, where $j\ne i, i+3$, all with
      multiplicity $1$, except if  $\dim X=9$ and the fixed-point component
      is a point in which case $\beta_{i\pm 1}-\beta_i$ are of
      multiplicity $2$,
    \item $\beta_i$ with multiplicity $1$ if $\dim X=7$ or multiplicity
      $1$ or $2$ if $\dim X=9$, and dimension of the component is $0$ or $1$,
      respectively,
    \end{itemize}
  \item \label{item_A2_compass_central}
     there is no central component of $X^H$.
  \end{enumerate}
\end{lemma}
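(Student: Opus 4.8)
The plan is to read everything off the combinatorics of the hexagon $\Delta=\Delta(A_2)=\Delta(G_2)$ together with the contact duality of Lemma~\ref{lem_t-action-contact}, so that types $A_2$ and $G_2$ are handled uniformly (only $\widetilde{\Delta}\subseteq\{\alpha_i,\beta_i,0\}$, the edge condition, and the contact structure are used). Keep the labelling of the $13$ lattice points of $\Delta$: the six vertices $\alpha_i$, the six inner points $\beta_i$ (with $\beta_i$ between $\alpha_{i-1}$ and $\alpha_i$, pulled towards $0$), and the centre $0$, indices mod $6$, $\alpha_{i+3}=-\alpha_i$, $\beta_{i+3}=-\beta_i$; every edge of $\Delta$ satisfies \ref{item_star_zero} by Lemma~\ref{lem_Ar_lattice_points_on_delta} in type $A_2$ and by the identical picture in type $G_2$. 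For part~\ref{item_A2_compass_extremal} I would fix $v=\alpha_i$; extremality gives a point by Assumption~\ref{assumptions-weaker}\ref{item_assumption_extremal_points_are_isolated_pts}. The two edges at $v$ contribute $\alpha_{i\pm1}-\alpha_i$, each with multiplicity $1$ and with nothing else on those two lines (Corollary~\ref{cor_compass_and_faces}\ref{item_edges_are_in_compass}), and $-\alpha_i$ appears with multiplicity $1$ (Corollary~\ref{cor_compass_and_faces}\ref{item_minus_v_in_compass}). Confining the compass to the lattice points of $\sigma\cap(-v-\sigma)$ with $\sigma=\RR_{\ge0}(\Delta-v)$ (Corollary~\ref{cor_compass_and_faces}\ref{item_compass_in_cones}) and enumerating that bounded region leaves exactly the five directions $\alpha_{i\pm1}-\alpha_i$, $-\alpha_i$, $\beta_i-\alpha_i$, $\beta_{i+1}-\alpha_i$. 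Since $(\beta_i-\alpha_i)+(\beta_{i+1}-\alpha_i)=-\alpha_i=\nu_0$, the two $\beta$-directions are paired by Lemma~\ref{lem_t-action-contact}, hence share one multiplicity $m$; the count $|\cC|=2n+1$ forces $3+2m=2n+1$, i.e.\ $m=n-1$, as claimed.

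For part~\ref{item_A2_compass_inner} I would exploit the downgrading fixed before the lemma: the quotient $\CC^*=H/H'_i$ acts on $Y=Y_i$ with $\Delta(Y,L|_Y,\CC^*)=[0,3]$ (the fixed values lie in $\{0,1,2,3\}$ by Lemma~\ref{lem_reduction_of_action}\ref{item_reduction_of_action_restricting_to_fix_points_of_subgroup}, the ends being the isolated points $y_{\alpha_{i\mp1}}$), and $\dim Y=n-1$ (from part~\ref{item_A2_compass_extremal} via Lemma~\ref{lem_reduction_of_compass}, the only surviving direction at $y_{\alpha_{i-1}}$ being $\beta_i-\alpha_{i-1}$ of multiplicity $n-1$). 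The source compass in $Y$ is then $(1^{n-1})$, so by the Bia{\l}ynicki--Birula decomposition $Y$ cannot have only its two extremal fixed points when $\dim Y\ge2$; hence there are fixed components over the interior weights $1,2$, namely the inner components over $\beta_i,\beta_{i+1}$. Moreover an interior fixed component of a $\CC^*$-manifold has codimension at least $2$, so $\dim Z\le\dim Y-2=n-3$ (a point if $\dim X=7$, a point or a curve if $\dim X=9$). The compass at $Z$ is obtained exactly as in part~\ref{item_A2_compass_extremal}, now based at the interior point $\beta_i$: Corollary~\ref{cor_compass_and_fixpt} and the requirement that the contact-dual $-\beta_i-\nu$ also point into $\widetilde{\Delta}$ single out the directions $\alpha_i-\beta_i$, $\alpha_{i-1}-\beta_i$, $\beta_j-\beta_i$ ($j\ne i,i+3$) and the central direction $\nu_0=-\beta_i$. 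The edge directions have multiplicity $1$; $\nu_0=-\beta_i$ has multiplicity $\dim Z+1$ by Corollary~\ref{cor_contact-fixedpt-isotropic}; and the pairing $\nu+\nu'=-\beta_i$ of Lemma~\ref{lem_t-action-contact} together with $|\cC|=\dim X-\dim Z$ distributes the remaining weight, producing precisely the point/curve dichotomy recorded in the statement (in dimension $9$ a curve trades one unit of multiplicity of $-\beta_i$ against the two units on $\beta_{i\pm1}-\beta_i$).

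For part~\ref{item_A2_compass_central} I would assume a central component $Y$ with $\mu_0(Y)=0$; by Corollary~\ref{cor_contact-fixedpt-contact} it is contact, so $\dim Y\ge1$, and its compass is symmetric since $\nu_0=0$. By Corollary~\ref{cor_compass_and_fixpt} each $\nu\in\cC(Y,X,H)$ points from $0$ to a point of $\widetilde{\Delta}$, hence is parallel either to a vertex $\alpha_j$ or to an inner point $\beta_j$. If some $\nu$ is parallel to $\alpha_j$, then Lemma~\ref{lem_number_of_vectors_on_compass_in_a_fixed_direction} along $W=\RR\alpha_j$ yields a fixed point over $\alpha_j$ with $\dim Y+\#(\cC(Y)\cap W)=0+\#(\cC(y_{\alpha_j})\cap W)=1$ (the only element of $\cC(y_{\alpha_j})$ on $W$ is $-\alpha_j$, by part~\ref{item_A2_compass_extremal}), contradicting $\dim Y\ge1$. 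Otherwise all $\nu$ lie on the three lines $\RR\beta_j$; on each such line carrying $b\ge1$ compass elements the same lemma produces an inner component $Z$ with $\dim Y+b=\dim Z+\#(\cC(Z)\cap W)=2\dim Z+1$, because the only element of $\cC(Z)$ on $\RR\beta_j$ is $-\beta_j$ of multiplicity $\dim Z+1$. As $\dim Z\le1$, and $=0$ when $\dim X=7$, this gives $\dim Y+b=1$ when $\dim X=7$ (impossible), and when $\dim X=9$ it forces $b\le 3-\dim Y$ on each of the at most three lines, so that $9-\dim Y=\#\cC(Y)\le 3(3-\dim Y)$, i.e.\ $\dim Y\le0$, again impossible. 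Hence there is no central component.

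The heart of the argument is part~\ref{item_A2_compass_inner}. Part~\ref{item_A2_compass_extremal} is a finite lattice enumeration closed off by a single counting identity, and once the extremal and inner compasses are known part~\ref{item_A2_compass_central} is a short application of Lemma~\ref{lem_number_of_vectors_on_compass_in_a_fixed_direction}. The inner points, however, are not vertices of $\Delta$, so the extremal toolkit does not apply to them directly; the real work is to descend to the auxiliary variety $Y_i$, control it in low dimension through the Bia{\l}ynicki--Birula cells, and then bookkeep the contact pairing carefully enough to separate the isolated-point case from the curve case when $\dim X=9$. This is where I expect the main difficulty to lie.
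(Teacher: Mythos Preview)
Your outline for part~\ref{item_A2_compass_extremal} is correct and essentially identical to the paper's argument. Your argument for part~\ref{item_A2_compass_central} is also correct; it differs from the paper's (which projects along a single $\nu$ and bounds $\dim Y_1\le 3$ via the extremal components of the restricted action) but Lemma~\ref{lem_number_of_vectors_on_compass_in_a_fixed_direction} encapsulates exactly that reduction, so your version is a clean repackaging.

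The gap is in part~\ref{item_A2_compass_inner}. After listing the seven candidate directions at an inner component $Z$ over $\beta_i$, you assert that ``the edge directions have multiplicity~$1$,'' meaning $\alpha_i-\beta_i$ and $\alpha_{i-1}-\beta_i$. But $\beta_i$ is not a vertex of $\Delta$, so Corollary~\ref{cor_compass_and_faces} does not apply, and nothing you have written forces these multiplicities. The contact pairing only tells you that $\alpha_i-\beta_i$ and $\beta_{i-2}-\beta_i$ have the \emph{same} multiplicity $p$, that $\alpha_{i-1}-\beta_i$ and $\beta_{i+2}-\beta_i$ have the same multiplicity $q$, and that $\beta_{i\pm1}-\beta_i$ share a multiplicity $s$; together with $b_{Z,-\beta_i}=\dim Z+1$ the dimension count gives only $p+q+s=n-\dim Z$, which is one equation in three unknowns. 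Without $p=q=1$ the ``point/curve dichotomy'' you claim does not follow.

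The paper closes this gap by actually using the downgrading you set up but then do not exploit for this purpose. By Lemma~\ref{lem_reduction_of_compass} applied twice (once at $y_{\alpha_{i-1}}$, once at $Z$, both inside the same $Y_i$), the multiset $\pi_i(\cC(Z,X,H))$ must equal $\cC(Y_i,X,H'_i)=\{-2,(-1)^n,1\}$. Among the seven candidates only $\alpha_i-\beta_i$ has $\pi_i$-image $+1$, so $p=1$; the projection $\pi_{i-1}$ gives $q=1$ by the same mechanism. Then $s=n-\dim Z-2$, which together with your bound $\dim Z\le n-3$ yields $s\ge 1$ and produces the table in the statement. You have all the ingredients for this step in your sketch (you compute the source compass $(1^{n-1})$ in $Y$), but you use the downgrading only to bound $\dim Z$, not to read off the multiplicity at the positive end; that is the missing move.
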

\begin{proof}
  Part \ref{item_A2_compass_extremal} is proven with the usual methods, as in previous subsections.
  Specifically, the multiplicities of $\alpha_{i-1}-\alpha_i$, $\alpha_{i+1}-\alpha_i$, $-\alpha_i$ follow from Corollary~\ref{cor_compass_and_faces}.
  The same corollary, part~\ref{item_compass_in_cones} show that $\beta_i-\alpha_i$ and  $\beta_{i+1}-\alpha_i$  are the only other candidates for elements of the compass.
  Their multiplicities must be equal by the duality of Lemma~\ref{lem_t-action-contact}.
  Thus the statement follows from the dimension calculation.
 
  For Part~\ref{item_A2_compass_inner}, we note that using the duality (Lemma~\ref{lem_t-action-contact}) and Corollary~\ref{cor_compass_and_fixpt}, 
    for every $\nu$ in the compass of any component $Z$ corresponding to $\beta_i$,
    we must have both halflines $\beta_i + \QQ_{> 0} \cdot \nu$ and $\beta_i + \QQ_{> 0} \cdot (- \nu - \beta_i)$ contain a lattice point of $\Delta$.
  This condition shows, that no other vector than those $7$ vectors listed in the statement can be in the compass. 
  It remains to determine the multiplicities. For
  \[
    \nu \in \set{\alpha_i-\beta_i, \alpha_{i-1}-\beta_i, \beta_{i+1}-\beta_i, \beta_{i-1}-\beta_i, \beta_{i+2}-\beta_i, \beta_{i-2}-\beta_i, -\beta_i}  
  \]
  denote by $b_{Z,\nu}$ the multiplicity of $\nu$ in the compass. We always have $b_{Z, - \beta_i} = \dim Z +1$.
  
  Consider the component $Y=Y_i$, the extremal fixed points  $y_{\alpha_j}$, the projection $\pi=\pi_i$, and the corresponding subtorus $H' = H'_i \subset H$ as introduced above.
  By Lemma~\ref{lem_reduction_of_compass} applied twice we have 
  \[
    \set{-2, -1^{n}, 1} = \pi(\cC(y_i, X, H)) = \cC(Y, X, H') = \pi(\cC(Z, X, H)),
  \]
  where $n=3$ for $\dim X =7$ and $n=4$ for $\dim X=9$.
  The only candidate for a member in the compass $\cC(Z, X, H)$, that is mapped to $1$ via $\pi$ is $\alpha_i-\beta_i$.
  Therefore, $b_{Z,\alpha_i-\beta_i}=1$, and the same proof with a different choice of $\pi = \pi_{i-1}$ shows that $b_{Z,\alpha_{i-1}-\beta_i}=1$.
  By the  duality (Lemma~\ref{lem_t-action-contact}), also  $\beta_{i+2}-\beta_i$ and $\beta_{i-2}-\beta_i$ have multiplicity $1$,
     while $\beta_{i+1}-\beta_i$ and $\beta_{i-1}-\beta_i$ have equal multiplicity.
  The only claim in \ref{item_A2_compass_inner} left to prove is that $b_{Z,\beta_{i+1}-\beta_i}\ge 1$, 
     which follows from Bia{\l}ynicki-Birula Decomposition (Theorem~\ref{thm_ABB-decomposition}) applied to the action of $H/\CC^*$ on $Y$.
  Indeed,  if $b_{Z,\beta_{i+1}-\beta_i} = 0$, then the respective BB-cells would be of dimension equal to $\dim Y$,   
     which is impossible, as $\pi(\beta_i)=1$ is not the smallest nor the largest in the $\Delta(X, L, H'_i) = [-2,2]$

  To prove \ref{item_A2_compass_central} we argue by contradiction and assume that we
  have a central component $Z$. 
  The compass of $Z$ in $X$ with respect to $H$ is symmetric: 
  if it contains a vector $\nu$, then it also contains $-\nu$
  with the same multiplicity (Lemma~\ref{lem_t-action-contact}).
  Consider the projection of $M$ along $\nu \in \cC(Z, X, H)$ and
    the corresponding subtorus $H' \subset H$.
  Pick the component $Y \subset X^{H'}$ which contains $Z$. 
  An extremal component $Y' \subset Y^{H/H'}$ of the fixed-point set of the quotient torus $H/H'$ action on $Y$ is  a component of $X^H$ by 
    Lemma~\ref{lem_reduction_of_action}\ref{item_reduction_of_action_restricting_to_fix_points_of_subgroup} and
    it is not a central component.
  Hence by Parts~\ref{item_A2_compass_extremal} and \ref{item_A2_compass_inner} we know that  $\dim Y'\leq 1$
  and the compass $\cC(Y' , Y , H/H')$ has at most $2$ elements (and $2$ elements are possible only if $\dim X=9$).
  We also know that $\dim Z$ and $\dim Y$ are odd by Corollary~\ref{cor_contact-fixedpt-contact}.
  
  We conclude that the only possibility is $\dim X =9$, $\dim Y=3$ and $\dim Z=1$ and $\nu=\beta_i$ for some $i$, with multiplicity $1$.
  Thus the compass $\cC(Z, X, H)$ consists of the $\beta_i$'s, each with multiplicity at most one which is
     impossible because it should consist of $8 = \dim X - \dim Z$ vectors, while there are only $6$ of the $\beta_i$'s.
\end{proof}

For a while now we restrict to the case $\dim X =7$. 

\begin{lemma}\label{lem_A2-on7fold}
  Under Assumptions~\ref{assumptions-weaker},
     suppose in addition $G$ is of type $A_2$ or $G_2$ and $\dim X =7$.
  Then there exists a unique inner fixed point $y_{\beta_i}$ for every $i\in \setfromto{0}{5}$.
\end{lemma}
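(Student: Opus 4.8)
The plan is to recognize each of the surfaces $Y=Y_i$ introduced just above the lemma as an instance of Example~\ref{ex_Hirzebruch_surface} and to read off its fixed points from there. Recall that $Y_i\subset X^{H'_i}$ carries the action of the one-dimensional quotient torus $\Lambda=H/H'_i$ with $\Delta(Y_i,L|_{Y_i},\Lambda)=\Gamma(Y_i,L|_{Y_i},\Lambda)=[0,3]$, where the integral points $0,1,2,3$ correspond to $\alpha_{i-1},\beta_i,\beta_{i+1},\alpha_{i+1}$. Since every $\beta_j$ occurs as one of the two inner weights on such a chord for a suitable $i$ (namely $j=i$ or $j=i+1$), it suffices to verify the hypotheses of Example~\ref{ex_Hirzebruch_surface} for each $i$ and then let $i$ range over $\setfromto{0}{5}$.

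First I would collect the structural hypotheses. For $\dim X=7$ the preceding discussion gives $\dim Y_i=2$, so $Y_i$ is a smooth connected projective surface with the ample linearized bundle $L|_{Y_i}$; the equality $\Gamma=\Delta=[0,3]$ is already recorded, the inclusion $\Gamma\subseteq\Delta$ being Lemma~\ref{lem_compare_polytopes}\ref{item_compare_polytopes_Gamma_in_Delta} and the reverse coming from the sections supported at the two extremal endpoints. By Lemma~\ref{lem_reduction_of_action}\ref{item_reduction_of_action_restricting_to_fix_points_of_subgroup} the $\Lambda$-fixed locus of $Y_i$ equals $X^H\cap Y_i$, and by Lemma~\ref{lem_A2-components-compass} these fixed points are isolated: the extremal ones are points, the inner ones are points for $\dim X=7$, and there is no central component.

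The heart of the argument is the compass check, carried out through Lemma~\ref{lem_reduction_of_compass}, which identifies $\cC(y,Y_i,\Lambda)$ with the part of $\cC(y,X,H)$ lying in the line $W=\ker\pi_i$ spanned by the chord direction $\beta_{i+1}-\beta_i$. At the endpoint $y_{\alpha_{i-1}}$ the only compass vector of Lemma~\ref{lem_A2-components-compass}\ref{item_A2_compass_extremal} parallel to $W$ is $\beta_i-\alpha_{i-1}$, of multiplicity $2$, so $\cC=(1^2)$; symmetrically $\cC=((-1)^2)$ at $y_{\alpha_{i+1}}$. At any inner fixed point over $\beta_i$ (or $\beta_{i+1}$), among the seven compass vectors listed in Lemma~\ref{lem_A2-components-compass}\ref{item_A2_compass_inner} exactly two, namely $\alpha_{i-1}-\beta_i$ and $\beta_{i+1}-\beta_i$, are parallel to $W$, each of multiplicity $1$, giving $\cC=(-1,1)$. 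Because Lemma~\ref{lem_A2-components-compass}\ref{item_A2_compass_inner} prescribes the compass of \emph{every} component over $\beta_i$, this reproduces the compass hypothesis of Example~\ref{ex_Hirzebruch_surface} at all $\Lambda$-fixed points of $Y_i$, regardless of how many inner points there turn out to be.

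With all hypotheses verified, Example~\ref{ex_Hirzebruch_surface} shows that $Y_i^{\Lambda}$ consists of exactly four points, one over each of $0,1,2,3$; in particular there is a unique $\Lambda$-fixed point over $\beta_i$ and over $\beta_{i+1}$. As every inner component over $\beta_i$ lies in $Y_i$ and is a $\Lambda$-fixed point over weight $1$, this point is the unique inner fixed point $y_{\beta_i}$, and existence comes out together with uniqueness. The main obstacle is not any single deep step but the bookkeeping that legitimizes the reduction: one must confirm that precisely the intended compass vectors survive the projection onto $W$, so that the delicate multiplicity pattern $(1^2),(-1,1),((-1)^2)$ demanded by Example~\ref{ex_Hirzebruch_surface} is matched, and one must use $Y_i^{\Lambda}=X^H\cap Y_i$ together with the uniformity of the inner compass to apply the Example \emph{before} knowing that inner points exist.
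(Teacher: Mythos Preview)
Your proof is correct and follows the same route as the paper: reduce to the surface $Y_i$ with its $\Lambda$-action, verify the hypotheses of Example~\ref{ex_Hirzebruch_surface} via Lemmas~\ref{lem_reduction_of_action} and~\ref{lem_reduction_of_compass} together with the compass list of Lemma~\ref{lem_A2-components-compass}, and conclude that there is exactly one $\Lambda$-fixed point over each of $\beta_i$ and $\beta_{i+1}$. Your careful remark that the compass pattern $(-1,1)$ holds for \emph{every} potential inner fixed point---so that Example~\ref{ex_Hirzebruch_surface} may be invoked before knowing how many there are---is exactly the point the paper leaves implicit.
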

\begin{proof}
  We consider $Y=Y_i$  with the action of $H/H'=H/H'_i \simeq \CC^*$ as introduced above.
  Lemma~\ref{lem_A2-components-compass} together with Lemma~\ref{lem_reduction_of_action}
    show that $Y$ is a surface and its polytope of fixed points $\Delta(Y, L|_{Y}, \CC^*)$,
    polytope of sections $\Gamma(Y,L|_{Y}, \CC^*)$ and the compasses $\cC(y, Y, \CC^*)$ are exactly
    as in Example~\ref{ex_Hirzebruch_surface}. 
  Thus there is exactly 1 fixed point in $Y^{H/H'}$ corresponding to $\beta_i$.
  But all fixed points corresponding to $\beta_i$ are contained in $Y$,
     so there is exactly 1 fixed point in $X^H$ corresponding to $\beta_i$, as claimed.
\end{proof}

The following lemma is a straightforward explicit verification. 

\begin{lemma}\label{lem_reductionB3-to-A2}
Consider the $7$-dimensional contact manifold $X=Gr(\PP^1, \cQ^5)$.
A three-dimensional torus acts on $X$, and the lattice and polytopes corresponding to type $B_3$ are described in Subsection~\ref{sectBrDr}.
Using the basis notation as in that subsection, consider a downgrading to a two dimensional torus $H$, corresponding to a projection $\ZZ^3 + (\frac{1}{2},\frac{1}{2},\frac{1}{2})\ZZ \to M$,
  with the kernel generated by $(\frac{1}{2},-\frac{1}{2},-\frac{1}{2})$.
Then the pair $(X, H)$ satisfies Assumptions~\ref{assumptions-weaker} with $G$ of type $G_2$.
In particular, its fixed points $X^H$ and compasses are described in Lemmas~\ref{lem_A2-components-compass} and~\ref{lem_A2-on7fold}.
\end{lemma}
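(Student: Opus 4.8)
The statement is an explicit computation with the $B_3$ root and weight data recalled in Subsection~\ref{sectBrDr}, so the plan is to perform the projection $\pi$ and then verify the five conditions of Assumptions~\ref{assumptions-weaker} one at a time. Write $v_0=\tfrac12(e_1-e_2-e_3)$ for the generator of $\ker\pi$, and $M_{B_3}=\sum_i\ZZ e_i+\ZZ\cdot\tfrac12(e_1+e_2+e_3)$ for the $B_3$-weight lattice, so that $M=M_{B_3}/\ZZ v_0$. Conditions \ref{item_assumption_contact} and \ref{item_assumption_PicX_ZL} are intrinsic to $X=Gr(\PP^1,\cQ^5)$: it is the adjoint variety of type $B_3$, hence a contact Fano $7$-fold with $\Pic X=\ZZ\cdot L$. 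For \ref{item_assumption_torus_action} I would take $G$ to be the simply connected group of type $G_2$ and $H$ the rank-$2$ subtorus of the $B_3$-maximal torus $H_{B_3}$ dual to the surjection $\pi$; this $H$ acts on $X$ and, being a subtorus of the almost faithful $H_{B_3}$, acts almost faithfully. Using the basis $\{\tfrac12(e_1+e_2+e_3),e_2,e_3\}$ of $M_{B_3}$ one checks $v_0$ is primitive, so $M$ is a rank-$2$ lattice.

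The heart of the argument, and the step I expect to require the most care, is the explicit projection of the roots, which yields \ref{item_assumption_Delta_weak}. I would sort the $18$ roots of $B_3$ by the functional $e_1^*-e_2^*-e_3^*$, whose kernel is $v_0^\perp$. The six long roots $\pm(e_1+e_2),\pm(e_1+e_3),\pm(e_2-e_3)$ lie in $v_0^\perp$, hence map injectively under $\pi$ to six vectors of equal length forming a regular hexagon; these are the long roots of $G_2$ and the vertices of the image polytope. The remaining six long roots together with the six short roots $\pm e_i$ collapse two-to-one onto six shorter vectors lying in the interior — the short roots of $G_2$ — for instance $e_1$ and $e_2+e_3$ differ by $2v_0$ and thus share an image. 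Checking that the length ratio is $\sqrt3$ and that the adjacencies match confirms that the image configuration is exactly the $G_2$ root system and that $M$ is its root ($=$ weight) lattice, finishing \ref{item_assumption_torus_action}. Since the vertices of the image are precisely the six long $G_2$ roots, $\pi(\Delta(B_3))=\Delta(G_2)$; combining this with the surjectivity of $\pi$ on polytopes (Lemma~\ref{lem_reduction_of_action}) and with $\Delta(X,L,H_{B_3})=\Delta(B_3)$ (Example~\ref{ex-downgradingB3}) gives $\Delta(X,L,H)=\Delta(G_2)$.

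It remains to establish \ref{item_assumption_extremal_points_are_isolated_pts}. The extremal components of $X^H$ correspond bijectively to the vertices of $\Delta(G_2)$, i.e.\ to the long roots $\alpha_i$ (Lemma~\ref{lem_extremal=sink}). Each $\alpha_i$ is the $\pi$-image of a \emph{single} $B_3$-root; since $\HH^0(X,L)\cong\mathfrak{so}_7$ as an $H_{B_3}$-representation (Lemma~\ref{lem_contact-canonical-linearization}), whose weights are the $18$ roots with multiplicity $1$ and $0$ with multiplicity $3$, the multiplicity of $\alpha_i$ among the $H$-weights of $\HH^0(X,L)$ equals $1$. By Lemma~\ref{lem_ABB-surjectivity}, $h^0(Y_{\alpha_i},L|_{Y_{\alpha_i}})=\dim\HH^0(X,L)_{\alpha_i}=1$, and Lemma~\ref{lem_Fano_3_folds_H0L} (together with Lemma~\ref{lem_ABBdecomposition-extremal_cells}\ref{item_extremal_cells_case_dim_positive}) then forces $Y_{\alpha_i}$ to be a point. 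This proves \ref{item_assumption_extremal_points_are_isolated_pts}, and the secondary conclusions of that item follow formally.

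Once Assumptions~\ref{assumptions-weaker} are verified for $(X,H)$ with $G$ of type $G_2$ and $\dim X=7$, the descriptions of $X^H$ and of the compasses in Lemmas~\ref{lem_A2-components-compass} and~\ref{lem_A2-on7fold} apply verbatim, giving the final assertion. The only genuinely delicate bookkeeping is confirming that $M$ is \emph{exactly} the $G_2$ weight lattice — neither a proper sublattice nor an overlattice — and that the long/short classification and the two-to-one collapses occur precisely as described; every other step is formal once the combinatorics of $\pi$ is pinned down.
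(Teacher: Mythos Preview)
Your proposal is correct and is exactly the explicit verification the paper has in mind; the paper itself gives no details beyond calling the lemma ``a straightforward explicit verification'' and remarking that the downgrading arises from the standard embedding $G_2\hookrightarrow SO(7)$.

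One small point in your verification of \ref{item_assumption_extremal_points_are_isolated_pts}: invoking Lemma~\ref{lem_Fano_3_folds_H0L} requires $\dim Y_{\alpha_i}\le 3$, which you do not cite; it follows from Corollary~\ref{cor_contact-fixedpt-isotropic} since $\mu_0(Y_{\alpha_i})=\alpha_i\ne 0$ and $n=3$. A more direct route, bypassing sections entirely, is also available: each vertex $\alpha_i$ is the $\pi$-image of exactly one long $B_3$-root, and since $\widetilde\Delta(X,L,H_{B_3})$ consists precisely of the twelve long roots (Example~\ref{ex-downgradingB3}), Lemma~\ref{lem_reduction_of_action}\ref{item_reduction_of_action_restricting_to_fix_points_of_subgroup} shows that the residual $H_{B_3}/H\simeq\CC^*$ action on the projective variety $Y_{\alpha_i}$ has a single fixed point, forcing $Y_{\alpha_i}$ to be that point.
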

We remark that the torus downgrading comes from the usual embedding of the $G_2$-group in $SO(7)$.

\begin{prop}\label{prop_A2_7folds_final}
  Under Assumptions~\ref{assumptions-weaker} with $\dim X =7$ and $G$ of type $A_2$ or $G_2$,
     we have $h^0(L) = \dim SO(7) = 21$.
  In particular, $X$ of dimension $7$ and $G$ or type $A_2$ or $G_2$ cannot satisfy Assumptions~\ref{assumptions-simple}.
\end{prop}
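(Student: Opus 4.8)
The plan is to compute $h^0(X,L)$ by showing that the combinatorial data of the $H$-action on $X$ coincides exactly with that of a known homogeneous model, and then to transport the dimension of the space of sections across Proposition~\ref{prop_if_combinatorial_data_agrees_then_representations_agree}. First I would assemble the full description of $X^H$ and of every compass. By Lemma~\ref{lem_A2-components-compass}\ref{item_A2_compass_central} there are no central components; by part~\ref{item_A2_compass_extremal} the six extremal fixed points $y_{\alpha_i}$ are isolated with prescribed compasses; and by Lemma~\ref{lem_A2-on7fold} together with Lemma~\ref{lem_A2-components-compass}\ref{item_A2_compass_inner} there is, for each $i$, a \emph{unique} inner fixed point $y_{\beta_i}$, again with completely prescribed compass (for $\dim X=7$ all compass multiplicities are pinned down). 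Thus $X^H$ consists of exactly twelve isolated points, and both the characters $\mu(y_{\alpha_i})=\alpha_i$, $\mu(y_{\beta_i})=\beta_i$ and the compasses $\cC(y,X,H)$ are determined purely by the combinatorial type, independently of $X$ and of whether $G$ is of type $A_2$ or $G_2$ (these two share the same root polytope and weight lattice).

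Next I would produce the homogeneous comparison variety. By Lemma~\ref{lem_reductionB3-to-A2}, the adjoint variety $X'=Gr(\PP^1,\cQ^5)$ of $SO(7)$, equipped with the two-dimensional torus $H$ obtained by downgrading its maximal torus along the stated projection, satisfies Assumptions~\ref{assumptions-weaker} with $G$ of type $G_2$; hence its fixed points and compasses are precisely the ones just described. Under the bijection $y_{\alpha_i}\leftrightarrow y'_{\alpha_i}$, $y_{\beta_i}\leftrightarrow y'_{\beta_i}$, the characters agree and the compasses agree. Since $X$ is contact Fano and $X'$ is Fano, the higher cohomology of $L$ and of $L'$ vanishes, so Proposition~\ref{prop_if_combinatorial_data_agrees_then_representations_agree} yields $\HH^0(X,L)\cong\HH^0(X',L')$ as representations of $H$, and in particular $h^0(L)=h^0(L')$.

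Then I would evaluate the homogeneous side: by Lemma~\ref{lem_contact-canonical-linearization} applied to $X'=Gr(\PP^1,\cQ^5)$ with its full automorphism group $SO(7)$, the space $\HH^0(X',L')$ is the adjoint representation $\mathfrak{so}_7$, so $h^0(L')=\dim SO(7)=21$, giving $h^0(L)=21$. For the final clause I would observe that Assumptions~\ref{assumptions-simple}\ref{item_assumption_H0L} would force $\HH^0(X,L)\cong\g$ with $\dim\g=8$ in type $A_2$ and $\dim\g=14$ in type $G_2$, neither equal to $21$; this contradiction shows that $X$ of dimension $7$ with $G$ of type $A_2$ or $G_2$ cannot satisfy Assumptions~\ref{assumptions-simple}.

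The main obstacle, essentially already cleared by the preceding lemmas, is rigidity: one must know that the combinatorial data is completely pinned down --- no freedom in the number of inner components and no freedom in compass multiplicities --- so that Proposition~\ref{prop_if_combinatorial_data_agrees_then_representations_agree} applies verbatim; this is exactly what the uniqueness statement of Lemma~\ref{lem_A2-on7fold} and the explicit $\dim X=7$ multiplicity computations in Lemma~\ref{lem_A2-components-compass} supply. A secondary point to check is that the single model $Gr(\PP^1,\cQ^5)$ serves \emph{both} the $A_2$ and $G_2$ cases, which holds because these cases have identical polytope, lattice, and hence identical fixed-point and compass data. (As a cross-check one could instead feed the same data into the localization formula of Proposition~\ref{prop_character_is_a_Laurent_polynomial}, compute the resulting Laurent polynomial, and evaluate it at the identity to recover $21$; I would keep the comparison argument as the primary route since it avoids the explicit rational-function simplification.)
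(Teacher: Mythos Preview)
Your proposal is correct and follows essentially the same route as the paper: pin down all fixed points and compasses via Lemmas~\ref{lem_A2-components-compass} and~\ref{lem_A2-on7fold}, invoke Lemma~\ref{lem_reductionB3-to-A2} to realize the identical data on $X'=Gr(\PP^1,\cQ^5)$, and transport $h^0$ across Proposition~\ref{prop_if_combinatorial_data_agrees_then_representations_agree}. Your write-up is a bit more explicit than the paper's (e.g.\ spelling out why $h^0(L')=21$ via Lemma~\ref{lem_contact-canonical-linearization}, and why the final clause fails for $\dim\g\in\{8,14\}$), but the argument is the same.
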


\begin{proof}
   Let $X' = Gr(\PP^1, \cQ^5)$. 
   By Lemma~\ref{lem_reductionB3-to-A2} both Lemmas~\ref{lem_A2-components-compass} and~\ref{lem_A2-on7fold} apply to $X'$ equally well as to $X$.
   There are only finitely many fixed points $X^H$ and $(X')^H$ 
     by Assumption~\ref{assumptions-weaker}\ref{item_assumption_extremal_points_are_isolated_pts} and Lemma~\ref{lem_A2-components-compass}.
   For each $X$ and $X'$ there are unique fixed points with $\mu(y)$ equal to 
   \begin{itemize}
    \item $\alpha_i$ (Lemma~\ref{lem_extremal=sink}), or
    \item $\beta_i$ (Lemma~\ref{lem_A2-on7fold}),
   \end{itemize}
   and there is no fixed point corresponding to $0 \in M$.
   Moreover, the compasses at these fixed points are uniquely determined by Lemma~\ref{lem_A2-components-compass}.
   Therefore by Proposition~\ref{prop_if_combinatorial_data_agrees_then_representations_agree} we must have $h^0(X,L) \simeq h^0(X',L') = 21$ as claimed.
\end{proof}

From now on we treat the case $\dim X=9$. 
\begin{lemma}\label{lem_9fold_no1dim}
  Under Assumptions~\ref{assumptions-weaker} with $\dim X =9$ and $G$ of type $A_2$ or $G_2$,
     the fixed-point locus $X^H$ has no component of dimension $1$.  
\end{lemma}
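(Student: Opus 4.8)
The plan is to argue by contradiction, confronting the local picture around a hypothetical fixed curve with the global constraint that an equivariant Euler characteristic is a Laurent polynomial. By Lemma~\ref{lem_A2-components-compass} the extremal components are isolated points and there is no central component, so any $1$-dimensional component of $X^H$ must be an inner component $Z$ lying over some $\beta_i$. My first task is to understand the contact geometry transverse to $Z$. Projecting $M$ along $\beta_i$ produces a $1$-dimensional subtorus $H_1$ whose fixed component $Y_1\supset Z$ has $\mu_0(Y_1)=0$, so by Corollary~\ref{cor_contact-fixedpt-contact} it is again contact, with contact bundle $L|_{Y_1}$ and an almost faithful action of $H/H_1\cong\CC^*$. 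By Lemma~\ref{lem_reduction_of_compass} the compass of $Z$ in $Y_1$ consists exactly of the two copies of $-\beta_i$, so $\dim Y_1=3$ and $Z$ is a source; since $0\notin\widetilde{\Delta}$, Corollary~\ref{cor_compass_and_fixpt} and Lemma~\ref{lem_number_of_vectors_on_compass_in_a_fixed_direction} show that the only other fixed locus is a second curve $Z'$ over $\beta_{i+3}=-\beta_i$.

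Next I would pin down $Y_1$. The Bia{\l}ynicki--Birula decomposition (Theorem~\ref{thm_ABB-decomposition}) applied to the two fixed curves gives $b_2(Y_1)=1$ irrespective of their genus, hence $\rho(Y_1)=1$; as $Y_1$ is covered by $\CC^*$-orbit closures it is Fano with $\Pic Y_1=\ZZ$. By the solved case $n=1$ of the LeBrun--Salamon conjecture \cite{hitchin, ye}, the only contact Fano threefold with $\rho=1$ is $\PP^3$ (with $L|_{Y_1}=\cO(2)$), so $Y_1\cong\PP^3$. In particular Lemma~\ref{lem_ABBdecomposition-extremal_cells}\ref{item_extremal_cells_case_dim_positive} now identifies the source $Z$ as a Fano curve of Picard number one, i.e.\ $Z\cong\PP^1$, and $L|_Z=\cO_{\PP^1}(2)$.

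With the degree of $L|_Z$ in hand, I would run the torus-action program on the auxiliary threefold $Y=Y_i$ obtained, exactly as in the proof of Lemma~\ref{lem_A2-on7fold}, from the chord through $\alpha_{i-1},\beta_i,\beta_{i+1},\alpha_{i+1}$: on $Y$ the quotient $\CC^*$ acts with $\Delta(Y)=\Gamma(Y)=[0,3]$, the endpoints being isolated points with compasses $(1^3)$ and $((-1)^3)$, while $Z$ reappears as the intermediate component over $1$ with compass $(-1,1)$ (and its mirror over $2$). Applying the Localization Theorem in the form of Corollary~\ref{cor_RRekw}, which admits both isolated points and fixed curves, I would compute $\chi^{\CC^*}(Y,L|_Y)$. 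The point contributions are $\tfrac{1}{(1-t)^3}$ and $-\tfrac{t^6}{(1-t)^3}$, whereas the curve terms are governed by $\deg(L|_Z)=2$ together with the degrees of the $H$-eigen-subbundles of $N_{Z/Y}$, which are constrained by the contact pairing $d\theta$ coupling $N_\nu$ with $N_{\nu_0-\nu}$ into $L|_Z$ as in Lemma~\ref{lem_t-action-contact}. Since this Euler characteristic must be a Laurent polynomial supported on $\{0,1,2,3\}$, comparing the two computations is designed to force a numerical inconsistency, ruling out the curve. (By contrast, in the isolated-point case the same localization already fails to be a polynomial, which is what drives the complementary dimension bound; here I must instead kill the \emph{positive-dimensional} case.)

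The step I expect to be the main obstacle is the bookkeeping for the fixed curve in the localization formula. Unlike the isolated-point setting of Proposition~\ref{prop_character_is_a_Laurent_polynomial}, a fixed $\PP^1$ contributes terms depending on the $\CC^*$-weights \emph{and} on the degrees of the eigen-line-subbundles of $N_{Z/Y}$, and the contact duality only fixes the sums of degrees within paired directions, leaving some freedom. Eliminating this freedom — by imposing that the numerator of $\chi^{\CC^*}(Y,L|_Y)$ be divisible by $(1-t)^3$ and cross-checking against $\deg(L|_Z)=2$ coming from $Y_1\cong\PP^3$ — is the technical heart of the argument, and is precisely where the identification of the transverse contact threefold with $\PP^3$ is indispensable.
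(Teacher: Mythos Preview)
Your Step~1—identifying the transverse contact threefold $Y_1\cong\PP^3$ via $\rho(Y_1)=1$ and hence $Z\cong\PP^1$, $L|_Z\cong\cO(2)$—is correct and matches the paper's first step (the paper reaches $\PP^3$ by bounding the topological Euler characteristic rather than the Picard rank, but the conclusion is identical).

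The contradiction step, however, has a genuine gap. Your plan is to apply Corollary~\ref{cor_RRekw} on the chord threefold $Y=Y_i$ using the fixed curve $Z$. But the curve contribution depends on the degrees of the eigen-line-subbundles of $N^*_{Z/Y}$, and the constraint you invoke—the contact pairing $d\theta$—does not control them: the two compass directions of $Z$ in $Y$ are $\alpha_{i-1}-\beta_i$ and $\beta_{i+1}-\beta_i$, and under the duality $\nu\leftrightarrow -\beta_i-\nu$ of Lemma~\ref{lem_t-action-contact} each is sent to a weight \emph{outside} $\ker\pi_i$ (e.g.\ $\alpha_{i-1}-\beta_i$ pairs with $\beta_{i+2}-\beta_i$). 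So $d\theta$ never couples the two summands of $N_{Z/Y}$ to one another, and the normal-bundle degrees you need remain free. You also implicitly assume $Z$ is the \emph{only} component of $X^H$ over $\beta_i$ (and similarly over $\beta_{i+1}$); nothing so far rules out additional isolated fixed points there, each adding another unknown term to the localization sum. With all these free parameters, the polynomiality of $\chi^{\CC^*}(Y,L|_Y)$ is not enough to force a contradiction.

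The paper sidesteps these issues entirely by a direct divisor argument on $Y=Y_{i-1}$: it takes eigensections $\sigma_0,\sigma_3\in\HH^0(Y,L|_Y)$ of extremal weight, analyzes their local equations near $Z$ and near $y_{\alpha_{i+1}}$ via Lemma~\ref{lem_local_description_inv_divisor}, extracts from $D_0$ a smooth surface $U\cong\PP^2$ with $L|_U\cong\cO(2)$ containing $Z$ as a line, and intersects $D_3$ with an orbit-closure line $K\subset U$. The local equation forces $D_3\cdot K=1$, while $L|_K\cong\cO(2)$ forces $D_3\cdot K=2$; that is the contradiction. No normal-bundle degrees are needed, and no hypothesis on the remaining components over $\beta_i$ or $\beta_{i+1}$ is required.
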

\begin{proof}
  Suppose by contradiction that $Z \subset X^H$ is a fixed curve.
  By Lemma~\ref{lem_A2-components-compass}  we must have $\mu(Z) = \beta_i$ for some $i$.
  We argue in several steps, using restriction and downgrading the action with respect to two different subtori $\CC^* \subset H$.

  \emph{Step 1, in which we show $Z \simeq \PP^1$ and $L|_{Z} \simeq \cO_{\PP^1}(2)$.}
  Here we consider the one parameter subgroup $H' \subset H$ which corresponds to the projection $M\to \ZZ$ with kernel generated by $\beta_i$. 
  Let $Y' \subset X^{H'}$ be the component containing $Z$.
  In particular, $Y'$ is a contact Fano manifold by Corollary~\ref{cor_contact-fixedpt-contact} with the contact line bundle $L|_{Y'}$.
  The fixed points and compass calculations in Lemma~\ref{lem_A2-components-compass} (and using Lemma~\ref{lem_reduction_of_compass}) 
     show $\dim Y'=3$ and there is no component of $(Y')^{H/H'}$ corresponding to $0 \in \ZZ\cdot\beta_i$.
  Moreover, there are exactly two components of $(Y')^{H/H'}$, corresponding to the extremal values $\beta_i$ and $-\beta_i$, 
     respectively (Lemma~\ref{lem_extremal=sink}).
  The component corresponding to $\beta_i$ is $Z$, and the other component is either a point or a curve 
     (Lemma~\ref{lem_A2-components-compass}\ref{item_A2_compass_inner}).
  Thus by the homology description in Theorem~\ref{thm_ABB-decomposition} the manifold $Y$ has topological Euler characteristic at most $4$.
  Any contact Fano $3$-fold must be isomorphic to a projective space $\PP^3$ 
     or to the projectivisation of a cotangent bundle $\PP (T^*\PP^2)$ 
     (this result was originally claimed by \cite{ye}, see \cite[\S1.3]{jabu_kapustka_kapustka_special_lines} for a detailed historical discussion).
  Given the restriction on the Euler characteristic, $Y' \simeq \PP^3$ and therefore $L|_{Y'} \simeq \cO_{\PP^3}(2)$.
  Since $Z$ is an eigenspace of a torus action, $Z$ is a line on $\PP^3$, in particular, $Z \simeq \PP^1$ and also $L|_{Z} \simeq \cO_{\PP^1}(2)$.
  
  \emph{Step 2, in which we review our notation for another $\CC^*$-action on $X$.}
  We swich our attention to $Y=Y_{i-1}$, the submanifold of dimension $3$, 
    which arises from the downgrading of the action as described in the introductory paragraphs of this subsection.
  The quotient $\CC^*$ acts on $Y$ and $\Delta(Y, L|_{Y}, \CC^*) = \Gamma(Y, L|_{Y}, \CC^*) = [0,3]$ 
    with $\beta_i$ corresponding to $2\in [0,3]$.
  The setting is similar to the proof of Example~\ref{ex_Hirzebruch_surface}:
  since $\Gamma(Y, L|_{Y}, \CC^*) = \Delta(Y,L|_{Y}, \CC^*)$ there are sections $\sigma_0$ and $\sigma_3$ of $L|_{Y}$
     which have weights $0$ and $3$ respectively.
  Let $D_0$ and $D_3$ be the corresponding divisors.
  By Lemma~\ref{lem_local_description_inv_divisor} the divisor $D_0$ contains all fixed points of $Y$ except $y_{\alpha_{i-1}}$ 
     (the fixed point corresponding to $0$),
     and analogously $D_3$ contains all of them except $y_{\alpha_{i+1}}$ corresponding to $3$.
   Consider the local defining equations $f_{0,z}$ and $f_{3,z}$ of $D_0$ and $D_3$ at a general point $z\in Z$,
      as in Lemma~\ref{lem_local_description_inv_divisor}.
   Denote the local coordinate ring by $\CC[s,t,u]$ with weights of $s$, $t$, and $u$ equal to $-1$, $0$, and $1$, respectively.
   
   \emph{Step 3, in which we analyze the divisor $D_0$.}
   The weight of $f_{0,z}$ is $2$, thus $f_{0,z}= g(t, su ) \cdot u^2$ for some power series $g$ in two variables.
   Thus $D_0$ has a component $U$ with multiplicity at least $2$ corresponding to $u=0$ near $z$.
   By Theorem~\ref{thm_ABB-decomposition}, $U$ must coincide with the closure of the cell corresponding to the fixed-point component $Z$,
      and its closure contains only one extra point, the extremal point $y_{\alpha_{i+1}}$.
   In particular, $U$ is smooth except perhaps at $y_{\alpha_{i+1}}$.
   Looking at the local equation of $D_0$ near $y_{\alpha_{i+1}}$ 
      we see that $U$ is given by a weight $-1$ equation in a coordinate ring with all negative weights 
      (in fact all the weights are equal to $-1$ by the usual compass calculation, but we will not use it). 
   Therefore $U$ is also smooth at $y_{\alpha_{i+1}}$, and by the previous arguments $U$ is smooth everywhere. 
   Moreover $y_{\alpha_{i+1}}$ has a neighbourhood in $U$ isomorphic with $\AAA^2$,
      and $U \setminus \AAA^2 = Z \simeq \PP^1$.
   Therefore $U\simeq\PP^2$ and $Z$ is a line on this $\PP^2$ and by Step 1 the line bundle $L|_{U}\simeq \cO_{\PP^2}(2)$.
   Denote by $K\subset U$ the line which is the closure of the orbit of $\CC^*$ containing $z$ and $y_{\alpha_{i+1}}$.
   
  \emph{Step 4, in which we analyze the divisor $D_3$ and conclude the proof.}
  We aim to calculate the intersection number $D_3.K$ (which should be equal to $2$ by Step 3, since $D_3$ is in the linear system of $L$, 
       and $L|_K \simeq \cO_{\PP^1}(2)$)
       and arrive at a contradiction.
   The local equation $f_{3,z}$ is not divisible by $u$ because $D_3$ does not contain $\alpha_{i+1}$.
   The weight of $f_{3,z}$ is $-1$ by Lemma~\ref{lem_local_description_inv_divisor}, 
       hence $f_{3,z} = s \cdot  h(t, su)$ for some power series $h$ in two variables, which is not divisible by the second variable.
   In particular, $D_3$ near $z$ is equal to the sum of $(s=0)$ and $(h(t, su)=0)$, where the second divisor does not vanish on all of $Z$.
   Since $z$ was chosen as a general point of $Z$, $h$ is invertible near $0$, and $D_3$ is equal to $(s=0)$ near $z$.
   In particular, $D_3.K=1$, a contradiction.
\end{proof}

\begin{lemma}\label{lem_A2_9fold_3_fixed_pts}
  Under Assumptions~\ref{assumptions-weaker} with $\dim X = 9$ and $G$ of type $A_2$ or $G_2$, 
     for every $i$ there are exactly $3$ fixed points of $X^H$ corresponding to $\beta_i$.
\end{lemma}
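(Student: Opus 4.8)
The plan is to reduce the count to a one‑dimensional torus computation on the threefold $Y=Y_i$ set up at the beginning of this subsection, and then to read off the number of fixed points from the Laurent‑polynomiality in Proposition~\ref{prop_character_is_a_Laurent_polynomial}, exactly as in the second proof of Example~\ref{ex_Hirzebruch_surface}. Recall that $Y_i$ is the unique component of $X^{H'_i}$ on the level $\pi_i=1$, it has dimension $3$, and the quotient $\CC^*=H/H'_i$ acts on it with $\Delta(Y_i,L|_{Y_i},\CC^*)=[0,3]$, where $\alpha_{i-1},\beta_i,\beta_{i+1},\alpha_{i+1}$ correspond to the levels $0,1,2,3$.

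First I would record that, by Lemma~\ref{lem_9fold_no1dim}, every component of $X^H$ over $\beta_i$ and over $\beta_{i+1}$ is a single point, so Lemma~\ref{lem_A2-components-compass}\ref{item_A2_compass_inner} assigns each of them the \emph{same} compass in $X$. Next I would check that these points, together with $y_{\alpha_{i-1}}$ and $y_{\alpha_{i+1}}$, constitute all of $Y^{\,H/H'_i}=Y\cap X^H$: the points over $\beta_i,\beta_{i+1}$ lie on the chord $\pi_i=1$, and this level is adjacent to the apex $\alpha_i$, so the component carrying them is the unique $Y_i$ by Lemma~\ref{lem_ABBdecomposition-extremal_cells}\ref{item_extremal_cells_case_dim_0}; the two remaining points are the source and sink by Lemma~\ref{lem_extremal=sink}. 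Writing $a$ and $b$ for the numbers of points over $\beta_i$ and $\beta_{i+1}$, the goal becomes $a=3$.

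The key computation is the compass of each of these fixed points inside $Y_i$, obtained from the compass in $X$ via Lemma~\ref{lem_reduction_of_compass} by intersecting with $\ker\pi_i$ (the chord direction). Keeping only those compass vectors $w-\beta_i$ with $w$ again on the level‑$1$ chord, a point over $\beta_i$ acquires $\CC^*$‑compass $(-1,1,1)$ (coming from $\alpha_{i-1}-\beta_i$ with multiplicity $1$ and $\beta_{i+1}-\beta_i$ with multiplicity $2$), a point over $\beta_{i+1}$ acquires $(1,-1,-1)$, while $y_{\alpha_{i-1}}$ and $y_{\alpha_{i+1}}$ carry $(1,1,1)$ and $(-1,-1,-1)$; here the $\CC^*$‑weight is the coordinate on $[0,3]$, so the four levels have $\mu=0,1,2,3$. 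Feeding this into Proposition~\ref{prop_character_is_a_Laurent_polynomial} I obtain
\[
  F=\frac{1}{(1-t)^3}+\frac{a\,t}{(1-t^{-1})(1-t)^2}+\frac{b\,t^2}{(1-t^{-1})^2(1-t)}+\frac{t^3}{(1-t^{-1})^3}=\frac{1-a t^2+b t^4-t^6}{(1-t)^3}.
\]
Since $F$ must be a Laurent polynomial, $(1-t)^3$ divides $1-a t^2+b t^4-t^6$; the vanishing of this numerator and of its derivative at $t=1$ forces $b=a$ and then $a=3$, while the second derivative vanishes automatically, confirming the triple root. Thus $a=b=3$, which is the assertion.

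The routine‑but‑delicate part is the compass bookkeeping of the third step: one must verify that no compass vector of a $\beta_i$‑point other than $\alpha_{i-1}-\beta_i$ and $\beta_{i+1}-\beta_i$ is parallel to the chord, and that their multiplicities are exactly $1$ and $2$ as supplied by Lemma~\ref{lem_A2-components-compass}\ref{item_A2_compass_inner} once the component is known to be a point. The only genuine obstacle is ensuring that \emph{every} fixed point over $\beta_i$ actually lies in the single threefold $Y_i$; this is precisely the uniqueness in Lemma~\ref{lem_ABBdecomposition-extremal_cells}\ref{item_extremal_cells_case_dim_0} applied at the apex $\alpha_i$, after which the global localization identity performs all the counting simultaneously.
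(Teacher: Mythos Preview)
Your proof is correct and follows essentially the same approach as the paper: reduce to the threefold $Y_i$, use Lemma~\ref{lem_9fold_no1dim} to ensure all inner components are points with the uniquely determined compass from Lemma~\ref{lem_A2-components-compass}\ref{item_A2_compass_inner}, compute the localization sum of Proposition~\ref{prop_character_is_a_Laurent_polynomial}, and force $a=b=3$ from the Laurent--polynomiality. Your numerator $1-at^2+bt^4-t^6$ over $(1-t)^3$ is the same rational function as the paper's $\tfrac{t^6-bt^4+at^2-1}{(t-1)^3}$, and the compass bookkeeping you spell out (that only $\alpha_{i-1}-\beta_i$ and $\beta_{i+1}-\beta_i$ lie in $\ker\pi_i$) is exactly what the paper packages in the reference to Lemmas~\ref{lem_A2-components-compass} and~\ref{lem_reduction_of_compass}.
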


\begin{proof}
  Fix $i\in \setfromto{0}{5}$, and as before consider the smooth 3-fold $Y=Y_i$ with a $\CC^*$ action
  and containing all the fixed points associated to $\alpha_{i-1}$, $\beta_i$ and
  $\beta_{i+1}$, $\alpha_{i-1}$. 
  By Lemma~\ref{lem_9fold_no1dim} there are only $a$ fixed points associated to $\beta_i$ and 
    $b$ fixed points associated to $\beta_{i+1}$ for some (finite) integers $a, b \ge 0$.
  We consider the rational function in one variable $t$:
  \begin{align*}
     F(t)&=\sum_{y\in Y^H} \frac {t^{\mu(y)}}{\prod_{\nu\in \cC(y, X, H)} (1-t^{\nu})}\\
      &= \frac{1}{(1-t)^3} + a \cdot \frac{t}{(1-t)^2(1-t^{-1})}\\
      &\qquad + b \cdot \frac{t^2}{(1-t)(1-t^{-1})^2} + \frac{t^3}{(1-t^{-1})^3}\\
      &= \frac{t^6 - b t^4 + a t^2 - 1}{(t-1)^3}.
  \end{align*}
  The second equality follows from Lemmas~\ref{lem_A2-components-compass} and~\ref{lem_reduction_of_compass}.
  By Proposition~\ref{prop_character_is_a_Laurent_polynomial} the rational function $F$ must be a Laurent polynomial in $t$. 
  In particular, $t=1$ must be a root of the numerator, thus $a=b$.
  With $a=b$, dividing out both numerator and denominator by $(t-1)$ we get:
  \[
    F(t) = \frac{(t+1)(t^4 + (1-a) t^2 + 1)}{(t-1)^2}
  \]
  Again, $t=1$ must be a root of the numerator, thus $a=b=3$, and $F=(1+t)^3$.
  So there are exactly $3$ fixed points for each $\beta_i$.
\end{proof}

Now we check the model case also satisfies Assumptions~\ref{assumptions-weaker},
  analogously to Lemma~\ref{lem_reductionB3-to-A2}.

\begin{lemma}\label{lem_reductionD4-to-A2}
Consider the $9$-dimensional contact manifold $X=Gr(\PP^1, \cQ^6)$.
A four-dimensional torus acts on $X$, and the lattice and polytopes corresponding to type $D_4$ are described in Subsection~\ref{sectBrDr}.
Using the basis notation as in that subsection, consider a downgrading to a two dimensional torus $H$, corresponding to a projection 
\[ 
  \ZZ^4 + (\frac{1}{2},\frac{1}{2},\frac{1}{2},\frac{1}{2})\ZZ \to M,
\]
   with the kernel generated by $(\frac{1}{2},-\frac{1}{2},-\frac{1}{2},-\frac{1}{2})$ and $(0,0,0,1)$.
Then the pair $(X, H)$ satisfies Assumptions~\ref{assumptions-weaker} with $G$ of type $G_2$.
\end{lemma}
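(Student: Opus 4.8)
The plan is to carry out the explicit verification in the same spirit as the announced (type $B_3$) Lemma~\ref{lem_reductionB3-to-A2}. Items~\ref{assumptions-weaker}\ref{item_assumption_contact} and~\ref{assumptions-weaker}\ref{item_assumption_PicX_ZL} are intrinsic to $X=Gr(\PP^1,\cQ^6)$, the adjoint variety of type $D_4$, which is a contact Fano manifold of dimension $9$ with $\Pic X=\ZZ\cdot L$. For item~\ref{assumptions-weaker}\ref{item_assumption_torus_action} I take $T$ to be the rank-$4$ torus acting on $X$ with character lattice $N=\ZZ^4+\ZZ\cdot\tfrac12(e_1+e_2+e_3+e_4)$ and root polytope $\Delta(X,L,T)=\Delta(D_4)$ as described in Subsection~\ref{sectBrDr}, and I let $H\subset T$ be the subtorus cut out by the two characters spanning $\ker\pi$. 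Its character lattice is then $M:=N/\ker\pi\cong\ZZ^2$ and the restriction of characters from $T$ to $H$ is exactly $\pi$; concretely one may take $\pi(x_1,x_2,x_3,x_4)=(x_1+x_2,\,x_1+x_3)$, and a direct check shows $\pi$ kills both $\tfrac12(e_1-e_2-e_3-e_4)$ and $e_4$. Since $T$ acts almost faithfully, so does $H\subset T$, and $H$ acts on $X$.

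The content is in items~\ref{assumptions-weaker}\ref{item_assumption_Delta_weak} and~\ref{assumptions-weaker}\ref{item_assumption_extremal_points_are_isolated_pts}, and both reduce to finite computations once $\pi$ is fixed. For item~\ref{assumptions-weaker}\ref{item_assumption_Delta_weak}, Lemma~\ref{lem_reduction_of_action} gives $\Delta(X,L,H)=\pi(\Delta(X,L,T))=\pi(\Delta(D_4))$, so I apply $\pi$ to the $24$ roots $\pm e_i\pm e_j$ of $D_4$ (the vertices of $\Delta(D_4)$, by Lemma~\ref{lem_B_r-D_r-big}). The computation yields exactly $12$ distinct images: the six vectors $\pm(2,1),\pm(1,2),\pm(1,-1)$, each the image of a single root, which are the vertices of a hexagon, and the six vectors $\pm(1,0),\pm(0,1),\pm(1,1)$, each the image of three roots, lying in the interior. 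Choosing $\alpha_1=(1,0)$ as a short and $\alpha_2=(-1,1)$ as a long simple root, one checks that these $12$ vectors are precisely the roots of $G_2$ --- the multiplicity-one vectors the long roots, the multiplicity-three vectors the short ones --- so $M=\ZZ^2$ is the (root $=$ weight) lattice of $G_2$ and $\Delta(X,L,H)=\Delta(G_2)$.

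For item~\ref{assumptions-weaker}\ref{item_assumption_extremal_points_are_isolated_pts}, by Lemma~\ref{lem_extremal=sink} the extremal components of $X^H$ are indexed by the six vertices of $\Delta(G_2)$. I fix the vertex $(2,1)=\pi(e_1+e_2)$ and let $p\in X^T$ be the (isolated) $T$-fixed point over the root $e_1+e_2$, lying in the extremal component $Y\subset X^H$. By Lemma~\ref{lem_reduction_of_compass} applied to $H\subset T$, the compass $\cC(Y,X,H)$ is obtained from $\pi(\cC(p,X,T))$ by discarding zero vectors, and the number discarded equals $\dim Y$. The $D_4$-compass
\[
  \cC(p,X,T)=\{\pm e_3-e_1,\ \pm e_4-e_1,\ \pm e_3-e_2,\ \pm e_4-e_2,\ -e_1-e_2\}
\]
was determined in Lemma~\ref{lem_B_r-D_r-big}, and applying $\pi$ one finds every image has first coordinate $-1$ or $-2$, hence no image is zero. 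Thus $\dim Y=0$ and $Y$ is a single (smooth, connected) point. The same computation at the vertices $(1,2)$ and $(1,-1)$, together with the central symmetry $-1\in W(D_4)$ handling the three opposite vertices, shows that every extremal component is an isolated point.

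I expect no genuine obstacle: the whole proof is the bookkeeping above, a computation of $\pi$ on a root system and on one compass. The single subtle point is item~\ref{assumptions-weaker}\ref{item_assumption_Delta_weak}: because $\Delta(A_2)=\Delta(G_2)$ as polytopes, the hexagon alone does not distinguish the two types, and one must genuinely produce the $G_2$ root system (simple roots and the long/short multiplicities) inside $M=\ZZ^2$ to justify the label ``$G_2$''. As remarked after Lemma~\ref{lem_reductionB3-to-A2}, the projection $\pi$ is the one induced by the embedding $G_2\subset\mathrm{Spin}(7)\subset\mathrm{Spin}(8)$, which explains a posteriori why the $G_2$ root system emerges.
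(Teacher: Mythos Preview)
Your proof is correct and carries out exactly the explicit verification the paper alludes to (``a straightforward and explicit calculation''): projecting the $D_4$ roots through $\pi$ to recover the $G_2$ hexagon in the correct lattice, and projecting the $D_4$ compass at $e_1+e_2$ to see that no element dies, hence the extremal components stay isolated. One small slip: ``since $T$ acts almost faithfully, so does $H\subset T$'' is not a valid inference in general; the almost-faithfulness of $H$ follows instead from the fact that $\Delta(X,L,H)$ is full-dimensional (your own computation), via Corollary~\ref{cor_Delta_has_maximal_dim}.
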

The proof is a straightforward and explicit calculation. 
The torus embedding is the restriction of standard embedding: $G_2 \hookrightarrow SO(7) \hookrightarrow SO(8)$.
As a conclusion, we show the analogue of Proposition~\ref{prop_A2_7folds_final} in dimension $9$.

\begin{prop}\label{prop_A2_9folds_final}
  Under Assumptions~\ref{assumptions-weaker} with $\dim X =9$ and $G$ of type $A_2$ or $G_2$,
     we have $\dim \Aut(X) =  h^0(L) = \dim SO(8) = 28$.
  In particular, $X$ of dimension $9$ and $G$ of type $A_2$ or $G_2$ cannot satisfy Assumptions~\ref{assumptions-simple}.
\end{prop}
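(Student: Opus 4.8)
The plan is to mimic the proof of Proposition~\ref{prop_A2_7folds_final}, replacing the model $Gr(\PP^1,\cQ^5)$ by $X'=Gr(\PP^1,\cQ^6)$, the $9$-dimensional adjoint variety of $SO(8)$. First I would invoke Lemma~\ref{lem_reductionD4-to-A2} to equip $X'$ with the $G_2$-downgraded action of the two-dimensional torus $H$, so that $(X',L')$ satisfies Assumptions~\ref{assumptions-weaker} with $G$ of type $G_2$. Consequently every structural result established earlier in this subsection under Assumptions~\ref{assumptions-weaker} with $\dim X=9$ --- namely Lemmas~\ref{lem_A2-components-compass}, \ref{lem_9fold_no1dim} and~\ref{lem_A2_9fold_3_fixed_pts} --- applies verbatim to both $X$ and $X'$ (recall $\Delta(A_2)=\Delta(G_2)$, so the combinatorics does not distinguish the two cases for $X$).

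Second, I would assemble the complete combinatorial description of the fixed loci. By Lemma~\ref{lem_9fold_no1dim} there are no one-dimensional components, so $X^H$ and $(X')^H$ consist of isolated points only. The extremal points are the six $y_{\alpha_i}$, one for each vertex (Lemma~\ref{lem_extremal=sink}), while by Lemma~\ref{lem_A2_9fold_3_fixed_pts} there are exactly three fixed points over each $\beta_i$ and none over $0$ (Lemma~\ref{lem_A2-components-compass}\ref{item_A2_compass_central}). This gives $6+18=24$ isolated fixed points on each of $X$ and $X'$, with the same multiset of characters $\mu$. Crucially, Lemma~\ref{lem_A2-components-compass} determines the compass at each point purely from its character and its dimension; since all inner points are now points, the three fixed points lying over a given $\beta_i$ all carry one and the same compass. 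Hence any character-preserving bijection $X^H\to (X')^H$ (matching the unique $\alpha_i$-points, and matching the three $\beta_i$-points in any order) automatically preserves compasses.

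Third, with this bijection in hand I would apply Proposition~\ref{prop_if_combinatorial_data_agrees_then_representations_agree}: both $X$ and $X'$ are Fano (so $L$ and $L'$ have no higher cohomology), the fixed points are isolated, and the characters and compasses agree, whence $\HH^0(X,L)\simeq \HH^0(X',L')$ as $H$-representations. Since $X'=Gr(\PP^1,\cQ^6)$ is the adjoint variety of $SO(8)$, Lemma~\ref{lem_contact-canonical-linearization} identifies $\HH^0(X',L')$ with $\mathfrak{so}(8)$, so $h^0(L')=\dim SO(8)=28$, and therefore $h^0(L)=28$. Finally, Lemma~\ref{lem_contact-canonical-linearization} also gives $\HH^0(X,L)\simeq \HH^0(X,TX)$, so $\dim\Aut(X)=h^0(TX)=h^0(L)=28$, establishing the first assertion. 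The concluding claim is then immediate: under Assumptions~\ref{assumptions-simple}\ref{item_assumption_H0L} one would have $h^0(L)=\dim\g$, which equals $8$ for type $A_2$ and $14$ for type $G_2$, contradicting $h^0(L)=28$.

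I expect the only delicate point to be the verification that the compass at each inner fixed point is genuinely independent of which of the three points over $\beta_i$ one selects --- this rests entirely on having first excluded one-dimensional fixed components in Lemma~\ref{lem_9fold_no1dim}, which forces the dimension-dependent multiplicities in Lemma~\ref{lem_A2-components-compass}\ref{item_A2_compass_inner} to take their ``point'' values uniformly. Everything else is a bookkeeping matching that reduces, exactly as in the $7$-dimensional case, to the already-proven comparison Proposition~\ref{prop_if_combinatorial_data_agrees_then_representations_agree}.
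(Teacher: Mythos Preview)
Your proof is correct and follows essentially the same approach as the paper's own argument: invoke Lemma~\ref{lem_reductionD4-to-A2} so that Lemmas~\ref{lem_A2-components-compass}, \ref{lem_9fold_no1dim}, \ref{lem_A2_9fold_3_fixed_pts} apply to both $X$ and the model $X'=Gr(\PP^1,\cQ^6)$, match fixed points and compasses, and apply Proposition~\ref{prop_if_combinatorial_data_agrees_then_representations_agree}. Your write-up is in fact slightly more explicit than the paper's (you spell out the $6+18$ count, the justification of $\dim\Aut(X)=h^0(L)$ via Lemma~\ref{lem_contact-canonical-linearization}, and the numerical values $8$ and $14$ for the contradiction), and your closing remark correctly identifies the one subtlety --- that the compass at inner points is well-defined only after Lemma~\ref{lem_9fold_no1dim} forces them to be zero-dimensional.
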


\begin{proof}
   Let $X' = Gr(\PP^1, \cQ^6)$. 
   By Lemma~\ref{lem_reductionD4-to-A2} all Lemmas~\ref{lem_A2-components-compass}, \ref{lem_9fold_no1dim}, and~\ref{lem_A2_9fold_3_fixed_pts}
      apply both to $X$ and $X'$.
   There are only finitely many fixed points $X^H$ and $(X')^H$ 
     by Assumption~\ref{assumptions-weaker}\ref{item_assumption_extremal_points_are_isolated_pts} and 
     Lemmas~\ref{lem_A2-components-compass}, \ref{lem_9fold_no1dim}.
   For each $X$ and $X'$ there are unique fixed points with $\mu(y)$ equal to $\alpha_i$ (Lemma~\ref{lem_extremal=sink}), 
     and three fixed points with $\mu(y)$ equal to $\beta_i$ (Lemma~\ref{lem_A2_9fold_3_fixed_pts}),
     and there is no fixed point corresponding to $0 \in M$.
   Moreover, the compasses at these fixed points are uniquely determined by Lemma~\ref{lem_A2-components-compass}.
   Therefore by Proposition~\ref{prop_if_combinatorial_data_agrees_then_representations_agree}
     we must have $h^0(X,L) = h^0(X',L') = 28$ as claimed.
   This is in contradiction with Assumption~\ref{assumptions-simple}\ref{item_assumption_H0L}.
\end{proof}

This concludes the proof of Theorem~\ref{thm_contact_simple}, as we have already analyzed all the possible cases.

\section{Proofs and concluding remarks}\label{sec_proofs}

Having all the technical statements done, we conclude the article with gathering known results from the literature 
   and applying our lemmas to show main results of the article.
We provide a lower bound on the dimension of automorphism group of a low dimensional contact Fano manifold, following \cite{Salamon-Inventiones}.
We further discuss the main theorems listed in the introduction, providing the necessary cross-references and citations.
We also provide a brief overview of the consequences of twisor construction for positive quaternion-K\"ahler manifolds.

\subsection{Dimension of the automorphism group of a contact Fano manifold}\label{sec_dimension}

In this subsection  we provide a lower bound on the dimension of the group of automorphism of a contact Fano  manifold of dimension $7$ or $9$.
The argument is analogous to  \cite[Thm.~7.5]{Salamon-Inventiones}, whose statement is slighly weaker,
  as it only concerns quaternion-K\"ahler manifolds, and these correspond to contact Fano manifolds with K\"ahler-Einstein metric. 
In addition we rely on \cite[Cor.~1.2]{kebekus_lines2} and Bogomolov--Gieseker inequality. 
Although it is possible to perform the calculations by hand (as shown in \cite{Salamon-Inventiones}), 
  for brevity we refer to computer calculations carried out in {\tt magma} \cite{magma}.
  
\begin{thm}\label{thm_dim_of_Aut_X}
   Suppose $X$ is a contact Fano manifold.
   \begin{itemize}
    \item If $\dim X = 7$, then $\dim (\Aut X) \ge 5$.
    \item If $\dim X = 9$, then $\dim (\Aut X) \ge 8$.
   \end{itemize}
\end{thm}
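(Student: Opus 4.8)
The plan is to turn the statement into a Riemann--Roch computation and then to pin down the few Chern numbers that survive. First I would dispose of the trivial cases using the trichotomy for projective contact manifolds recalled in the introduction: if $\Pic X\neq \ZZ\cdot L$, then $X\simeq\PP^{2n+1}$ or $X\simeq\PP(T^*\PP^{n+1})$, and in both cases $\Aut(X)$ contains $PGL_{2n+2}$ (resp.\ $PGL_{n+2}$), whose dimension vastly exceeds $5$ for $n=3$ and $8$ for $n=4$. Hence I may assume $\Pic X=\ZZ\cdot L$. By Lemma~\ref{lem_contact-canonical-linearization} (Beauville's identification) $\HH^0(X,TX)\simeq\HH^0(X,L)$, and since $\Aut(X)$ is smooth in characteristic zero with Lie algebra $\HH^0(X,TX)$, this gives $\dim\Aut(X)=h^0(X,L)$. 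Finally $L-K_X=(n+2)L$ is ample, so Kodaira vanishing yields $\HH^i(X,L)=0$ for $i>0$ and therefore $\dim\Aut(X)=\chi(X,L)$.

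Next I would exploit the rigidity of the whole Hilbert polynomial $P(k):=\chi(X,L^{\otimes k})$, which has degree $2n+1$. Serre duality together with $K_X=-(n+1)L$ gives the antisymmetry
\[
   P(k)=-P\bigl(-(n+1)-k\bigr),
\]
so $P$ is odd about $k=-(n+1)/2$. Moreover Kodaira vanishing applied to $L^{\otimes(-j)}=K_X\otimes L^{\otimes(n+1-j)}$ for $1\le j\le n$ forces $P(-1)=\dots=P(-n)=0$, while $P(0)=\chi(\cO_X)=1$. Writing $P(u)/u$ as an even polynomial in $u=k+\tfrac{n+1}{2}$, one checks that these universal relations leave precisely $L^{2n+1}$ together with a \emph{single} further free Chern-number parameter. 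Carrying out the elementary linear algebra then expresses $\chi(X,L)=P(1)$ as an explicit affine function of these two quantities; for instance in dimension $7$ one obtains
\[
   \chi(X,L)=4+\tfrac13\,L^{7}+120\,c,
\]
where $c$ is the remaining free coefficient, itself a fixed rational combination of $c_2(X)\cdot L^{5}$ and $L^{7}$. Dimension $9$ is completely analogous, with a different affine formula and a larger target value.

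It then remains to bound the free parameter from below. Here I would feed in the two geometric inputs. By \cite[Cor.~1.2]{kebekus_lines2} the manifold $X$ is covered by contact lines; restricting to a general surface section and invoking Mehta--Ramanathan, this makes $TX$ (equivalently the symplectic subbundle $F$) semistable with respect to $L$, so that the Bogomolov--Gieseker inequality applies and bounds $c_2(X)\cdot L^{2n-1}$ from below in terms of $c_1(X)^2\cdot L^{2n-1}=(n+1)^2L^{2n+1}$. Combined with $L^{2n+1}>0$ and the integrality of $\chi(X,L)$, this already yields $\chi(X,L)\ge 5$ for $\dim X=7$; for $\dim X=9$ I would additionally use the bound from \cite{kebekus_lines2} on the contact lines (controlling the degree $L^{9}$ and hence the residual parameter) to close the gap up to $8$. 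Since the final optimization runs over an explicit bounded set of Chern numbers, I would verify the resulting inequality by the \texttt{magma} computation referenced in the text, exactly as in the analogous argument of \cite{Salamon-Inventiones}.

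The main obstacle I anticipate is the legitimacy of the Bogomolov--Gieseker step: one must establish enough (semi)stability of $TX$ or $F$ to apply the inequality, and this is precisely where the covering by contact lines of \cite{kebekus_lines2} is indispensable, together with care that restriction to a general complete-intersection surface preserves semistability. A secondary difficulty is the bookkeeping in dimension $9$, where one must check that the Bogomolov--Gieseker bound \emph{and} Kebekus' bound together cut out a region on which the Riemann--Roch expression for $\chi(X,L)$ stays $\ge 8$; this is the part I would delegate to the computer calculation.
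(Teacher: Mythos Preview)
Your strategy coincides with the paper's: reduce to $\Pic X=\ZZ\cdot L$, use $\dim\Aut(X)=h^0(X,L)=\chi(X,L)$ by Kodaira vanishing, exploit Serre duality and the vanishing $P(-1)=\dots=P(-n)=0$, $P(0)=1$ to leave exactly two free parameters in the Hilbert polynomial (the degree $\mathfrak{d}=L^{2n+1}$ and one combination involving $c_2(TX)\cdot L^{2n-1}$), and then bound the latter by Bogomolov--Gieseker. Two points need correction.

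First, \cite[Cor.~1.2]{kebekus_lines2} already asserts that $TX$ is \emph{stable}; there is no need to go through contact lines and a restriction argument. In fact your use of Mehta--Ramanathan is backwards: that theorem transports (semi)stability from $X$ to a general high-degree hypersurface, not the other way around. Simply cite stability of $TX$ and apply Bogomolov--Gieseker on $X$ directly in the form
\[
   (2\dim X)\,c_2(TX)\cdot L^{\dim X-2}\;\ge\;(\dim X-1)\,c_1(TX)^2\cdot L^{\dim X-2}.
\]

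Second, no extra degree bound from Kebekus is needed for $\dim X=9$. After imposing $P(-2)=P(-1)=0$ and $P(0)=1$ one finds $c_2(TX)\cdot L^7=9\mathfrak{d}+24\,P(1)-168$ while $c_1(TX)^2\cdot L^7=25\mathfrak{d}$; Bogomolov--Gieseker then reads $18\bigl(9\mathfrak{d}+24\,P(1)-168\bigr)\ge 200\,\mathfrak{d}$, i.e.\ $P(1)\ge 7+\tfrac{19}{216}\mathfrak{d}>7$, hence $P(1)\ge 8$ by integrality. The $7$-dimensional case is entirely parallel: $c_2(TX)\cdot L^5=4\mathfrak{d}+12\,P(1)-48$ and $c_1(TX)^2\cdot L^5=16\mathfrak{d}$ give $P(1)\ge 4+\tfrac{5}{21}\mathfrak{d}>4$, hence $\ge 5$. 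Both cases close with Bogomolov--Gieseker and $\mathfrak{d}>0$ alone; the computer is only a convenience for the Riemann--Roch bookkeeping, not for any optimization.
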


Note that (unlike in Theorem~\ref{main_thm}) we do not need to assume that $\Aut(X)$ is reductive.

\begin{proof}
   The statement holds true if $X$ is one of $\PP^7$, $\PP(T^* \PP^4)$, $\PP^9$, or $\PP(T^*\PP^5)$, 
     so without loss of generality we may assume $\Pic X \simeq \ZZ\cdot L$.
   Therefore, $\dim (\Aut X) = h^0(X,L)$ by Lemma~\ref{lem_contact-canonical-linearization} and  the latter is equal to $\chi(X, L)$ by Kodaira vanishing since $L$ is ample and $X$ is Fano.
   Thus we have to show that $\chi(X, L)$ is at least $5$ or $8$ respectively.
   
   We calculate the Hilbert polynomial $p(m) = \chi(X, L^{m})$ using Hirze\-bruch-Riemann-Roch Theorem:
   \[
      p(m) = \int_{X} td(T X)ch(L^m).
   \]
   The Todd class is multiplicative $td(T X) = td(F)td(L)$ (by the short exact sequence $0\to F \to TX \to L\to 0$),
      and the Chern classes of $F$ satisfy the symmetry property arising from the isomorphism $F \simeq F^* \otimes L$.
   This determines the odd Chern classes of $F$ in terms of the even Chern classes and $c_1(L)$.
   Explicitly, if $\dim X = 7$, then
   \begin{align*}
      c_1(F) &= 3 c_1(L),\\
      c_3(F) &= 2 c_2(F) c_1(L) - 5c_1(L)^3,\\
      c_5(F) &=   c_4(F) c_1(L) - c_2(F) c_1(L)^3 + 3c_1(L)^5, 
   \intertext{and if $\dim X =9$, then}
      c_1(F) &= 4 c_1(L),\\
      c_3(F) &= 3 c_2(F) c_1(L) - 14c_1(L)^3,\\
      c_5(F) &= 2 c_4(F) c_1(L) - 5c_2(F) c_1(L)^3 + 28c_1(L)^5,\\
      c_7(F) &=   c_6(F) c_1(L) - c_4(F) c_1(L)^3 + 3c_2(F) c_1(L)^5 - 17c_1(L)^7. 
   \end{align*}
   Thus, the formula for the polynomial $p(m)$ is an explicit polynomial in $m$ and the Chern classes $c_1(L)$, $c_2(F)$, $c_4(F)$, $c_6(F)$ and (in the case $\dim X=9$) $c_8(F)$.
   There are extra conditions to impose on this polynomial, implied by Kodaira vanishing:
   $p(-2)= p(-1)=0$ and $p(0)=1$ (in the case $\dim X=7$, the condition $p(-2)=0$ is vacuous, as it follows from the symmetry, or Serre duality).
   Combining all of those identities and denoting $\mathfrak{d} = \deg (X,L) = c_1(L)^{\dim X}$
        (the self intersection of $L$) we get that:
   \begin{align*}
       \text{if $\dim X = 7$, then}\\
       c_1(TX)^2 c_1(L)^5 &= 16 \mathfrak{d}, & c_2(TX)c_1(L)^5& = 4 \mathfrak{d} + 12 p(1) - 48,\\
       \text{if $\dim X = 9$, then}\\
       c_1(TX)^2 c_1(L)^7 &= 25 \mathfrak{d}, & c_2(TX)c_1(L)^7& = 9 \mathfrak{d} + 24 p(1) - 168.
   \end{align*}
   The vector bundle $TX$ is stable by \cite[Cor.~1.2]{kebekus_lines2}, hence by Bogomolov--Gieseker inequality \cite[Thm~0.1]{Langer_Semistable_sheaves_in_pos_char} we have:
   \[
      (2 \dim X \cdot c_2(TX) - (\dim X -1)\cdot  c_1(TX)^2) \cdot c_1(L)^{\dim X -2}  \ge 0. 
   \]
   Substituting the explicit expressions and rearranging terms we get
   \[
      p(1)\ge 4 + \frac{5}{21} \mathfrak{d} \quad \text{ or }\quad  p(1)\ge 7 + \frac{19}{216} \mathfrak{d},
   \]
   respectively, which proves the claim of the theorem.
   
   The calculations were performed using the Chern classes package for {\tt magma} available at \cite{chern_classes_package}.
\end{proof}
\begin{rem}
   The Hilbert polynomials of $p(m)$ from the proof of Theorem~\ref{thm_dim_of_Aut_X} are equal to:
   \begin{align*}
      p(m) &= \mathfrak{d} \tbinom{m+7}{7} -  2\mathfrak{d} \tbinom{m+6}{6} + (\mathfrak{d}+ p(1) -4) \tbinom{m+5}{5}\\
           &- (p(1) - 4) \tbinom{m+4}{4} + \tbinom{m+3}{3}, \text{ or}\\
      p(m) &= \mathfrak{d} \tbinom{m+9}{9} -  \tfrac{5}{2} \mathfrak{d} \tbinom{m+8}{8} + (2\mathfrak{d}+ 2p(1) -14) \tbinom{m+7}{7} \\
           &- \left(\tfrac{1}{2}\mathfrak{d} + 3p(1) - 21\right) \tbinom{m+6}{6} + (p(1) - 5) \tbinom{m+5}{5} - \tbinom{m+4}{4},
   \end{align*}
   for $\dim X = 7$ or $9$, respectively.
   In particular, if $\dim X = 9$, then the degree $\mathfrak{d}$ is even.
   For $\dim X  \ge 11$ the information from Kodaira vanishing and Bogomolov--Gieseker 
     inequality is not enough to determine the bounds on $h^0(L)$ in terms of the degree of $(X,L)$.
   If $\dim X = 11$, then we have:
     \begin{align*}
      p(m) &= \mathfrak{d} \tbinom{m+11}{11} -  3\mathfrak{d} \tbinom{m+10}{10} + (3\mathfrak{d} - 8 p(1) + p(2) +27) \tbinom{m+9}{9}\\
           &-(\mathfrak{d} - 16p(1) + 2 p(2) + 54) \tbinom{m+8}{8}  -(7 p(1) - p(2) -21)\tbinom{m+7}{7}\\
           &-(p(1) - 6) \tbinom{m+6}{6} + \tbinom{m+5}{5}\text{, and}\\
      11 p(2) &+ 297 \ge 88 p(1) + 4 d.      
    \end{align*}
\end{rem}

We may strengthen the statement of the final item of Theorem~\ref{thm_contact_simple}.
\begin{cor}\label{cor_A1}
    Suppose Assumptions~\ref{assumptions-simple} are satisfied with $G$ of type $A_1$.
    Then $\dim X\ge 11$.
\end{cor}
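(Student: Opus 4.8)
The plan is to combine the numerical rigidity forced by Assumptions~\ref{assumptions-simple} with the dimension bound of Theorem~\ref{thm_dim_of_Aut_X} and the known classifications in low dimension. First I would record the relevant invariants: since $\g=\mathfrak{sl}_2$ is three-dimensional, Assumption~\ref{assumptions-simple}\ref{item_assumption_H0L} gives $h^0(X,L)=\dim\g=3$, and by Lemma~\ref{lem_contact-canonical-linearization} (with $\HH^0(X,TX)=\g$) together with Assumption~\ref{assumptions-simple}\ref{item_assumption_torus_simple_automorphisms} we get $\dim\Aut(X)=\dim\Aut^{\circ}(X)=h^0(X,L)=3$. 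Because $\dim X=2n+1$ is odd, it then suffices to rule out $\dim X\in\{1,3,5,7,9\}$.

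The two highest of these, $\dim X=7$ and $\dim X=9$, are dispatched immediately by Theorem~\ref{thm_dim_of_Aut_X}, which forces $\dim\Aut(X)\ge 5$ and $\dim\Aut(X)\ge 8$ respectively; both contradict $\dim\Aut(X)=3$. This is where the real weight of the argument sits, but it is entirely absorbed into Theorem~\ref{thm_dim_of_Aut_X} (through stability of $TX$ and the Bogomolov--Gieseker inequality), so at this point it is only a citation.

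For the small dimensions I would appeal to the established low-dimensional classifications, checking in each case that the candidate manifolds fail \emph{either} the Picard condition \emph{or} the requirement that $\Aut^{\circ}(X)$ be simple of type $A_1$. If $\dim X=1$ then $X\simeq\PP^1$ with $L\simeq\cO(2)$, so $\Pic X=\ZZ\cdot\cO(1)\ne\ZZ\cdot L$, violating Assumption~\ref{assumptions-weaker}\ref{item_assumption_PicX_ZL}. If $\dim X=3$ then any contact Fano threefold is $\PP^3$ or $\PP(T^*\PP^2)$ (the same input as in the proof of Lemma~\ref{lem_9fold_no1dim}); the first has $L\simeq\cO(2)$ and the second has $\Pic X=\ZZ^2$, so neither meets Assumption~\ref{assumptions-weaker}\ref{item_assumption_PicX_ZL}. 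If $\dim X=5$ then the LeBrun--Salamon conjecture for $n=2$ (Poon--Salamon, Druel) shows that the only contact Fano fivefold with $\Pic X=\ZZ\cdot L$ is the adjoint variety of $G_2$, whose automorphism group is of type $G_2$, again excluding type $A_1$.

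Having excluded $\dim X\in\{1,3,5,7,9\}$ and using that $\dim X$ is odd, we conclude $\dim X\ge 11$. The only genuinely delicate part of the plan is the bookkeeping in the small dimensions: one must be careful to distinguish the Picard-rank obstruction from the wrong-automorphism-type obstruction, since different models on the classification lists are ruled out for different reasons; once Theorem~\ref{thm_dim_of_Aut_X} is granted, no further geometric input into the corollary is needed.
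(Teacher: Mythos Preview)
Your proposal is correct and follows essentially the same approach as the paper's proof: rule out $\dim X\in\{3,5\}$ via the known classifications of contact Fano $3$- and $5$-folds (either $\Pic X\ne\ZZ\cdot L$ or the automorphism group is of type $G_2$), and rule out $\dim X\in\{7,9\}$ by Theorem~\ref{thm_dim_of_Aut_X} against $\dim\Aut(X)=\dim G=3$. The only cosmetic differences are that you additionally dispose of $\dim X=1$ explicitly and route the equality $\dim\Aut(X)=3$ through $h^0(X,L)$, whereas the paper simply invokes $\dim G=3$ directly from Assumption~\ref{assumptions-simple}\ref{item_assumption_torus_simple_automorphisms}.
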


\begin{proof}
    If $\dim X=5$ or $\dim X=3$ by theorems of \cite{druel} and \cite{ye, peternell_jabu_contact_3_folds},
       either Assumptions~\ref{assumptions-weaker}\ref{item_assumption_contact} and \ref{item_assumption_PicX_ZL} fail to simultaneously hold,
       or $G$ is of type $G_2$, a contradiction.
    Since $\dim G = 3$, if $\dim X= 7$ or $\dim X =9$, then we get a contradiction from Theorem~\ref{thm_dim_of_Aut_X}.
\end{proof}

\subsection{Classification results for contact manifolds}\label{subsec_classification_contact}

We gather the results from  earlier sections and the literature in order to prove 
  Theorems~\ref{thm_contact_dim_7_9} and \ref{main_thm} concerning 
  the classification of contact Fano manifolds with reductive automorphism group.
In addition, we assume that either the dimension of the manifold is small, 
  or rank of the automorphism group is sufficiently large.

\begin{proof}[Proof of Theorem~\ref{main_thm}]
Here we assume that $X$ is a contact Fano manifold of dimension $2n+1$ and $G=\Aut (X)$ is reductive of the rank at least $n-2$.
We claim that $X$ is a homogeneous space.
    If $\dim X \le 5$ then the theorems of \cite{druel} and \cite{ye, peternell_jabu_contact_3_folds} imply the statement.
If $\Pic X \ne \ZZ$, then the theorems of \cite{KPSW, Demailly} imply that $X\simeq \PP(T^*M)$ for some projective manifold $M$.
Further, since $X$ is Fano, $TM$ is an ample vector bundle and $M\simeq \PP^{n+1}$ by~\cite{mori_proj_mflds_with_ample_tangent},
   that is $X\simeq \PP (T^*\PP^{n+1})$, which is homogeneous, as claimed.
Now suppose $\Pic X =\ZZ$, but $L$ does not generate $\Pic(X)$, say $L \simeq (L')^a$ for a line bundle $L'$ and an integer $a >1$.
Then $-K_X \simeq L^{n+1} \simeq (L')^{an+a}$, thus $X$ is Fano of index at least $2n+2$.
By \cite{kobayashi_ochiai}, $X\simeq \PP^{2n+1}$, in particular $X$ is homogeneous.

Therefore, it remains to treat the case when $\Pic X = \ZZ \cdot L$.
By Proposition~\ref{prop_contact->Gsimple} the group $G$ is simple
   and we claim that Assumptions~\ref {assumptions-weaker} and~\ref{assumptions-simple} are satisfied.
Indeed, \ref{item_assumption_contact}, \ref{item_assumption_PicX_ZL}, \ref{item_assumption_torus_action}, 
  \ref{item_assumption_torus_simple_automorphisms} are immediate.
Items~\ref{item_assumption_extremal_points_are_isolated_pts} and \ref{item_assumption_H0L} hold by Lemma~\ref{lem_contact-Gamma=Delta}
  while \ref{item_assumption_Delta_weak} follows from  Lemma~\ref{lem_contact-canonical-linearization}
  and $\Delta(X,L,H)=\Gamma(X,L,H)$ from \ref{item_assumption_extremal_points_are_isolated_pts}.

Thus we can apply Theorem~\ref{thm_contact_simple}. 
Most of the cases of that theorem have too large dimension compared to the rank of $G$. 
The only case left which is not a homogeneous space is $G$ of type $A_1$, which is treated in Corollary~\ref{cor_A1}.
This shows that $X$ must be a homogeneous space, and the explicit list in Table~\ref{tab_list_of_contact_with_high_rank_of_torus}
  arises from comparing the rank of a simple group and the dimension of the corresponding adjoint variety.
\end{proof}
  
\begin{proof}[Proof of Theorem~\ref{thm_contact_dim_7_9}]
   Let $X$ be a contact Fano manifold of dimension at most $9$.
   If $\dim X = 3$ or $\dim X =5$ then  
     \cite{ye, peternell_jabu_contact_3_folds} and \cite{druel}
     solved the problem.
   If $\dim X = 7$ or $9$, then $\dim \Aut(X) \ge 5$ by Theorem~\ref{thm_dim_of_Aut_X}.
   Therefore, the rank of $\Aut(X)$ must be at least $2$, and we are in a position to apply Theorem~\ref{main_thm}, which shows the claim.   
\end{proof}

\subsection{Classification results for quaternion-K\"ahler manifolds}

In  this section we provide references 
  that connect our results about contact Fano manifolds to the claims about quaternion-K\"ahler manifolds.
For the details of the construction of the twistor space we follow \cite{Salamon-Inventiones}. 
See also the more precise references in the proofs below.

\begin{thm}\label{thm_twistor}
   The twistor construction determines a bijection between the following sets:
   \begin{itemize}
    \item The set of contact Fano manifolds admitting a K\"ahler-Einstein metric (up to an algebraic isomorphism),
    \item The set of positive quaternion-K\"ahler manifolds (up to a conformal diffeomorphism).
   \end{itemize}
\end{thm}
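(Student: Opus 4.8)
The plan is to assemble the twistor correspondence from the classical literature rather than to reprove it, since both directions are known. First I would recall the forward construction. Given a positive quaternion-K\"ahler manifold $\cM$ of real dimension $4n$, the quaternion-K\"ahler structure singles out a rank-three subbundle $\mathcal{G} \subset \mathrm{End}(T\cM)$, and the twistor space $Z$ is the unit sphere bundle of $\mathcal{G}$, whose fiber over a point parametrizes the compatible orthogonal complex structures on the corresponding tangent space. Following \cite{Salamon-Inventiones}, the Levi-Civita connection splits $TZ$ into horizontal and vertical parts and endows $Z$ with an integrable complex structure, so that $Z$ becomes a complex manifold of dimension $2n+1$. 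The same connection data produce a holomorphic contact distribution $F\subset TZ$ with contact line bundle $L$, and positivity of the scalar curvature is precisely the condition that $L$ be ample with $-K_Z = (n+1)L$; hence $Z$ is a contact Fano manifold. Finally, combining a suitable rescaling of the base metric with the Fubini--Study metric along the $\PP^1$-fibers yields a K\"ahler--Einstein metric on $Z$, so the forward assignment $\cM \mapsto Z$ lands in the first set.

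Next I would describe the inverse assignment. Given a contact Fano manifold $X$ of dimension $2n+1$ equipped with a K\"ahler--Einstein metric, the aim is to reconstruct a positive quaternion-K\"ahler manifold having $X$ as its twistor space. Here I would invoke the structure theory of \cite{Salamon-Inventiones} and \cite{LeBrunSalamon}: the contact line bundle $L$ determines a canonical circle action on $X$, and the K\"ahler--Einstein metric, normalized so that its Einstein constant matches the Fano index $n+1$, descends through the reduction by this action to a quaternion-K\"ahler metric on the quotient $\cM$; positivity of the Einstein constant translates into positivity of the scalar curvature of $\cM$. The substantive point, which I would import wholesale, is that a complex contact structure together with a correctly normalized K\"ahler--Einstein metric forces $X$ to be a twistor space in the first place.

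It then remains to check that the two assignments are mutually inverse and that they respect the stated equivalences. For mutual inverseness I would verify directly, along the fibration $Z\to\cM$, that reconstructing the base of $Z(\cM)$ returns $\cM$ up to homothety and that the twistor space of the reconstructed base returns $X$ as a complex contact manifold; these are local computations once the bundle structures are identified. For the equivalence relations I would use uniqueness results on both sides: on a Fano manifold the K\"ahler--Einstein metric compatible with a fixed polarization is unique up to biholomorphism and scale, so that ``admitting a K\"ahler--Einstein metric'' is a \emph{property} of $X$ rather than additional data, while the positive quaternion-K\"ahler metric on $\cM$ is unique up to homothety. Consequently an algebraic isomorphism of contact Fano manifolds pulls back K\"ahler--Einstein metrics and induces a conformal diffeomorphism of the bases, and conversely, giving the claimed bijection of sets of equivalence classes.

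The hard part will be the converse direction together with the bookkeeping of equivalences. One must guarantee that \emph{every} contact Fano manifold admitting a K\"ahler--Einstein metric truly arises as a twistor space, and not merely that twistor spaces have contact Fano type; one must also argue that the matching of the Einstein constant with the Fano index is automatic and not an extra hypothesis, and that a conformal diffeomorphism between two positive quaternion-K\"ahler manifolds is necessarily a homothety up to isometry, so that the quotient of the equivalence relation is the intended one. All of these rely on the integrability and uniqueness statements already established in \cite{Salamon-Inventiones} and \cite{LeBrunSalamon}; once they are invoked, the remaining verifications are formal.
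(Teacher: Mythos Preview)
Your approach is the same as the paper's: assemble the bijection from classical results rather than reprove them. The paper is terser and more precise about the citations: the forward direction is \cite{Salamon-Inventiones} (Thms~4.1, 4.3, 6.1, Cor.~6.2); well-definedness and injectivity of the map on equivalence classes is \cite[Thm~3.2]{LeBrunSalamon}; and surjectivity is LeBrun's inverse twistor theorem \cite[Thm~A]{lebrun}, a separate paper which you do not cite. Your sketch of the inverse construction via a ``circle action'' is not accurate --- the fibers over $\cM$ are $\PP^1$'s, not circles, so a circle quotient lands in the wrong dimension --- but since you explicitly say you are importing this step wholesale, this is a misdescription rather than a gap. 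The bookkeeping on equivalence classes that you single out as the ``hard part'' is exactly what \cite[Thm~3.2]{LeBrunSalamon} packages up, so once you cite it there is nothing further to verify.
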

\begin{proof}
   Let $X$ be the twistor space \cite[\S2]{Salamon-Inventiones} of a quaternion-K\"ahler manifold $\cM$.
   Then $X$ is a complex, contact manifold \cite[Thms~4.1, 4.3]{Salamon-Inventiones}.
   If $\cM$ is in addition a positive quaternion-K\"ahler manifold, 
      then the twistor space is Fano, in particular, projective \cite[Cor.~6.2]{Salamon-Inventiones}, 
      and admits a K\"ahler-Eistein metric  \cite[Thm~6.1]{Salamon-Inventiones}.
   By  \cite[Thm~3.2]{LeBrunSalamon} the map from 
      the set of positive quaternion-K\"ahler manifolds up to a conformal diffeomorphisms 
      into the set of contact Fano manifolds admitting a K\"ahler-Einstein metric up to an algebraic isomorphism 
      is well defined and injective. 
   It is also surjective by \cite[Thm~A]{lebrun}, completing the proof.
\end{proof}

\begin{thm}\label{thm_dim_Aut_and_Isom}
   Except in the case $\cM \simeq \HHH\PP^n$ and $X\simeq \CC\PP^{2n+1}$,
     the real dimension of the isometry group of a positive quaternion K\"ahler manifold $\cM$ is equal 
     to complex dimension of the group of automorphism of its twistor space $X$. 
   In fact, the connected component of the latter group is the complexification of the connected component of the first one.
\end{thm}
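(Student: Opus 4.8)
The plan is to exploit the twistor correspondence to realize ${\rm Isom}(\cM)$ inside $\Aut(X)$ as a compact subgroup, and then invoke the reductivity coming from the Kähler--Einstein metric. Recall from Theorem~\ref{thm_twistor} and \cite{Salamon-Inventiones} that $X$ carries a canonical antiholomorphic involution $\tau$ (the real structure) whose invariant rational curves are the twistor lines, and that $\cM$ is recovered as the parameter space of these lines. First I would lift isometries: any $\phi\in{\rm Isom}^\circ(\cM)$ acts on the bundle of almost complex structures defining $X$, and since it preserves the quaternion-K\"ahler structure the induced map $\tilde\phi\colon X\to X$ is holomorphic and commutes with $\tau$. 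As $\cM=X/(\text{twistor lines})$, the lift $\phi\mapsto\tilde\phi$ is injective, so it identifies ${\rm Isom}^\circ(\cM)$ with a compact connected subgroup $K\subseteq\Aut^\circ(X)$.

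Next I would bring in the metric. By Theorem~\ref{thm_twistor} the manifold $X$ is Fano and admits a K\"ahler--Einstein metric $g_{KE}$, built from the quaternion-K\"ahler metric on $\cM$ and hence invariant under $K$. By Matsushima's classical theorem $\Aut^\circ(X)$ is reductive and equals the complexification of the maximal compact subgroup $\tilde K:={\rm Isom}^\circ(X,g_{KE})$; in particular $\dim_{\RR}\tilde K=\dim_{\CC}\Aut^\circ(X)$. By construction $K\subseteq\tilde K$, so it remains to prove the reverse inclusion $\tilde K\subseteq K$ away from the excluded case. Here I would use that, unless $X\simeq\PP^{2n+1}$, we have $\Pic X=\ZZ\cdot L$ and the contact distribution is unique \cite[Cor.~4.5]{kebekus_lines1}; consequently every automorphism of $X$ preserves the contact structure, the distinguished family of twistor lines is intrinsic to $X$, and therefore each $g_{KE}$-isometry descends to a structure-preserving diffeomorphism of $\cM$, that is to an element of ${\rm Isom}^\circ(\cM)$. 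This yields $K=\tilde K$, whence ${\rm Isom}^\circ(\cM)$ is a maximal compact subgroup of $\Aut^\circ(X)$ and $\Aut^\circ(X)=K_{\CC}=({\rm Isom}^\circ(\cM))_{\CC}$, giving the asserted equality of dimensions.

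Finally I would isolate why $\HHH\PP^n$ is genuinely exceptional: there $X\simeq\PP^{2n+1}$, so $\tilde K={\rm Isom}^\circ(\PP^{2n+1},g_{FS})=PU(2n+2)$, whereas a Fubini--Study isometry need not preserve the quaternionic fibration $\PP^{2n+1}\to\HHH\PP^n$; only the subgroup $PSp(n+1)={\rm Isom}^\circ(\HHH\PP^n)$ does, so $K\subsetneq\tilde K$ and the dimension count fails. The main obstacle is precisely the descent step $\tilde K\subseteq K$: one must argue that for $X\not\simeq\PP^{2n+1}$ the twistor family is canonically determined by the complex-geometric data of $X$, so that metric automorphisms cannot move it off the real locus, which is exactly the rigidity that breaks down for $\PP^{2n+1}$. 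Everything else is either the standard twistor dictionary of \cite{Salamon-Inventiones,LeBrunSalamon} or the classical Matsushima decomposition for K\"ahler--Einstein Fano manifolds.
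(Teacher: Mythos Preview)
Your approach differs from the paper's, and the descent step $\tilde K\subseteq K$ is a genuine gap rather than a routine verification.

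The paper does not attempt to push K\"ahler--Einstein isometries of $X$ down to $\cM$. Instead it establishes the dimension equality first, by quoting Salamon's computation \cite[Lem.~6.4, 6.5]{Salamon-Inventiones} that $\dim_{\RR}{\rm Isom}(\cM)=h^0(X,L)$, then identifying $h^0(X,L)$ with $\dim_{\CC}\Aut(X,F)$ via \cite{Beauville}, and finally using uniqueness of the contact structure \cite[Cor.~4.5]{kebekus_lines1} to conclude $\Aut(X,F)=\Aut(X)$ for $X\not\simeq\PP^{2n+1}$ (with $\PP(T^*\PP^{n+1})$ checked separately). Once the dimensions are known to agree, the complexification statement follows immediately from the injective lift ${\rm Isom}^{\circ}(\cM)\hookrightarrow\Aut^{\circ}(X)$ and a dimension count.

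Your route via Matsushima is attractive, but the inclusion $\tilde K\subseteq K$ needs more than what you supply. Uniqueness of the contact distribution gives only that every holomorphic automorphism preserves $F$; it does \emph{not} say that the twistor fibration $X\to\cM$, or equivalently the real structure $\tau$, is determined by the complex (or contact) data of $X$. A holomorphic $g_{KE}$-isometry certainly preserves $F$, but to descend to $\cM$ it must preserve the $\tau$-invariant twistor lines, and you give no argument that these are singled out purely by $(X,F)$ or by $g_{KE}$. You yourself flag this as ``the main obstacle'' and then assert it rather than prove it. Without this, your argument yields only $\dim_{\RR}{\rm Isom}^{\circ}(\cM)\le\dim_{\CC}\Aut^{\circ}(X)$. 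A minor side issue: your claim that $\Pic X=\ZZ\cdot L$ whenever $X\not\simeq\PP^{2n+1}$ is false, since $\PP(T^*\PP^{n+1})$ has Picard rank two; only the uniqueness of the contact structure is actually needed there, and that does hold.
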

\begin{proof}
   By \cite[Lem.~6.4, 6.5]{Salamon-Inventiones} the dimension of the isometry group is equal to $h^0(X,L)$,
      where $L$ is the contact line bundle as in Sections~\ref{sec_contact_general} and~\ref{sec_contact_simple}.
   (Note the traditional discrepancy between the notation in algebraic geometry papers such as this one, and quaternion-K\"ahler papers such as \cite{Salamon-Inventiones},
     where $L$ usually denotes the ``half'' of the contact line bundle, which is defined only locally, but whose square is equal to the contact line bundle.)
   Further, $h^0(X,L)$ is equal to the dimension of the group of contact automorphisms of $X$ 
     (see \cite[Prop.~1.1]{Beauville} or \cite[Cor.E.14(i)]{jabu_dr}).
   Finally, the claim in most of the cases follows from the uniqueness of the contact structure 
     \cite[Cor.~4.5]{kebekus_lines1} or \cite[Cor.E.14(ii)]{jabu_dr}).
   It remains to verify the claim explicitly in the only contact Fano case with $\Pic X\ne \ZZ$, i.e. $\PP(T^* \PP^{n+1})$, 
     which is straightforward.
   
   To see that $\rm{Isom}(\cM)$ complexifies to $\Aut(X)$,
     note that the naturality of the twistor construction gives an embedding 
      $\rm{Isom}(\cM) \hookrightarrow \Aut(X)$.
   Since $\Aut(X)$ is a complex Lie group and $\rm{Isom}(\cM)$ is compact, we get an immersive homomorphism $\rm{Isom}(\cM)_{\CC} \to \Aut(X)$.
   By the dimension count above, it is also surjective.
\end{proof}

\begin{thm}\label{thm_reductive_Aut_for_KE}
  Let $X$ be the twistor space of a positive quaternion-K\"ahler manifold $\cM$.
  Then $\Aut X$ is reductive.
\end{thm}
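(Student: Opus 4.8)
The plan is to realize the identity component $\Aut^{\circ}(X)$ as the complexification of a compact Lie group, from which reductivity is immediate. Since the unipotent radical of an algebraic group is connected, it is contained in $\Aut^{\circ}(X)$, and so it suffices to show that this identity component is reductive.

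First I would separate out the one case excluded from Theorem~\ref{thm_dim_Aut_and_Isom}, namely $\cM\simeq\HHH\PP^{n}$ with $X\simeq\CC\PP^{2n+1}$; here $\Aut(X)=PGL_{2n+2}$ is visibly reductive, so this case needs no further argument. In all remaining cases I would invoke the fact that a positive quaternion-K\"ahler manifold $\cM$ is compact (it is Einstein with positive scalar curvature, so Myers' theorem applies), and therefore its isometry group ${\rm Isom}(\cM)$ is a compact real Lie group; in particular so is its identity component ${\rm Isom}^{\circ}(\cM)$.

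The central step is then Theorem~\ref{thm_dim_Aut_and_Isom}, which identifies $\Aut^{\circ}(X)$ with the complexification $({\rm Isom}^{\circ}(\cM))_{\CC}$. The final ingredient is the standard structural fact that the complexification of a compact Lie group is a connected reductive complex algebraic group --- equivalently, a connected complex reductive group is exactly one admitting a compact real form. Combining these yields that $\Aut^{\circ}(X)$, and hence $\Aut(X)$, is reductive.

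I do not expect a serious obstacle here: the argument is essentially a concatenation of Theorems~\ref{thm_twistor} and~\ref{thm_dim_Aut_and_Isom} with classical Lie theory. The only points requiring care are the bookkeeping of connected components (to pass from reductivity of the identity component to that of the whole group) and the explicit treatment of the exceptional $\CC\PP^{2n+1}$ case that lies outside the scope of Theorem~\ref{thm_dim_Aut_and_Isom}. As an alternative route that bypasses Theorem~\ref{thm_dim_Aut_and_Isom} altogether, I could appeal directly to Matsushima's theorem: by Theorem~\ref{thm_twistor} the variety $X$ is a Fano manifold carrying a K\"ahler-Einstein metric, and the automorphism group of any such manifold is reductive.
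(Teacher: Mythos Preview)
Your argument is correct, but your primary route differs from the paper's. The paper goes straight to Matsushima's theorem: by \cite[Thm~6.1]{Salamon-Inventiones} the twistor space $X$ is Fano and carries a K\"ahler--Einstein metric, and Matsushima's theorem then gives reductivity of $\Aut(X)$ in one line. You mention this only as an alternative; your main approach instead invokes Theorem~\ref{thm_dim_Aut_and_Isom} to identify $\Aut^{\circ}(X)$ with the complexification of the compact group ${\rm Isom}^{\circ}(\cM)$, and then uses the classical fact that complexifications of compact Lie groups are reductive. This is a legitimate and logically independent path (Theorem~\ref{thm_dim_Aut_and_Isom} does not rely on reductivity of $\Aut(X)$, only on the universal property of complexification and a dimension count), and it has the virtue of making the compact real form of $\Aut^{\circ}(X)$ explicit. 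The paper's route is shorter and avoids the case split on $\CC\PP^{2n+1}$, since Matsushima applies uniformly; your route trades that economy for a more structural description of the automorphism group.
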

\begin{proof}
  By \cite[Thm~6.1]{Salamon-Inventiones} the manifold $X$ is Fano and admits a K\"ahler-Einstein metric.
  Thus by \cite[Thm. 1]{matsushima_Kahler_Einstein_have_reductive_group_of_automorphisms} the automorphism group is reductive.
\end{proof}

We are now ready to put together the building blocks and prove Theorems~\ref{thm_qK_dim_12_16} and~\ref{thm_qK_rank_of_isometries_group}.

\begin{proof}[Proof of Thereom~\ref{thm_qK_rank_of_isometries_group}]
  Let $\cM$ be a positive quaternion-K\"ahler manifold of dimension~$4n$, 
    and suppose the isometry group ${\rm Isom}(\cM)$ has rank at least $n-2$.
  Let $X$ be the twistor space of $\cM$.
  The manifold $X$ is contact Fano of dimension $2n+1$ and it admits a K\"ahler-Einstein metric by Theorem~\ref{thm_twistor}. 
  By Theorem~\ref{thm_reductive_Aut_for_KE} the group of automorphisms of $X$ is reductive.
  By Theorem~\ref{thm_dim_Aut_and_Isom}, the rank of $\Aut X$ is at least $n-2$, too.
  Therefore $X$ is an adjoint variety by Theorem~\ref{main_thm}.
  Since the adjoint varieties are exactly the twistor spaces of Wolf spaces, by Theorem~\ref{thm_twistor} 
    the manifold $\cM$ must be one of the Wolf spaces, as claimed.
\end{proof}

\begin{proof}[Proof of Thereom~\ref{thm_qK_dim_12_16}]
  Let $\cM$ be a positive quaternion-K\"ahler manifold of dimension~$12$ or $16$ and let $X$ be the twistor space of $\cM$.
  Since $X$ is a contact Fano manifold of dimension $7$ or $9$ (respectively) admitting a K\"ahler-Einstein metric by Theorem~\ref{thm_twistor}, 
    and by Theorem~\ref{thm_reductive_Aut_for_KE} the group of automorphisms of $X$ is reductive.
  Therefore $X$ is an adjoint variety by Theorem~\ref{thm_contact_dim_7_9}.
  Thus $\cM$ must be one of the Wolf spaces by Theorem~\ref{thm_twistor}. 
\end{proof}

\appendix

\section{Riemann-Roch and localization in \texorpdfstring{$K$}{K}-theory (by Andrzej Weber)}
\label{sect_appendix}

Suppose an algebraic torus $H\simeq (\CC^*)^r$ acts on an algebraic variety $X$.
An algebraic vector $H$-bundle over $X$ defines an element in  topological equivariant $K$-theory $K^H(X)$ defined by Segal \cite{Segal}, as well in the algebraic equivariant K-theory, see \cite{Nielsen, Thomason, EdidinGraham}. 
Which theory we use is irrelevant for us. For an accesible overview of algebraic equivariant $K$-theory we refer the reader to \cite[\S5]{ChrissGinzburg}. 
The Localization Theorem provides a formula
for the equivariant Euler characteristic $\chi^H$, which is  an element of the equivariant $K$-theory of a point $K^H(pt)$. This ring is just the
representation ring
$$R(H)\simeq\ZZ[t_1,t_1^{-1},t_2,t_2^{-1},\dots,t_r,t_r^{-1}]\,.$$
In the appendix we use the following notation:
fix an integral basis $$x_1,x_2,\dots,x_r\in M\subset \mathfrak{h}^*$$  of the dual of Lie algebra of $H$. Elements of $M$ are called weights.
Denote by
$t_1,t_2,\dots,t_r$  the corresponding characters.
That is, the image of $x_i$ under the identification of $M$ (with additive notation) 
   with $\Hom(H,\CC^*)$ (with multiplicative notation) is denoted by $t_i$.
For a weight $\nu=\sum_{i=1}^r a_i x_i$ the corresponding character is denoted by $t^\nu=\prod_{i=1}^r t_i^{a_i}$.

\begin{thm}\label{thm_RRekwgeneral} Assume that $X$ is a smooth compact complex $H$-manifold and $E$ is an $H$-vector bundle. Then the equivariant Euler characteristic is equal to
\begin{equation}\chi^H(X;E)=\sum_{i\in I}p_{i!}\left(\frac{E_{|Y_i}}{\lambda_{-1}(N^*_{Y_i})}\right)\in R(H)\,.\end{equation}
Here $X^H=\bigsqcup_{i\in I} Y_i$ is the decomposition into connected components, $\lambda_{-1}(N^*_{Y_i})$ is the equivariant Euler class of the normal 
bundle to $Y_i$. 
The map $p_i:Y_i\to pt$ is the constant map and $p_{i!}$ is the push forward in the equivariant $K$-theory.
\end{thm}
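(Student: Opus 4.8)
The plan is to deduce the formula from the localization principle in equivariant $K$-theory, in the spirit of Atiyah--Segal and Thomason, with the self-intersection formula as the central mechanism. Write $\pi\colon X\to pt$ for the structure map, so that $\chi^H(X;E)=\pi_!([E])$ with $\pi_!$ the equivariant pushforward. Let $j\colon X^H\hookrightarrow X$ be the inclusion of the fixed locus $X^H=\bigsqcup_i Y_i$, and let $N$ be its normal bundle, with $N_{Y_i}$ the normal bundle of the component $Y_i$. Since $\pi\circ j=p\colon X^H\to pt$ is the constant map, functoriality of pushforward gives $\pi_!\circ j_!=p_!$, and the argument reduces to rewriting $[E]$ in terms of a class pushed forward from $X^H$. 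Throughout I localize $R(H)$ at the multiplicative set $S$ generated by the elements $1-t^\nu$, $\nu\in M\setminus\{0\}$.

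First I would record the self-intersection (excess) formula: for $\alpha\in K^H(X^H)$,
$$j^*j_!(\alpha)=\lambda_{-1}(N^*)\cdot\alpha,\qquad \lambda_{-1}(N^*)=\sum_k(-1)^k[\Wedge{k}N^*].$$
This follows from the Koszul resolution of $\cO_{X^H}$ by exterior powers of the conormal bundle together with the projection formula. The class $\lambda_{-1}(N^*_{Y_i})$ is a unit in $S^{-1}K^H(Y_i)$: its image under the rank map $K^H(Y_i)\to R(H)$ is $\prod_\nu(1-t^{-\nu})^{r_\nu}\in S$ (the product over the nonzero weights $\nu$ of $N_{Y_i}$, of multiplicity $r_\nu$), while the kernel of the rank map is nilpotent by finite-dimensionality of $Y_i$, so $\lambda_{-1}(N^*_{Y_i})$ is a unit plus a nilpotent element.

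Next I would invoke the $K$-theoretic localization theorem: after inverting $S$, the pushforward $j_!\colon S^{-1}K^H(X^H)\to S^{-1}K^H(X)$ is an isomorphism. This is the technical heart and the step I expect to be the main obstacle. It rests on showing that $S^{-1}K^H(U)=0$ for the open complement $U=X\setminus X^H$; this holds because $K^H(U)$ is supported, as an $R(H)$-module, on the proper closed subschemes of $\Spec R(H)$ determined by the stabilizers of points of $U$ (each a proper subgroup of $H$), so that localization at $S$ kills it. Granting the isomorphism, every $\beta\in S^{-1}K^H(X)$ has the form $j_!(\gamma)$; applying $j^*$ and the self-intersection formula forces $\gamma=j^*\beta/\lambda_{-1}(N^*)$.

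Finally I would assemble the pieces. Taking $\beta=[E]$ and using $\pi_!\circ j_!=p_!$ yields
$$\chi^H(X;E)=p_!\!\left(\frac{j^*[E]}{\lambda_{-1}(N^*)}\right)=\sum_{i\in I}p_{i!}\!\left(\frac{E_{|Y_i}}{\lambda_{-1}(N^*_{Y_i})}\right),$$
an identity in $S^{-1}R(H)$. As $R(H)$ is a Laurent polynomial ring, hence a domain, the localization map $R(H)\to S^{-1}R(H)$ is injective, and since the left-hand side already lies in $R(H)$ this is the asserted equality. As a cross-check I would note the alternative route obtained by transporting the Atiyah--Bott--Berline--Vergne integration formula in equivariant cohomology through the equivariant Riemann--Roch (Atiyah--Singer) isomorphism, which recovers the same fixed-point expansion.
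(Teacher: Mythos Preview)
Your argument is correct, but it takes a genuinely different route from the paper. You work intrinsically in equivariant $K$-theory: you use the excess/self-intersection formula $j^*j_!=\lambda_{-1}(N^*)\cdot(-)$, invoke Thomason/Atiyah--Segal localization to show $j_!$ becomes an isomorphism after inverting $S$, and then read off $\gamma=j^*[E]/\lambda_{-1}(N^*)$ and push forward. This is clean and self-contained on the $K$-theory side; the only substantial input is the vanishing of $S^{-1}K^H(U)$ on the complement, which you correctly identify as the main technical point.

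The paper instead \emph{derives} the $K$-theoretic statement from the cohomological side. It starts from Nielsen's formula
\[
\chi^H(X,E)=\sum_i\int_{Y_i}\frac{td(Y_i)\cdot ch^H(E_{|Y_i})}{ch^H\lambda_{-1}(N^*_{Y_i})},
\]
expands $ch^H\lambda_{-1}(L)^{-1}$ explicitly for line summands, and then recognizes the integrand as $td(Y_i)\cdot ch^H(E_{|Y_i}/\lambda_{-1}(N^*_{Y_i}))$; applying Hirzebruch--Riemann--Roch to the extended Chern character identifies each integral with $p_{i!}(E_{|Y_i}/\lambda_{-1}(N^*_{Y_i}))$. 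In other words, the paper's argument is precisely the ``cross-check'' you mention at the end. What your approach buys is directness and no passage through completed cohomology; what the paper's approach buys is that the explicit expansion of $ch^H\lambda_{-1}^{-1}$ is already in hand, which is exactly what is needed for the application (Corollary~\ref{cor_RRekw}) where one must actually compute the contributions of curves.
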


 We will explain what are the objects appearing in the theorem below, but first let us derive a corollary:

\begin{cor}\label{cor_RRekw} Suppose
$E=L$ is a line bundle and $\mu $ is a linearization of the action of $H$ on $L$.
Assume the fixed points $X^H$ consist of isolated points $y_1, y_2,\dotsc, y_k$ and curves $C_{k+1},C_{k+2}, \dots, C_\ell$.
The genus of $C_i$ is denoted by $g_i$.
 Suppose \begin{enumerate}[leftmargin=*]
\item for $i=1,2,\dots k$: $\mu_i =\mu(y_i)$ is the weight of the action of $H$ on $L|_{y_i}$
and $\nu_{i,j}$ are the weights of $H$ on $T^*_{y_i}X$;
\item for $i=k+1,k+2,\dots \ell$:
$\mu_i = \mu(C_i)$ is the weight of the action of $H$ on $L_{|C_i}$, $d_i$ --- the degree of $L_{|C_i}$;   the conormal bundle to $C_i$ decomposes into a direct sum
$N^*(C_i)=\bigoplus N^*(i,j)$, and suppose that the action of $H$ on the summand $ N^*(i,j)$ is of the weight $\nu_{i,j}$, let $\mathop{rk}(N^*(i,j))=r_{i,j}$ and let $\int_{C_i}c_1(N^*(i,j))=n_{i,j}$.\end{enumerate}
Then the equivariant Euler characteristic $\chi^H(X, L)$ is equal to
\[
\sum_{i=1}^k\frac {t^{\mu_i}}{\prod_{j} (1-t^{\nu_{i,j}})}+\sum_{i=k+1}^\ell \frac{t^{\mu_i}}{\prod_j(1 - t^{\nu_{i,j}})^{r_{i,j}}}\left(
1-g_i+d_i+\sum_j \tfrac{n_{i,j} }{ t^{-\nu_{i,j}}-1}\right).
\]
\end{cor}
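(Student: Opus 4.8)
The plan is to specialize the localization formula of Theorem~\ref{thm_RRekwgeneral} to the two kinds of fixed components and to compute each pushforward $p_{i!}$ explicitly. For an isolated fixed point $y_i$ the normal bundle is the whole tangent space, so its conormal bundle splits into one-dimensional representations of weights $\nu_{i,j}$ and $\lambda_{-1}(N^*_{y_i}) = \prod_j(1 - t^{\nu_{i,j}})$. Since $Y_i$ is a point, $p_{i!}$ is the identity on $R(H)$ and the numerator $L|_{y_i}$ contributes the single weight $t^{\mu_i}$; this gives at once the first sum in the statement.

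The substance of the corollary is the contribution of the fixed curves $C_i$. Because $H$ acts trivially along $C_i$, one has $K^H(C_i) = K(C_i)\otimes R(H)$ and $p_{i!}$ is the ordinary holomorphic Euler characteristic on the first factor tensored with the identity on $R(H)$. I would evaluate it by equivariant Hirzebruch--Riemann--Roch, writing $p_{i!}(\alpha) = \int_{C_i}\operatorname{ch}^H(\alpha)\operatorname{Td}(TC_i)$, where the character $t^{\nu}$ is read as $e^{\nu}$ with $\nu$ a formal equivariant parameter. On a curve all cohomology in degree exceeding $2$ vanishes, so every Chern class squares to zero and only the degree-$0$ and degree-$2$ parts of the integrand survive.

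The key computation is the Chern character of $1/\lambda_{-1}(N^*)$. Writing $s_j = t^{\nu_{i,j}}$ and using $\xi_a^2 = 0$ for the ordinary Chern roots $\xi_a$ of the summand $N^*(i,j)$ of constant weight $\nu_{i,j}$, I would factor out the constant part to obtain
\[
  \operatorname{ch}\lambda_{-1}(N^*(i,j)) = (1 - s_j)^{r_{i,j}}\Bigl(1 - \tfrac{s_j}{1 - s_j}\,c_1^{\mathrm{ord}}(N^*(i,j))\Bigr),
\]
whose reciprocal is $(1 - s_j)^{-r_{i,j}}\bigl(1 + \tfrac{s_j}{1 - s_j}c_1^{\mathrm{ord}}(N^*(i,j))\bigr)$. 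Multiplying over $j$, then by $\operatorname{ch}^H(L|_{C_i}) = t^{\mu_i}(1 + c_1^{\mathrm{ord}}(L|_{C_i}))$ and by $\operatorname{Td}(TC_i) = 1 + \tfrac12 c_1(TC_i)$, extracting the degree-$2$ part and integrating with $\int_{C_i}c_1^{\mathrm{ord}}(L|_{C_i}) = d_i$, $\int_{C_i}c_1^{\mathrm{ord}}(N^*(i,j)) = n_{i,j}$ and $\int_{C_i}c_1(TC_i) = 2 - 2g_i$, I would arrive at $\frac{t^{\mu_i}}{\prod_j(1 - s_j)^{r_{i,j}}}\bigl(1 - g_i + d_i + \sum_j \tfrac{s_j}{1 - s_j}n_{i,j}\bigr)$. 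The identity $\tfrac{s_j}{1 - s_j} = \tfrac{1}{t^{-\nu_{i,j}} - 1}$ then turns this into precisely the second sum of the statement.

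The main obstacle is bookkeeping rather than conceptual: correctly factoring $\lambda_{-1}$ of a summand of constant weight as $(1 - s_j)^{r_{i,j}}$ times a Chern-class correction, and tracking the unique surviving linear term through the product over $j$ and the multiplication by the Todd class. One must also justify that $p_{i!}$ on the fixed curves is computed by equivariant Hirzebruch--Riemann--Roch in this localized setting, which is standard for a trivial action on the fixed locus and which I would cite rather than reprove.
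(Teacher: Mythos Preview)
Your proof is correct and follows essentially the same route as the paper: both specialize the localization formula to isolated points and curves, expand $ch^H(\lambda_{-1}(N^*))^{-1}$ using that degree-$2$ classes square to zero on a curve, multiply by the Todd class and $ch^H(L)$, and integrate. The paper writes the Todd class directly as $1+(1-g_i)[p]$ and expands $\lambda_{-1}$ line-bundle-by-line-bundle via Chern roots, whereas you handle each constant-weight summand $N^*(i,j)$ in one step and recover $1-g_i$ from $\tfrac12\int c_1(TC_i)$; these are cosmetic differences only.
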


The notation $\mu(y_i)$, $\mu(C_i)$ above is consistent with Sections~\ref{sec_torus_action}--\ref{sec_contact_simple}.
The characters $\nu_{i,j}$ form the compass of $y_i$ or $C_i$ in $X$ with respect to the action of $H$.

Corollary~\ref{cor_RRekw} is applied in Propositions~\ref{prop_if_combinatorial_data_agrees_then_representations_agree}, \ref{prop_character_is_a_Laurent_polynomial}
   and Example~\ref{ex_Hirzebruch_surface} in the situation when the fixed-point set is finite. 
We consider here the case when $X^H$ is of dimension $\leq 1$ having in mind further applications, 
   and also to point out which invariants of the fixed-point components are relevant to compute the space of global sections.

It is hard to trace the first appearance of Theorem~\ref{thm_RRekwgeneral}. Let us review what is present in literature. 
In a paper of Atiyah-Bott from the Wood-Hole conference, \cite{WoodsHole} the result in the case of isolated fixed points is given.
The formula was then repeated by Grothendieck, \cite[Cor.~6.12]{Gro} and Nielsen, \cite[\S4.7]{Nielsen}. 
For the case of nonisolated fixed points we quote Atiyah-Singer paper on equivariant Index Theorem. The relevant theorem  is called there
,,Holomorphic Lefschetz Theorem'' \cite[(4.6)]{AtiyahSinger}.
In \cite{BaumFultonQuart}, where  finite group actions are studied, such a kind of formula is called Lefschetz-Riemann-Roch. 
One can also apply widely known Atiyah-Bott-Berline-Vergne localization in equivariant cohomology, \cite{AtiyahBott, BerlineVergne}. Equivariant Riemann-Roch theorem allows to deduce localization in equivariant K-theory from localization in equivariant cohomology.  For localization theorems in algebraic equivariant K-theory see \cite[Theorem 2.1]{Thomason}, \cite[Theorem 5.11.7]{ChrissGinzburg}.

When  we identify the direct image of sheaves in homological algebra with the push-forward in homotopy theory then
the statement of Theorem~\ref{thm_RRekwgeneral} follows from \cite{tomDieck}, which is valid for any complex-oriented generalized cohomology theory.
Equally well we can deduce Theorem~\ref{thm_RRekwgeneral} from~\cite[Theorem 4.3(b)]{EdidinGraham}.

To show Theorem~\ref{thm_RRekwgeneral} in the form presented here, consider \emph{the equivariant Chern character} $ch^H$.
It maps equivariant $K$-theory to equivariant cohomology, in particular 
\begin{equation}\label{repChern}
   ch^H\colon K^H(pt)\longrightarrow \hat \HH{_H^*}(pt;\QQ)=\prod_{i\geq 0}\HH_H^i(pt;\QQ).
\end{equation}
For a weight $\nu=\sum_{i=1}^n \nu_i x_i$  the image $ch^H(t^\nu)=\exp(\nu)\in \hat \HH^*_H(pt;\QQ)$ will be denoted by $t^\nu$ again, in order to maintain a brevity of formulas.  
We use  \cite[\S4.10]{Nielsen}, which expresses $\chi^H(X,E)$ as:
\begin{equation}\label{LRR2}
   \chi^H(X,E)= \sum_{i\in I}\int_{Y_i} \frac{td(Y_i)\cdot ch^H(E_{|Y_i})}{ch^H\lambda_{-1}(N^*_{Y_i})},
\end{equation}
where $E\mapsto \lambda_{-1}(E)$ is the multiplicative transformation of $K$-theory such that for a line bundle $L$
$$\lambda_{-1}(L)=1-L\,.$$
 A part of the proof of localization theorem is  to show  that $\lambda_{-1}(N^*_{Y_i})$ is invertible after a suitable localization.
Note that the action of the torus $H$ on the base of the bundle $N^*_{Y_i}$ is trivial. 
If $L$ is an invariant line subbundle of $N^*_{Y_i}$ such that $H$ acts with weight $\nu$ on the fibers of $L$ (the weight is necessarily nonzero) then
\begin{multline}  ch^H\lambda_{-1}(L)^{-1}=\frac1{1-ch^H(L)}=\frac1{1-t^{\nu}\left(1+c_1(L)+\dots\right)}=\\=\frac1{1-t^{\nu}}\cdot\frac1{1-\frac{t^{\nu}}{1-t^{\nu}}c_1(L)+\dots}=\frac1{1-t^{\nu}}\left(1+\frac{c_1(L)}{t^{-\nu}-1}+\dots\right)\,.\label{wzor-rozwiniecie}\end{multline}
The integrals in (\ref{LRR2}) can be rewritten as
\begin{equation}\label{twistedintegral}
\int_{Y_i} td(Y_i)\cdot ch^H\left(\frac{E_{|Y_i}}{\lambda_{-1}(N^*_{Y_i})}\right)\,.\end{equation}
The expression $\frac{E_{|Y_i}}{\lambda_{-1}(N^*_{Y_i})}$ belongs to $ S^{-1}K^H(Y_i)\simeq S^{-1}R(H)\otimes K(Y_i)$, where $S$ is the multiplicative system generated by $1-t^\nu$, for all $\nu$ which are the weights of $H$-action on $N^*_{Y_i}$. 
Formally the nonequivariant Chern character is  a map
$$ch\colon K(Y_i)\to H^*(Y_i)\,.$$
The Chern character defined for the representation ring
$$ch^H\colon R[H]\to {\QQ}[[x_1,x_2,\dots x_r]]$$
$$t_i\mapsto exp(x_i)\,,$$
coincides with \eqref{repChern}.
We obtain an extension 
$$ch^H\colon
S^{-1}R(H)\otimes K(Y_i)\to (ch^H(S))^{-1}{\QQ}[[x_1,x_2,\dots x_r]]\otimes H^*(Y_i)\,.$$
Now we apply the  Hirzebruch-Riemann-Roch theorem for the extended Chern character appearing in \eqref{twistedintegral} and conclude that 
\begin{equation}\chi^H(X;E)=
\sum_{i\in I}p_{i\,!} \left(\frac{E_{|Y_i}}{\lambda_{-1}(N^*_{Y_i})}\right)\,.\end{equation}
This way we have identified the Nielsen formula \eqref{LRR2} with the statement of Theorem~\ref{thm_RRekwgeneral}.

Corollary \ref{cor_RRekw} follows directly from \eqref{LRR2}.

\begin{proof}[Proof of Corollary \ref{cor_RRekw}.]
 Assume that each fixed-point component of $X^H$ is either a point or a curve.
The contributions to $\chi^H(X;L)$ in (\ref{LRR2}) coming from the components $Y_i$  are the following:

If $Y_i=\{y_i\}$ is a point and $\nu_{i,j}$ are the cotangent weights (compass) then
 $$\lambda_{-1}(N^*_{y_i})^{-1}=\prod_{j=1}^{\dim X}\frac1{1-t^{\nu_{i,j}}}$$
because  $N^{*}_{y_i}=T^{*}_{y_i}X$, which is an equivariant bundle over a point.
Then
$$\int_{Y_i} \frac{td(Y_i)\cdot ch^H(L_{|Y_i})}{ch^H\lambda_{-1}(N^*_{Y_i})}=\frac {t^{\mu_i}}{\prod_{j=1}^{\dim X}(1-t^{\nu_{i,j}})}\,.$$

Suppose $Y_i=C_i$ is a curve. Let $[p]$ be the generator of $\HH^2(C_i)$. 
Let us decompose the conormal bundle
$N^*(C_i)=\bigoplus N^*(i,j)$, and suppose that the action of $H$ on the summand $ N^*(i,j)$ is of the weight $\nu_{i,j}$ and $\mathop{rk}(N^*(i,j))=r_{i,j}$.
Then by (\ref{wzor-rozwiniecie})
$$\lambda_{-1}(N^*(Y_i))^{-1}=\prod_j\frac{1 +
\tfrac{n_{i,j} }{t^{-\nu_{i,j}}-1}[p]}{(1 - t^{\nu_{i,j}})^{r_{i,j}}}\,,$$
where $c_1(N^*(i,j))=n_{i,j}[p]$ is the first Chern class.
The Todd class of the curve $C_i$ is equal to  $td(Y_i)=1+(1-g_i)[p]$. Assume that the action of $H$ on $L_{|C_i}$ is of weight $\mu_i$. Let $d_i$ be the degree of $L$ on $C_i$. The integral over $Y_i$
is equal to the homogenous component of degree one (with respect to $[p]$) of the product
$$\left(1+(1-g_i)[p]\right)\cdot t^{\mu_i}\left(1+d_i[p]\right)\cdot\prod_j\frac{1 +
\tfrac{n_{i,j}}{t^{-\nu_{i,j}}-1}[p]}{(1 - t^{\nu_{i,j}})^{r_{i,j}}}.$$
We obtain the contribution
$$\frac{t^{\mu_i}}{\prod_j(1 - t^{\nu_{i,j}})^{r_{i,j}}}\left(
1-g_i+d_i+\sum_j \tfrac{n_{i,j} }{ t^{-\nu_{i,j}}-1}\right)\,.$$
\end{proof}

\bibliography{torus_on_contact_all} 
\bibliographystyle{plain}
\end{document}